\title[Higher Fitting ideals and Sinnott's circular units]
 {On the higher Fitting ideals of Iwasawa modules 
of ideal class groups 
over real abelian fields}
\author{Tatsuya Ohshita}
\address{Department of Mathematics, Kyoto University,
Kyoto 606-8502, Japan}
\email{ohshita@math.kyoto-u.ac.jp}
\date{\today}
\subjclass[2010]{Primary~11R23. Secondary~11R18}
\newtheorem{thm}{Theorem}[section]
\newtheorem{prop}[thm]{Proposition}
\newtheorem{cor}[thm]{Corollary}
\newtheorem{lem}[thm]{Lemma}
\theoremstyle{definition}
\newtheorem{dfn}[thm]{Definition}
\newtheorem{exa}[thm]{Example}
\newtheorem{rem}[thm]{Remark}
\def\plim{\mathop{\underleftarrow{\mathrm{lim}}}\nolimits}
\def\Gal{\mathop{\mathrm{Gal}}\nolimits}
\def\cha{\mathop{\mathrm{char}}\nolimits}
\def\Fitt{\mathop{\mathrm{Fitt}}\nolimits}
\def\det{\mathop{\mathrm{det}}\nolimits}
\def\Im{\mathop{\mathrm{Im}}\nolimits}
\def\Ker{\mathop{\mathrm{Ker}}\nolimits}
\def\Coker{\mathop{\mathrm{Coker}}\nolimits}
\def\Hom{\mathop{\mathrm{Hom}}\nolimits}
\def\ann{\mathop{\mathrm{ann}}\nolimits}
\def\Frob{\text{\rm Fr}}
\newcommand{\mf}[1]{{\mathfrak{#1}}}
\newcommand{\mb}[1]{{\mathbf{#1}}}
\newcommand{\bb}[1]{{\mathbb{#1}}}
\newcommand{\mca}[1]{{\mathcal{#1}}}
\newcommand{\mrm}[1]{{\mathrm{#1}}}
\newfont{\bg}{cmr9 scaled\magstep4}
\newcommand{\bigzerol}{\smash{\lower1.0ex\hbox{\bg 0}}}
\begin{document}

\begin{abstract}
Kurihara established a refinement of the minus-part of the Iwasawa main conjecture for totally real number fields using the higher Fitting ideals 
in his paper \cite{Ku}. 
In this paper, by using Kurihara's methods and Mazur-Rubin theory, 
we study the higher Fitting ideals of the plus-part of 
Iwasawa modules associated the cyclotomic $\bb{Z}_p$-extension of 
abelian fields for an odd prime number $p$.
We define {\em the higher cyclotomic ideals} $\{ \mf{C}_i \}_{i \ge 0}$, which are ideals of the Iwasawa algebra defined by the Kolyvagin derivative classes of circular units, and prove that they give upper and lower bounds 
of the  higher Fitting ideals in some sense, and determine the pseudo-isomorphism classes
of the plus-part of Iwasawa modules. 
Our result can be regarded as an partial analogue of Kurihara's results and 
a refinement of the plus-part of 
the Iwasawa main conjecture for abelian fields.    
\end{abstract}

\maketitle

\section{Introduction}\label{notation}
The Iwasawa main conjecture describes 
the characteristic ideals of certain Iwasawa modules. 
The characteristic ideals are important invariants  
on the structure of finitely generated torsion Iwasawa modules, 
but they are not enough to determine the pseudo-isomorphism classes 
of Iwasawa modules (cf.\ \S 2) completely. 

The higher Fitting ideals have more detailed information 
on Iwasawa modules.  
For instance, the higher Fitting ideals determine 
the pseudo-isomorphism class and 
the least cardinality of generators of 
finitely generated torsion Iwasawa modules. 
(See Remark \ref{remark Fitt and generators} and 
Remark \ref{remark for p-isom class}.)
In \cite{Ku}, Kurihara proved that 
all the higher Fitting ideals of the minus-part of the Iwasawa modules 
associated to the cyclotomic $\bb{Z}_p$-extension of certain CM-fields 
coincide with the higher Stickelberger ideals, 
which are defined by analytic objects arising 
from $p$-adic $L$-functions (cf.\ \cite{Ku} Theorem 1.1). 
His result is a refinement of the minus-part of 
the Iwasawa main conjecture for totally real number fields. 

Here, we study the higher Fitting ideals of the plus-part of 
the Iwasawa modules by similar methods as in \cite{Ku}.  
In this paper, we construct a collection 
$\{\mf{C}_{i,\chi} \}_{i\ge 0}$ of ideals 
of the Iwasawa algebra $\Lambda_{\chi}$, 
which is an analogue of  
Kurihara's higher Stickelberger ideals, 
and prove that the ideals $\mf{C}_{i,\chi}$
give upper and lower bounds of 
the higher Fitting ideals of the plus-part in some sense. 
(In certain cases, the ideals $\mf{C}_{i,\chi}$ determine 
the pseudo-isomorphism class of the plus-part.)
The main tool in \cite{Ku} is the Kolyvagin system of Gauss sums. 
Instead, in this paper, 
we use the Euler system of circular units, 
so we can only treat the Iwasawa modules associated 
to the cyclotomic $\bb{Z}_p$-extension of subfields of cyclotomic fields. 

In order to state the main theorem of this paper, 
we set notations in this paper. 
We fix an odd prime number $p$.  
Let $\overline{\bb{Q}}$ be an algebraic closure of $\bb{Q}$ and 
$K$ a totally real subfield of $\overline{\bb{Q}}$, 
which is a finite abelian extension of $\bb{Q}$. 
We assume that the prime number $p$ is unramified in $K/\bb{Q}$. 
Let $\mu_n$ be the group of all $n$-th roots of unity 
contained in $\overline{\bb{Q}}$. 
For an integer $m$ with $m \ge 0$, 
let $F_m$ be the maximal totally real subfield of 
$K(\mu_{p^{m+1}})$ and $F_\infty := \bigcup_{m \ge 0}F_m$. 
We put $\Gamma_{m,n} :=\Gal(F_m/F_n)$ 
and $\Gamma_m:=\Gal(F_\infty/F_m)$. 
Especially, we write $\Gamma:=\Gamma_0$. 
We fix a topological generator $\gamma \in \Gamma_0$.
Let $\Lambda:=\bb{Z}_p[[\Gal(F_\infty/\bb{Q})]]=
\plim \bb{Z}_p[\Gal(F_m/\bb{Q})]$. 

Put $\Delta := \Gal(F_0/\bb{Q})=\Delta_0\times\Delta_p$, where $\Delta_0$ is the maximal subgroup of $\Delta$ whose order is prime to $p$, and $\Delta_p$ is the $p$-Sylow subgroup of $\Delta$. We denote $D_p$ the decomposition subgroup of $\Delta$ at $p$. (Note that $D_p$ is uniquely determined since $\Delta$ is abelian.) We put $\widehat{\Delta}:=\Hom(\Delta, \overline{\bb{Q}}_p^{\times})$. For any character $\chi \in \widehat{\Delta}$, we denote by $\mca{O}_{\chi}$ the $\bb{Z}_p[\Delta]$-algebra, which is isomorphic to $\bb{Z}_p[\Im\chi]$ as a $\bb{Z}_p$-algebra, and $\Delta$ acts on via $\chi$. 
We denote the $\Lambda$-algebra $\mca{O}_{\chi}[[\Gamma_0]]$ 
by $\Lambda_\chi$, and we identify $\Lambda_\chi$ with
$\mca{O}_\chi[[T]]$ by the isomorphism 
$\Lambda_\chi \simeq \mca{O}_\chi[[T]]$ of $\mca{O}_\chi$ algebras 
defined by $\gamma \mapsto 1+T$. 

For any $\Lambda$-module $M$, we put 
$M_{\chi}:=M\otimes_{\Lambda}\Lambda_{\chi}$. 
We define a $\Lambda$-module $X:= \plim A_m$, 
where $A_m:=A_{F_m}$ is the $p$-Sylow subgroup of 
the ideal class group of $F_m$ 
and the projective limit is taken with respect to the norm maps. 
It is well-known that the $\Lambda_\chi$-module $X_\chi$ is 
finitely generated and torsion.
In this paper, we study the higher Fitting ideals 
$\Fitt_{\Lambda_\chi,i}(X_\chi)$ of $X_\chi$ 
for any non-trivial character $\chi \in \widehat{\Delta}$.
Let $X_{\chi,\mrm{fin}}$ be the largest pseudo-null 
$\Lambda_\chi$-submodule of $X$, and $X'_\chi:=X_\chi/X_{\chi,\mrm{fin}}$.
We treat $X'_\chi$ instead of $X_\chi$ in order to apply 
Kurihara's Euler system arguments, 
which work for finitely generated torsion
$\Lambda$-modules 
whose structures are given by square matrices 
(cf.\ Lemma \ref{no pn} and 
\S \ref{Kurihara's Euler system argument}).

Comparing our setting with the minus part, which Kurihara studied, 
in the case of the plus-part, a problem lies in how to define the ideals which are substitutes for the higher Stickelberger ideals by Kurihara because we do not have elements as the Stickelberger elements in group rings of Galois groups.  
A key idea of this paper lies in the definition of ideal $\mf{C}_{i,\chi}$ 
of $\Lambda$, called {\em the higher cyclotomic ideals} 
for each $i \in \bb{Z}_{ \ge 0}$
which match Kurihara's arguments well.
We shall define these ideals in \S \ref{the section of cyclotomic ideals}, by using the Euler system of circular units (cf.\ Definition \ref{The $i$-th cyclotomic ideal}). Roughly speaking, first, we shall define the ideals 
$\mf{C}_{i,m,N,\chi}$  of the group ring 
$R_{m,N,\chi}:=\bb{Z}/p^N[\Gal(F_m/\bb{Q})]_\chi$ 
generated by images of certain Kolyvagin 
derivatives $\kappa_{m,N}(\xi)$ by {\em all $R_{m,N,\chi}$-homomorphisms} 
$\xymatrix{\big(F_m^\times/(F_m^\times)^{p^N} \big)_\chi \ar[r] 
& R_{m,N,\chi}}$, 
then we shall define $\mf{C}_{i,\chi}$ 
by the projective limit of them.

Let $I$ and $J$ be ideals of $\Lambda_{\chi}$. Then, we write $I \prec J$ if there exists a height two ideal $A$ of $\Lambda_{\chi}$
(called an ``error factor") satisfying $AI \subseteq J$. 
Note that for two ideals $I$ and $J$ of $\Lambda_\chi$, 
we have $I \prec J$ if and only if 
$I\Lambda_{\chi,\mf{p}}\subseteq J\Lambda_{\chi,\mf{p}}$
for all prime ideals $\mf{p}$ of height $1$, 
where we denote the localization of $\Lambda_\chi$ 
at $\mf{p}$ by $\Lambda_{\chi,\mf{p}}$.
We write $I \sim  J$ if $I \prec J$ and $J \prec I$. 
The relation $\sim$ is 
an equivalence relation on ideals of $\Lambda_\chi$.

The following theorem is a rough form of our main theorem in this paper. 
\begin{thm}
\label{Main theorem, Rough}
We assume that the extension degree of $K/\bb{Q}$ is prime to $p$. 
Let $\chi  \in \widehat \Delta$ be a character
satisfying $\chi(p)\ne 1$.
Then, we have 
$$ \Fitt_{\Lambda_\chi,i}(X_\chi)\sim \mf{C}_{i,\chi}$$
for any $i \in \bb{Z}_{\ge 0}$. 
Moreover, we have 
\begin{equation}\label{explicit estimates}
\ann_{\Lambda_\chi}(X_{\chi,\mrm{fin}}) 
\Fitt_{\Lambda_\chi,i}(X'_\chi)\subseteq \mf{C}_{i,\chi}
\end{equation}
for any $i \in \bb{Z}_{\ge 0}$. 
\end{thm}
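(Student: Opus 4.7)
The plan is to follow the Euler-system strategy of \cite{Ku}, replacing the Euler system of Gauss sums with that of Sinnott's circular units and Mazur--Rubin-type Kolyvagin derivatives. First, I would reduce from $X_\chi$ to $X'_\chi$. Under the hypotheses $p\nmid[K:\bb{Q}]$ and $\chi(p)\ne 1$, Lemma \ref{no pn} together with the framework of \S \ref{Kurihara's Euler system argument} should guarantee that $X'_\chi$ admits a presentation by a \emph{square} $\Lambda_\chi$-matrix, which is the input needed to extract Fitting-ideal information from Kolyvagin derivatives. Since $X_{\chi,\mrm{fin}}$ is pseudo-null, $\Fitt_{\Lambda_\chi,i}(X_\chi)$ and $\Fitt_{\Lambda_\chi,i}(X'_\chi)$ coincide after localizing at each height-one prime of $\Lambda_\chi$, and the precise discrepancy between them is controlled by $\ann_{\Lambda_\chi}(X_{\chi,\mrm{fin}})$.

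The upper-bound direction $\mf{C}_{i,\chi}\prec\Fitt_{\Lambda_\chi,i}(X_\chi)$, which will also yield (\ref{explicit estimates}), I would prove at finite level first. Fix $(m,N)$, a Kolyvagin index $\mf{n}=\ell_1\cdots\ell_i$, and an $R_{m,N,\chi}$-homomorphism $\phi:(F_m^\times/(F_m^\times)^{p^N})_\chi\to R_{m,N,\chi}$. Standard Kolyvagin-derivative recursions, applied to the circular unit $\xi_\mf{n}$ and combined with reciprocity at each $\ell_j$, should express $\phi(\kappa_{m,N}(\xi_\mf{n}))$, up to units, as an $i$-minor of a square presentation matrix of $X'_\chi/p^N$ built from the Frobenius data at $\ell_1,\ldots,\ell_i$, hence as an element of $\Fitt_{R_{m,N,\chi},i}(X'_\chi/p^N)$. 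Taking $\plim_{m,N}$ and varying $\phi$ gives $\mf{C}_{i,\chi}\subseteq\Fitt_{\Lambda_\chi,i}(X'_\chi)$; multiplying by $\ann_{\Lambda_\chi}(X_{\chi,\mrm{fin}})$ to pass back from $X'_\chi$ to $X_\chi$ produces (\ref{explicit estimates}).

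The main obstacle is the reverse containment $\Fitt_{\Lambda_\chi,i}(X'_\chi)\prec\mf{C}_{i,\chi}$: every $i$-minor of a square presentation matrix of $X'_\chi$ must be realized, up to a height-two error ideal, by some Kolyvagin derivative $\phi(\kappa_{m,N}(\xi_\mf{n}))$. I would mimic the Chebotarev argument of \cite{Ku}: given the matrix, select primes $\ell$ whose Frobenius in an auxiliary field, generated over $F_m(\mu_{p^N})$ by $p^N$-division data encoded by the Pontryagin dual of $X'_\chi$, takes prescribed values, so that the resulting Kolyvagin-derivative classes realize the prescribed minor modulo a piece supported on height-two primes. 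The hypothesis $\chi(p)\ne 1$ should enter here to ensure that certain local cohomological terms at $p$ are invertible in $\Lambda_\chi$, so that the minor realization is clean at the height-one primes above $p$; the bookkeeping of the height-two error factor is the technical heart of the argument. Combining both directions with the $\sim$-equivalence of $\Fitt_{\Lambda_\chi,i}(X_\chi)$ and $\Fitt_{\Lambda_\chi,i}(X'_\chi)$ established in the reduction step yields $\Fitt_{\Lambda_\chi,i}(X_\chi)\sim\mf{C}_{i,\chi}$.
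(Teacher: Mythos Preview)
Your proposal has the two halves reversed, and this is not merely cosmetic: the sentence ``Taking $\plim_{m,N}$ and varying $\phi$ gives $\mf{C}_{i,\chi}\subseteq\Fitt_{\Lambda_\chi,i}(X'_\chi)$; multiplying by $\ann_{\Lambda_\chi}(X_{\chi,\mrm{fin}})$ to pass back from $X'_\chi$ to $X_\chi$ produces (\ref{explicit estimates})'' is a non-sequitur, since $\mf{C}_{i,\chi}\subseteq\Fitt_{\Lambda_\chi,i}(X'_\chi)$ and $\ann_{\Lambda_\chi}(X_{\chi,\mrm{fin}})\Fitt_{\Lambda_\chi,i}(X'_\chi)\subseteq\mf{C}_{i,\chi}$ point in opposite directions. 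In the paper, the Chebotarev/Kurihara argument (\S\ref{Kurihara's Euler system argument}) is what yields (\ref{explicit estimates}): starting from a fixed square presentation $\Lambda_\chi^h\to\Lambda_\chi^h\to X'_\chi\to 0$, one uses Proposition~\ref{chebo appli} to choose primes $q,\ell_2,\dots,\ell_{i+1}$ and build Kurihara's combinations $x_{\nu,q}$ so that an \emph{arbitrary} $(h-i)$-minor $\det(M_i)$, times an explicit factor in $\ann_{\Lambda_\chi}(X_{\chi,\mrm{fin}})$, is realized as $\bar\phi^{\ell_{i+1}}(x_{\nu,q})$ and hence lies in $\mf{C}_{i,\chi}$. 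This is the argument you assigned to the ``reverse containment'', and it gives $\Fitt\prec\mf{C}$ (in the sharp form (\ref{explicit estimates})), not $\mf{C}\prec\Fitt$.

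The direction $\mf{C}_{i,\chi}\prec\Fitt_{\Lambda_\chi,i}(X_\chi)$ is not obtained by expressing $\phi(\kappa_{m,N}(\xi_{\mf n}))$ as a minor; standard Kolyvagin recursions do not do that. The paper instead invokes Mazur--Rubin's structure theorem (Theorem~\ref{MR5.2.12}): for each height-one prime $\mf P\supseteq\Fitt_{\Lambda_\chi,i}(X_\chi)$ one perturbs to nearby primes $\mf P_k$ with $S_{\mf P_k}$ a DVR, transports the Kolyvagin derivatives to $T_\chi\otimes_{\mca O}S_{\mf P_k}$, and uses that the Kolyvagin-system invariant $\partial_i(\mf P_k)$ equals the $i$-th Fitting exponent of the dual Selmer group over $S_{\mf P_k}$. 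This forces every generator of $\mf{C}_{i,\chi}$ to be divisible, at $\mf P$, to the order dictated by $\Fitt_{\Lambda_\chi,i}(X_\chi)$ (Theorem~\ref{local lower bounds}). Your outline should swap the roles of the two techniques and drop the erroneous deduction of (\ref{explicit estimates}) from $\mf{C}\subseteq\Fitt$.
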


\begin{rem}
Let $K/\bf{Q}$ and $\chi  \in \widehat \Delta$ be as 
in Theorem \ref{Main theorem, Rough}. 
By a property of the principal (the $0$-th)  Fitting ideals, 
we have
$$\Fitt_{\Lambda_\chi,0}(X_{\chi,\mrm{fin}})\subseteq 
\ann_{\Lambda_\chi}(X_{\chi,\mrm{fin}}).$$ 
Note that we have  
$$\Fitt_{\Lambda_\chi,0}(X_{\chi,\mrm{fin}}) 
\Fitt_{\Lambda_\chi,0}(X'_\chi) = 
\Fitt_{\Lambda_\chi,0}(X_\chi)$$
since $X'_\chi$ is a $\Lambda_\chi$-module of projective dimension one
(cf.\ \ref{fitt0cor}). 
So, we have 
$$\Fitt_{\Lambda_\chi,0}(X_\chi)\subseteq \mf{C}_{0,\chi}.$$
\end{rem}

\begin{rem}
In this paper, we also study the upper bounds of 
the higher Fitting ideals of the plus-part 
when $\Delta_p \ne 0$ or $\chi(p)=1$.
For the precise statement of our main theorem 
for upper bounds on the Fitting ideals, 
including these cases, 
see Theorem \ref{Main theorem}.
\end{rem}

Theorem \ref{Main theorem, Rough} implies that 
the higher cyclotomic ideals give ``true" upper bounds 
in some special cases.

\begin{cor}\label{no pnull case}
Let $K/\bf{Q}$ and $\chi  \in \widehat \Delta$ be as 
in Theorem \ref{Main theorem, Rough}. 
Assume that $X_{\chi}$ is has no non-trivial 
$\Lambda_\chi$-submodules. Then, we have
$$\Fitt_{\Lambda_\chi, i}(X_{\chi})\subseteq \mf{C}_{i, \chi}$$ 
for any $i\in \bb{Z}_{\ge 0}$.
\end{cor}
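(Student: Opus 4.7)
The plan is to deduce this corollary directly from the explicit inclusion \eqref{explicit estimates} in Theorem \ref{Main theorem, Rough}. Reading the hypothesis as ``$X_\chi$ has no non-trivial pseudo-null $\Lambda_\chi$-submodule'' (the only natural reading, since otherwise $X_\chi$ would be zero), the first step is simply to observe that $X_{\chi,\mathrm{fin}}$, being by definition the largest pseudo-null $\Lambda_\chi$-submodule of $X_\chi$, must vanish. Consequently $X'_\chi = X_\chi/X_{\chi,\mathrm{fin}} = X_\chi$, and
$$\ann_{\Lambda_\chi}(X_{\chi,\mathrm{fin}}) = \ann_{\Lambda_\chi}(0) = \Lambda_\chi.$$

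Substituting these two identities into the inclusion \eqref{explicit estimates} of Theorem \ref{Main theorem, Rough}, which reads
$$\ann_{\Lambda_\chi}(X_{\chi,\mathrm{fin}})\,\Fitt_{\Lambda_\chi,i}(X'_\chi) \subseteq \mf{C}_{i,\chi},$$
yields $\Lambda_\chi \cdot \Fitt_{\Lambda_\chi,i}(X_\chi) \subseteq \mf{C}_{i,\chi}$, which is exactly the desired inclusion $\Fitt_{\Lambda_\chi,i}(X_\chi) \subseteq \mf{C}_{i,\chi}$ for every $i \ge 0$. There is no real obstacle here: the corollary is a formal specialization of the main theorem, the only subtlety being to recognize that the pseudo-null-free hypothesis on $X_\chi$ makes the ``error factor'' $\ann_{\Lambda_\chi}(X_{\chi,\mathrm{fin}})$ trivial and collapses $X'_\chi$ back to $X_\chi$, turning the $\prec$-style bound of Theorem \ref{Main theorem, Rough} into a genuine containment of ideals.
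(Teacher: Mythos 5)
Your proof is correct and is exactly the intended derivation: the hypothesis forces $X_{\chi,\mathrm{fin}}=0$, hence $X'_\chi=X_\chi$ and $\ann_{\Lambda_\chi}(X_{\chi,\mathrm{fin}})=\Lambda_\chi$, and the inclusion (\ref{explicit estimates}) of Theorem \ref{Main theorem, Rough} collapses to $\Fitt_{\Lambda_\chi,i}(X_\chi)\subseteq\mf{C}_{i,\chi}$. The paper presents this corollary without a separate proof precisely because it is this formal specialization.
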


\begin{rem}
All known examples of $X_\chi$ is pseudo-null 
(cf.\ Greenberg conjecture, for example, 
see \cite{Gree1} Conjecture 3.4),
so we have no non-trivial example for Corollary \ref{no pnull case}
in present. 
\end{rem}

\begin{rem}
We prove the inequality 
$$\ann_{\Lambda_\chi}(X_{\chi,\mrm{fin}}) 
\Fitt_{\Lambda_\chi,i}(X'_\chi)\subseteq \mf{C}_{i,\chi}$$
in \S \ref{Kurihara's Euler system argument} 
by the Euler system argument using 
analogues of Kurihara's elements. 
(See Theorem \ref{Main theorem} 
and Corollary \ref{half of Main theorem, Rough}.) 
Then, in \S \ref{the cyclotomic ideals and Mazur-Rubin theory}, 
we prove the inequalities 
$$\Fitt_{\Lambda_\chi,i}(X'_\chi)\succ \mf{C}_{i,\chi}\hspace{5mm} 
(i \in \bb{Z}_{\ge 0}) $$
by using the results of Mazur-Rubin theory on 
Kolyvagin systems. 
(See Theorem \ref{local lower bounds}.)
Note that the following is already known. 
\begin{itemize}
\item Without Kurihara's elements, 
we can obtain (non-explicit) estimates  
$$\Fitt_{\Lambda_\chi,i}(X'_\chi)\prec \mf{C}_{i,\chi} \hspace{5mm} 
(i \in \bb{Z}_{\ge 0}), $$
which are weaker than the estimates 
(\ref{explicit estimates}) in 
Theorem \ref{Main theorem, Rough},
by using usual Euler system argument and 
the Iwasawa main conjecture. 
(See Remark \ref{usual ES arguments})
\item By the Mazur-Rubin theory in \cite{MR} \S 5, 
it turns out that the principal Kolyvagin systems of 
$(\Lambda_{\chi^{-1}} \otimes_{\bb{Z}_p}\bb{Z}_p(1),
\mca{F}_{\Lambda})$ completely know 
the pseudo-isomorphism class of $X_\chi$. 
(See Theorem \ref{MR5.2.12}, 
Corollary \ref{AandaBandb} and 
Corollary \ref{akandpartial}.)
But the results in \cite{MR} does not 
give explicit estimates of higher Fitting ideals 
of $X_\chi$ in terms of ideal. 
\end{itemize}
The first assertion of Theorem \ref{Main theorem, Rough}
implies that the higher cyclotomic ideals translates 
the pseudo-isomorphism classes of $X_\chi$ 
into the terms of {\em ideals} of $\Lambda_\chi$ 
(cf.\ Remark \ref{remark for p-isom class}).
What is essentially new in this paper is 
the definition of the higher cyclotomic ideals and 
to give stronger estimates (\ref{explicit estimates})
of higher Fitting ideals
which contains more refined information 
than the pseudo-isomorphism class of $X_\chi$
by using Euler system arguments via Kurihara arguments.
The key slogan is that 
the usual Euler system arguments work well only 
when the relation matrix is diagonal, but  
the usual Euler system arguments 
via Kurihara's elements work well even when 
the relation matrix is square. 
\end{rem}

We remark on the relation between 
higher cyclotomic ideals and the structure of 
$A_{0,\chi}:=A_0 \otimes_{\bb{Z}_p} \mca{O}_\chi$. 
By Mazur-Rubin theory in \cite{MR},  
the isomorphism class of the $\mca{O}_\chi$-module $A_{0,\chi}$
is determined by the Kolyvagin systems of $G_{\bb{Q}}$-module 
$\mca{O}_{\chi^{-1}} \otimes_{\bb{Z}_p}\bb{Z}_p(1)$. 
By comparing Mazur-Rubin theory and higher cyclotomic ideals, 
we obtain the following proposition. 
(See Theorem \ref{0-layer} and 
Corollary \ref{cyclotomic ideals and reduction}.)

\begin{prop}
Let $K/\bf{Q}$ and $\chi  \in \widehat \Delta$ be as 
in Theorem \ref{Main theorem, Rough}. 
Then, the following holds. 
\begin{itemize}
\item[(1)] The image of $\mf{C}_{i,\chi}$ in the ring
$$R_{0,\chi}:=\bb{Z}_p[\Gal(F_0/\bb{Q})]_\chi
=\Lambda_{\chi}/(\gamma-1)
=\varprojlim_{N}R_{0,N,\chi}
\simeq \mca{O}_\chi$$ 
coincides with the ideal 
$\mf{C}_{i,F_0,\chi}:= \varprojlim_{N} \mf{C}_{i,0,N,\chi}$  
for any $i \in \bb{Z}_{\ge 0}$. 
\item[(2)] We have 
$\Fitt_{\mca{O}_\chi,i}(A_{0,\chi})
=\mf{C}_{i,F_0,\chi}$
for any $i \in \bb{Z}_{\ge 0}$. 
\end{itemize}
\end{prop}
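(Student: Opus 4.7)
The plan is to handle parts (1) and (2) separately; (1) is essentially a compatibility check for inverse limits, while (2) is an application of Mazur-Rubin theory to the level-zero object.

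For (1), I would unwind the definition $\mf{C}_{i,\chi}=\varprojlim_{(m,N)}\mf{C}_{i,m,N,\chi}$ and show that the composition
$$\mf{C}_{i,\chi}\hookrightarrow \Lambda_\chi \twoheadrightarrow R_{0,\chi}=\varprojlim_N R_{0,N,\chi}$$
has image exactly $\mf{C}_{i,F_0,\chi}=\varprojlim_N \mf{C}_{i,0,N,\chi}$. Containment of the image in $\mf{C}_{i,F_0,\chi}$ is immediate from functoriality: the transition map $R_{m,N,\chi}\twoheadrightarrow R_{0,N,\chi}$ sends any generator $\varphi(\kappa_{m,N}(\xi))$ of $\mf{C}_{i,m,N,\chi}$ to an element of $\mf{C}_{i,0,N,\chi}$, because the Kolyvagin derivatives are norm-compatible and $\varphi$ descends. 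For the reverse inclusion, I would verify that the transition maps $\mf{C}_{i,m,N,\chi}\twoheadrightarrow \mf{C}_{i,0,N,\chi}$ are surjective. The key ingredient is that every $R_{0,N,\chi}$-homomorphism $\psi\colon (F_0^\times/(F_0^\times)^{p^N})_\chi \to R_{0,N,\chi}$ can be extended to an $R_{m,N,\chi}$-homomorphism at level $m$, via the Shapiro-type identification of $(F_m^\times/(F_m^\times)^{p^N})_\chi$ with the induced module from $\Gamma_{m,0}$; this lift, applied to a compatible lift $\kappa_{m,N}(\xi_m)$ of $\kappa_{0,N}(\xi)$, reduces to $\psi(\kappa_{0,N}(\xi))$. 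Surjectivity at every finite level together with Mittag-Leffler then yields the equality.

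For (2), the main input is the Mazur-Rubin structure theorem stated as Theorem \ref{MR5.2.12}. By class field theory together with Kummer theory, $A_{0,\chi}$ is naturally the $\chi$-component of the $\mca{O}_\chi$-valued Selmer group attached to $T:=\mca{O}_{\chi^{-1}}\otimes_{\bb{Z}_p}\bb{Z}_p(1)$ with its canonical Cartier-dual Selmer structure $\mca{F}_\Lambda$. Mazur-Rubin express the $i$-th Fitting ideal of this Selmer group as the ideal of $\mca{O}_\chi/p^N$ generated by the images of the Kolyvagin derivatives $\kappa_{0,N}(\xi)$ under all module homomorphisms into $R_{0,N,\chi}$ from the relevant local cohomology at the Kolyvagin primes. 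By local Tate duality and Kummer theory, that local cohomology group is precisely $(F_0^\times/(F_0^\times)^{p^N})_\chi$, so the ideal produced by Mazur-Rubin is exactly $\mf{C}_{i,0,N,\chi}$; passing to $\varprojlim_N$ delivers the asserted equality $\Fitt_{\mca{O}_\chi,i}(A_{0,\chi})=\mf{C}_{i,F_0,\chi}$.

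The hardest step will be the surjectivity of the transition maps in (1). The subtlety is that $\mf{C}_{i,m,N,\chi}$ is defined using \emph{all} $R_{m,N,\chi}$-linear homomorphisms, which a priori form a strictly larger family than those pulled back from $R_{0,N,\chi}$, so I must verify both that level-$m$ homomorphisms do not produce extra generators after reduction to $R_{0,N,\chi}$ and that every level-zero homomorphism actually arises by specialization from a level-$m$ one. The correct bookkeeping demands careful use of the $\Gamma_{m,0}$-equivariance of the Kolyvagin derivatives and the adjunction between extension of scalars along $R_{0,N,\chi}\hookrightarrow R_{m,N,\chi}$ and restriction. Once (1) is settled, (2) is comparatively clean, since it is a direct matching of definitions against the Mazur-Rubin machinery already invoked in the paper.
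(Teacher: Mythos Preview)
Your treatment of part (1) is essentially correct and matches the paper: the lifting of level-zero homomorphisms to level-$m$ homomorphisms that you sketch is exactly Lemma~\ref{liftinghom}(ii), which yields the $m$-direction surjectivity recorded in Corollary~\ref{compatibility of cyclotomic ideals}, and finiteness of all terms makes the Mittag--Leffler step unproblematic. (Incidentally, the paper runs the logic in the opposite order, deducing (1) from (2) together with Corollary~\ref{compatibility of cyclotomic ideals}, but your direct argument for (1) is fine.)

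There is, however, a genuine gap in your argument for part (2). Theorem~\ref{MR5.2.12} does \emph{not} say that $\Fitt_{\mca{O}_\chi,i}(A_{0,\chi})$ equals the ideal generated by images of Kolyvagin derivatives of circular units; it says $\Fitt_i$ is $\pi^{\partial_i}$, where $\partial_i$ is computed using a \emph{primitive} Kolyvagin system. From this one gets only the inclusion $\mf{C}_{i,0,N,\chi}\subseteq\Fitt_i(A_{0,\chi})\cdot R_{0,N,\chi}$: any Kolyvagin system $\kappa$ satisfies $\partial_i(\kappa)\ge\partial_i$, so the classes $\kappa_{0,N}(\eta;n)_\chi$ coming from circular units lie in $p^{\partial_i}\cdot(F_0^\times/p^N)_\chi$ and hence so do their images under every $f\in\mca{H}_{0,N,\chi}$. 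The reverse inclusion $\Fitt_i\subseteq\mf{C}_{i,0,N,\chi}$ requires knowing that the circular-unit Kolyvagin system is primitive, equivalently that it realizes $\partial_0(\kappa)=\partial_0$; this is not formal and is essentially the level-zero main conjecture $\#A_{0,\chi}=[E_{0,\chi}:C_{0,\chi}]$. The paper supplies this direction by an explicit Euler-system chain argument in the style of Rubin (see the second half of the proof of Theorem~\ref{0-layer}): one chooses primes $\ell_1,\dots,\ell_r$ via Proposition~\ref{chebo appli} so that $p^{\sum_{j\le i-1}d_j}\psi_i(\kappa_{0,N}(\eta;n_i))=\psi_0(\eta)$, and then invokes $\psi_0(\eta)=u\cdot\#A_{0,\chi}$ to conclude $p^{\sum_{j>i}d_j}\in\mf{C}_{i,0,N,\chi}$. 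Your proposal omits this step entirely.
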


So, by Nakayama's lemma, we obtain the following corollary. 
(See Corollary \ref{generator and cyclotomic ideals}.)
\begin{cor}
Let $K/\bf{Q}$ and $\chi  \in \widehat \Delta$ be as 
in Theorem \ref{Main theorem, Rough}.
Let $r$ be a non-negative integer. 
Then, the following two properties are equivalent. 
\begin{itemize}
\item[(1)] The least cardinality of generators of 
the $\Lambda_\chi$-module $X_\chi$ is $r$. 
\item[(2)] $\mf{C}_{r-1,\chi} \ne \Lambda_\chi$ 
and $\mf{C}_{r,\chi} = \Lambda_\chi$. 
\end{itemize}
\end{cor}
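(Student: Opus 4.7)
The plan is to combine the preceding proposition with Nakayama's lemma for the complete Noetherian local ring $\Lambda_\chi \simeq \mca{O}_\chi[[T]]$ (whose maximal ideal is $\mf{m} = (\pi_\chi, T)$, with $\pi_\chi$ a uniformizer of the DVR $\mca{O}_\chi$) and for $\mca{O}_\chi$ itself. Recall that Nakayama's lemma characterizes the minimal number of generators of a finitely generated module $M$ over a Noetherian local ring $R$ as the least integer $r$ with $\Fitt_{R,r}(M) = R$.

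The central algebraic observation I will use is that for any ideal $I \subseteq \Lambda_\chi$, the equality $I = \Lambda_\chi$ holds if and only if the image of $I$ under the projection $\Lambda_\chi \twoheadrightarrow \Lambda_\chi/(T) \simeq \mca{O}_\chi$ equals $\mca{O}_\chi$. Indeed, any $a \in I$ congruent to $1$ modulo $T$ lies in $1 + T\Lambda_\chi \subseteq \Lambda_\chi^{\times}$ by $(T)$-adic completeness, so $1 \in I$; the converse is trivial. Applying this observation and chaining parts (1) and (2) of the proposition, $\mf{C}_{r,\chi} = \Lambda_\chi$ is equivalent to $\mf{C}_{r,F_0,\chi} = \mca{O}_\chi$, and in turn to $\Fitt_{\mca{O}_\chi, r}(A_{0,\chi}) = \mca{O}_\chi$. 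A second application of Nakayama's lemma, now to the DVR $\mca{O}_\chi$, shows that this last condition is equivalent to $A_{0,\chi}$ being generated by at most $r$ elements as an $\mca{O}_\chi$-module. Therefore condition (2) of the corollary is equivalent to the assertion that the minimal number of $\mca{O}_\chi$-generators of $A_{0,\chi}$ is exactly $r$.

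To match this with condition (1), I will identify the minimal number of $\Lambda_\chi$-generators of $X_\chi$ with the minimal number of $\mca{O}_\chi$-generators of $A_{0,\chi}$. Nakayama computes the former as $\dim_k (X_\chi/TX_\chi)/\pi_\chi(X_\chi/TX_\chi)$ where $k = \mca{O}_\chi/\pi_\chi$, that is, as the minimal number of $\mca{O}_\chi$-generators of the coinvariant module $X_\chi/TX_\chi$. The remaining ingredient is the standard Iwasawa-theoretic isomorphism $X_\chi/TX_\chi \simeq A_{0,\chi}$, which holds in our cyclotomic setup because the primes above $p$ in $F_0$ are totally ramified in $F_\infty/F_0$, while the hypothesis $\chi(p) \ne 1$ annihilates the decomposition-group correction term that would otherwise appear when more than one prime lies above $p$. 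I expect this last identification to be the main technical point, though it is essentially routine and should already be available from the proof of the preceding proposition; once granted, the two chains of equivalences thread together to yield (1) $\Leftrightarrow$ (2), completing the argument.
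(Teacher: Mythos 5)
Your proof is correct and follows essentially the same route as the paper: Nakayama's lemma for $\Lambda_\chi$, reduction to the ground level $F_0$ via Corollary \ref{cyclotomic ideals and reduction} and Theorem \ref{0-layer}, and the identification of $X_\chi$-coinvariants with $A_{0,\chi}$ via Proposition \ref{class group, refined}. The only cosmetic difference is that the paper passes directly to $X_\chi/\mf{m}X_\chi \simeq A_{0,\chi}/p$ with $\mf{m}=(p,\gamma-1)$, while you factor this as $X_\chi/TX_\chi\simeq A_{0,\chi}$ followed by reduction modulo the uniformizer of $\mca{O}_\chi$.
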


In \S \ref{the section of Higher Fitting ideals}
we recall the definition and some basic properties of
of higher Fitting ideals. 
In \S \ref{the section of Preliminaries}, 
we recall some preliminary results on Iwasawa theory. 
In \ref{the section of cyclotomic ideals}, 
we define the higher Fitting ideals, 
and prove our main theorem for $i=0$
(Theorem \ref{Main theorem for i=0}). 
In \S \ref{the section of Kurihara's elements for circular units}, 
we recall some basic facts 
on the Kolyvagin derivatives of the Euler system of circular units, 
and induce some elements $x_{m,N}(n)_\chi$ 
of $(F_m^\times/p^N)_\chi$, are 
analogue of Kurihara's elements in \cite{Ku}. 
The elements $x_{m,N}(n)_\chi$  play important roles 
in the Kurihara's Euler system arguments in the proof of 
Theorem \ref{Main theorem, Rough}.  
Especially, Proposition \ref{[] and phi and x} is 
one of the keys of the Euler system arguments. 
In \S \ref{the section of Chebotarev Density Theorem}, 
we prove Proposition \ref{chebo appli}, 
which is a key proposition in 
the induction arguments in Euler system arguments. 
In \S \ref{Kurihara's Euler system argument}, 
by the Euler system argument using 
analogues of Kurihara's elements,
we prove the estimate 
\[
\ann_{\Lambda_\chi}(X_{\chi,\mrm{fin}}) 
\Fitt_{\Lambda_\chi,i}(X'_\chi)\subseteq \mf{C}_{i,\chi}
\]
for any $i \ge 0$ 
(see Theorem \ref{Main theorem}). 
We also treat the case $\Delta_p \ne 0$ or $\chi(p) = 1$.
In \S \ref{the cyclotomic ideals and Mazur-Rubin theory}, 
we compare the higher Fitting ideals with Mazur-Rubin theory. 
We apply theory on Kolyvagin systems established 
by Mazur and Rubin, and 
we prove Theorem \ref{0-layer}, 
which is a result on the ground level,
and the remaining part of Theorem \ref{Main theorem, Rough}.

\subsection*{Notation}
In this paper, we use the following notation.

Let $F$ be a perfect field, and $\overline{F}$ an algebraic closure. 
We denote the absolute Galois group of $F$ 
by $G_F:=\Gal(\overline{\bb{Q}}/F)$.
For a topological abelian group $T$ with continuous $G_F$ action, 
let $H^*(F,T)=H^*(G_F,T)$ be the continuous Galois cohomology group.

In this paper, an algebraic number field is 
a subfield $F$ of a fixed algebraic closure 
$\overline{\bb{Q}}$ of $\bb{Q}$ such that 
the extension degree of $F/\bb{Q}$ is a finite.
For a finite set $\Sigma$ of places of $\bb{Q}$, 
we denote by $\bb{Q}_{\Sigma}$  
the maximal extension field of $\bb{Q}$ 
unramified outside $\Sigma$.
For any algebraic number field $F$,  
we denote the ring of integers of $F$ by $\mca{O}_F$, 
and the $p$-Sylow subgroup of the ideal class group of $F$ 
by $A_F$. 

We define $\bb{Q}_{\infty}/\bb{Q}$ to be 
the cyclotomic $\bb{Z}_p$-extension. 
For any $m \in \bb{Z}_{\ge 0}$, 
we denote by $\bb{Q}_m$ the unique subfield of $\bb{Q}_\infty$ 
whose extension degree over $\bb{Q}$ is $p^m$. 
Note that 
the field $F_m$ is the composite field of $\bb{Q}_m$ and $F_0$
for any $m \in \bb{Z}_{\ge 0} \cup \{\infty \}$. 
We identify $\Gal(\bb{Q}_\infty/\bb{Q})$ as 
$\Gamma=\Gal(F_\infty/F_0)$ by the natural isomorphism
$\xymatrix{\Gamma \ar[r]^(0.3){\simeq} & \Gal(\bb{Q}_\infty/\bb{Q}) }$. 

Let $L/K$ be a finite Galois extension of algebraic number fields. 
Let $\lambda$ be a prime ideal of $K$, 
and $\lambda'$ a prime ideal of $L$ above $\lambda$. 
We denote the completion of $K$ at $\lambda$ by $K_{\lambda}$. 
If $\lambda$ is unramified in $L/K$,  the arithmetic Frobenius 
at $\lambda'$ is denoted by $(\lambda', L/K) \in \Gal(L/K)$. 
We fix a family of embeddings 
$\{ \xymatrix{\ell_{\overline{\bb{Q}}}\colon\overline{\bb{Q}}\ar@{^{(}->}[r] 
& \overline{\bb{Q}}_\ell} \}_{\ell: \rm{prime}}$ 
satisfying the condition (Chb) as follows:
\begin{itemize}
\item[(Chb)] {\em For any subfield $F \subset \overline{\bb{Q}}$ 
which is a finite Galois extension of $\bb{Q}$ 
and any element $\sigma \in \Gal(F/\bb{Q})$, 
there exist infinitely many prime numbers $\ell$ 
such that $\ell$ is unramified in $F/\bb{Q}$ 
and $(\ell_F, F/\bb{Q})= \sigma$, 
where $\ell_F$ is the prime ideal of $\mca{O}_F$ 
corresponding to the  embedding $\ell_{\overline{\bb{Q}}}|_F$.}
\end{itemize}
The existence of a family satisfying the condition (Chb) follows easily from the Chebotarev density theorem.

Let $\ell$ be a prime number. For an algebraic number field $F$, 
let $\ell_F$ be the prime ideal of $F$ 
corresponding to the  embedding $\ell_{\overline{\bb{Q}}}|_F$. 
Then, if $L \supseteq F$ is an extension of algebraic number fields, 
we have $\ell_{L}| \ell_{F}$.

For an abelian group $M$ and a positive integer $n$, we write $M/n$ 
in place of $M/nM$ for simplicity. 
In particular, for the multiplicative group 
$K^\times$ of a field $K$, 
we write $K^\times/p^N$ in place of $K^\times/(K^\times)^{p^N}$. 
We denote by $M_\mrm{tor}$ the kernel of 
the natural homomorphism $\xymatrix{M \ar[r] & M\otimes\bb{Q}}$.

For a $\Lambda$-module $M$, 
we denote the $\Gamma_m$-invariants 
(resp.\ $\Gamma_m$-coinvariants) of $M$ 
by $M^{\Gamma_m}$ (resp.\  $M_{\Gamma_m}$). 

\section*{Acknowledgment}
The author would like to thank Professors Kazuya Kato, Masato Kurihara, Masataka Chida and Tetsushi Ito for their helpful advices. 
The author also thank to Kenji Sakugawa for fruitful 
conversations and discussion with him. 
This work is supported by Grant-in-Aid for JSPS Fellows (22-2753) from Japan Society for the Promotion of Science.

\section{Higher Fitting ideals}\label{the section of Higher Fitting ideals}
We use the same notation as in the previous section.
In particular, we fix a finite abelian field $K$, and 
We define $\Lambda:=\bb{Z}_p[[\Gal(F_\infty/\bb{Q})]]$, 
where $F_\infty$ is the maximal totally real subfield of 
$K(\mu_{p^{\infty}})$.

Here, we recall the definition and 
some basic properties of higher Fitting ideals briefly. 
\begin{dfn}[higher Fitting ideals, see \cite{No} \S 3.1]\label{Fitt}
Let $R$ be an commutative ring, and $M$ a finitely presented $R$-module. Let
\[
\xymatrix{
R^m \ar[r]^{f} & R^n \ar[r] & M \ar[r] & 0 \\
}
\] 
be an exact sequence of $R$-modules. For each $i \ge 0$, we define {\em the $i$-th Fitting ideal} $\Fitt_{R,i}(M)$ as  follows. 
\begin{itemize}
\item When $0 \le i < n$ and $m \ge n-i$, we define $\Fitt_{R,i}(M)$ to be the ideal of $R$ generated by all $(n-i) \times (n-i)$ minors of the matrix corresponding to $f$. 
\item When $0 \le i < n$ and $m < n-i$, we define $\Fitt_{R,i}(M):=0$. 
\item When $i \ge n$, we define $\Fitt_{R,i}(M):=R$. 
\end{itemize}
The definition of these ideals depends only on $M$, and does not depend on the choice of the above exact sequence.
\end{dfn}

\begin{rem}\label{remark Fitt and generators}
Let $R$ be an commutative ring, $S$ an $R$-algebra,  
and $M$ a finitely presented $R$-module, 
Then, by definition of the higher Fitting ideals and
the right exactness of tensor products,
we have 
\[
\Fitt_{S,i}(M\otimes_{R}S)=\Fitt_{R,i}(M)S
\]
for any $i \ge 0$.
\end{rem}

\begin{rem}\label{remark Fitt and generators}
Let $R$ be an commutative ring, 
and $M$ be a finitely presented $R$-module. 
If we have $\Fitt_{R,i}(M) \neq R$, 
then the least cardinality of generators of $M$ 
is greater than $i+1$. 
Note that when $R$ is a local ring or PID, 
the least cardinality of generators of $M$ is  
$i+1$ if and only if $\Fitt_{R,i}(M)\neq R$ 
and $\Fitt_{R,i+1}(M)=R$.
\end{rem}

\begin{rem}\label{remark for p-isom class}
Fix an arbitrary character $\chi \in \widehat{\Delta}$, and
let $M$ and $N$ be $\Lambda_\chi$-modules. 
We say that $M$ is pseudo-null if the order of $M$ is finite.
We write $M \sim_{\mrm{p.i.}} N$ if there exist homomorphism 
$M \longrightarrow N$
whose kernel and cokernel are both pseudo-null,
and we call $M$ is pseudo-isomorphic to $N$.
Note the relation $\sim_{\mrm{p.i.}}$ 
is an equivalence relation on finitely generated torsion 
$\Lambda_\chi$-modules. 
Assume 
\[
M\sim_{\mrm{p.i.}} \bigoplus_{i=1}^n\Lambda_\chi/f_i\Lambda_\chi 
\]
and $f_i$ divides $f_{i+1}$ for $1 \le i \le n-1$. 
Then, we have  
\[
\Fitt_{\Lambda_\chi,i}(M) \sim 
\begin{cases}
(\prod_{k=1}^{n-i}f_k\big) & (\text{if} \ i<n) \\
\Lambda_\chi & (\text{if} \ i \ge n)
\end{cases}
\] 
for any non-negative integer $i$ (cf.\ \cite{Ku} Lemma 8.2). 
In particular, the pseudo-isomorphism class of $M$ is determined 
by the higher Fitting ideals 
$\{\Fitt_{\Lambda_\chi,i}(M) \}_{i\ge 0}$.
\end{rem}

\begin{rem}
Let $M$ be a finitely generated torsion $\Lambda_\chi$-module. 
Then, the characteristic ideal $\cha_{\Lambda_\chi}(M)$ 
is the minimal principal ideal of $\lambda_\chi$ 
containing $\Fitt_{\Lambda_\chi,0}(M)$.
\end{rem}

Let us recall basic properties of higher Fitting ideals. 

\begin{lem}\label{basicFitting}
Let $R$ be a commutative ring, 
and 
\[ 
\xymatrix{0 \ar[r] & L \ar[r] &
M \ar[r] & N \ar[r] & 0 }
\]
be a short exact sequence of 
finitely presented $R$-modules.
Then, we have the following: 
\begin{enumerate}
\item $\Fitt_{R,i}(M) \subseteq \Fitt_{R,i}(L)$ for any $i \ge 0$.
\item $\Fitt_{R,i}(M) \subseteq \Fitt_{R,i}(N)$ for any $i \ge 0$.
\item $\Fitt_{R,i}(L)\Fitt_{R,0}(N) 
\subseteq \Fitt_{R,i}(M)$ for any $i \ge 0$.
\item $\Fitt_{R,0}(L)\Fitt_{R,i}(N) 
\subseteq \Fitt_{R,i}(M)$ for any $i \ge 0$.
\end{enumerate}
\end{lem}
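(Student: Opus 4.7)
My plan is to prove all four containments by working with carefully chosen finite presentations of $L$, $M$, and $N$ and tracking the ideals of minors of the associated relation matrices, using freely that $\Fitt_{R,i}$ is intrinsic to the module and independent of the chosen presentation. The central device is a block-triangular presentation of $M$ built out of given presentations of $L$ and $N$: starting from
\[
R^{a} \xrightarrow{A} R^{b} \to L \to 0, \qquad R^{c} \xrightarrow{C} R^{d} \to N \to 0,
\]
I lift the $d$ generators of $N$ to $M$, combine them with the embedded generators of $L$, and obtain a presentation
\[
R^{a+c} \xrightarrow{\;\Phi \,=\, \begin{pmatrix} A & X \\ 0 & C \end{pmatrix}\;} R^{b+d} \to M \to 0,
\]
where the coupling block $X$ records how the lifts of the chosen relations of $N$ land inside $L$.

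Claim (2) I would handle first and separately, as it is the cheapest: given any presentation $R^m \xrightarrow{\phi} R^n \to M \to 0$, appending lifts of a finite generating set of $L$ as extra columns of $\phi$ produces a presentation of $N = M/L$ whose relation matrix contains all columns of $\phi$; since adjoining columns can only enlarge the ideal of $(n-i) \times (n-i)$ minors, this gives $\Fitt_{R,i}(M) \subseteq \Fitt_{R,i}(N)$.

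For (3) and (4), I would work directly with $\Phi$. Selecting $(b+d-i) \times (b+d-i)$ minors that contain all $d$ rows of the $C$-block together with $b-i$ rows of the $A$-block, and using the zero lower-left block via Laplace expansion, forces such a minor to factor as a $(b-i) \times (b-i)$ minor of $A$ times a $d \times d$ minor of $C$; summing over all such row and column choices yields $\Fitt_{R,i}(L) \cdot \Fitt_{R,0}(N) \subseteq \Fitt_{R,i}(M)$, which is (3). The symmetric choice (all $b$ rows of $A$ together with $d-i$ rows of $C$) yields (4). For (1), my plan is to likewise expand $(b+d-i)$-minors of $\Phi$ along the $A$-block using the zero lower-left block, so as to realize each such minor as an $R$-linear combination of $(b-i) \times (b-i)$ minors of $A$ with coefficients drawn from the coupling and $C$-blocks, thereby placing every generator of $\Fitt_{R,i}(M)$ inside $\Fitt_{R,i}(L)$.

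The main obstacle I expect is the boundary bookkeeping imposed by the conventions of Definition \ref{Fitt}: whenever $b - i$, $d - i$, or similar quantities fall outside the valid ranges the Fitting ideals degenerate to $0$ or to $R$, and one has to check that the block-matrix arguments above respect these degenerations uniformly. I anticipate this being most delicate for claim (1), where minors of $\Phi$ drawn largely from the columns of the coupling and $C$-blocks can fail to involve the $A$-block in an obvious way, so the expansion must be combined with careful case analysis before every resulting expression can be placed inside $\Fitt_{R,i}(L)$.
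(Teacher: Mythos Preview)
Your arguments for (2), (3), and (4) are correct and essentially identical to the paper's: the paper also builds the block-triangular presentation with matrix $\left(\begin{smallmatrix} A & * \\ 0 & B \end{smallmatrix}\right)$ from chosen presentations of $L$ and $N$, and then asserts that all four claims follow from computing minors. Your separate route to (2) via adjoining columns is a harmless variant.

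The genuine problem is (1), and your unease there is well founded: as stated for an arbitrary commutative ring $R$, claim (1) is \emph{false}, so neither your plan nor the paper's one-line sketch can succeed. Take $R=k[x,y]$ and the short exact sequence of finitely presented $R$-modules
\[
0 \longrightarrow (x,y) \longrightarrow R \longrightarrow R/(x,y) \longrightarrow 0.
\]
Here $\Fitt_{R,1}(M)=\Fitt_{R,1}(R)=R$, while the Koszul presentation $R\xrightarrow{\left(\begin{smallmatrix}-y\\ x\end{smallmatrix}\right)} R^{2}\to (x,y)\to 0$ gives $\Fitt_{R,1}(L)=(x,y)\subsetneq R$. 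In the block presentation of $M$ this is exactly the failure you anticipated: lifting the two relations $x,y$ of $N$ through $\tilde f_{1}=1\in R$ makes the coupling block $X=\left(\begin{smallmatrix}-1&0\\0&-1\end{smallmatrix}\right)$, and the $2\times 2$ minor of $\Phi$ lying entirely inside $X$ equals $1$, with no column of $A$ involved at all; no amount of case analysis will push such a minor into the ideal of $(b-i)$-minors of $A$. The only place the paper actually invokes this lemma (deducing Corollary~\ref{AandaBandb}) is over the discrete valuation ring $S_{\mf{P}_k}$, and over a DVR claim (1) \emph{does} hold --- but via a different argument (for any $i$-generated $M'\subseteq M$ one has $L/(L\cap M')\hookrightarrow M/M'$, and $L\cap M'$ is again $i$-generated over a PID), not via block-minor bookkeeping.
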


\begin{proof}
Consider free resolutions
\begin{align*} 
& \xymatrix{R^{s} \ar[r]^f & R^r \ar[r] & L \ar[r] & 0,} \\ 
& \xymatrix{R^{s'} \ar[r]^g & R^{r'} \ar[r] & N \ar[r] & 0}
\end{align*}
of $R$-modules $L$ and $N$. 
Let $A \in M_{r,s}(R)$ (resp.\ $B \in M_{r',s'}(R)$) be 
the matrix associated to 
the $R$-linear map $f$ (resp.\ $g$) for standard basis.
Then, we have an exact sequence 
$$\xymatrix{R^{s+s'} \ar[r]^h & R^{r+r'} \ar[r] & M \ar[r] & 0}$$
such that the $(r+r')\times (s+s')$ matrix $C$ associated to $h$ is given by 
\begin{equation*}
C=
\begin{pmatrix}
A & * \\
0 & B 
\end{pmatrix}.
\end{equation*}
All assertions of the lemma follows 
immediately from the computation of minors of the matrix $C$. 
\end{proof}

Later, we use the following lemma on 
principal Fitting ideals of Iwasawa modules. 

\begin{lem}[for example, see \cite{Ku} Theorem 8.1]\label{no pn}
Let $R= \Lambda_\chi \simeq \mca{O}_\chi[[T]]$ and $M$ a finitely generated torsion $R$-module.
Suppose $M$ contains no non-trivial pseudo-null $R$-submodule. 
Then, there exists an exact sequence 
\[
\xymatrix{0 \ar[r] & R^n \ar[r] & R^n \ar[r] & M \ar[r] & 0}
\] 
for some integer $n>0$, and we have 
\[
\Fitt_{R,0}(M)=\cha_R(M).
\]
\end{lem}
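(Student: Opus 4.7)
The plan is to first build a free resolution $0 \to R^n \xrightarrow{\phi} R^n \to M \to 0$ with a square presentation matrix, and then to identify the ideal $(\det \phi) = \Fitt_{R,0}(M)$ with $\cha_R(M)$ by localizing at height-1 primes of the UFD $R$.

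I begin by fixing a minimal generating set of $M$, yielding a short exact sequence $0 \to K \to R^n \to M \to 0$. Since $R$ is an integral domain and $M$ is torsion, $K$ is a torsion-free $R$-submodule of rank $n$. The main step is to show that $K$ is free. For this I will show $K$ is reflexive, and then invoke that $R = \mca{O}_\chi[[T]]$ is a $2$-dimensional regular local ring (as $\mca{O}_\chi$ is a DVR), over which reflexive modules are $S_2$, hence of depth $2$, hence free by the Auslander--Buchsbaum formula.

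To show reflexivity, note that for every height-1 prime $\mf{p}$ the localization $R_\mf{p}$ is a DVR, so $K_\mf{p}$ is free, hence reflexive. Therefore the cokernel $Q$ of the canonical injection $K \hookrightarrow K^{**}$ satisfies $Q_\mf{p}=0$ for every height-1 $\mf{p}$, i.e., $Q$ is pseudo-null. Since $R^n$ is reflexive, the inclusion $K \hookrightarrow R^n$ extends to an inclusion $K^{**} \hookrightarrow R^n$, and the snake lemma applied to
\[
\xymatrix{
0 \ar[r] & K \ar[r] \ar@{^{(}->}[d] & R^n \ar@{=}[d] \ar[r] & M \ar[r] \ar@{->>}[d] & 0 \\
0 \ar[r] & K^{**} \ar[r] & R^n \ar[r] & R^n/K^{**} \ar[r] & 0
}
\]
produces an embedding $Q \hookrightarrow M$. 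By the hypothesis that $M$ has no non-trivial pseudo-null submodule, $Q=0$, so $K = K^{**}$ is reflexive and hence free of rank $n$. This gives the desired square presentation.

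For the final step, the square presentation yields $\Fitt_{R,0}(M) = (\det \phi)$ directly from the definition of the $0$-th Fitting ideal. Since $R$ is a $2$-dimensional regular local ring, hence a UFD, two principal ideals coincide iff they agree at every height-1 prime. At such a $\mf{p}$ with uniformizer $\pi$, the elementary divisor theorem over the PID $R_\mf{p}$ writes $M_\mf{p} \simeq \bigoplus_i R_\mf{p}/(\pi^{a_i})$, and Smith normal form forces $\det \phi_\mf{p} \doteq \pi^{\sum_i a_i}$ up to unit; on the other hand, $\cha_R(M)_\mf{p}$ equals $(\pi^{\sum_i a_i})$ by definition. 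Hence $(\det \phi)$ and $\cha_R(M)$ have identical height-1 localizations, so they coincide as ideals of $R$.

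The main obstacle I expect is the reflexivity step—in particular, cleanly extending the inclusion $K \hookrightarrow R^n$ to $K^{**} \hookrightarrow R^n$ (using that $R^n$ is already reflexive) and verifying that the resulting snake-lemma connection identifies $Q$ with an actual submodule of $M$. Once this is set up, the Auslander--Buchsbaum conclusion that reflexive $=$ free over a $2$-dimensional regular local ring and the height-1 comparison of $(\det \phi)$ with $\cha_R(M)$ are standard.
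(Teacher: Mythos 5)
The paper itself offers no proof of this lemma; it merely points to Kurihara's Theorem~8.1, so there is no in-house argument to compare against. Your proof is the standard one and it is correct: you reduce to showing that $K:=\ker\big(R^n\twoheadrightarrow M\big)$ is free; you note that $K_{\mf{p}}$ is reflexive at every height-one prime $\mf{p}$ (since $R_{\mf{p}}$ is a DVR), so that $Q:=K^{**}/K$ is pseudo-null; you extend $K\hookrightarrow R^n$ to $K^{**}\hookrightarrow R^n$ — this map is injective because $Q$ is torsion, so any element of the kernel is killed by some nonzero scalar and hence vanishes in the torsion-free module $K^{**}$; the snake lemma then identifies $Q\cong\ker\big(M\to R^n/K^{**}\big)$, a pseudo-null submodule of $M$ which must vanish by hypothesis, forcing $K=K^{**}$; Auslander--Buchsbaum (or the $S_2$/depth argument) then makes the reflexive module $K$ free of rank $n$ over the two-dimensional regular local ring $\mca{O}_\chi[[T]]$; and the final identification $(\det\phi)=\Fitt_{R,0}(M)=\cha_R(M)$ by localizing at height-one primes is correct, since both are divisorial (reflexive) ideals of a UFD and therefore determined by their height-one localizations. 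Each step is sound, and this is precisely the mechanism behind the cited result (projective dimension at most one for torsion Iwasawa modules with no nontrivial finite submodule, together with the determinant formula for the characteristic ideal).
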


By lemma \ref{no pn}, we obtain the following corollary.  

\begin{cor}\label{fitt0cor}
Let $R= \Lambda_\chi \simeq \mca{O}_\chi[[T]]$ 
and $M$ a finitely generated torsion $R$-module.
We denote the maximal pseudo-null $R$-submodule 
of $M$ by $M_{\mrm{fin}}$. 
Then, w have 
\[
\Fitt_{R,0}(M)=\Fitt_{R,0}(M_{\mrm{fin}})
\Fitt_{R,0}(M/M_{\mrm{fin}}).
\]
\end{cor}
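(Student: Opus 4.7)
The plan is to combine Lemma \ref{basicFitting}(3) with Lemma \ref{no pn} and an explicit block-matrix computation of the $0$-th Fitting ideal. Since $M' := M/M_{\mathrm{fin}}$ contains no non-trivial pseudo-null $R$-submodule by the maximality of $M_{\mathrm{fin}}$, Lemma \ref{no pn} provides a free resolution
\[
\xymatrix{0 \ar[r] & R^n \ar[r]^{B} & R^n \ar[r] & M' \ar[r] & 0}
\]
by a square matrix $B$. On the other hand, pick any finite free presentation
\[
\xymatrix{R^s \ar[r]^{A} & R^r \ar[r] & M_{\mathrm{fin}} \ar[r] & 0};
\]
since $M_{\mathrm{fin}}$ is a torsion module we may enlarge $s$ freely by adjoining trivial relations, so we assume $s \ge r$.

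The inclusion $\mathrm{Fitt}_{R,0}(M_{\mathrm{fin}}) \cdot \mathrm{Fitt}_{R,0}(M') \subseteq \mathrm{Fitt}_{R,0}(M)$ follows immediately from Lemma \ref{basicFitting}(3). For the reverse inclusion, I would argue as in the proof of Lemma \ref{basicFitting}: the short exact sequence $0 \to M_{\mathrm{fin}} \to M \to M' \to 0$ induces a presentation
\[
\xymatrix{R^{s+n} \ar[r]^{C} & R^{r+n} \ar[r] & M \ar[r] & 0}, \qquad C = \begin{pmatrix} A & * \\ 0 & B \end{pmatrix},
\]
so $\mathrm{Fitt}_{R,0}(M)$ is generated by the $(r+n)\times(r+n)$ minors of $C$.

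The key (and really the only nontrivial) step is to show that every such maximal minor of $C$ lies in $\mathrm{Fitt}_{R,0}(M_{\mathrm{fin}}) \cdot \mathrm{Fitt}_{R,0}(M')$. Suppose a choice of $r+n$ columns of $C$ takes $k$ columns from the last $n$ (the $B$-block) and $r+n-k$ columns from the first $s$ (the $A$-block). The bottom $n$ rows of the resulting square submatrix vanish on the $r+n-k$ left columns, so by the Cauchy--Binet expansion along those $n$ rows the minor is zero whenever $k<n$. For $k=n$, the lower--left block is zero and the lower--right block is exactly $B$, so Laplace expansion yields $\det(B) \cdot \det(A_{J'})$ for an $r\times r$ submatrix $A_{J'}$ of $A$. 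Varying the choice of columns, these products generate exactly $\mathrm{Fitt}_{R,0}(M') \cdot \mathrm{Fitt}_{R,0}(M_{\mathrm{fin}})$, which gives the reverse inclusion.

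The main obstacle, and the reason the statement works at all, is precisely the squareness of the presentation of $M'$ given by Lemma \ref{no pn}: if the $B$-block were not square, extra minors with $k < n$ could contribute and the equality would degrade to a mere inclusion. Thus the non-pseudo-null part $M'$ of $M$ is exactly what allows one to split off its Fitting ideal as a clean factor.
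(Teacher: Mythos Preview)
Your argument is correct and is essentially the proof the paper has in mind: the paper states the corollary as an immediate consequence of Lemma \ref{no pn} without further detail, and you have simply written out the block-matrix computation (already set up in the proof of Lemma \ref{basicFitting}) showing that squareness of the presentation of $M/M_{\mrm{fin}}$ forces every maximal minor of $C$ to be either zero or a product $\det(B)\cdot\det(A_{J'})$. Your final remark identifying the squareness of $B$ as the crucial point is exactly right.
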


\section{Preliminaries}\label{the section of Preliminaries}
In this section, we recall some preliminary results 
on certain Iwasawa modules. 

\subsection{}
In this subsection, we give some remarks on ``$\chi$-quotients"
of $\Lambda$-modules. 
Recall we denote the $p$-component of $\Delta:=\Gal(F_0/\bb{Q})$
by $\Delta_p$, and the maximal subgroup of 
$\Delta$ of order coprime to $p$ by $\Delta_0$. 
Note that $\Lambda_{\chi_0}:=\mca{O}_{\chi_0}[[\Gamma]][\Delta_p]$ 
is flat over $\Lambda$ 
for any $\chi_0 \in \widehat{\Delta}_0$. 
In particular, if the extension degree of 
$K/\bb{Q}$ is prime to $p$, 
then $\Lambda_\chi$ is flat over $\Lambda$
for any $\chi \in \widehat{\Delta}$. 
When the degree of $K/\bb{Q}$ is divisible by $p$,
we have to treat such $\Lambda$-algebras more carefully.

Let $S_{\widehat{\Delta}}$  be a set of all representatives of 
$\Gal (\overline{\bb{Q}}_p/\bb{Q}_p)$-conjugacy classes of $\widehat{\Delta}$.
We put 
\[\xymatrix{\iota_{S_{\widehat{\Delta}}}\colon 
\Lambda \ar[r] & \prod_\chi \Lambda_\chi}
\]
to be the natural homomorphism, 
where $\chi$ runs all elements of $S_{\widehat{\Delta}}$. 
Note that the cokernel of the homomorphism $\iota_{S_{\widehat{\Delta}}}$ is
annihilated by $\left| \Delta_p \right|$.
We use the following elementary lemma.
\begin{lem}\label{elementary lemma}
Let $M$ be a $\Lambda$-module. Then, we consider a natural homomorphism
\[
\xymatrix{\iota_{M,S_{\widehat{\Delta}}}\colon M \ar[r] & 
\prod_{\chi\in S_{\widehat{\Delta}}} M_\chi.}
\]   
Then, the kernel and the cokernel of 
$\iota_{M,S_{\widehat{\Delta}}}$ 
are annihilated by $\left| \Delta_p \right|$.
\end{lem}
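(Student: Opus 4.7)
The plan is to deduce the assertion for an arbitrary $M$ from the already-noted fact for $M=\Lambda$ itself by a Tor long exact sequence argument. Since $\widehat{\Delta}$ is finite, the product $\prod_{\chi\in S_{\widehat{\Delta}}} \Lambda_\chi$ agrees with the direct sum, so it commutes with the functor $M\otimes_\Lambda -$, giving $\prod_{\chi} M_\chi \cong M\otimes_\Lambda \prod_{\chi}\Lambda_\chi$ canonically. Consequently $\iota_{M,S_{\widehat{\Delta}}} = \mrm{id}_M \otimes \iota_{S_{\widehat{\Delta}}}$, and it suffices to realize $\iota_{S_{\widehat{\Delta}}}$ as part of a short exact sequence with a ``small'' quotient and then apply $M\otimes_\Lambda -$.

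First I would check that $\iota_{S_{\widehat{\Delta}}}$ is injective. Inverting $p$ gives
$$\Lambda\otimes_{\bb{Z}_p}\bb{Q}_p \;=\; \bb{Q}_p[\Delta][[\Gamma]]
\;\simeq\; \prod_{\chi\in S_{\widehat{\Delta}}}\bb{Q}_p(\Im\chi)[[\Gamma]]$$
via the Wedderburn decomposition of the semisimple $\bb{Q}_p$-algebra $\bb{Q}_p[\Delta]$, and this decomposition is compatible with $\iota_{S_{\widehat{\Delta}}}$. Because $\Lambda$ is $\bb{Z}_p$-torsion free it embeds into $\Lambda\otimes\bb{Q}_p$, which forces injectivity of $\iota_{S_{\widehat{\Delta}}}$. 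Denoting its cokernel by $C$ we therefore have a short exact sequence
$$0 \longrightarrow \Lambda \xrightarrow{\ \iota_{S_{\widehat{\Delta}}}\ } \prod_{\chi\in S_{\widehat{\Delta}}}\Lambda_\chi \longrightarrow C \longrightarrow 0$$
of $\Lambda$-modules, with $C$ annihilated by $|\Delta_p|$ by the remark recalled just before the lemma.

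Next I would apply $M\otimes_\Lambda -$ and extract the four-term exact sequence
$$\mrm{Tor}_1^\Lambda(M,C) \longrightarrow M \xrightarrow{\ \iota_{M,S_{\widehat{\Delta}}}\ }
\prod_{\chi\in S_{\widehat{\Delta}}} M_\chi \longrightarrow M\otimes_\Lambda C \longrightarrow 0.$$
Since $C$ is killed by $|\Delta_p|$, functoriality of $\mrm{Tor}_i^\Lambda(M,-)$ applied to the zero endomorphism $|\Delta_p|\cdot \mrm{id}_C$ shows that $\mrm{Tor}_1^\Lambda(M,C)$ and $M\otimes_\Lambda C$ are also killed by $|\Delta_p|$. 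The cokernel of $\iota_{M,S_{\widehat{\Delta}}}$ equals $M\otimes_\Lambda C$, while its kernel is a quotient of $\mrm{Tor}_1^\Lambda(M,C)$; both are therefore annihilated by $|\Delta_p|$, as required.

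The only non-formal step is the injectivity of $\iota_{S_{\widehat{\Delta}}}$, which I expect to be the main (albeit mild) obstacle; once this is in place the remainder is mechanical, and no separate treatment of the cases $p\mid|\Delta|$ and $p\nmid|\Delta|$ is needed since the single factor $|\Delta_p|$ uniformly absorbs the obstruction.
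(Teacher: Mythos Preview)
Your proposal is correct and follows essentially the same approach as the paper: both tensor the short exact sequence $0 \to \Lambda \to \prod_{\chi}\Lambda_\chi \to C \to 0$ with $M$ and read off the claim from the resulting four-term Tor exact sequence, using that $C$ is killed by $|\Delta_p|$. Your version is slightly more explicit than the paper's, supplying the justification for the injectivity of $\iota_{S_{\widehat{\Delta}}}$ and for the identification $\iota_{M,S_{\widehat{\Delta}}}=\mrm{id}_M\otimes\iota_{S_{\widehat{\Delta}}}$, both of which the paper uses without comment.
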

\begin{proof}
We consider the exact sequence
\[
\xymatrix{0 \ar[r] & 
\Lambda \ar[r]^(0.4){\iota_{S_{\widehat{\Delta}}}} & 
\prod_\chi \Lambda_\chi  \ar[r] & 
\prod_\chi \Coker \iota_{S_{\widehat{\Delta}}} \ar[r]  
& 0.}
\]
Then, we obtain the exact sequence 
\[
\xymatrix{\mrm{Tor}^{\Lambda}_1(\Coker \iota_{S_{\widehat{\Delta}}},M) 
\ar[r] & M \ar[r] & \prod_\chi M_\chi  \ar[r] & 
\Coker \iota_{S_{\widehat{\Delta}}}\otimes_{\Lambda} M \ar[r]  
& 0.}
\]
Since the $\Lambda$-module 
$\Coker \iota_{S_{\widehat{\Delta}}}$ is annihilated 
by $\left| \Delta_p \right|$, 
the $\Lambda$-modules 
$\Coker \iota_{S_{\widehat{\Delta}}}\otimes_{\Lambda}M$ 
and $\mrm{Tor}^{\Lambda}_1(\Coker \iota_{S_{\widehat{\Delta}}},M)$ 
are annihilated by $\left| \Delta_p \right|$.
\end{proof}

We denote the image of $I_{\Delta}$ 
in $\Lambda_\chi$ by $\bar{I}_{\Delta,\chi}$ 
for each character $\chi \in \widehat{\Delta}$. 

\begin{cor}\label{torsion free}
Let $M$ be $\Lambda$-modules with no non-zero $\bb{Z}_p$-torsion elements. 
We denote the $\Lambda_\chi$-submodule of $M_\chi$ consisting of all $\bb{Z}_p$-torsion elements by $M_{\chi,\mrm{tor}}$.  
Then, the $\Lambda_\chi$-module $M_{\chi,\mrm{tor}}$ is annihilated by $\left| \Delta_p \right|$.
\end{cor}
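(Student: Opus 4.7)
The plan is to extract Corollary \ref{torsion free} directly from Lemma \ref{elementary lemma} by tracing $\bb{Z}_p$-torsion through the natural map $\iota_{M,S_{\widehat{\Delta}}} \colon M \to \prod_{\chi} M_\chi$. First I would observe that, since $M$ is $\bb{Z}_p$-torsion-free and $\Ker \iota_{M,S_{\widehat{\Delta}}}$ is annihilated by $\left| \Delta_p \right|$ (which is a power of $p$) by Lemma \ref{elementary lemma}, the kernel must vanish. Hence $\iota_{M,S_{\widehat{\Delta}}}$ is injective, and its image is $\bb{Z}_p$-torsion-free as a submodule of $M$.

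Next I would exploit the fact that $\Coker \iota_{M,S_{\widehat{\Delta}}}$ is annihilated by $\left| \Delta_p \right|$. For any $\bb{Z}_p$-torsion element $(y_\chi) \in \prod_\chi M_\chi$, the element $\left| \Delta_p \right| \cdot (y_\chi)$ lies in $\Im \iota_{M,S_{\widehat{\Delta}}}$ and remains $\bb{Z}_p$-torsion; since the image is $\bb{Z}_p$-torsion-free, this forces $\left| \Delta_p \right| \cdot (y_\chi) = 0$. Thus the $\bb{Z}_p$-torsion submodule of $\prod_\chi M_\chi$ is annihilated by $\left| \Delta_p \right|$. The projection onto the $\chi$-component is $\Lambda$-linear and surjective on $\bb{Z}_p$-torsion (any element of $M_{\chi,\mrm{tor}}$ is hit by the obvious one-coordinate embedding), so $M_{\chi,\mrm{tor}}$ is a quotient of a module killed by $\left| \Delta_p \right|$ and is therefore itself killed by $\left| \Delta_p \right|$.

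The case in which the given $\chi$ is not itself in the fixed set $S_{\widehat{\Delta}}$ of representatives is handled by replacing $\chi$ with its $\Gal(\overline{\bb{Q}}_p/\bb{Q}_p)$-conjugate $\chi' \in S_{\widehat{\Delta}}$: the canonical isomorphism $\Lambda_\chi \simeq \Lambda_{\chi'}$ induces a $\bb{Z}_p$-torsion-preserving isomorphism $M_\chi \simeq M_{\chi'}$, so the two conclusions are equivalent. The argument is entirely formal once Lemma \ref{elementary lemma} is in hand, so I do not anticipate a real obstacle; the only small point requiring care is ensuring that the torsion-free hypothesis on $M$ is invoked in exactly the right spot, namely to promote the kernel bound for $\iota_{M,S_{\widehat{\Delta}}}$ to injectivity, which is what allows the second step to convert the cokernel bound into an annihilator statement for $M_{\chi,\mrm{tor}}$.
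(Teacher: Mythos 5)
Your proof is correct and follows essentially the same route as the paper: both arguments invoke Lemma \ref{elementary lemma} to deduce injectivity of $\iota_{M,S_{\widehat\Delta}}$ from the kernel bound together with torsion-freeness of $M$, and then use the cokernel bound to push $|\Delta_p|$-multiples of torsion elements of $\prod_{\chi'} M_{\chi'}$ into the torsion-free image of $M$, concluding they vanish (the paper packages this as a chase through the commutative square with $M\otimes\bb{Q}$, but the content is identical). The only slip is cosmetic: you describe $\Im\,\iota_{M,S_{\widehat\Delta}}$ as ``a submodule of $M$'' when you mean it is a submodule of $\prod_{\chi'} M_{\chi'}$ isomorphic to $M$, hence $\bb{Z}_p$-torsion-free.
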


\begin{proof}
We consider the commutative diagram of natural homomorphisms
\[
\xymatrix{
M \ar@{^{(}->}[rr]^{f} \ar[d]^{\iota_{M,S_{\widehat{\Delta}}}} 
& & 
M\otimes \bb{Q} \ar[d]^{\iota_{M,S_{\widehat{\Delta}}}}_{\simeq} \\
\prod_{\chi\in S_{\widehat{\Delta}}} M_\chi \ar[rr]^{\prod_\chi f_\chi} 
& &  
\prod_{\chi\in S_{\widehat{\Delta}}} (M\otimes \bb{Q})_\chi.
}
\]   
Then, the corollary follows from this commutative diagram and 
Lemma \ref{elementary lemma}.
\end{proof}

\begin{cor}\label{kernel}
Let $M$ and $N$ be $\Lambda$-modules, 
and $\xymatrix{f\colon M \ar[r] & N}$ 
a homomorphism of $\Lambda$-modules. 
We consider the commutative diagram 
\begin{equation}
\label{diagram ker}
\xymatrix{
M \ar[rr]^{f} \ar[d]^{\iota_{M,S_{\widehat{\Delta}}}} 
& & N \ar[d]^{\iota_{M,S_{\widehat{\Delta}}}} \\
\prod_{\chi\in S_{\widehat{\Delta}}} M_\chi 
\ar[rr]^{\prod_\chi f_\chi} & &  
\prod_{\chi\in S_{\widehat{\Delta}}} N_\chi
}
\end{equation}
induced by the homomorphism $f$. 
\begin{itemize}
\item We have 
\[
\iota_{M,S_{\widehat{\Delta}}}(\Ker f) \supseteq \left| \Delta_p \right|^2\cdot \Ker(\prod_\chi f_\chi).
\]
In particular, for each character $\chi \in \widehat{\Delta}$, 
then 
$\left| \Delta_p \right|^2 \Ker f_\chi$ is contained in the image of 
the kernel of $f$ in $M_\chi$.
\item The natural homomorphism 
$\xymatrix{\mrm{Coker}(f_\chi) \ar[r] & 
\big( \mrm{Coker} (f) \big)_\chi}$ 
is an isomorphism of $\Lambda_\chi$-modules 
for any character $\chi \in \widehat{\Delta}$. 
\end{itemize}
\end{cor}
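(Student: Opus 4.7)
The plan is to derive both assertions directly from Lemma \ref{elementary lemma}, which supplies the key input that $\Ker \iota_{M,S_{\widehat{\Delta}}}$, $\Coker \iota_{M,S_{\widehat{\Delta}}}$, and the analogous groups for $N$, are all annihilated by $|\Delta_p|$. Each application of this lemma costs one power of $|\Delta_p|$, so the factor $|\Delta_p|^2$ in the first assertion corresponds precisely to the two applications needed in the diagram chase.

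For the first assertion, I would chase elements through the commutative diagram (\ref{diagram ker}). Given any $x \in \Ker\bigl(\prod_\chi f_\chi\bigr)$, the fact that $\Coker \iota_{M,S_{\widehat{\Delta}}}$ is annihilated by $|\Delta_p|$ lets me lift $|\Delta_p|\cdot x$ to an element $\tilde{x} \in M$ satisfying $\iota_{M,S_{\widehat{\Delta}}}(\tilde{x}) = |\Delta_p|\, x$. The commutativity of (\ref{diagram ker}) together with $\bigl(\prod_\chi f_\chi\bigr)(x) = 0$ then forces $\iota_{N,S_{\widehat{\Delta}}}(f(\tilde{x})) = 0$, that is, $f(\tilde{x}) \in \Ker \iota_{N,S_{\widehat{\Delta}}}$. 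Invoking Lemma \ref{elementary lemma} a second time yields $|\Delta_p|\, f(\tilde{x}) = 0$, so $|\Delta_p|\,\tilde{x} \in \Ker f$, and its image under $\iota_{M,S_{\widehat{\Delta}}}$ is exactly $|\Delta_p|^2\, x$. The \emph{in particular} consequence follows by taking $x$ to be the element of $\prod_\chi M_\chi$ supported only in the coordinate indexed by the given character, since then the $\chi$-coordinate of $\iota_{M,S_{\widehat{\Delta}}}(|\Delta_p|\tilde x)$ is precisely the image of an element of $\Ker f$ in $M_\chi$.

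For the second assertion, I would simply apply the right exact functor $(-)\otimes_\Lambda \Lambda_\chi$ to the exact sequence $M \xrightarrow{f} N \to \Coker f \to 0$; this produces the exact sequence $M_\chi \xrightarrow{f_\chi} N_\chi \to (\Coker f)_\chi \to 0$, whence the canonical identification $\Coker(f_\chi) \cong (\Coker f)_\chi$ is immediate.

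The main obstacle is really just careful bookkeeping of the $|\Delta_p|$-factors: one must spend exactly one power to lift an element from $\prod_\chi M_\chi$ into $M$ via $\iota_{M,S_{\widehat{\Delta}}}$, and another to kill the resulting error term in $\Ker \iota_{N,S_{\widehat{\Delta}}}$. There is no genuine arithmetic difficulty once Lemma \ref{elementary lemma} is in hand, and the second assertion requires nothing beyond right exactness of tensor product.
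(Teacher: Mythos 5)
Your argument is correct and matches the paper's approach: the paper's own proof simply states that the first assertion follows from the diagram and Lemma \ref{elementary lemma}, and that the second is clear, and the diagram chase you spell out (one power of $\left|\Delta_p\right|$ to lift through the cokernel, one to kill the error in the kernel) is exactly the intended argument. The right-exactness argument for the second assertion is likewise the standard one the paper has in mind.
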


\begin{proof}
The first assertion follows from the diagram (\ref{diagram ker}) 
and Lemma \ref{elementary lemma}.
The second assertion is clear. 
\end{proof}

\subsection{}\label{subsection of global units}
This and the next subsection, we recall some preliminary results
on Iwasawa theory.
In this section, we refer results on unit groups.

Let $m \in \bb{Z}_{\ge 0}$.
We put $U_m:=\big(\mca{O}_{F_m}\otimes\bb{Z}_p\big)^\times$ to be the group of semi-local units at $p$ of $F_m$, and $U_m^1$ to be the maximal pro-$p$-part of $U_m$.
We denote the group of units of $\mca{O}_{F_m}$ by $E_m$, and the Sinnott's circular units in $F_m$ by $C_m$ (cf.\ \cite{Si} \S 4). 
We define $E_m^{\mrm{cl}}$ (resp.\ $C_m^{\mrm{cl}}$) to be the closure of $E_m$ (resp.\ $C_m$) in $U_m$, and $E_m^{1}$ (resp.\ $C_m^{1}$) by $E_m^{\mrm{cl}} \cap U_m^1$ (resp.\ $C_m^{\mrm{cl}} \cap U_m^1$).
We define $U_\infty:=\plim U_m^1$ and $E_\infty:=\plim E_m^1$, where these projective limit is taken with respect to the norm maps. 
Similarly, we define the limit $C_\infty:=\plim C_m^1$ of 
the projective system with respect to  norm maps.

\begin{rem}
By Leopoldt's conjecture for abelian fields (cf.\ \cite{Wa} Corollary 5.32), we have the natural isomorphism 
$\xymatrix{E_m\otimes\bb{Z}_p \ar[r]^(0.59){\simeq}  & E_m^1}$.
We also have the natural isomorphism 
$\xymatrix{ C_m\otimes\bb{Z}_p \ar[r]^(0.59){\simeq} & C_m^{1}}.$
\end{rem}

\begin{prop}\label{rank one}
Let $\chi \in \widehat{\Delta}$ be a non-trivial character. 
There exists a homomorphism 
${\varphi\colon E_{\infty,\chi} \longrightarrow  \Lambda_\chi}$
of $\Lambda_\chi$-modules whose cokernel has finite order, and 
whose kernel is annihilated by $p$-power. 
(Note that if the extension degree of $K/\bb{Q}$ is prime to $p$, 
then the $E_{\infty}$ has no non-trivial 
$p$-torsion element.)
\end{prop}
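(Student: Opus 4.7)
My plan would be to realize $E_{\infty,\chi}$, up to a $p$-power-annihilated piece and a pseudo-null error, as a free $\Lambda_\chi$-module of rank one, and take $\varphi$ to be the resulting natural map into $\Lambda_\chi$.

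The first step is to compute the generic $\Lambda_\chi$-rank of $E_{\infty,\chi}$. By Dirichlet's unit theorem applied at each finite level $F_m$, combined with Leopoldt's conjecture for abelian fields (invoked earlier in the excerpt), one has a $\bb{Q}_p[G_m]$-isomorphism $E_m^1\otimes_{\bb{Z}_p}\bb{Q}_p\simeq \mrm{Aug}(\bb{Q}_p[G_m])$, where $G_m:=\Gal(F_m/\bb{Q})$, because $F_m$ is totally real and the log embedding sends units to the trace-zero hyperplane of $\bb{R}[G_m]$. The augmentation ideal contains each non-trivial character of $G_m$ with multiplicity one, so for non-trivial $\chi\in\widehat{\Delta}$ the $\chi$-part $(E_m^1\otimes\bb{Q}_p)_\chi$ is free of rank one over the group ring of $\Gamma_{m,0}=\Gal(F_m/F_0)$ over $\mca{O}_\chi\otimes_{\bb{Z}_p}\bb{Q}_p$. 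Passing to the inverse limit in $m$ shows that $E_{\infty,\chi}\otimes_{\bb{Z}_p}\bb{Q}_p$ is free of rank one over $\Lambda_\chi\otimes_{\bb{Z}_p}\bb{Q}_p$.

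The second step is torsion control. Since $F_\infty$ is totally real and $p$ is odd, each $E_m$ is torsion-free, so $E_\infty$ is $\bb{Z}_p$-torsion-free, and Corollary \ref{torsion free} then bounds the $\bb{Z}_p$-torsion $T$ of $E_{\infty,\chi}$ by the $p$-power $|\Delta_p|$. Let $E':=E_{\infty,\chi}/T$, which is $\bb{Z}_p$-torsion-free. To finish I need $E'$ also $\Lambda_\chi$-torsion-free: for this I would invoke the embedding $E_\infty\hookrightarrow U_\infty$ into the semi-local units at $p$, together with the standard Iwasawa-theoretic/Coleman-type fact that $U_{\infty,\chi}$ is $\Lambda_\chi$-torsion-free modulo $\bb{Z}_p$-torsion.

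Finally, $E'$ is a $\Lambda_\chi$-torsion-free module of rank one. Since $\Lambda_\chi\simeq\mca{O}_\chi[[T]]$ is a two-dimensional regular local ring, hence a UFD, any rank-one reflexive $\Lambda_\chi$-module is free, so the double-dual $(E')^{**}$ is non-canonically isomorphic to $\Lambda_\chi$, and the natural map $E'\hookrightarrow (E')^{**}$ has pseudo-null cokernel. Over a two-dimensional regular local ring, pseudo-null modules are exactly the modules of finite order, so composing $E_{\infty,\chi}\twoheadrightarrow E'\hookrightarrow (E')^{**}\simeq\Lambda_\chi$ produces a homomorphism $\varphi$ with $p$-power-annihilated kernel (namely $T$) and cokernel of finite order, as required.

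The main obstacle is the verification in the second step that $E'$ is $\Lambda_\chi$-torsion-free: a module can be $\bb{Z}_p$-torsion-free but still carry $\Lambda_\chi$-torsion (e.g.\ $\mca{O}_\chi\simeq\Lambda_\chi/(T)$), so this really does require Iwasawa-theoretic input beyond Corollary \ref{torsion free}. An alternative route that avoids $U_\infty$ would exploit Sinnott's circular units $C_\infty$, for which the classical computation shows $C_{\infty,\chi}$ is a cyclic rank-one $\Lambda_\chi$-module and hence free of rank one up to small $p$-power defects, followed by comparison via the finite-index inclusion $C_\infty\subseteq E_\infty$ on $\chi$-components.
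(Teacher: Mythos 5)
The paper gives no proof for this proposition; it is invoked as a standard Iwasawa-theoretic fact, so there is nothing in the source to compare against directly. Your main line of argument is nevertheless the standard one and is essentially sound: the rank computation via Dirichlet's unit theorem (so $E_m\otimes\bb{Q}$ is the augmentation representation of $\bb{Q}[\Gal(F_m/\bb{Q})]$) combined with Leopoldt gives generic rank one in the $\chi$-component for $\chi\ne 1$; Corollary \ref{torsion free} controls the $\bb{Z}_p$-torsion by $|\Delta_p|$; and once one knows $E':=E_{\infty,\chi}/E_{\infty,\chi,\mrm{tor}}$ is $\Lambda_\chi$-torsion-free, the reflexive-hull argument over the two-dimensional regular local ring $\Lambda_\chi\simeq\mca{O}_\chi[[T]]$ produces a map into $\Lambda_\chi$ with pseudo-null (hence finite) cokernel. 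The one substantive input you rightly flag — that $E'$ has no $\Lambda_\chi$-torsion — is indeed the crux, and your proposed route via the embedding $E_\infty\hookrightarrow U_\infty$ together with the Iwasawa/Coleman description of $U_\infty$ is the standard way to supply it.

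Your suggested fallback via $C_\infty$, however, does not work as stated. First, $C_\infty\subseteq E_\infty$ does not have finite index as $\Lambda_\chi$-modules; what one has is that $(E_\infty/C_\infty)_\chi$ is a $\Lambda_\chi$-torsion module, which in general has infinite order. More importantly, knowing $C_{\infty,\chi}\simeq\Lambda_\chi$ (Proposition \ref{circular units, free}, which in the paper anyway presupposes $\Delta_p=0$) does not bound the $\Lambda_\chi$-torsion of $E_{\infty,\chi}$: the torsion submodule of $E_{\infty,\chi}$ meets $C_{\infty,\chi}$ trivially and therefore injects into $(E_\infty/C_\infty)_\chi$, but that quotient is merely torsion, so no $p$-power annihilation follows. (A schematic obstruction: $\Lambda_\chi\hookrightarrow\Lambda_\chi\oplus\Lambda_\chi/(f)$ is free of rank one mapping into a rank-one module with $\Lambda_\chi$-torsion $\Lambda_\chi/(f)$ of infinite order.) So you cannot avoid the semi-local-units input by switching to circular units; the control on local units at $p$ really is what makes the argument close.
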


Let $\chi \in \widehat{\Delta}$. 
We denote the restriction of $\chi$ to $\Delta_0$ by $\chi_0$. 
We define an integer $a_\chi$ by 
\[
a_\chi=
\begin{cases}
0 &\text{if $\chi_0(p) \ne 1$};\\
2 &\text{if $\chi_0(p)= 1$}.
\end{cases}
\]
For each $m \in \bb{Z}_{\ge 0}$, 
we consider the natural homomorphism 
${P_{m}^E \colon (E_{\infty})_{\Gamma_m} 
\longrightarrow E_{m}^{1}}$.  
We define the homomorphism 
${P_{m}^F \colon (E_{\infty})_{\Gamma_m} 
\longrightarrow F_{m}^{\times}\otimes_\bb{Z}\bb{Z}_p}$
to be the composition of $P_{m}^E$ and the inclusion map
${E_{m}^{1} \simeq \mca{O}_{F_m}^\times\otimes_\bb{Z}\bb{Z}_p
\longrightarrow  F_{m}^{\times}\otimes_\bb{Z}\bb{Z}_p}$. 
For each character $\chi \in \widehat{\Delta}$, 
the $\Lambda_\chi$-homomorphisms 
$P_{m}^E$ and $P_m^{F}$ induce the homomorphisms
\begin{align*}
P_{m,\chi}^E \colon & 
(E_{\infty,\chi})_{\Gamma_m} 
 \longrightarrow E_{m}^{1}, \\
P_{m,\chi}^F \colon & 
(E_{\infty,\chi})_{\Gamma_m} 
 \longrightarrow (F_{m}^{\times}\otimes_\bb{Z}\bb{Z}_p)_\chi.
\end{align*}

\begin{prop}\label{proj of E}
Let $\chi \in \widehat{\Delta}$ be a non-trivial character. 
Then, there exist ideals $I_{P_\chi^E}$ and $J_{P_\chi^E}$ 
of $\Lambda_\chi$ of finite indeces such that 
\begin{align*}
(\gamma-1)^{a_\chi/2}\left| \Delta_p\right|^{2} 
I_{P_\chi^E}\Ker P_{m,\chi}^F&=
(\gamma-1)^{a_\chi/2}\left| \Delta_p\right|^{2}
I_{P_\chi^E}\Ker P_{m,\chi}^F =\{0\}, \\
(\gamma-1)^{a_\chi/2}J_{P_\chi^E}\Coker P_{m,\chi}^E & =\{0\}
\end{align*}
for any $m \in \bb{Z}_{\ge 0}$.
\end{prop}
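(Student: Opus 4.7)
The plan is to reduce to Proposition~\ref{rank one} and then run a standard Iwasawa-theoretic descent from $E_\infty$ down to $E_m^1$, inserting the two correction factors $\left|\Delta_p\right|^2$ (for the passage to the $\chi$-component) and $(\gamma-1)^{a_\chi/2}$ (for the descent to the $m$-th layer when $p$ behaves badly with respect to $\chi$).

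First I would apply Proposition~\ref{rank one} to fix a $\Lambda_\chi$-homomorphism $\varphi\colon E_{\infty,\chi} \to \Lambda_\chi$ with $p$-power-annihilated kernel $K_\varphi$ and finite cokernel $C_\varphi$. Since both $K_\varphi$ and $C_\varphi$ have finite order, their annihilators in $\Lambda_\chi$ are ideals of finite index, and I would take $I_{P_\chi^E}$ and $J_{P_\chi^E}$ to be products of these annihilators with a few further finite-index factors forced by the later steps. Splicing the two short exact sequences
$$
0 \to K_\varphi \to E_{\infty,\chi} \to \varphi(E_{\infty,\chi}) \to 0,\qquad 0 \to \varphi(E_{\infty,\chi}) \to \Lambda_\chi \to C_\varphi \to 0
$$
and taking $\Gamma_m$-coinvariants via the snake lemma compares $(E_{\infty,\chi})_{\Gamma_m}$ with $\Lambda_\chi/(\gamma^{p^m}-1)\Lambda_\chi$ modulo kernels and cokernels annihilated by $\ann_{\Lambda_\chi}(K_\varphi)$ and $\ann_{\Lambda_\chi}(C_\varphi)$ respectively.

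The heart of the argument is then the comparison of $(E_\infty)_{\Gamma_m}$ with $E_m^1$ itself. The kernel and cokernel of $P_m^E$ are controlled by a standard descent exact sequence whose error terms are cohomology groups $H^i(\Gamma_m, E_\infty)$ and norm-kernels of global unit groups; by Leopoldt's conjecture for abelian fields these are pseudo-null as $\Lambda$-modules, and on the $\chi$-part they should be annihilated by $(\gamma-1)^{a_\chi/2}$. This is where the distinction $\chi_0(p)\ne 1$ versus $\chi_0(p)=1$ enters: in the former case the decomposition subgroup at $p$ already acts non-trivially on the $\chi$-eigenspace of the local units, so no $(\gamma-1)$ factor is needed; in the latter case one $(\gamma-1)$ factor on each of invariants and coinvariants accounts for the trivial $\Gamma$-action modulo the inertial image at $p$, giving the exponent $a_\chi/2=1$. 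Passing from the $\Lambda$-module statement to the $\chi$-component statement, and simultaneously controlling the additional kernel of $P_{m,\chi}^F$ coming from $p$-power torsion in $(F_m^\times \otimes \bb{Z}_p)_\chi$ (i.e.\ from roots of unity in $F_m$), contributes the factor $\left|\Delta_p\right|^2$ through Corollary~\ref{torsion free} and Corollary~\ref{kernel}---one factor for the $\Ker$/$\Coker$ interchange with $\chi$-quotients, and one for the torsion introduced on moving from $E_m^1$ into $F_m^\times\otimes\bb{Z}_p$.

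The main obstacle will be the precise bookkeeping in the $\chi_0(p)=1$ case: one must verify that the cohomological error in the descent really stabilizes uniformly in $m$ after multiplication by $(\gamma-1)^{a_\chi/2}$, which requires using that each completion of $F_\infty$ at a prime above $p$ is a totally ramified $\bb{Z}_p$-extension of its ground-level completion, so that the local-unit contribution on the $\chi$-part is essentially a cyclic $\mca{O}_\chi[[\Gamma]]$-module generated by an inertia generator. This uniformity in $m$ is what makes it possible to choose $I_{P_\chi^E}$ and $J_{P_\chi^E}$ independently of $m$, as the statement of the proposition demands.
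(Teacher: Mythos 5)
Your overall skeleton is the right one (control $\Ker P_m^E$ and $\Coker P_m^E$ by the Tate-cohomology error terms of the descent sequence from $E_\infty$ to $E_m^1$, then pass to $\chi$-quotients), but there are three concrete gaps, and the central one is the hard point of the proposition.

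First, the source of the factor $\left|\Delta_p\right|^2$. You attribute it to $p$-power torsion (roots of unity) entering $F_m^\times\otimes\bb{Z}_p$, but $F_m$ is totally real and $p$ is odd, so $F_m^\times$ has no $p$-power roots of unity and $F_m^\times\otimes\bb{Z}_p$ has no extraneous torsion from that source. In the paper the $\left|\Delta_p\right|^2$ arises purely from switching between $\Lambda$-module statements and their $\chi$-quotients when $\Delta_p\ne0$: Corollary~\ref{kernel} gives $\left|\Delta_p\right|^2\Ker(f_\chi)\subseteq$ image of $\Ker f$, and this is applied to the descent sequence and to the inclusion $E_m^1\hookrightarrow F_m^\times\otimes\bb{Z}_p$ before taking $\chi$-quotients. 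Your reasoning would also not explain why the second estimate (on $\Coker P_{m,\chi}^E$) has no $\left|\Delta_p\right|^2$: that is because formation of cokernel commutes with $\chi$-quotient (the second bullet of Corollary~\ref{kernel}), whereas formation of kernel does not.

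Second, and more seriously, you dispose of the crucial uniformity in $m'$ (and $m$) by saying that Leopoldt's conjecture forces the error groups to be pseudo-null as $\Lambda$-modules. That is both not a proof and, in the case $\chi_0(p)=1$, not even true without the $(\gamma-1)$-factor: without multiplying by $(\gamma-1)$ the Tate cohomology groups $\hat H^i(\Gamma_{m',m},E_{m'})$ need not be bounded, and their limit need not be finite. The paper's actual argument splits into two genuinely different cases. For $\chi_0(p)=1$ it invokes Rubin's Lemma~1.2 of~\cite{Ru1}, which is the precise statement that $\left|(\gamma-1)\hat H^i(\Gamma_{m',m},E_{m'})\right|\le p^k$ with $k$ independent of $m,m'$; this is exactly where $a_\chi/2=1$ comes from. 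For $\chi_0(p)\ne1$ one must show the groups $\hat H^i(\Gamma_{m',m},(E_{m'}\otimes\bb{Z}_p)_{\chi_0})$ are themselves uniformly bounded: the paper does this via a Herbrand-quotient computation (using that $E_{m'}^1\oplus\bb{Z}_p$ contains a free $\bb{Z}_p[\Gal(F_{m'}/\bb{Q})]$-module of finite index), the $p$-unit exact sequence to replace $E_{m'}$ by $E_{m'}^{(p)}$ on the $\chi_0$-part (using that $D_p$ acts trivially on the divisor group at $p$, so $(E_{m'}\otimes\bb{Z}_p)_{\chi_0}\simeq(E_{m'}^{(p)}\otimes\bb{Z}_p)_{\chi_0}$), and Iwasawa's uniform bound on $\hat H^{-1}(\Gamma_{m',m},E^{(p)}_{m'})$. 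None of this can be replaced by a pseudo-nullity assertion.

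Third, the opening route through Proposition~\ref{rank one} and the map $\varphi$ is not needed here and doesn't advance the argument: the proposition is a statement purely about the norm-descent maps $P_m^E$, $P_m^F$, and in the paper $\varphi$ only enters later (for instance in the proof of Proposition~\ref{size}). You can cut that paragraph entirely and start directly from the Tate descent sequence.
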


\begin{proof}
For any cyclic group $G$ and any $G$-module $M$,
we denote the Tate cohomology groups by $\hat{H}^i(G,M)$.
Fix a non-negative integer $m$. 
We have the exact sequence 
\[
\xymatrix{0 \ar[r] & \plim \hat{H}^{-1}(\Gamma_{m',m},E_{m'}) \ar[r] & 
(E_{\infty})_{\Gamma_m} \ar[r]^{P_{m}^E} & 
E_{m}^{1} \ar[r] & \plim \hat{H}^{0}(\Gamma_{m',m},E_m') \ar[r] & 0}
\]
of $\Lambda$-modules. 
This exact sequence and Corollary \ref{kernel} imply that 
\begin{align*}
\Coker P_{m,\chi}^E&= 
\varprojlim \hat{H}^{0}(\Gamma_{m',m},E_{m'})_\chi, \\
\ann_{\Lambda_\chi}(\Ker P_{m,\chi}^E) &\supseteq  
\left| \Delta_p\right|^{2}\ann_{\Lambda_\chi}
(\varprojlim \hat{H}^{-1}(\Gamma_{m',m},E_{m'})_\chi). 
\end{align*}
By Lemma 1.2 of \cite{Ru1}, there exists an integer $k$ satisfying
\[
\left| (\gamma-1)\hat{H}^{i}(\Gamma_{m',m},E_{m'}) \right|
\le p^k
\]
for all $i\in \bb{Z}$ and all $m',m \in \bb{Z}_{\ge 0}$ with $m' \ge m$.
(Note that the setting of Lemma 1.2 of \cite{Ru1} 
seems to be different from ours,
but the argument in the proof of this lemma 
works in more general situation containing our case.)
Therefore, the assertion for characters 
$\chi \in \widehat{\Delta}$ satisfying $\chi_0(p)=1$ follows.

We assume that $\chi \in \widehat{\Delta}$ is a character satisfying 
$\chi_0(p)\ne 1$. 
By Corollary \ref{kernel}, 
it is sufficient to show that for any $i \in \bb{Z}$, 
order of 
the $\Lambda_{\chi_0}$-module 
\[
\hat{H}^{i}(\Gamma_{m',m},E_{m'})_{\chi_0}= 
\hat{H}^{i}(\Gamma_{m',m}, (E_{m'}\otimes \bb{Z}_p)_{\chi_0})
\] 
is finite and bounded by a constant independent of $m'$.

Let $m$ be an integer with $m' \ge m$.
Since the $\bb{Z}_p[\Gal(F_{m'}/\bb{Q})]$-module 
$E_{m'}^1\oplus\bb{Z}_p$ contains 
a submodule of finite index 
which is free of rank one, we have an exact sequence
\[
\xymatrix{0 \ar[r] & \bb{Z}_p[\Gal(F_{m'}/\bb{Q})] 
\ar[r]^(0.59)f & E_{m'}^{1}\oplus\bb{Z}_p  \ar[r] & N \ar[r] & 0,}
\]
where $N$ is a $\bb{Z}_p[\Gal(F_{m'}/\bb{Q})]$-module of finite order.
This exact sequence induces the exact sequence 
\[
\xymatrix{0 \ar[r] & \bb{Z}_p[\Gal(F_{m'}/\bb{Q})]_{\chi_0} 
\ar[r]^(0.54){f_{\chi_0}} & E_{{m'},\chi_0}^{1}\oplus 
(\bb{Z}_p)_{\chi_0} 
\ar[r] & N_{\chi_0} \ar[r] & 0.}
\]
Note that we have $(\bb{Z}_p)_{\chi_0}=0$ 
since $\chi_0$ is non-trivial.
We consider the Herbrand quotients, and obtain 
\begin{align*}
\frac{\#\hat{H}^{0}(\Gamma_{m',m},E_{m',\chi_0}^{1})}{
\#\hat{H}^{-1}(\Gamma_{m',m},E_{m',\chi_0}^{1})}
&= \frac{\#\hat{H}^{0}(\Gamma_{m',m},
\bb{Z}_p[\Gal(F_{m'}/\bb{Q})]_{\chi_0})}{
\#\hat{H}^{-1}(\Gamma_{m',m},\bb{Z}_p[\Gal(F_{m'}/\bb{Q})]_{\chi_0)}} 
\cdot \frac{\#\hat{H}^{0}(\Gamma_{m',m},N_{\chi_0})}{
\#\hat{H}^{-1}(\Gamma_{m',m},N_{\chi_0})} \\
&= 1
\end{align*}

Let $E^{(p)}_{m'}$ be the group of $p$-units of $F_{m'}$. 
Then, we have an exact sequence 
\begin{equation}\label{p-units}
\xymatrix{0 \ar[r] & E_{m'}\otimes \bb{Z}_p \ar[r]^i & E^{(p)}_{m'}\otimes\bb{Z}_p  \ar[r] & S_{m'} \ar[r] & 0,}
\end{equation}
where $S_{m'}$ is a $\bb{Z}_p[\Gal(F_{m'}/\bb{Q})]$-submodule of 
$\mca{I}_{F_{m'}}^p\otimes \bb{Z}_p$, 
which is a free $\bb{Z}_p$-module generated 
by all places of $F_{m'}$ above $p$. 
Note that the group $D_p$ acts trivially on $S_{m'}$.
So, the natural homomorphism 
\[
\xymatrix{i_{\chi_0} \colon (E_{m'}\otimes\bb{Z}_p)_{\chi_0} \ar[r] & 
(E^{(p)}_{m'}\otimes\bb{Z}_p)_{\chi_0}}
\]
is an isomorphism. Then, we have 
\begin{equation}\label{Herbrand quatient}
\frac{\#\hat{H}^{0}(\Gamma_{m',m},
(E^{(p)}_{m'}\otimes\bb{Z}_p)_{\chi_0})}{
\#\hat{H}^{-1}(\Gamma_{m',m},(E^{(p)}_{m'}\otimes\bb{Z}_p)_{\chi_0})}
=\frac{\#\hat{H}^{0}(\Gamma_{m',m},E_{m',\chi_0}^{1})}{
\#\hat{H}^{-1}(\Gamma_{m',m},E_{m',\chi_0}^{1})}=1.
\end{equation}
By Corollary in \S 5.4 of \cite{Iw}, 
there exists an integer $r$ such that
\[\left| \hat{H}^{-1}(\Gamma_{m',m},E^{1}_{m'}) \right|
=\left| \hat{H}^{-1}(\Gamma_{m',m},E^{(p)}_{m'}) \right| \le p^r 
\]
for all $m',m \in \bb{Z}$ satisfying $m' \ge m \ge 0$.
By the equality (\ref{Herbrand quatient}), we also have 
\[\left| \hat{H}^{0}(\Gamma_{m',m},E^{1}_{m',\chi_0}) \right|
=\left| \hat{H}^{0}(\Gamma_{m',m},E^{(p)}_{m',\chi_0}) \right| \le p^r
\]
for all $m',m \in \bb{Z}$ satisfying $m' \ge m \ge 0$.
Therefore, 
order of the $\Lambda_{\chi_0}$-modules 
$\hat{H}^{-1}(\Gamma_{m',m},E_{m'})_{\chi_0}$ and  
$\hat{H}^{0}(\Gamma_{m',m}, (E_{m'}\otimes \bb{Z}_p)_{\chi_0})$
are bounded by a constant independent of $m'$.
This completes the proof of proposition.
\end{proof}

We can prove a more refined proposition 
than Proposition \ref{class group} 
in the case of $\Delta_p=0$ and $\chi(p) \ne 1$, 
by the similar argument to 
the proof of \cite{Ru4} Theorem 7.6. 
(We have to replace $X_\infty$ in \cite{Ru4} 
to $\Gal(M_\infty/F_\infty)$ and $U_\infty$ in \cite{Ru4}
to our $U_\infty$, 
where $M_\infty$ is the maximal pro-$p$ extension field 
of $F_\infty$ unramified outside the places above $p$.)

\begin{prop}\label{proj of E, refined}
Assume that the extension degree of $K/\bb{Q}$ is prime to $p$, 
and the character $\chi\in \widehat{\Delta}$ satisfies 
$\chi (p) \ne 1$. 
Then, we can take 
$I_{P_\chi^E}=\Lambda_\chi$ and 
$J_{P_\chi^E}=ann_{\Lambda_\chi}(X_{\chi,\mrm{fin}})$. 
\end{prop}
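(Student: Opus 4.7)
The plan is to adapt Rubin's argument for \cite{Ru4} Theorem 7.6, substituting $\Gal(M_\infty/F_\infty)$ for Rubin's $X_\infty$ while retaining the $U_\infty$ of the present setup. Two sharpenings over Proposition \ref{proj of E} come from exploiting the hypotheses: $\Delta_p = 0$ makes the $\chi$-component functor exact on $\Lambda$-modules (by Corollary \ref{kernel}, with no $|\Delta_p|^2$ factor appearing), and $\chi(p) \ne 1$ ensures that the decomposition data at $p$ vanishes after $\chi$-localization.

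For the kernel statement $I_{P_\chi^E} = \Lambda_\chi$, I would first observe that $\Ker P_{m,\chi}^F = \Ker P_{m,\chi}^E$ because the inclusion $\mca{O}_{F_m}^\times \otimes \bb{Z}_p \hookrightarrow F_m^\times \otimes \bb{Z}_p$ is injective (the quotient, the group of principal fractional ideals, is free abelian). By the calculation in the proof of Proposition \ref{proj of E}, this reduces to showing $\plim_{m'} \hat H^{-1}(\Gamma_{m',m}, E_{m'})_\chi = 0$. Using the exact sequence $(\ref{p-units})$ and the fact that $D_p$ acts trivially on $S_{m'}$, the hypothesis $\chi(p) \ne 1$ yields $S_{m',\chi} = 0$ and hence an isomorphism $(E_{m'}\otimes \bb{Z}_p)_\chi \simeq (E_{m'}^{(p)}\otimes \bb{Z}_p)_\chi$. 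The vanishing of the $\chi$-part of Tate cohomology of the $p$-units along the cyclotomic tower then follows from standard computations in the spirit of \cite{Iw}, \S 5.4.

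For the cokernel statement, I would use the pair of class field theory exact sequences
\[
0 \to E_\infty \to U_\infty \to \Gal(M_\infty/L_\infty) \to 0, \qquad
0 \to \Gal(M_\infty/L_\infty) \to \Gal(M_\infty/F_\infty) \to X \to 0,
\]
where $L_\infty$ is the maximal unramified abelian pro-$p$ extension of $F_\infty$. After $\chi$-localization (exact since $\Delta_p = 0$) and taking long exact sequences in $\Gamma_m$-(co)homology, $\Coker P_{m,\chi}^E$ can be controlled in terms of $\mca{X}_{\infty,\chi}^{\Gamma_m}$, with $\mca{X}_\infty := \Gal(M_\infty/F_\infty)$. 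The hypothesis $\chi(p) \ne 1$ guarantees that both $(U_\infty)_\chi$ and $\mca{X}_{\infty,\chi}$ contain no non-trivial pseudo-null $\Lambda_\chi$-submodules (by Iwasawa's theorem on semi-local units, and by weak Leopoldt for the cyclotomic tower, respectively). Hence all pseudo-null contributions to $\Coker P_{m,\chi}^E$ must come from $X_{\chi,\mrm{fin}}$, yielding the desired annihilator estimate.

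The main technical obstacle is the precise snake-lemma bookkeeping: one must track the pseudo-null contributions arising from the $\Gamma_m$-cohomologies of $E_\infty$, $U_\infty$, $\mca{X}_\infty$, and $X$, and show that they assemble into a module annihilated by $\ann_{\Lambda_\chi}(X_{\chi,\mrm{fin}})$ uniformly in $m$. The essential input, following Rubin, is the especially rigid structure of $\mca{X}_{\infty,\chi}$ and $(U_\infty)_\chi$ when $\chi(p) \ne 1$, which reduces the pseudo-null analysis to a computation on $X_\chi$ alone.
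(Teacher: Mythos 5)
Your overall strategy — adapt Rubin's proof of \cite{Ru4} Theorem~7.6 with $\Gal(M_\infty/F_\infty)$ in place of Rubin's $X_\infty$ — is the one the paper explicitly names as the ``similar argument'' route. The paper, however, does not actually redo that argument in the totally real setting; instead, in Remark~\ref{imagquad-real} it applies \cite{Ru4} Theorem~7.6 \emph{verbatim} by choosing an auxiliary imaginary quadratic field $L$ with $(pD_{K/\bb{Q}},D_{L/\bb{Q}})=1$, working over the CM tower $LF_\infty/L$, and then passing to the $j=1$ eigenspace under complex conjugation (which is exact since $p$ is odd). That route buys the injectivity and cokernel statements for free from Rubin and sidesteps all the diagram-chasing you gesture at. So the approach is the same family, but the execution differs: you sketch a direct re-proof, the paper uses a descent trick to avoid re-proving.

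The sketch as written does contain a genuine gap in the kernel argument. You reduce to $\plim_{m'}\hat H^{-1}(\Gamma_{m',m},E_{m'})_\chi=0$ and claim this vanishing ``follows from standard computations in the spirit of \cite{Iw}~\S5.4.'' But Iwasawa's result there furnishes only a \emph{bound}, uniform in $m'$, on $|\hat H^{\pm1}(\Gamma_{m',m},E^{(p)}_{m'})|$; it is exactly this boundedness that the paper already exploited in the proof of Proposition~\ref{proj of E} to produce an ideal $I_{P_\chi^E}$ of finite index, so it cannot by itself upgrade that ideal to $\Lambda_\chi$. The vanishing $\Ker P^E_{m,\chi}=0$ is strictly stronger and in Rubin's treatment comes out of the class-field-theory sequence for $U_\infty$ and $\Gal(M_\infty/F_\infty)$ under the $\chi(p)\neq1$ hypothesis (or, in the paper's version, is simply imported from \cite{Ru4} Theorem~7.6(ii) and then restricted to the $j=1$ part). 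You would need either to carry out Rubin's actual argument at this step, or to invoke the freeness of $E_{\infty,\chi}$ over $\Lambda_\chi$ and justify it; the citation to \cite{Iw}~\S5.4 alone does not close the gap.
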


\begin{rem}\label{imagquad-real}
Note that 
the assertions of \cite{Ru4} Theorem 7.6 are 
for imaginary quadratic base fields,
but we can also prove Proposition \ref{proj of E, refined}
by direct application of \cite{Ru4} Theorem 7.6 
by the manner as follows. 
Let $L$ be an imaginary quadratic field and 
denote the discriminant of $L/\bb{Q}$ by $D_{L/\bb{Q}}$. 
Assume that $(pD_{K/\bb{Q}},D_{L/\bb{Q}})=1$. 
Then $F_0$ and $L$ are linearly disjoint over $\bb{Q}$. 
So, we have the natural isomorphism $\Delta \simeq \Gal(F_0L/L)$,
and regard $\chi$ as a character of $\Gal(F_0L/L)$.
We define $\tilde{X}_\infty:=\varprojlim_{m}A_{F_mL}$ and 
$\tilde{E}_\infty:=
\varprojlim_{m}\mca{O}^\times_{F_mL}\otimes_{\bb{Z}}\bb{Z}_p$. 
Let 
$$\xymatrix{P^{\tilde{E}}_{m,\chi} \colon 
(\tilde{E}_{\infty})_{\Gamma_m} \ar[r] & 
\mca{O}^\times_{F_mL}\otimes_{\bb{Z}}\bb{Z}_p.}$$
be the natural map. 
We can set  
$(K_\infty/K,F,\chi)$ in \cite{Ru4} to 
$(LF_\infty/L, F_mL,\chi)$ in our notation, 
and apply \cite{Ru4} Theorem 7.6 (ii).
Then, it follows that $\mrm{pr}_{\tilde{E},m}$ is injective, 
and 
\[
\mrm{Coker}(P^{\tilde{E}}_{m,\chi})
\simeq \tilde{X}_{\infty,\chi}^{\Gamma_m}\subseteq  
\tilde{X}_{\mrm{fin},\chi},
\]
where $\tilde{X}_{\mrm{fin},\chi}$ is the maximal pseudo-null 
$\Lambda_\chi$-submodule of $\tilde{X}_{\infty,\chi}$. 
Since $p$ is odd, we obtain 
\begin{align*}
\Ker(P^{E}_{m,\chi})&=\Ker(P^{\tilde{E}}_{m,\chi})^{j=1}=0, \\
\mrm{Coker}(P^{E}_{m,\chi}) &=
\mrm{Coker}(P^{\tilde{E}}_{m,\chi})^{j=1}
\simeq (\tilde{X}_{\infty,\chi}^{\Gamma_m})^{j=1}
={X}_{\infty,\chi}^{\Gamma_m}
\subseteq  
{X}_{\mrm{fin},\chi}.
\end{align*}
This implies Proposition \ref{proj of E, refined}. 
\end{rem}

\subsection{} 
In this subsection, we recall some results on the ideal class groups 
and the statement of the Iwasawa main conjecture.

We denote the $p$-Sylow subgroup of the ideal class group 
of $F_m$ by $A_{F_m}$. 
We define the $\Lambda$-module $X$ by 
$X:=\varprojlim A_{F_m}$, where the projective 
limits are taken with respect to the norm maps.
Note that $X$ is a finitely generated $\Lambda$-torsion module.

For the Iwasawa modules $X$, we need the following well-known results.
\begin{prop}[cf.\ \cite{Wa} Lemma 13.15]\label{class group}
The following holds. 
\begin{itemize}
\item[(i)] For each $m \in \bb{Z}_{\ge 0}$, the natural homomorphism 
${X_{\Gamma_m} \longrightarrow A_{F_m}}$
is surjective. 
\item[(ii)] There exist an $\Lambda$-submodule $Y$ of $X$ such that 
$(\gamma-1)X \subseteq Y \subseteq X$, 
and the kernel of the canonical homomorphism 
${X_{\Gamma_m,\chi} \longrightarrow A_{F_m,\chi}}$
is annihilated by $I_A:=ann_{\Lambda}(Y/(\gamma-1)X)$ 
for any $m \in \bb{Z}_{\ge 0}$.
\end{itemize}
\end{prop}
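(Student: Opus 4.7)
The argument will follow the classical Iwasawa-theoretic template of Washington's Lemma 13.15, adapted to our setting. The key geometric input is that, since $p$ is unramified in $F_0/\bb{Q}$ and the cyclotomic $\bb{Z}_p$-extension $\bb{Q}_\infty/\bb{Q}$ is totally ramified at $p$, every prime of $F_0$ above $p$ is totally ramified in $F_\infty/F_0$. Let $\mf{p}_1,\ldots,\mf{p}_s$ be these primes and $\mf{P}_i$ the unique prime of $F_\infty$ above $\mf{p}_i$.

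For (i), I would first show that for any $m'\ge m\ge 0$ the norm map $N_{m',m}\colon A_{F_{m'}}\to A_{F_m}$ is surjective. This is a standard consequence of class field theory, using that at least one prime of $F_m$ (any of the primes above $p$) is totally ramified in $F_{m'}/F_m$: a generator of $A_{F_m}$ can be represented by a prime ideal whose chosen lift to $F_{m'}$ has norm differing by the ramification index, which is a $p$-power. Taking the inverse limit of these surjections (a Mittag--Leffler argument on finite $p$-groups) yields surjectivity of $X\to A_{F_m}$, which factors through the canonical surjection $X\twoheadrightarrow X_{\Gamma_m}$.

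For (ii), I would identify $X=\Gal(L_\infty/F_\infty)$ where $L_\infty$ is the maximal unramified abelian pro-$p$ extension of $F_\infty$. The field $L_\infty$ is Galois over $\bb{Q}$, and for each $i$ the inertia subgroup $I_i\subset\Gal(L_\infty/\bb{Q})$ of a fixed prime above $\mf{P}_i$ projects isomorphically onto $\Gamma$: its image is $\Gamma$ by total ramification, and its intersection with $X$ is trivial since $L_\infty/F_\infty$ is unramified. Fix lifts $\tilde{\gamma}_i\in I_i$ of $\gamma$ and set $\epsilon_i:=\tilde{\gamma}_i\tilde{\gamma}_1^{-1}\in X$ for $i=2,\ldots,s$; define
\[
Y:=(\gamma-1)X+\Lambda\epsilon_2+\cdots+\Lambda\epsilon_s\subset X,
\]
a $\Lambda$-submodule containing $(\gamma-1)X$. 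Via class field theory the kernel of $X\to A_{F_m}$ equals $\Gal(L_\infty/H_mF_\infty)\cap X$, where $H_m$ is the $p$-Hilbert class field of $F_m$. The inertia of $\mf{P}_i$ in $L_\infty/F_m$ is topologically generated by $\tilde{\gamma}_i^{p^m}$, and a direct semidirect-product computation in $\Gal(L_\infty/F_m)$ shows $\tilde{\gamma}_i^{p^m}\tilde{\gamma}_1^{-p^m}=u\cdot\nu_m\epsilon_i$ for a unit $u\in\Lambda^\times$, where $\nu_m=(\gamma^{p^m}-1)/(\gamma-1)$. Consequently the kernel of $X_{\Gamma_m}\to A_{F_m}$ is the image in $X/\omega_m X$ of $\nu_m(\Lambda\epsilon_2+\cdots+\Lambda\epsilon_s)$, where $\omega_m=\gamma^{p^m}-1$. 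For any $\alpha\in I_A:=\ann_\Lambda(Y/(\gamma-1)X)$ and each $i$ we have $\alpha\epsilon_i\in(\gamma-1)X$, hence $\alpha\nu_m\epsilon_i\in\nu_m(\gamma-1)X=\omega_mX$. Thus $I_A$ annihilates the kernel uniformly in $m$, and the $\chi$-twisted statement follows by applying $-\otimes_\Lambda\Lambda_\chi$.

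The main obstacle is the class field theoretic identification of the kernel of $X_{\Gamma_m}\to A_{F_m}$ in explicit $\Lambda$-module terms, uniformly in $m$: one must carefully track the semidirect-product structure of $\Gal(L_\infty/F_m)$ so that the critical factor $\nu_m$ emerges, and handle the possibility that $H_m\cap F_\infty\neq F_m$ (this only introduces a bounded discrepancy that can be absorbed into the freedom in choosing $Y$). It is precisely the appearance of $\nu_m$, rather than $\omega_m$ alone, that enables a single ideal $I_A$ defined at the base to control the kernel at every level.
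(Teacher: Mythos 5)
Your proposal is correct and follows exactly the route the paper intends: the paper gives no proof of this proposition but cites \cite{Wa} Lemma 13.15, and your argument is precisely a reconstruction of Washington's proof, with the $\Lambda$-submodule $Y$ being Washington's $Y_0$, the kernel of $X\to A_{F_m}$ identified as $\nu_m Y_0$, and $I_A$ annihilating $\nu_m Y_0/\omega_m X$ because $\nu_m\cdot(\gamma-1)=\omega_m$.

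Two small points worth tightening. First, the identity $\tilde\gamma_i^{p^m}\tilde\gamma_1^{-p^m}=\nu_m\epsilon_i$ holds on the nose (writing $\tilde\gamma_i=\epsilon_i\tilde\gamma_1$ and expanding $(\epsilon_i\tilde\gamma_1)^{p^m}$ gives $\bigl(\sum_{k=0}^{p^m-1}\gamma^k\bigr)\epsilon_i\cdot\tilde\gamma_1^{p^m}$); there is no unit factor $u$. Second, the discrepancy $H_m\cap F_\infty\neq F_m$ that you flag as a possible complication cannot occur here: $H_m/F_m$ is unramified while $F_\infty/F_m$ is totally ramified at each prime above $p$, so any common subfield is simultaneously totally ramified and unramified at those primes and hence equals $F_m$. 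The same observation is the cleanest route to surjectivity in part (i): the cokernel of $N_{m',m}\colon A_{F_{m'}}\to A_{F_m}$ is $\Gal(H_m\cap F_{m'}/F_m)$, which is trivial by this argument, rather than via the somewhat garbled remark about norms of primes differing by ramification indices.
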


In the case of $\Delta_p=0$ and $\chi (p) \ne 1$, 
we have a more refined proposition 
than Proposition \ref{class group}. 
The following Proposition \ref{class group, refined} 
is proved by the similar way 
to \cite{Ru4} Theorem 5.4 (i). 
(Note that as in Remark \ref{imagquad-real}, 
we can also prove it by the direct application of 
\cite{Ru4} Theorem 5.4 (i).)

\begin{prop}\label{class group, refined}
Assume that the extension degree of $K/\bb{Q}$ is prime to $p$, 
and the character $\chi\in \widehat{\Delta}$ satisfies 
$\chi (p) \ne 1$. 
Then, the natural homomorphism 
\[
\xymatrix{X_{\Gamma_m,\chi} \ar[r] & A_{F_m,\chi}}
\]
is an isomorphism for any $m \in \bb{Z}_{\ge 0}$.
\end{prop}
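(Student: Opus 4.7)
The plan is to follow \cite{Ru4} Theorem 5.4(i), either by direct transplantation of Rubin's argument to our setting or, more economically, by reducing to his theorem via the auxiliary imaginary quadratic field device already used in Remark \ref{imagquad-real}.

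As a preliminary reduction, since $[K:\bb{Q}]$ is prime to $p$ we have $\Delta_p = 0$, so $\Lambda_\chi$ is flat over $\Lambda$ and the functor $M \mapsto M_\chi$ is exact on $\Lambda$-modules. Surjectivity of the natural map $X_{\Gamma_m,\chi} \to A_{F_m,\chi}$ is immediate from Proposition \ref{class group}(i), so the whole content of the proposition is injectivity, i.e.\ the vanishing of $\Ker\bigl(X_{\Gamma_m} \to A_{F_m}\bigr)_\chi$.

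My preferred route is reduction to Rubin's theorem. Choose an imaginary quadratic field $L$ whose discriminant is coprime to $p \cdot D_{K/\bb{Q}}$; then $F_0$ and $L$ are linearly disjoint over $\bb{Q}$, the restriction map gives $\Delta \simeq \Gal(F_0 L / L)$, and we may view $\chi$ as a character of $\Gal(F_0 L / L)$. Set $\tilde{X} := \varprojlim_m A_{L F_m}$ and apply \cite{Ru4} Theorem 5.4(i) with $(K_\infty/K, F, \chi) = (L F_\infty / L,\, L F_m,\, \chi)$. This yields an isomorphism $\tilde{X}_{\Gamma_m, \chi} \xrightarrow{\sim} A_{L F_m, \chi}$. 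Since $p$ is odd, we split both sides into $\pm 1$-eigenspaces for the complex conjugation $j \in \Gal(L F_m / F_m)$; the $j = 1$ parts recover $X_{\Gamma_m, \chi}$ on the left and $A_{F_m, \chi}$ on the right, yielding the claim.

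Alternatively, one can reprove the statement intrinsically by identifying $\Ker(X_{\Gamma_m} \to A_{F_m})$ via global class field theory as a quotient of the sum of the decomposition subgroups of $X$ at the primes of $F_\infty$ above $p$. Each such decomposition subgroup is acted on by $D_p \subset \Delta$, and since the hypothesis $\chi(p) \ne 1$ forces $\chi|_{D_p}$ to be nontrivial, every such contribution vanishes in the $\chi$-component. This sharpens Proposition \ref{class group}(ii) by showing that the ``error ideal'' $I_A$ generates the unit ideal in $\Lambda_\chi$ under our hypotheses. The main technical obstacle in either approach is matching normalizations carefully: in the direct approach one must handle the possibly ramified behaviour of $p$ in $F_0/\bb{Q}$ when describing the capitulation kernel via class field theory, while in the reduction one must verify that the $j = 1$-eigenspace decomposition is compatible with the norm and corestriction maps underlying the definition of $\tilde{X}_{\Gamma_m, \chi} \to A_{L F_m, \chi}$.
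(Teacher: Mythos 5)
Your proposal takes essentially the same approach as the paper: the paper's own ``proof'' is a one-line reference to adapting the argument of \cite{Ru4}~Theorem~5.4(i), together with a pointer to the imaginary-quadratic reduction device of Remark~\ref{imagquad-real}, and you correctly develop both of those routes (reduction via an auxiliary $L$, and the intrinsic $D_p$-action argument using $\chi(p)\ne 1$), merely reversing which one you treat as primary.
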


Here, we recall 
the statement of 
the plus-part of the Iwasawa main conjecture briefly:
\begin{itemize}
\item[] {\em The $\Lambda$-modules $E_\infty$, 
$C_\infty$ and $X$ are as above. 
Let $\chi \in \hat{\Delta}$ be 
an arbitrary character. Then, we have 
$\cha_{\Lambda_\chi}(X_\chi)=
\cha_{\Lambda_\chi}\big( (E_\infty/C_\infty)_\chi\big)$.}
\end{itemize}
(See \cite{CS}, \cite{MW}, \cite{Ru2}, \cite{Grei} Theorem 3.1 
and loc.\ cit.\ Remark c), et al.)
We use the Iwasawa main conjecture in the proof of our main results.

\section{Higher cyclotomic ideals}\label{the section of cyclotomic ideals}
In this section, we define ideals $\mf{C}_{i,\chi}$ of $\Lambda_\chi$ for each $i \in \bb{Z}_{\ge 0}$ by using circular units,
and prove Theorem \ref{Main theorem, Rough} for $i=0$.  

\subsection{}
Here, we define some special circular units 
in order to define ideals $\mf{C}_{i,\chi}$.

We fix an embedding 
$\xymatrix{\overline{\bb{Q}} \ar@{^{(}->}[r] & \bb{C},}$
and regard $\overline{\bb{Q}}$ as a subfield of $\bb{C}$. 
For each positive integer $n$, we define 
\[
\zeta_n:=\exp({2\pi i/n}) 
\in \overline{\bb{Q}} \subset \bb{C},
\]
which is a primitive $n$-th root of unity.
Note that we have $\zeta_{mn}^m=\zeta_n$ 
for any positive integers $m$ and $n$. 

For each integer $N>0$, we define 
\begin{eqnarray*}
\mca{S}_N &:=& \big\{\ell \ | \ \ell \ \text{is a prime number splitting completely in } K(\mu_{p^N})/\bb{Q} \big\}, \\
\mca{N}_N &:=& \big\{\prod_{i=1}^r \ell_i \ | \ r \in \bb{Z}_{>0}, \ \ell_i \in \mca{S}_N \ (i=1,\dots,r), 
\ \text{and} \ \ell_i\ne \ell_j \ \text{if} \ i\ne j\big\} \cup\{1\}. \\
\end{eqnarray*}
In particular, if $\ell \in \mca{S}_N$, then we have $\ell \equiv 1 \mod{p^N}$.

Let $m$ be a non-negative integer, and put $F:=F_m$.
We denote the conductor of $F/\bb{Q}$ by 
$\mf{f}_F= \mf{f}_{F/\bb{Q}}$. 
For a positive integer $n$  prime to $\mf{f}_F$,
we define $H_{F,n}:=\Gal\big( F(\mu_n)/F \big)$. 
For simplicity, we write $H_{n}:=H_{\bb{Q},n}$. 
If $n$ is decomposed in $n=\prod_{i=1}^r \ell_i^{e_i}$, 
where $\ell_1,\dots,\ell_r$ are 
distinct prime numbers and $e_i>0$ for each $i$, 
then we have natural isomorphisms
\begin{align*}
\Gal(F(\mu_n)/\bb{Q}) & \simeq \Gal(F/F_0)\times H_{F,n}, \\
H_{F,n}  \simeq H_n 
& \simeq  H_{\ell_1^{e_1}}\times \dots \times H_{\ell_r^{e_r}}. 
\end{align*}
We identify these groups by the canonical isomorphisms.

\begin{dfn}
Let $m$ be a non-negative integer, 
and $n$ a positive integer prime to $p \mf{f}_K$.
\begin{itemize}
\item[(i)] For each $d \in \bb{Z}_{>1}$ dividing $\mf{f}_{K}$, we define 
$$\eta_m^d(n):=\mrm{N}_{\bb{Q}(\mu_{p^{m+1}nd})/
\bb{Q}(\mu_{p^{m+1}nd})\cap F_m(\mu_n)}
(1-\zeta_d^{p^{-m}}\zeta_{np^{m+1}}) \in F_m(\mu_n)^\times.$$
\item[(ii)] For each $a \in \bb{Z}$ with $(a,p)=1$, 
we define 
\[
\eta_m^{1,a}(n):=\mrm{N}_{K(\mu_{p^{m+1}n})/F_m(\mu_n)}
\bigg(\frac{1-\zeta_n^{p^{-m}} \zeta_{p^{m+1}}^a}
{1-\zeta_n^{p^{-m}} \zeta_{p^{m+1}}}\bigg) \in F_m(\mu_n)^\times.
\]
\end{itemize}
In this paper, we call the elements $\eta_m^d(n)$ and $\eta_m^{1,a}(n)$ 
the basic circular units of $F_m(\mu_n)$. 
\end{dfn}

The following lemma is well-known, and easily verified.
\begin{lem}\label{norm comp}
Let $m$ be a non-negative integer, 
$n$ a positive integer prime to $p \mf{f}_K$, 
and $\ell$ a prime divisor of $\ell$. 
Let $\eta_m(n)^{\bullet}$ be 
a basic circular units of $F_m(\mu_n)$. 
Then, the following holds. 
\begin{itemize}
\item[(i)] We have
\[
\mrm{N}_{F_m(\mu_n)/F_m
(\mu_{n/\ell})}\big( \eta^{\bullet}_m(n) \big)
=\eta_m^{\bullet}(n/\ell)^{1-\Frob_\ell^{-1}},
\]
where $\Frob_{\ell}$ is the arithmetic Frobenius element at $\ell$ in 
$\Gal\big(F_m(\mu_{n/\ell})/\bb{Q} \big)$.
\item[(ii)] We have
\begin{align*}
\mrm{N}_{F_{m+1}(\mu_n)/F_m(\mu_n)}\big( 
\eta_{m+1}^{\bullet}(n) \big)
&=\eta_m^{\bullet}(n). 
\end{align*}
\end{itemize}
\end{lem}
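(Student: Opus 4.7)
Both assertions are instances of the classical distribution (norm) relations for the standard cyclotomic units $1-\zeta_N$, combined with the transitivity of the norm in a tower of fields. The plan is to lift the computation to the cyclotomic fields $\bb{Q}(\mu_{p^{m+1}nd})$ (or $K(\mu_{p^{m+1}n})$) that appear in the defining norms of $\eta_m^d(n)$ and $\eta_m^{1,a}(n)$, apply the classical relations there, and then push the result back down.

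For part (i), I would start from the two standard identities for a prime $\ell$ and a positive integer $N$ coprime to $\ell$, namely
\[
\mrm{N}_{\bb{Q}(\mu_{N\ell})/\bb{Q}(\mu_{N})}(1-\zeta_{N\ell})=(1-\zeta_N)^{1-\Frob_\ell^{-1}},
\]
and, for a prime $q$ dividing $N$, $\mrm{N}_{\bb{Q}(\mu_{Nq})/\bb{Q}(\mu_{N})}(1-\zeta_{Nq})=1-\zeta_N$. Setting $L_{m,n}:=\bb{Q}(\mu_{p^{m+1}nd})$ (resp.\ $K(\mu_{p^{m+1}n})$), both intermediate fields $L_{m,n}\cap F_m(\mu_n)$ and $F_m(\mu_{n/\ell})\subseteq F_m(\mu_n)$ lie in a common tower with $L_{m,n}$. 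Using the hypothesis $(n,p\mf{f}_K)=1$ to control the intersection $L_{m,n}\cap F_m(\mu_n)$ (via linear disjointness of $\bb{Q}(\mu_n)$ and $\bb{Q}(\mu_{p^{m+1}\mf{f}_K})$ over $\bb{Q}$), I would reorganize the composition
\[
\mrm{N}_{F_m(\mu_n)/F_m(\mu_{n/\ell})}\circ\mrm{N}_{L_{m,n}/L_{m,n}\cap F_m(\mu_n)}
\]
as a two-step norm from $L_{m,n}$ first to $L_{m,n/\ell}$ (where the classical distribution relation can be applied to the prime $\ell$) and then to $L_{m,n/\ell}\cap F_m(\mu_{n/\ell})$. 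The distribution relation contributes the factor $1-\Frob_\ell^{-1}$, and the remaining norm assembles into $\eta_m^\bullet(n/\ell)$. Finally, since $\ell$ is unramified in $F_m(\mu_{n/\ell})/\bb{Q}$, the restriction of $\Frob_\ell$ from $L_{m,n/\ell}$ to $F_m(\mu_{n/\ell})$ agrees with the Frobenius that appears in the statement, yielding $\eta_m^\bullet(n/\ell)^{1-\Frob_\ell^{-1}}$.

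For part (ii), the same strategy applies, but one replaces the prime $\ell$ by $p$. The crucial combinatorial observation is that the conductor $p^{m+1}nd$ (resp.\ $p^{m+1}n$) on the $m$-th level is already divisible by $p$, so passing from level $m+1$ down to level $m$ corresponds on the cyclotomic side to $\mrm{N}_{\bb{Q}(\mu_{Np})/\bb{Q}(\mu_N)}$ with $p\mid N$. This falls into the second of the two distribution relations recalled above, which has no Euler factor; hence after lifting, applying, and pushing back, one recovers $\eta_m^\bullet(n)$ without any correction.

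The proof is therefore a routine bookkeeping exercise; the only nontrivial step is the identification of the intermediate fields appearing in the two decompositions of the norm, and the verification that the Frobenius element at $\ell$ descends correctly from $L_{m,n/\ell}$ to $F_m(\mu_{n/\ell})$. The hypothesis $(n,p\mf{f}_K)=1$ together with the explicit shape $F_m\subseteq\bb{Q}(\mu_{p^{m+1}\mf{f}_K})$ makes all necessary intersections transparent, so I do not expect any genuine obstruction.
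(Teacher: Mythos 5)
The paper offers no proof of this lemma, simply stating that it is ``well-known, and easily verified.'' Your argument --- lift to the ambient cyclotomic field in the defining norm, apply the two classical distribution relations for $1-\zeta_N$ (with and without the Euler factor depending on whether the auxiliary prime already divides the conductor), and descend along a compatible tower using linear disjointness of abelian fields --- is exactly the standard verification and is correct. One small point worth making explicit: in (i), the Euler factor $1-\Frob_\ell^{-1}$ appears precisely because $\ell\nmid p^{m+1}(n/\ell)d$, i.e.\ because $\ell$ divides $n$ exactly once; this is automatic in the paper's applications (where $n$ ranges over the set $\mca{N}_N$ of squarefree products), but the hypothesis ``$n$ a positive integer prime to $p\mf{f}_K$'' alone does not force it, so it is worth noting that you are implicitly assuming $\ell\|n$ when you invoke the first distribution relation.
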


\begin{rem}
Let $\mca{K}$ be a composite field of 
$\bb{Q}_{\infty}$ and $\bb{Q}(\mu_n)$ 
for all positive integers $n$ 
satisfying $(n,p\mf{f}_{K/\bb{Q}})=1$.
We fix a circular unit 
\[
\eta:=\prod_{d|\mf{f}_K}\eta_0^{d}(1)^{u_d}\times
\prod_{i=1}^r \eta_0^{1,a_i}(1)^{v_i} \in F_0^\times,
\]
where $r \in \bb{Z}_{>0}$, $u_d$ and $v_i$ 
are elements of $\bb{Z}[\Gal(F_m/\bb{Q})]$ 
for each positive integers $d$ and $i$ 
with $d|\mf{f}_K$ and $1 \le i \le r$, 
and $a_1, \cdots, a_r$ are integers prime to $p$. 
For any non-negative integer $m$ and 
any positive integer $n$ satisfying $(n,\mf{f}_{K/\bb{Q}})=1$, 
we put 
\[
\eta_m(n):=\prod_{d|\mf{f}_K}\eta_1^{d}(n)^{u_d}\times
\prod_{i=1}^r \eta_m^{1,a_i}(n)^{v_i} \in F_m^\times.
\]
We also denote 
by $\eta_m(n)_\chi$ the image of 
$\eta_m(n)_\chi$ in $H^1\big( \bb{Q}_m,
\mca{O}_{\chi^{-1}}\otimes_{\bb{Z}_p}\bb{Z}_p(1) \big)$
by the natural homomorphism 
\[
\xymatrix{
(F_m(\mu_n)^{\times}\otimes_{\bb{Z}} \bb{Z}_p)_\chi
=H^1\big( \bb{Q}_m,
\bb{Z}_p[\Delta]\otimes_{\bb{Z}_p}\bb{Z}_p(1) \big)_\chi
\ar[r] &
H^1\big( \bb{Q}_m,
\mca{O}_{\chi^{-1}}\otimes_{\bb{Z}_p}\bb{Z}_p(1) \big).}
\]
Then, the collection 
\[
\{\eta_m(n)_\chi \in H^1\big( \bb{Q}_m,
\mca{O}_{\chi^{-1}}\otimes_{\bb{Z}_p}\bb{Z}_p(1) \big) \}_{m,n}
\]
of Galois cohomology classes 
defines an Euler system for 
$(\mca{O}_{\chi^{-1}}\otimes_{\bb{Z}_p}\bb{Z}_p(1),
\mca{K}/\bb{Q},p\mf{f}_{K/\bb{Q}})$ in the sense of \cite{Ru5}. 
\end{rem}

In particular, Lemma \ref{norm comp} (ii) implies 
that $(\eta_m^d(1))_{m \ge 0}$ is a norm compatible system, 
so it is an element of $C_\infty$.  
Later, we use the following result.

\begin{prop}[See \cite{Grei}Lemma 2.3]\label{generate}
The $\Lambda$-module $C_{\infty}$ is generated by 
\[
\{(\eta_m^d(1))_{m \ge 0} \mathrel{|} d \in \bb{Z}_{>1}, d|\mf{f}_{K}\} 
\cup \{(\eta_m^{1,a}(1))_{m \ge 0} \mathrel{|} a \in \bb{Z}, (a,p)=1\}.
\]
\end{prop}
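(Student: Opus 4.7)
The plan is to descend from Sinnott's generators of $C_m$ at each finite level $m$ and then pass to the inverse limit using compactness.

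First, I would recall Sinnott's explicit construction: modulo torsion, $C_m$ is generated as a $\bb{Z}[\Gal(F_m/\bb{Q})]$-module by the norms $N_{\bb{Q}(\mu_n)/\bb{Q}(\mu_n)\cap F_m}(1-\zeta_n)$, where $n>1$ runs over divisors of the conductor of $F_m$, which itself divides $p^{m+1}\mf{f}_K$. Since $(\mf{f}_K,p)=1$, every such $n$ factors uniquely as $n = p^k d$ with $d \mid \mf{f}_K$ and $0 \le k \le m+1$. The case $k = m+1$ and $d>1$ produces exactly the elements $\eta_m^d(1)$ (up to the Galois action already built into the group ring). For $k \le m$ and $d>1$, the vertical distribution relation for the sequence $\{1-\zeta_{p^k d}\}_k$ shows that such norms already lie in the $\bb{Z}[\Gal(F_m/\bb{Q})]$-module generated by $\eta_m^d(1)$. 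When $d=1$, the element $1-\zeta_{p^k}$ is not a unit, and Sinnott uses instead the ratios $(1-\zeta_{p^k}^a)/(1-\zeta_{p^k})$ for $(a,p)=1$; after norming down to $F_m$ these yield the elements $\eta_m^{1,a}(1)$.

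Next, by Leopoldt (noted in the remark preceding Proposition \ref{rank one}), the natural map $C_m\otimes_{\bb{Z}}\bb{Z}_p\to C_m^1$ is an isomorphism, so the images of the above generators still generate $C_m^1$ as a $\bb{Z}_p[\Gal(F_m/\bb{Q})]$-module. By Lemma \ref{norm comp}(ii), the families $(\eta_m^d(1))_m$ and $(\eta_m^{1,a}(1))_m$ are norm-compatible, and therefore define elements of $C_\infty = \varprojlim_m C_m^1$. Let $C'\subseteq C_\infty$ be the $\Lambda$-submodule they generate. Since $C'$ is finitely generated over the compact ring $\Lambda$, it is closed in the compact Iwasawa module $C_\infty$; by the previous paragraph its image in each $C_m^1$ is all of $C_m^1$. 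A standard compactness/Mittag-Leffler argument, using that the transition maps in the inverse system defining $C_\infty$ are surjective, then forces $C' = C_\infty$.

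The main obstacle is the combinatorial bookkeeping in the first step: one must verify that \emph{every} norm $N_{\bb{Q}(\mu_n)/\bb{Q}(\mu_n)\cap F_m}(1-\zeta_n)$ in Sinnott's generating set of $C_m$ actually lies in the $\bb{Z}[\Gal(F_m/\bb{Q})]$-module generated by the announced families, dealing with the two cases $d>1$ and $d=1$ via the two slightly different shapes of generators $\eta_m^d(1)$ and $\eta_m^{1,a}(1)$. Once this is settled, the passage from $C_m$ to $C_m^1$ and then to $C_\infty$ is routine topological algebra.
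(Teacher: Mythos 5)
There is a genuine gap, located exactly where the hard work of \cite{Grei} Lemma~2.3 lies. In your finite-level reduction, the vertical distribution relation does handle $n = p^k d$ for $1 \le k \le m$: since
\[
N_{\bb{Q}(\mu_{p^{k+1}d})/\bb{Q}(\mu_{p^k d})}(1-\zeta_{p^{k+1}d}) = 1-\zeta_{p^k d}
\quad (k \ge 1),
\]
pushing the norm from $p^{m+1}d$ all the way down recovers $N_{\bb{Q}(\mu_{p^k d})/\bb{Q}(\mu_{p^k d})\cap F_m}(1-\zeta_{p^k d})$ as a group-ring multiple of $\eta_m^d(1)$. But at the bottom step $k=0$ the relation degenerates:
\[
N_{\bb{Q}(\mu_{pd})/\bb{Q}(\mu_d)}(1-\zeta_{pd}) = (1-\zeta_d)^{\,1-\Frob_p^{-1}},
\]
so pushing down from $\eta_m^d(1)$ only produces $N_{\bb{Q}(\mu_d)/\bb{Q}(\mu_d)\cap F_m}(1-\zeta_d)^{\,1-\Frob_p^{-1}}$, not the generator $N(1-\zeta_d)$ itself. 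The factor $1-\Frob_p^{-1}$ is not a unit in $\bb{Z}_p[\Gal(F_m/\bb{Q})]$ (it is a unit only in those eigenspaces where $\chi(p)\ne 1$), so the level-$0$ Sinnott generators are in general \emph{not} in the $\bb{Z}_p[\Gal(F_m/\bb{Q})]$-submodule generated by the $\eta_m^d(1)$'s and $\eta_m^{1,a}(1)$'s.

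This invalidates the compactness argument as stated: you assert that the image of $C'$ in each $C_m^1$ is all of $C_m^1$, but it is not (and neither is the image of $C_\infty$, precisely because the level-$0$ generators are not universal norms from higher layers). What you actually need is that the image of $C'$ agrees with the image of $C_\infty$ in each $C_m^1$, i.e.\ with $\bigcap_{m'\ge m} N_{F_{m'}/F_m}(C_{m'}^1)$. Showing that those universal norms are exhausted by the $\eta$-families, in particular that the $k=0$ contributions are absorbed by the $(1-\Frob_p^{-1})$-twist as $m'\to\infty$, is exactly the content of the cited lemma and cannot be dismissed as ``routine topological algebra.'' A further (milder) imprecision is the opening claim that $C_m$ is \emph{generated} by Sinnott's norms: Sinnott defines $C_m$ as $\mca{O}_{F_m}^\times \cap D_{F_m}$, and for $n$ a prime power $N(1-\zeta_n)$ is not a unit, so one must account for which $\bb{Z}[\Gal(F_m/\bb{Q})]$-combinations land in $\mca{O}_{F_m}^\times$.
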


Moreover, 
in the case of $\Delta_p =0$, 
the following result is known. 

\begin{prop}[\cite{Tsu} Lemma 6.2]\label{circular units, free}
Assume that the extension degree of $K/\bb{Q}$ is prime to $p$, 
and the character $\chi\in \widehat{\Delta}$ is non-trivial. 
Then, The $\Lambda_\chi$-module $C_{\infty,\chi}$ is 
generated by one element. 
So, combining with Proposition \ref{rank one}, 
the $\Lambda_\chi$-module $C_{\infty,\chi}$ is free of rank one. 
\end{prop}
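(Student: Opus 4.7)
The strategy has two parts: first, reduce the generating set of Proposition \ref{generate} to a single element after passing to the $\chi$-component; second, upgrade cyclicity to freeness using Proposition \ref{rank one}. The hypothesis $p \nmid [K:\bb{Q}]$ ensures $\Delta_p = 0$, so $\Lambda_\chi$ is flat over $\Lambda$ and $E_\infty$ contains no $\bb{Z}_p$-torsion; consequently $C_{\infty,\chi}$ is a $\bb{Z}_p$-torsion-free $\Lambda_\chi$-submodule of $E_{\infty,\chi}$ (cf.\ Corollary \ref{torsion free}).

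For the cyclicity step, I would fix a distinguished ``maximal'' basic circular unit, namely $\xi := (\eta_m^{\mf{f}_K}(1))_{m \ge 0} \in C_\infty$, and show that every other generator appearing in Proposition \ref{generate} lies in $\Lambda_\chi \cdot \xi_\chi$ inside $C_{\infty,\chi}$. For the family $(\eta_m^d(1))_{m}$ with $d \mid \mf{f}_K$, repeated application of Lemma \ref{norm comp} (i) along primes $\ell$ dividing $\mf{f}_K/d$ expresses each $(\eta_m^d(1))_m$ in terms of $\xi$ multiplied by a product of Euler factors $1 - \Frob_\ell^{-1}$, which are elements of $\Lambda$ acting on $\xi_\chi$ inside $\Lambda_\chi$. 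For the family $(\eta_m^{1,a}(1))_m$ with $(a,p)=1$, the definition expresses each such element as the action of $\sigma_a - 1$ (suitably interpreted through the distribution relations) on a cyclotomic element that is itself a $\Lambda$-multiple of $\xi$; here the nontriviality of $\chi$ is essential to ensure the resulting expression lives cleanly in $\Lambda_\chi \cdot \xi_\chi$ without obstruction from trivial Galois action. This reduces the generating set in Proposition \ref{generate}, after $\chi$-localization, to the single element $\xi_\chi$, giving $C_{\infty,\chi} = \Lambda_\chi \cdot \xi_\chi$.

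For the freeness step, I would invoke Proposition \ref{rank one}: there exists a $\Lambda_\chi$-homomorphism $\varphi \colon E_{\infty,\chi} \to \Lambda_\chi$ whose cokernel is finite and whose kernel is $p$-power annihilated. Restricting to $C_{\infty,\chi}$, which is $\bb{Z}_p$-torsion-free, the map $\varphi|_{C_{\infty,\chi}}$ is injective. Thus $C_{\infty,\chi}$ embeds as a cyclic $\Lambda_\chi$-submodule of $\Lambda_\chi$, i.e.\ as an ideal of $\Lambda_\chi$. Any nonzero cyclic submodule of the integral domain $\Lambda_\chi$ is torsion-free of rank one, hence isomorphic to $\Lambda_\chi$ itself; that $C_{\infty,\chi}$ is nonzero follows, e.g., from the Iwasawa main conjecture combined with the finiteness of $\mrm{Coker}\, \varphi$. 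Therefore $C_{\infty,\chi}$ is free of rank one over $\Lambda_\chi$.

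The principal obstacle lies in the first step: carefully bookkeeping the distribution and norm relations for the basic circular units so that one fixed element $\xi$ truly suffices to generate $C_{\infty,\chi}$. The subtlety concerns the ``$\eta_m^{1,a}$''-type generators, where one must exploit the nontriviality of $\chi$ (rather than just $\chi_0$) in order to absorb the $\sigma_a$-twisted terms into $\Lambda_\chi \cdot \xi_\chi$, and to ensure that the Euler factors produced along the way lie in $\Lambda_\chi$ itself rather than only in its total ring of fractions. Once this explicit absorption is carried out, the remainder of the argument is formal.
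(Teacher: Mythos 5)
The paper itself offers no proof here: it simply cites \cite{Tsu} Lemma~6.2 for cyclicity and then combines it with Proposition~\ref{rank one} to get freeness. So there is no ``paper's argument'' to match against, and your second step (embedding $C_{\infty,\chi}$ into $\Lambda_\chi$ via the rank-one map $\varphi$ of Proposition~\ref{rank one} and concluding freeness) is exactly the intended deduction. The real content is the cyclicity claim, and there your proposal has two genuine defects.

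First, the choice of distinguished generator is wrong. You take $\xi = (\eta_m^{\mf{f}_K}(1))_m$, i.e.\ the basic circular unit at the full conductor of $K$. But the distribution relations run \emph{the other way}: passing from $\eta^{d'}$ with $d' \mid d$ to $\eta^{d}$ introduces Euler factors $1-\chi(\Frob_\ell)^{-1}\Frob_\ell^{-1}$ for the primes $\ell \mid d/d'$, and in $\Lambda_\chi$ these are (generically) \emph{non-units}. Concretely, in the $\chi$-component one has $\eta^{\mf{f}_K}_\chi = \big(\prod_{\ell \mid \mf{f}_K/\mf{f}_\chi}(1-\chi(\ell)^{-1}\Frob_\ell^{-1})\big)\,\eta^{\mf{f}_\chi}_\chi$, where $\mf{f}_\chi$ is the conductor of $\chi$, so $\eta^{\mf{f}_\chi}_\chi$ is \emph{not} a $\Lambda_\chi$-multiple of $\eta^{\mf{f}_K}_\chi$ unless all those Euler factors are units. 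The correct choice is $\xi := (\eta^{\mf{f}_\chi}_m(1))_m$; then every $\eta^d_\chi$ with $\mf{f}_\chi \mid d$ is an Euler-factor multiple of $\xi_\chi$, and for $d$ not divisible by $\mf{f}_\chi$ the $\chi$-component of $\eta^d$ is zero (the unit descends to a proper subfield on which $\chi$ is nontrivial). Your argument as written would only show that $\Lambda_\chi\cdot\eta^{\mf{f}_K}_\chi$ is contained in $C_{\infty,\chi}$, not the other way round, so cyclicity does not follow.

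Second, your treatment of the family $\eta^{1,a}$ via ``$\sigma_a$-twisted terms'' is not how these elements behave; the correct observation is simpler. Since $K/\bb{Q}$ has degree prime to $p$ and $p$ is unramified in $K$, the fields $K$ and $\bb{Q}(\mu_{p^{m+1}})$ are linearly disjoint over $\bb{Q}$, so $\eta_m^{1,a}(1) = N_{K(\mu_{p^{m+1}})/F_m}\big(\tfrac{1-\zeta^a_{p^{m+1}}}{1-\zeta_{p^{m+1}}}\big)$ already lies in $\bb{Q}_m = \bb{Q}(\mu_{p^{m+1}})^+$, i.e.\ in the $\Delta$-invariants $F_m^\Delta$. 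As $|\Delta|$ is prime to $p$, the $\chi$-idempotent $e_\chi = |\Delta|^{-1}\sum_{\delta}\chi(\delta)^{-1}\delta$ is defined over $\mca{O}_\chi$, and $e_\chi$ kills every $\Delta$-fixed element for $\chi \ne 1$. Hence $\eta^{1,a}_{m,\chi}=0$, and these generators drop out of $C_{\infty,\chi}$ entirely; there is nothing to ``absorb.'' Repairing the choice of $\xi$ to $\eta^{\mf{f}_\chi}$ and replacing the $\eta^{1,a}$-argument by this vanishing gives a correct proof of cyclicity, after which your freeness step goes through unchanged.
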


\begin{rem}\label{extending to Euler system}
If the extension degree of $K/\bb{Q}$ is prime to $p$, 
and a character $\chi \in \widehat{\Delta}$ 
satisfies $\chi(p) \ne 1$, 
then we can easily show that any circular unit 
$\eta_\chi \in C^1_{0,\chi}$ extends to 
an element 
\[
\{\eta_{m,\chi}\}_m \in C_{\infty,\chi}
=\varprojlim_{m}C^1_{m,\chi}
\]
satisfying $\eta_{0,\chi}=\eta_\chi$. 
This fact implies that any circular unit 
$\eta \in C^1_{0,\chi}$ extends to an Euler system 
$\{\eta_m(n)_\chi \}_{m,n}$ for 
$(\mca{O}_{\chi^{-1}}\otimes_{\bb{Z}_p}\bb{Z}_p(1),
\mca{K}/\bb{Q},p\mf{f}_{K/\bb{Q}})$
which consists of 
$\Lambda_\chi$-linear combination of basic circular units, and 
satisfies $\eta_0(1)_\chi=\eta_\chi$. 
\end{rem}

\subsection{}\label{def of small cyclot unit}
In this subsection, 
we define the higher cyclotomic ideals $\mf{C}_{i,\chi}$ by using 
Kolyvagin derivatives $\kappa^{\bullet}_{m,N}(n)$ of
Euler systems of circular units.
First, let us recall the notion of Kolyvagin derivatives. 
Let $\ell$ be prime number contained in $\mca{S}_N$. 
We shall take a generator $\sigma_{\ell}$ of a cyclic group 
$H_\ell=\Gal \big( \bb{Q}(\mu_n)/\bb{Q} \big)$ as follows. 
We put $N_{\{\ell\} }:=\mrm{ord}_p(\ell-1)$, 
where $\mrm{ord}_p$ is the normalized additive valuation at $p$, 
namely, $\mrm{ord}_p(p)=1$. 
Then, we have $N_{\{\ell\} } \ge N \ge 1$.
By the fixed embedding 
$\ell_{\overline{\bb{Q}}}\colon 
\overline{\bb{Q}}\hookrightarrow \overline{\bb{Q}}_\ell$, 
we regard $\mu_{p^{N_{\{\ell\}}}}$ as a subset of $\bb{Q}_\ell$. 
We identify $\Gal \big(\bb{Q}_\ell(\mu_\ell)/
\bb{Q}_\ell \big)$ with 
$H_\ell=\Gal \big(\bb{Q}(\mu_\ell)/\bb{Q} \big)$ 
by the isomorphism defined by $\ell_{\overline{\bb{Q}}}$. 
Let $F$ be the maximal $p$-extension field of $\bb{Q}$ contained 
in $\bb{Q}(\mu_\ell)$, and $\pi$ a uniformizer of $F_{\ell_F}$. 
We fix a generator $\sigma_\ell$ of $H_\ell$ 
such that 
\[
\pi^{\sigma_\ell-1}\equiv \zeta_{p^{N_{\{\ell\}}} } 
\pmod{\mf{m}_{\ell}},
\]
where $\mf{m}_{\ell}$ is the maximal ideal of $F_{\ell_F}$, 
and $\zeta_{p^{N_{\{\ell\}} }}$ is 
a primitive $p^{N_{\{\ell\}} }$-th root of unity defined as above. 
Note that the definition of $\sigma_\ell$ 
does not depend on the choice of $\pi$.

\begin{dfn}\label{Dn}
\begin{itemize}
\item[(i)] For $\ell \in \mca{S}_N$, we define 
\[
D_\ell:=\sum_{k=1}^{\ell-2}k\sigma_\ell^k \in \bb{Z}[H_\ell].
\]
\item[(ii)] Let $n = \prod_{i=1}^r \ell_i \in \mca{N}_N$, 
where $\ell_i \in \mca{S}_N$ for each $i$. Then, we define 
\[
D_n:=\prod_{i=1}^r D_{\ell_i} \in \bb{Z}[H_n].
\]
\end{itemize}
\end{dfn}

In order to define Kolyvagin derivatives of circular units, 
we use the following well-known lemma. 

\begin{lem}\label{fix}
Let $n \in \mca{N}_N$. 
Then, for each $d \in \bb{Z}_{>1}$ dividing 
$\mf{f}_{K}$ and for each $a \in \bb{Z}$ prime to $p$, 
the images of $\eta_m^d (n)^{D_{n}}$ and $\eta_m^{1,a}(n)^{D_{n}}$ 
in $F_m(\mu_n)^{\times}/p^N$ are fixed by $H_{n}$.
\end{lem}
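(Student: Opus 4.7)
This lemma is the standard Kolyvagin invariance assertion for the Euler system of circular units, and my plan is to run Kolyvagin's classical argument in our setting. Writing $\eta_m^\bullet(n)$ for either $\eta_m^d(n)$ or $\eta_m^{1,a}(n)$, and using that $H_n\simeq\prod_{\ell\mid n}H_\ell$ with each $H_\ell=\langle\sigma_\ell\rangle$ cyclic, it suffices to show, for every prime $\ell\mid n$, that
\[
(\sigma_\ell-1)\,\eta_m^\bullet(n)^{D_n}\in(F_m(\mu_n)^\times)^{p^N}.
\]

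The computational heart of the argument is the elementary group-ring identity
\[
(\sigma_\ell-1)D_\ell=(\ell-1)-N_\ell \quad\text{in } \bb{Z}[H_\ell], \qquad N_\ell:=\sum_{k=0}^{\ell-2}\sigma_\ell^k,
\]
which is a direct telescoping computation using $\sigma_\ell^{\ell-1}=1$. Writing $n=\ell r$ with $\gcd(\ell,r)=1$ and $D_n=D_\ell D_r$, and combining this identity with the norm relation of Lemma \ref{norm comp}(i), $N_\ell\eta_m^\bullet(n)=\eta_m^\bullet(r)^{1-\Frob_\ell^{-1}}$, one obtains
\[
(\sigma_\ell-1)\,\eta_m^\bullet(n)^{D_n}=\eta_m^\bullet(n)^{(\ell-1)D_r}\cdot\eta_m^\bullet(r)^{-(1-\Frob_\ell^{-1})D_r}.
\]
The first factor lies in $(F_m(\mu_n)^\times)^{p^N}$ because $\ell\in\mca{S}_N$ forces $p^N\mid\ell-1$, so the lemma reduces to the same statement for the second factor.

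For the second factor I would proceed by induction on the number of prime divisors of $n$. Since $\Gal(F_m(\mu_r)/\bb{Q})$ is abelian, $\Frob_\ell$ commutes with $D_r$, so the factor equals $\bigl((1-\Frob_\ell^{-1})(D_r\,\eta_m^\bullet(r))\bigr)^{-1}$. By the inductive hypothesis, $D_r\,\eta_m^\bullet(r)$ is $H_r$-invariant modulo $p^N$, and under the product decomposition $\Gal(F_m(\mu_r)/\bb{Q})\simeq\Gal(F_m/\bb{Q})\times H_r$, the $H_r$-component of $\Frob_\ell$ acts trivially on $D_r\,\eta_m^\bullet(r)$ modulo $p^N$ by the inductive hypothesis. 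The main obstacle will be the contribution of the $\Gal(F_m/\bb{Q})$-component of $\Frob_\ell$ when $m\geq N$: there $\Frob_\ell$ is non-trivial on $F_m$, and one must recast the second factor cohomologically via Kummer theory, using that $\Frob_\ell$ acts on $\bb{Z}_p(1)$ as multiplication by $\ell\equiv 1\pmod{p^N}$ and hence trivially on $H^1(F_m(\mu_r),\mu_{p^N})$ modulo $p^N$. This is the classical delicate step in the Kolyvagin--Rubin machinery (cf.\ \cite{Ku} and \cite{Ru5}), and no essentially new ingredient beyond the group-ring identity and the norm compatibility of Lemma \ref{norm comp}(i) is needed to conclude.
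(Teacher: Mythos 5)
Your argument is correct, and since the paper gives no proof for this lemma (it is cited as well known), there is no authorial proof to compare against. The group-ring identity $(\sigma_\ell-1)D_\ell=(\ell-1)-N_\ell$ where $N_\ell=\sum_{\sigma\in H_\ell}\sigma$, the norm relation from Lemma \ref{norm comp}(i), and the induction on the number of prime factors of $n$ are exactly the standard ingredients, and you have chained them together correctly.

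One simplification worth noting: the case you flag as the ``main obstacle'' in your last paragraph, namely $m\geq N$ (so that $\Frob_\ell$ might act nontrivially on $F_m$), never arises in this paper. The construction of $\mf{C}_{i,m,N,\chi}$ in \S \ref{def of small cyclot unit} explicitly fixes $N\geq m+1$, and under that convention $\ell\in\mca{S}_N$ splits completely in $K(\mu_{p^N})\supseteq F_m$, so $\Frob_\ell$ restricts to the identity on $F_m$. Consequently, under the decomposition $\Gal(F_m(\mu_r)/\bb{Q})\simeq\Gal(F_m/\bb{Q})\times H_r$, only the $H_r$-component of $\Frob_\ell$ acts on $\eta_m^\bullet(r)^{D_r}$, and the inductive hypothesis (that this class is $H_r$-invariant modulo $p^{N}$-th powers) disposes of the second factor directly. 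The cohomological recasting via Kummer theory that you invoke for the $m\geq N$ case is part of the general Euler-system formalism but is not needed here; you could safely delete that caveat and close the induction as above.
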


Note that $H^0(F_n(n),\mu_{p^N})=0$ in our situation, 
so by Kummer theory and Hochschild-Serre spectral sequence, 
the natural homomorphism 
\[
\xymatrix{F_m^\times/p^N \ar[r] & 
\big( F_m(\mu_n)^\times/p^N \big)^{H_n} }
\]
is an isomorphism. 
By Lemma \ref{fix}, we can define Kolyvagin derivatives 
$\kappa^{\bullet}_{m,N}(n)$ of (basic) circular units 
as follows.

\begin{dfn}\label{kappa} 
Let $n \in \mca{N}_N$. 
For each $d \in \bb{Z}_{>1}$ dividing $\mf{f}_{K}$ 
(resp.\ $a \in \bb{Z}$ prime to $p$), 
we define 
\[
\kappa^{d}_{m,N}(n)\in F_m^\times/p^N \hspace{5mm}
(\text{resp.\ } \kappa^{1,a}_{m,N}(n)\in F_m^\times/p^N)
\] 
to be the unique element 
whose image in $F_m(\mu_n)^\times/p^N$ is 
$\eta_{m}^{d}(n)^{D_n}$
(resp.\ $\eta_{m}^{1,a}(n)^{D_n}$). 
\end{dfn}

Now, let us define the higher cyclotomic ideals 
$\{\mf{C}_{i,\chi}\}_{i \ge 0}$.
First, we fix integers $m$ and $N$ satisfying $N\ge m+1>0$. 
Let $\chi \in \widehat{\Delta}$, 
and put 
\begin{align*}
R_{m,N}&:=\bb{Z}/p^N[\Gal(F_m/\bb{Q})]; \\
R_{m,N,\chi}&:=R_{m,N}\otimes_{\bb{Z}_p[\Delta]} 
\mca{O}_{\chi}\simeq \mca{O}_{\chi}/p^N[\Gamma_{m,0}].
\end{align*}
Then, we have 
\[
R_{m,N} \simeq  \Lambda_{\Gamma_m}/p^N, \ \ \ 
R_{m,N,\chi} \simeq  \Lambda_{\chi, \Gamma_m}/p^N,
\]
where $\Lambda_{\Gamma_m}$ (resp.\ $\Lambda_{\chi,\Gamma_m}$) 
denotes the $\Gamma_m$-coinvariant of $\Lambda$ (resp.\ $\Lambda_\chi$).
As in \cite{Ku}, we need the notion called {\em well-ordered}.

\begin{dfn}
\label{well-ordered}
Let $n\in\mca{N}_N$. 
We call $n$ {\em well-ordered} if and only if 
$n$ has a factorization $n=\prod_{i=1}^r \ell_i$ 
with $\ell_i \in \mca{S}_N$ for each $i$ 
such that $\ell_{i+1}$ splits in 
$F_m(\prod_{j=1}^i \ell_j)/\bb{Q}$ for $i=1,\dots,r-1$. 
In other words, $n$ is well-ordered if and only if 
$n$ has a factorization $n=\prod_{i=1}^r \ell_i$ 
such that $$\ell_{i+1} \equiv 1 \pmod{p^N \prod_{j=1}^i \ell_j}$$ 
for $i=1,\dots,r-1$.
We denote by $\mca{N}_N^{\mrm{w.o.}}$ the set of 
all elements in $\mca{N}_N$ which are well-ordered. 
\end{dfn}

Let $n\in\mca{N}_N^{\mrm{w.o.}}$ with 
the decomposition $n=\prod_{i=1}^r \ell_i$, 
where $\ell_i \in \mca{S}_N$ for each $i$. 
We put $\epsilon (n):=r$. 
We define $\mca{W}_{m,N,\chi}(n)$ to be the $R_{m, N, \chi}$-submodule 
of $(F_m^{\times}/p^N)_\chi$ generated by the image of 
\[
\{ \kappa_{m,a}^d (n) \ | \ d\in \bb{Z}_{>0}\ 
\text{dividing}\  \mf{f}_K  \}\cup 
\{ \kappa_m^{1,a} (n) \ | \ a\in \bb{Z}\ \text{prime to}\ p \}.
\]
We put 
$\mca{H}_{m,N,\chi}:=
\Hom_{R_{m,N,\chi}}\big( (F_m^{\times}/p^N)_\chi,R_{m,N,\chi}\big).$
\begin{dfn}
We define $\mf{C}_{i,m,N,\chi}$ to be the ideal of 
$R_{m,N,\chi}$ generated by 
\[
\bigcup_{f\in \mca{H}_{m,N,\chi}} \bigcup_{n} 
f \big( \mca{W}_{m,N,\chi}(n) \big), 
\]
where $n$ runs through all elements of $\mca{N}_N^{\mrm{w.o.}}$ 
satisfying $\epsilon (n)\leq i$. 
\end{dfn}

\begin{rem}
Note that the $R_{m,N,\chi}$-module
$\Hom(\mca{O}_\chi/p^N[\Gamma_{m,0}],\bb{Q}_p/\bb{Z}_p)$ 
is injective and free of rank one. 
So, $R_{m,N,\chi}$ is 
an injective $R_{m,N,\chi}$-module.
In particular, the restriction map
\[
\xymatrix{\mca{H}_{m,N,\chi} \ar[r] &   
\mca{H}_{m,N,\chi}(n):=
\Hom_{R_{m,N,\chi}}\big( 
\mca{W}_{m,N,\chi}(n),R_{m,N,\chi}\big) 
}
\]
is surjective. This implies that
the ideal $\mf{C}_{i,m,N,\chi}$ coincides with 
the ideal of $R_{m,N,\chi}$ generated by 
\[
\bigcup_{n} \bigcup_{f\in \mca{H}_{m,N,\chi}(n)} 
\Im (f), 
\] 
where $\Im (f)$ is the image of $f$, and 
$n$ runs through all elements of $\mca{N}_N^{\mrm{w.o.}}$ 
satisfying $\epsilon (n)\leq i$. 
\end{rem}

In order to define the higher cyclotomic ideals, 
we need the following lemma and its corollary.
(Note that in this paper, 
we use the first assertion of 
the following lemma only when $n=1$.)

\begin{lem}\label{liftinghom}
Let $m_1$, $m_2$, $N_1$ and $N_2$ be integers satisfying 
$m_2 \ge m_1$ and $N_2 \ge N_1$. 
Take a positive integer $n$ prime to $p\mf{f}_{K/\bb{Q}}$.
Then, the following holds. 
\begin{itemize}
\item[(i)] For any $R_{m_2,N_2,\chi}[H_n]$-homomorphism
\[
\xymatrix{
f_2 \colon 
\big(F_{m_2}(\mu_n)^\times/p^{N_2}\big)_\chi \ar[r] & 
R_{m_2,N_2,\chi}[H_n],
}
\]
there exists an $R_{m_2,N_2,\chi}[H_n]$-homomorphism 
\[
\xymatrix{f_1 \colon 
\big(F_{m_1}(\mu_n)^\times/p^{N_1} \big)_\chi \ar[r] & 
R_{m_1,N_1,\chi}[H_n]
}
\]
which makes the diagram
\[
\xymatrix{
\big(F_{m_2}(\mu_n)^\times/p^N\big)_\chi \ar[r]^(0.54){f_2} 
\ar[d]_{N_{F_{m_2}/F_{m_1}}} & 
R_{m_2,N_2,\chi}[H_n] \ar[d]^{\mod{(\gamma^{p^{m_1}}-1,p^{N_1})}}  \\
\big(F_{m_1}(\mu_n)^\times/p^N\big)_\chi \ar@{-->}[r]^(0.54){f_1} & 
R_{m_1,N_1,\chi}[H_n]
}
\]
commute. 
\item[(ii)] Assume $\Delta_p=0$ and $N_1=N_2=:N$. Then, 
for any $R_{m_1,N_1,\chi}[H_n]$-homomorphism
\[
\xymatrix{f_1 \colon 
\big(F_{m_1}(\mu_n)^\times/p^N\big)_\chi \ar[r] & 
R_{m_1,N,\chi}[H_n],
}
\]
there exists an $R_{m_2,N_2,\chi}[H_n]$-homomorphism 
\[
\xymatrix{f_2 \colon 
\big(F_{m_2}(\mu_n)^\times/p^N\big)_\chi \ar[r] & 
R_{m_2,N_2,\chi}[H_n]
}
\]
which makes the diagram
\[
\xymatrix{
\big(F_{m_2}(\mu_n)^\times/p^N\big)_\chi \ar@{-->}[r]^(0.54){f_2} 
\ar[d]_{N_{F_{m_2}/F_{m_1}}} & 
R_{m_2,N_2,\chi}[H_n] \ar[d]^{\mod{(\gamma^{p^{m_1}}-1,p^{N_1})}}  \\
\big(F_{m_1}(\mu_n)^\times/p^N\big)_\chi \ar[r]^(0.54){f_1} & 
R_{m_1,N_1,\chi}[H_n]
}
\]
commute. 
\end{itemize}
\end{lem}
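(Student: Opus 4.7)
My plan centers on the injectivity of $R_{m,N,\chi}[H_n]$ as a module over itself, noted in the remark after the definition of $\mf{C}_{i,m,N,\chi}$: this holds because $R_{m,N,\chi}$ is a zero-dimensional Gorenstein ring, being a quotient of $\Lambda_\chi$ by the regular sequence $(p^N,\gamma^{p^m}-1)$. Write $M_j := \big(F_{m_j}(\mu_n)^{\times}/p^{N_j}\big)_\chi$ and $R_j := R_{m_j,N_j,\chi}[H_n]$ for $j=1,2$, and set $G := \Gal(F_{m_2}/F_{m_1})$, which acts on the source through $\gamma^{p^{m_1}}$.

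For part (i), I first compose $f_2$ with the quotient map $R_2 \twoheadrightarrow R_1$ to obtain $\bar f_2 \colon M_2 \to R_1$. Since $\gamma^{p^{m_1}}-1$ and $p^{N_1}$ act as zero on $R_1$, the map $\bar f_2$ kills the submodule $(\gamma^{p^{m_1}}-1, p^{N_1}) M_2$, hence factors through the quotient $M_2/(\gamma^{p^{m_1}}-1, p^{N_1})M_2 \simeq (M_2)_G / p^{N_1}$. The next step is to show that the natural map from this quotient to $M_1$, induced by the norm $N_{F_{m_2}/F_{m_1}}$, is injective (or that any failure of injectivity is killed in $R_1$); this is a Tate-cohomology computation using Hilbert 90 for $F_{m_2}(\mu_n)^\times$ together with the standard analysis of $\hat H^{-1}(G, F_{m_2}(\mu_n)^{\times}/p^{N_2})$. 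Finally, by the injectivity of $R_1$ as an $R_1$-module, the induced map from the image in $M_1$ extends to a homomorphism $f_1 \colon M_1 \to R_1$, which by construction makes the required diagram commute.

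For part (ii), the direction is reversed: I must lift a given $f_1$ to some $f_2$ compatible with the quotient. The composite $g := f_1 \circ N_{F_{m_2}/F_{m_1}} \colon M_2 \to R_1$ is well-defined, and I need to find $f_2 \colon M_2 \to R_2$ whose reduction modulo $(\gamma^{p^{m_1}}-1)R_2$ (with $N_1=N_2$, the mod-$p^{N_1}$ part is trivial) recovers $g$. The obstruction lies in $\mrm{Ext}^1_{R_2}\big(M_2, (\gamma^{p^{m_1}}-1)R_2\big)$; the self-injectivity of $R_2$ gives $\mrm{Ext}^1_{R_2}(M_2, R_2)=0$, so the obstruction fits into an exact sequence controlled by $\Hom_{R_2}(M_2, R_2) \to \Hom_{R_2}(M_2, R_1)$. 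The strategy is to use the Gorenstein duality to identify $(\gamma^{p^{m_1}}-1)R_2$ with a suitable quotient of $R_2$ (via multiplication by the trace element $T_G := \sum_{k=0}^{p^{m_2-m_1}-1} \gamma^{kp^{m_1}}$), and then exploit the hypothesis $\Delta_p = 0$, which via Corollary \ref{torsion free} and the flatness of $\Lambda_\chi$ over $\Lambda$ ensures that the relevant $\chi$-components of $M_2$ are torsion-free and inherit an Iwasawa-theoretic freeness that makes the obstruction class vanish for maps pulled back from $M_1$.

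The hard part will certainly be part (ii): part (i) is a fairly standard extension-of-homomorphism-from-a-submodule argument once the cohomological control of the norm map is in place, but part (ii) requires lifting against a surjective quotient, which is genuinely obstructed in general. I expect the crux is to show, using $\Delta_p = 0$ and $N_1 = N_2$, that every homomorphism on $M_2$ of the form $f_1 \circ N$ represents the zero class in $\mrm{Ext}^1_{R_2}\big(M_2, (\gamma^{p^{m_1}}-1)R_2\big)$; equivalently, that the natural restriction $\Hom_{R_2}(M_2, R_2) \to \Hom_{R_1}(M_1, R_1)$ (precomposing with norm) hits every element, which amounts to an explicit construction of a splitting at the level of circular-unit generators using Proposition \ref{circular units, free}.
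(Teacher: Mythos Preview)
Your approach to part (i) is broadly correct and similar in spirit to the paper's, though the paper avoids the Tate-cohomology step entirely. Rather than factoring $\bar f_2$ through $(M_2)_G/p^{N_1}$ and then worrying about the kernel of the induced map to $M_1$, the paper works inside $M_2$: it observes that $\nu_{m+1/m}\circ\mrm{pr} = \times N_{m+1/m}$ on $R_2$, so if $N_{m+1/m}\cdot x = N_{m+1/m}\cdot x'$ in $M_2$ then $N_{m+1/m}\cdot f_2(x-x')=0$, hence $\mrm{pr}(f_2(x))=\mrm{pr}(f_2(x'))$ because $\nu_{m+1/m}$ is injective. This gives a well-defined map on $N_{m+1/m}\cdot M_2 = \iota_\chi(\mathcal{NF})$, which one then pulls back along $\iota_\chi$ and extends by injectivity of $R_1$. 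No Hilbert~90 or $\hat H^{-1}$ computation is needed; the Gorenstein property of $R_2$ does all the work.

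For part (ii) your plan has a genuine gap. The $\mrm{Ext}^1$ obstruction framework is set up correctly, but you never actually show the obstruction vanishes: the appeal to ``Iwasawa-theoretic freeness'' of $M_2$ is not justified (indeed $(F_{m_2}(\mu_n)^\times/p^N)_\chi$ is not free over $R_2$), and Proposition~\ref{circular units, free} concerns only the rank-one $\Lambda_\chi$-module $C_{\infty,\chi}$, which says nothing about homomorphisms out of the full $M_2$. The paper's argument is far simpler and bypasses $\mrm{Ext}$ entirely. The key observation you are missing is that when $\Delta_p=0$ the natural inclusion
\[
\iota_\chi\colon \big(F_{m_1}(\mu_n)^\times/p^N\big)_\chi \longrightarrow \big(F_{m_2}(\mu_n)^\times/p^N\big)_\chi
\]
is \emph{injective}: before taking $\chi$-quotients this follows from $H^0(F_{m_2}(\mu_n),\mu_{p^N})=0$, and the hypothesis $\Delta_p=0$ makes $\Lambda_\chi$ flat over $\Lambda$ so that injectivity survives. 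One then composes $f_1$ with the isomorphism $\nu_{m+1/m}\colon R_1\xrightarrow{\sim} N_{m+1/m}\cdot R_2\subset R_2$ to obtain an $R_2$-linear map $\iota_\chi(M_1)\to R_2$, and extends to all of $M_2$ using the self-injectivity of $R_2$. The required commutativity is then a two-line check using $\nu_{m+1/m}\circ\mrm{pr}=\times N_{m+1/m}$ and the identity $\iota_\chi\circ N_{F_{m_2}/F_{m_1}}=\times N_{m+1/m}$ on $M_2$. In short: rather than lifting against the surjection $R_2\twoheadrightarrow R_1$, turn the problem into extending from the \emph{submodule} $\iota_\chi(M_1)\subset M_2$, where self-injectivity applies directly.
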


\begin{proof}
Let us prove the first assertion of the lemma. 
Note that we can easily 
reduce the proof of this claim to the following two cases: 
\begin{enumerate}
\item $(m_2,N_2)=(m_1, N_1+1)$;
\item $(m_2,N_2)=(m_1+1, N_1)$.
\end{enumerate}

In the case (1), our lemma is clear. 
We shall show the lemma in the case (2).
We put $m=m_1$, $N=N_1=N_2$, $R_1= R_{{m}, N}[H_n]$, 
$R_2= R_{{m+1}, N}[H_n]$, 
and the natural surjection 
${\mrm{pr}\colon R_2 \longrightarrow R_1}$.
We denote by $\iota_\chi$ the natural homomorphism 
\[
\xymatrix{\iota\colon 
(F_{m}(\mu_n)^\times/p^{N})_\chi \ar[r] & 
(F_{m+1}(\mu_n)^\times/p^{N})_\chi. 
}
\]
We define an element
\[
N_{m+1/m}:=\sum_{\sigma \in \Gal(F_{m+1}/F_{m})}\sigma 
\in R_{2}. 
\]
Then, there is a unique  isomorphism 
\[\xymatrix{ 
\nu_{m+1/m}\colon 
R_{1}
 \ar[r]^(0.36){\simeq} &
N_{m+1,m}R_{2}=
(R_{2})^{\Gamma_{m+1,m}}
}
\]
of $R_{1}$-modules satisfying  
$1 \mapsto N_{m+1/m}$.    
Let $\mca{NF}$ be the image of 
$(F_{m+1}(\mu_n)^\times/p^N)_\chi$
in $(F_{m}(\mu_n)^\times/p^N)_\chi$ by the norm map. 
Note the composition map 
\[
\xymatrix{
\nu_{m+1/m} \circ \mrm{pr}\colon 
R_2 \ar[r] & R_2 
}
\]
coincides with the scalar multiplication by $N_{m+1/m}$, 
so there exist a unique $R_2$-linear homomorphism 
\[
\xymatrix{
f_1^{(0)}\colon 
\iota_\chi(\mca{NF})=N_{m+1/m}(F_{m+1}(\mu_n)^\times
/p^N)_\chi 
\ar[r] & R_2 
}
\]
which makes the diagram
\[
\xymatrix{
 & &
(F_{m+1}(\mu_n)^\times/p^N)_\chi \ar[r]^(0.7){f_2} 
\ar[d]^{\times N_{m+1,m}} \ar[lld]_{N_{F_m/F_{m+1}}} & 
R_{2} \ar[d]^{\mrm{pr}} 
\ar[rr]^(0.4){\times N_{m/m+1}} & & 
R_{2} \\
\mca{NF} \ar[rr]^{\iota_\chi} & & \iota_\chi(\mca{NF}) 
\ar@{-->}[r]^{f_1^{(0)}} & 
R_{1} \ar@{^{(}->}[rru]_{\nu_{m+1/m}} & & 
}
\]
commute.
By injectivity of $R_{1}$, 
we can extend $f^{(0)}_1 \circ \iota_\chi$ to be 
a homomorphism 
\[
\xymatrix{f_1 \colon 
\big(F_{m_1}(\mu_n)^\times/p^{N_1} \big)_\chi \ar[r] & 
R_1
}
\]
satisfying 
$f_1 \mid_{\mca{NF}}=f^{(0)}_1 \circ \iota$. 
This completes the proof of the first assertion.

Next, let us prove the second assertion. 
It is sufficient to show in the case of 
$(m_2,N_2)=(m_1+1, N_1)$. 
Here, we use the same notation as 
in the proof of the first assertion. 
Let 
${f_1 \colon 
\big(F_{m_1}(\mu_n)^\times/p^N\big)_\chi \longrightarrow 
R_1}$ be a given $R_1$-homomorphism. 
Note that the natural homomorphism 
\[
\xymatrix{\iota\colon 
F_{m}(\mu_n)^\times/p^{N} \ar[r] & 
F_{m+1}(\mu_n)^\times/p^{N} }
\]
is injective since we have 
$H^0(F_{m+1}(\mu_n),\mu_{p^{N}})=0$. 
We regard $\big(F_{m_1}(\mu_n)^\times/p^N\big)_\chi$ 
as an $R_2$-submodule of 
$\big(F_{m+1}(\mu_n)^\times/p^{N} \big)_\chi$
by the natural injection $\iota_\chi$. 
Note $R_2$ is 
an injective $R_2$-module, 
so we can extend the homomorphism 
\[
\xymatrix{\nu_{m+1/m}\circ f_1 \colon 
\big(F_{m}(\mu_n)^\times/p^{N} \big)_\chi \ar[r] & 
R_2 }
\]
to an  $R_2$-homomorphism 
$f_2 \colon 
\big(F_{m+1}(\mu_n)^\times/p^{N} \big)_\chi \longrightarrow 
R_2$.
By definition of $f_2$, we obtain 
the commutative diagram
\[
\xymatrix{
\big(F_{m+1}(\mu_n)^\times/p^{N} \big)_\chi 
\ar@{-->}[r]^(0.7){f_2} 
\ar[d]_{\times N_{m+1/m}} & 
R_2 \ar[d]_{\mrm{pr}} 
\ar[rr]^{\times N_{m+1/m}} & & R_2 \\
\big(F_{m}(\mu_n)^\times/p^{N} \big)_\chi \ar[r]^(0.7){f_1} & 
R_1 \ar@{^{(}->}[urr]_{\nu_{m+1/m}} & & }
\]
in the category of $R_2$-modules.
\end{proof}

Let $m_1$, $m_2$, $N_1$ and $N_2$ be integers satisfying 
$N_1\ge m_1+1$, $N_2\ge m_2+1$, $m_2 \ge m_1$ 
and $N_2 \ge N_1$. 
Let $n \in \mca{N}_{N_2}^{\mrm{w.o.}}$ be 
an element satisfying $\epsilon (n)\leq i$.
Note that it follows from Lemma \ref{norm comp} that 
the image of $\mca{W}_{m_2,N_2,\chi}(n)$ 
by the norm map 
\[
\xymatrix{N_{F_{m_2}/F_{m_1}}\colon 
(F_{m_2}^\times/p^{N_2})_\chi \ar[r] & (F_{m_1}^\times/p^{N_1})_\chi}
\]
is contained in $\mca{W}_{m_1,N_1,\chi}(n)$.
Hence the following corollary follows from Lemma \ref{liftinghom}.

\begin{cor}\label{compatibility of cyclotomic ideals}
Let $m_1$, $m_2$, $N_1$ and $N_2$ be integers satisfying 
$N_1\ge m_1+1$, $N_2\ge m_2+1$, $m_2 \ge m_1$ 
and $N_2 \ge N_1$. 
Then, the image of $\mf{C}_{i,m_2,N_2,\chi}$ 
by the projection
$R_{m_2, N_2,\chi} \longrightarrow R_{m_1, N_1,\chi}$
is contained in $\mf{C}_{i,m_1,N_1,\chi}$. 
Moreover, if we assume $\Delta_p=0$ and $N_1=N_2=:N$, then 
the image of $\mf{C}_{i,m_2,N,\chi}$ 
in $R_{m_1,N,\chi}$
coincides with $\mf{C}_{i,m_1,N,\chi}$. 
\end{cor}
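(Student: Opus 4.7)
The plan is to deduce both assertions from Lemma \ref{liftinghom} by pulling back generators of $\mf{C}_{i,m,N,\chi}$ through the norm/projection diagrams, using (respectively) the containment and the equality of the norm image of $\mca{W}_{m_2,N_2,\chi}(n)$ inside $\mca{W}_{m_1,N_1,\chi}(n)$.

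For the first assertion I would take an arbitrary generator $f_2(w_2)$ of $\mf{C}_{i,m_2,N_2,\chi}$, where $f_2\in\mca{H}_{m_2,N_2,\chi}$, $n\in\mca{N}_{N_2}^{\mrm{w.o.}}$ with $\epsilon(n)\le i$, and $w_2\in\mca{W}_{m_2,N_2,\chi}(n)$. Applying Lemma \ref{liftinghom} (i) with $n=1$ produces an $f_1\in\mca{H}_{m_1,N_1,\chi}$ fitting in the commutative square $f_1\circ N_{F_{m_2}/F_{m_1}}=\mathrm{pr}\circ f_2$. Since the remark just before the corollary (via Lemma \ref{norm comp}) gives $N_{F_{m_2}/F_{m_1}}(w_2)\in\mca{W}_{m_1,N_1,\chi}(n)$, we obtain $\mathrm{pr}(f_2(w_2))=f_1(N_{F_{m_2}/F_{m_1}}(w_2))\in\mf{C}_{i,m_1,N_1,\chi}$, which proves the desired containment.

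For the second assertion, under the hypotheses $\Delta_p=0$ and $N_1=N_2=:N$, I first upgrade the containment to an equality by showing that the norm map $N_{F_{m_2}/F_{m_1}}\colon \mca{W}_{m_2,N,\chi}(n)\to \mca{W}_{m_1,N,\chi}(n)$ is surjective. The key observation is that Lemma \ref{norm comp} (ii), combined with the fact that $D_n\in\bb{Z}[H_n]$ commutes with $\Gal(F_{m_2}/F_{m_1})$, yields $N_{F_{m_2}(\mu_n)/F_{m_1}(\mu_n)}(\eta_{m_2}^{\bullet}(n)^{D_n})=\eta_{m_1}^{\bullet}(n)^{D_n}$; by the uniqueness characterization of the Kolyvagin derivatives (Definition \ref{kappa}), this forces $N_{F_{m_2}/F_{m_1}}(\kappa^{\bullet}_{m_2,N}(n))=\kappa^{\bullet}_{m_1,N}(n)$. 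Because the norm satisfies $\sigma\cdot N(x)=N(\tilde\sigma x)$ for any lift $\tilde\sigma\in R_{m_2,N,\chi}$ of $\sigma\in R_{m_1,N,\chi}$, its image is an $R_{m_1,N,\chi}$-submodule of $(F_{m_1}^{\times}/p^N)_\chi$ that contains all the generators $\kappa^{\bullet}_{m_1,N}(n)$, and hence equals $\mca{W}_{m_1,N,\chi}(n)$. Then, for any generator $f_1(w_1)$ of $\mf{C}_{i,m_1,N,\chi}$, I would choose a preimage $w_2\in\mca{W}_{m_2,N,\chi}(n)$ of $w_1$ under the norm, apply Lemma \ref{liftinghom} (ii) to extend $f_1$ to an $f_2\in\mca{H}_{m_2,N,\chi}$ with $\mathrm{pr}\circ f_2=f_1\circ N_{F_{m_2}/F_{m_1}}$, and conclude $f_1(w_1)=\mathrm{pr}(f_2(w_2))\in\mathrm{pr}(\mf{C}_{i,m_2,N,\chi})$.

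The main obstacle I anticipate is the surjectivity of the norm map on the $\mca{W}$-modules required for the second assertion. While the first containment is essentially a routine diagram chase once Lemma \ref{liftinghom} (i) is invoked, the equality rests on verifying simultaneously that (i) the Kolyvagin derivatives are norm compatible in the $m$-direction, and (ii) the Galois-equivariant (but not $R_{m_2,N,\chi}$-linear) norm map nonetheless sends the $R_{m_2,N,\chi}$-module $\mca{W}_{m_2,N,\chi}(n)$ onto an $R_{m_1,N,\chi}$-submodule of the target, which then must coincide with $\mca{W}_{m_1,N,\chi}(n)$ by the generator count.
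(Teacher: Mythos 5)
Your argument is correct and follows the same route the paper intends: the first inclusion is Lemma \ref{liftinghom}~(i) together with the norm compatibility of $\mca{W}_{m,N,\chi}(n)$ coming from Lemma \ref{norm comp}, and the second uses Lemma \ref{liftinghom}~(ii) (which is where $\Delta_p=0$ enters) once the norm map $\mca{W}_{m_2,N,\chi}(n)\to\mca{W}_{m_1,N,\chi}(n)$ is seen to be surjective, which you verify correctly via Lemma \ref{norm comp}~(ii), uniqueness in Definition \ref{kappa}, and the semilinearity of the norm over $R_{m_2,N,\chi}\twoheadrightarrow R_{m_1,N,\chi}$. The paper's own proof is only the two sentences preceding the corollary; what you spell out (the exact norm compatibility $N_{F_{m_2}/F_{m_1}}(\kappa^{\bullet}_{m_2,N}(n))=\kappa^{\bullet}_{m_1,N}(n)$ and the submodule argument) is exactly the detail it leaves implicit.
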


Now, we can define the higher cyclotomic ideals.
\begin{dfn}\label{The $i$-th cyclotomic ideal}
We define {\em the $i$-th cyclotomic ideal} 
$\mf{C}_{i,\chi}$ to be the ideal of 
$\Lambda_\chi$ by 
\[
\mf{C}_{i,\chi}:=\plim\mf{C}_{i,m,N,\chi},
\]
where the projective limit is taken 
with respect to the system of the natural homomorphisms 
$\mf{C}_{i,m_2,N_2,\chi} \longrightarrow \mf{C}_{i,m_1,N_1,\chi}$
for integers $m_1, m_2, N_1$ and $N_2$ satisfying 
$N_1\ge m_1+1$, $N_2\ge m_2+1$, 
$m_2 \ge m_1$ and $N_2 \ge N_1$.
\end{dfn}

\subsection{}\label{subsection of 0-th cyclotomic ideals}
We take a generator $\theta \in \Lambda_\chi$ 
of $\cha_{\Lambda_{\chi}}\big( E_{\infty,\chi}/C_{\infty,\chi} \big)$. 
Then, we denote the ideal of $\Lambda_\chi$ generated by  
\[
\bigcup_{\varphi\in \Hom_{\Lambda_\chi}
(E_{\infty,\chi},\Lambda_\chi) } 
\theta^{-1}\varphi(C_{\infty,\chi})
\]
by $I_C$. 
Note that $I_C$ is an ideal of $\Lambda_\chi$ of finite index. 
Moreover, 
by Proposition \ref{circular units, free}, 
we have $I_C=\Lambda_\chi$ if the extension degree of 
$K/\bb{Q}$ is prime to $p$. 

In the rest of this section, 
we will prove the following theorem, 
which is a part of
Theorem \ref{Main theorem, Rough} for $i=0$.

\begin{thm}\label{Main theorem for i=0}
Let $\chi  \in \widehat \Delta$ be any character.
Then, we have
\begin{itemize}
\item[(i)] 
\ $\mf{C}_{0,\chi} \subseteq \Fitt_{\Lambda_\chi,0}(X'_\chi)$.
\item[(ii)]
\ $(\gamma-1)^{a_\chi}\left| \Delta_p \right|^4 
I_C I_{P_{\chi}^E}J_{P_{\chi}^E}  
\Fitt_{\Lambda_\chi,i}(X'_\chi) \subseteq \mf{C}_{i,\chi}$.
\end{itemize}
\end{thm}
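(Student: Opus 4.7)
The plan is to reduce both inclusions to the plus-part of the Iwasawa main conjecture combined with a rank-one analysis of $E_{\infty,\chi}$ and $C_{\infty,\chi}$, with the various error factors bookkeeping the obstructions introduced by taking $\chi$-components when $\Delta_p \ne 0$, by descending from the inverse limit to each finite level via $P_{m,\chi}^E$, and by comparing $C_{\infty,\chi}$ with the free part of $E_{\infty,\chi}$. Fix a generator $\theta \in \Lambda_\chi$ of $\cha_{\Lambda_\chi}\bigl((E_\infty/C_\infty)_\chi\bigr)$. Since $X'_\chi$ contains no non-trivial pseudo-null $\Lambda_\chi$-submodule by construction, Lemma \ref{no pn} gives $\Fitt_{\Lambda_\chi,0}(X'_\chi) = \cha_{\Lambda_\chi}(X_\chi)$, and the plus-part of the Iwasawa main conjecture identifies this with $\theta \Lambda_\chi$. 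Thus both inclusions reduce to comparisons between $\mf{C}_{0,\chi}$ and $\theta \Lambda_\chi$ modulo the stated error factors.

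For part (i), I would take arbitrary generators $f \in \mca{H}_{m,N,\chi}$ and $c \in (C_m^1/p^N)_\chi$ of $\mf{C}_{0,m,N,\chi}$ and prove that $f(c) \in \theta R_{m,N,\chi}$. The key step is to compose the restriction of $f$ to $(E_m^1/p^N)_\chi$ with $P_{m,\chi}^E$ from Proposition \ref{proj of E}, obtaining a map $(E_{\infty,\chi})_{\Gamma_m}/p^N \to R_{m,N,\chi}$, which via Corollary \ref{torsion free} and the rank-one statement of Proposition \ref{rank one} lifts to a $\Lambda_\chi$-homomorphism $g\colon E_{\infty,\chi} \to \Lambda_\chi$ up to torsion bounded by the error factors. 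Because $E_{\infty,\chi}$ has $\Lambda_\chi$-rank one, such a $g$ is essentially scalar multiplication, so the fact that $(E_\infty/C_\infty)_\chi$ has characteristic $\theta$ forces $g(C_{\infty,\chi}) \subseteq \theta \Lambda_\chi$. Using Proposition \ref{generate} and Lemma \ref{norm comp} to realise $c$ as the image of a norm-compatible element of $C_{\infty,\chi}$ then yields $f(c) \in \theta R_{m,N,\chi}$; passing to the inverse limit via Corollary \ref{compatibility of cyclotomic ideals} gives (i).

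For part (ii) the strategy is reversed: rather than analyse an arbitrary $f$, I would construct specific $f \in \mca{H}_{m,N,\chi}$ whose values on circular units cover $\theta$ up to the error factors. Starting from the homomorphism $\varphi\colon E_{\infty,\chi} \to \Lambda_\chi$ of Proposition \ref{rank one}, I would descend via $P_{m,\chi}^E$ to obtain a map $(E_m^1/p^N)_\chi \to R_{m,N,\chi}$; this step absorbs the factor $(\gamma-1)^{a_\chi/2}|\Delta_p|^2 I_{P_\chi^E} J_{P_\chi^E}$ dictated by Proposition \ref{proj of E} and Corollary \ref{kernel}. Using the self-injectivity of $R_{m,N,\chi}$ exploited in Lemma \ref{liftinghom}, extend this to a homomorphism $f\colon (F_m^\times/p^N)_\chi \to R_{m,N,\chi}$. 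Evaluating $f$ on the norm-compatible circular units of Proposition \ref{generate} recovers $\varphi(C_{\infty,\chi})$ modulo $(\gamma^{p^m}-1, p^N)$, and the very definition of $I_C$ guarantees that as $\varphi$ ranges over $\Hom_{\Lambda_\chi}(E_{\infty,\chi},\Lambda_\chi)$ the resulting ideal contains $\theta I_C \Lambda_\chi$. A second application of Corollary \ref{kernel} to pass back between $\Lambda$-modules and their $\chi$-components, together with a squaring of the estimates for $P_{m,\chi}^E$ when combining the kernel and cokernel bounds, yields the extra $(\gamma-1)^{a_\chi/2}|\Delta_p|^2$; passing to the inverse limit produces (ii).

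The main obstacle I anticipate is the careful bookkeeping of the error factors when $\Delta_p \ne 0$ or $\chi(p)=1$, where neither $E_{\infty,\chi}$ nor $C_{\infty,\chi}$ is known to be a free $\Lambda_\chi$-module of rank one: the rank-one reductions above become only approximate and must be tracked through Propositions \ref{rank one} and \ref{proj of E} and the finite-index ideals $I_C$, $I_{P_\chi^E}$, $J_{P_\chi^E}$. A secondary subtlety, handled by Lemma \ref{liftinghom} and Corollary \ref{compatibility of cyclotomic ideals}, is ensuring that the constructions of $g$ in part (i) and of $f$ in part (ii) are compatible under varying $(m,N)$, so that the inverse limit defining $\mf{C}_{0,\chi}$ behaves as intended.
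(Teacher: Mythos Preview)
Your strategy matches the paper's: reduce via Lemma~\ref{no pn} and the Iwasawa main conjecture to the comparison of $\mf{C}_{0,\chi}$ with $\theta\Lambda_\chi=\cha_{\Lambda_\chi}\bigl((E_\infty/C_\infty)_\chi\bigr)$, and prove the two inclusions by exploiting that $E_{\infty,\chi}$ has $\Lambda_\chi$-rank one (Proposition~\ref{rank one}) and that $\mca{W}_{m,N,\chi}(1)$ is the image of $C_{\infty,\chi}$ at level $m$ (Proposition~\ref{generate}). For part~(ii) your descent of a fixed $\varphi\colon E_{\infty,\chi}\to\Lambda_\chi$ through $P_{m,\chi}^E$ and extension by self-injectivity of $R_{m,N,\chi}$ is exactly the paper's argument (Lemma~\ref{delta kowaza}).

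There is one step in part~(i) you gloss over that the paper makes explicit. Your lift of $f\circ P_{m,\chi}^E$ to $g\colon E_{\infty,\chi}\to\Lambda_\chi$ only works up to error factors, so what you actually obtain is an identity of the form $(\gamma-1)^{a_\chi}\delta'_1\delta'_2\cdot f(\mca{W}_{m,N,\chi}(1)) \subseteq \theta R_{m,N,\chi}$ with $\delta'_1,\delta'_2$ annihilating $\Ker\varphi$ and $\Coker\varphi$; this does not by itself yield $f(c)\in\theta R_{m,N,\chi}$. The paper closes this gap by observing that $\delta'_1,\delta'_2$ may be taken to be $p$-powers (since $\varphi$ has pseudo-null kernel and cokernel), and that the principal ideal $\theta\Lambda_\chi$ contains neither $p$ nor $\gamma-1$; coprimality in the UFD $\Lambda_\chi$ then removes the error factors and gives the clean inclusion $\mf{C}_{0,\chi}\subseteq\theta\Lambda_\chi$. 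You should add this coprimality step.
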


In order to prove Theorem \ref{Main theorem for i=0}, 
by Iwasawa main conjecture, 
it is enough to prove the following Proposition \ref{size}. 

\begin{prop}\label{size}
Let $\chi$ be a non-trivial character in $\widehat{\Delta}$. Then, 
\begin{enumerate}
\item $(\gamma-1)^{a_\chi}\left| \Delta_p \right|^4 
I_{P_{\chi}^E} J_{P_{\chi}^E} I_C \cha_{\Lambda_{\chi}}\big( 
E_{\infty,\chi}/C_{\infty,\chi} \big) 
\subseteq \mf{C}_{0,\chi}; $
\item $ \mf{C}_{0,\chi} \subseteq \cha_{\Lambda_{\chi}}\big( 
E_{\infty,\chi}/C_{\infty,\chi} \big).$
\end{enumerate}
\end{prop}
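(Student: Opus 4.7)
The plan is to prove the two inclusions by passing the finite-level homomorphisms $f_{m,N}\colon (F_m^\times/p^N)_\chi \to R_{m,N,\chi}$ to $\Lambda_\chi$-linear maps on $E_{\infty,\chi}$ and comparing the resulting images of $C_{\infty,\chi}$ with $\theta\Lambda_\chi := \cha_{\Lambda_\chi}(E_{\infty,\chi}/C_{\infty,\chi})$. The key tools are Lemma~\ref{liftinghom}(i) (to build compatible inverse systems of $f_{m,N}$ out of a single one), the injectivity of $R_{m,N,\chi}$ over itself recorded in the remark after Definition~\ref{The $i$-th cyclotomic ideal}, the quasi-isomorphism $\varphi \colon E_{\infty,\chi} \to \Lambda_\chi$ of Proposition~\ref{rank one}, and the comparison of $(E_\infty)_{\Gamma_m,\chi}$ with $E^1_{m,\chi}$ given by Proposition~\ref{proj of E}.

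For inclusion~(2) I take any $f_{m,N}$ and any basic circular unit $\eta = \eta_m^\bullet(1)$ in $\mca{W}_{m,N,\chi}(1)$. Since $\eta$ comes from $E^1_{m,\chi}/p^N$, restricting $f_{m,N}$ to $E^1_{m,\chi}/p^N$ and composing with the canonical projection $(E_\infty)_{\Gamma_m,\chi}/p^N \to E^1_{m,\chi}/p^N$ produces a map $g_{m,N}\colon (E_\infty)_{\Gamma_m,\chi}/p^N \to R_{m,N,\chi}$. Using Lemma~\ref{liftinghom}(i) I assemble the $g_{m,N}$ into a compatible inverse system to obtain $g_\infty \in \Hom_{\Lambda_\chi}(E_{\infty,\chi}, \Lambda_\chi)$ whose reduction is $g_{m,N}$ and which satisfies $f_{m,N}(\eta) = g_{m,N}(\eta_\infty \bmod (\gamma^{p^m}-1, p^N))$ for the norm-compatible lift $\eta_\infty \in C_{\infty,\chi}$ of $(\eta_m^\bullet(1))_m$. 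Because $E_{\infty,\chi}$ has $\Lambda_\chi$-rank one by Proposition~\ref{rank one}, its $\Lambda_\chi$-dual is pseudo-isomorphic to $\Lambda_\chi$ and is generated up to pseudo-null by $\varphi$; thus $g_\infty = \alpha\varphi$ for some $\alpha \in \Lambda_\chi$, and $g_\infty(C_{\infty,\chi}) \subseteq \alpha\,\varphi(C_{\infty,\chi}) \subseteq \theta\Lambda_\chi$, since $\varphi(C_{\infty,\chi})$ has characteristic ideal $\theta\Lambda_\chi$ by the Iwasawa main conjecture.

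For inclusion~(1) I reverse the construction. Writing $y = \theta z$ with $z \in I_C$, the definition of $I_C$ furnishes $\varphi \in \Hom_{\Lambda_\chi}(E_{\infty,\chi}, \Lambda_\chi)$ and $c \in C_{\infty,\chi}$ with $\varphi(c) = \theta z$. Reducing $\varphi$ to $\varphi_{m,N}\colon (E_{\infty,\chi})_{\Gamma_m}/p^N \to R_{m,N,\chi}$ and applying Proposition~\ref{proj of E} twice---once to absorb the kernel and once for the cokernel of $P^E_{m,\chi}$, each contributing a factor of $(\gamma-1)^{a_\chi/2} I_{P^E_\chi} J_{P^E_\chi}$---yields a map $E^1_{m,\chi}/p^N \to R_{m,N,\chi}$. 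Injectivity of $R_{m,N,\chi}$ extends this to $\tilde\varphi_{m,N}\colon (F_m^\times/p^N)_\chi \to R_{m,N,\chi}$, and its value on the reduction of $c$ lies in $\mf{C}_{0,m,N,\chi}$ by construction. The factor $|\Delta_p|^4$ comes from two applications of Corollary~\ref{kernel} / Lemma~\ref{elementary lemma}, one to transport $c$ through the $\chi$-quotient of $C_\infty$ and one to transport $\varphi$ through the $\chi$-quotient of $E_\infty$. Corollary~\ref{compatibility of cyclotomic ideals} then assembles the values $\tilde\varphi_{m,N}(\bar c)$ into an inverse-limit element of $\mf{C}_{0,\chi}$ equal to $y$ times the accumulated error factors.

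The main obstacle is the bookkeeping: each of $(\gamma-1)^{a_\chi}$, $I_{P^E_\chi}$ and $J_{P^E_\chi}$ enters because both the kernel and the cokernel of $P^E_{m,\chi}$ must be annihilated, while $|\Delta_p|^4$ appears because a $\chi$-quotient comparison is needed for both $E_\infty$ and $C_\infty$. The most delicate step is ensuring that the injective extension producing $\tilde\varphi_{m,N}$ is done compatibly under the norm maps, so that the resulting family fits into an inverse system in the sense of Corollary~\ref{compatibility of cyclotomic ideals}; for inclusion~(2) the analogous delicate step is making the assertion ``$g_\infty = \alpha\varphi$ up to pseudo-null'' precise enough to yield a genuine set-theoretic containment in the principal ideal $\theta\Lambda_\chi$ rather than merely $I \sim \theta\Lambda_\chi$.
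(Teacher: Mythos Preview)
Your argument for inclusion~(1) is essentially the paper's: fix a $\Lambda_\chi$-homomorphism $\varphi\colon E_{\infty,\chi}\to\Lambda_\chi$ with finite cokernel, reduce it modulo $(\gamma^{p^m}-1,p^N)$, and use Proposition~\ref{proj of E} to descend through $P^E_{m,\chi}$ to a map on $\mca{NO}_{m,N,\chi}$, which one then extends to $(F_m^\times/p^N)_\chi$ by injectivity of $R_{m,N,\chi}$. The paper packages this descent as Lemma~\ref{delta kowaza}. Two minor corrections: (a) only the \emph{kernel} of $P^E_{m,\chi}$ contributes $I_{P^E_\chi}$ and only the cokernel contributes $J_{P^E_\chi}$, so each appears to the first power, not squared; (b) the compatibility of the extensions $\tilde\varphi_{m,N}$ under norm maps is not actually delicate---the values $\tilde\varphi_{m,N}(\bar c)$ are already determined by the reductions of the fixed $\Lambda_\chi$-map $\varphi$, and these reductions are automatically compatible.

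Your argument for inclusion~(2) has a genuine gap, and it is not the one you flag. The problem is the construction of $g_\infty$. Lemma~\ref{liftinghom}(i) takes a homomorphism at level $(m_2,N_2)$ and produces a compatible one at a \emph{lower} level $(m_1,N_1)$; it does not let you climb upward. So starting from a single $g_{m_0,N_0}$ you obtain only a finite compatible family indexed by $(m,N)\le(m_0,N_0)$, never an element of $\Hom_{\Lambda_\chi}(E_{\infty,\chi},\Lambda_\chi)$. Nor can this be repaired in general: since $E_{\infty,\chi}$ need not be $\Lambda_\chi$-free (it is only isomorphic to an ideal of finite index), the reduction map
\[
\Hom_{\Lambda_\chi}(E_{\infty,\chi},\Lambda_\chi)\longrightarrow
\Hom_{R_{m,N,\chi}}\big((E_{\infty,\chi})_{\Gamma_m}/p^N,\,R_{m,N,\chi}\big)
\]
is typically not surjective, so there exist finite-level maps $g_{m,N}$ with no lift $g_\infty$ at all. (Incidentally, if $g_\infty$ \emph{did} exist your worry would be unfounded: $\Hom_{\Lambda_\chi}(E_{\infty,\chi},\Lambda_\chi)$ is exactly $\Lambda_\chi\cdot\varphi$, not merely up to pseudo-null, so $g_\infty=\alpha\varphi$ on the nose.)

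The paper sidesteps this by never leaving the finite level. It builds a ``pseudo-inverse'' $\psi_{\delta'_1,\delta'_2}\colon R_{m,N,\chi}\to(E_{\infty,\chi})_{\Gamma_m}/p^N$ of $\bar\varphi_{m,N,\chi}$ (well-defined after multiplying by $(\gamma-1)^{a_\chi}\delta'_1\delta'_2$ with $\delta'_1,\delta'_2$ $p$-powers annihilating $\Ker\varphi$ and $\Coker\varphi$), and shows via a diagram chase that
\[
(\gamma-1)^{a_\chi}\delta'_1\delta'_2\,f\big(\mca{W}_{m,N,\chi}(1)\big)\subseteq
\bar\varphi_{m,N,\chi}\big((C_{\infty,\chi})_{\Gamma_m}/p^N\big)\subseteq\theta R_{m,N,\chi}.
\]
Passing to the limit gives $(\gamma-1)^{a_\chi}\delta'_1\delta'_2\,\mf{C}_{0,\chi}\subseteq\theta\Lambda_\chi$. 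The final step---and the real idea you are missing---is that $\theta$ is coprime to both $p$ and $\gamma-1$ in the UFD $\Lambda_\chi$, so the auxiliary factor $(\gamma-1)^{a_\chi}\delta'_1\delta'_2$ can be cancelled, yielding the clean inclusion $\mf{C}_{0,\chi}\subseteq\theta\Lambda_\chi$.
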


\begin{proof}
Let us prove Proposition \ref{size}. 
First, we prove the first assertion. 
By Proposition \ref{rank one}, we can take a homomorphism 
$\varphi\colon E_{\infty,\chi} \longrightarrow \Lambda_\chi$ 
of finite index.
This induces a homomorphism 
\[
\xymatrix{\bar{\varphi}_{m,N,\chi}\colon 
(E_{\infty,\chi})_{\Gamma_m}/p^N \ar[r] & R_{m, N,\chi}. }
\]  
We take arbitrary elements 
$\delta_1 \in I_{P_{\chi}^E}$ 
and $\delta_2 \in J_{P_{\chi}^E}$. 

\begin{lem}\label{delta kowaza}
Let $\mca{NO}_{m,N,\chi}$ be the image of the homomorphism 
\[
\xymatrix{(E_{\infty,\chi})_{\Gamma_m}/p^N  \ar[r] & 
\big(F_m^{\times}/p^N\big)_\chi}
\]
induced by the homomorphism $P_{m,\chi}^{F}$ defined 
in \S \ref{subsection of global units}. 
Then, the kernel of the natural homomorphism is annihilated by 
$(\gamma-1)^{a_\chi}\left| \Delta_p \right|^4 
I_{P_{\chi}^E} J_{P_{\chi}^E}$,
and 
there exists a homomorphism  
$\psi\colon \mca{NO}_{m,N,\chi} \longrightarrow R_{m,N,\chi}$ 
which makes the diagram
\[
\xymatrix{
(C_{\infty,\chi})_{\Gamma_m}/p^N \ar[r] \ar@{->>}[d] & 
(E_{\infty,\chi})_{\Gamma_m}/p^N 
\ar[rrrr]^(0.55){(\gamma -1)^{a_\chi} 
\left| \Delta_p \right|^4 \delta_1 \delta_2 
\cdot \bar{\varphi}_{m,N,\chi}}  
\ar@{->>}[d] & & & & R_{m,N,\chi} \\
\mca{W}_{m,N,\chi}(1)  \ar@{^{(}->}[r] & 
\mca{NO}_{m,N,\chi} \ar@{-->}[rrrru]_{\psi}  & & & }
\] 
commute.
\end{lem}
This lemma follows from Corollary \ref{kernel} 
and Proposition \ref{proj of E} straightforward. 

Lemma \ref{delta kowaza} implies 
the first assertion of Proposition \ref{size}.
Indeed, since the image of $(C_{\infty,\chi})_{\Gamma_m}$ 
in $F_m^{\times}/p^N$ coincides with $\mca{W}_{m,N,\chi}(1)$ 
by Proposition \ref{generate}, 
we have 
\[
(\gamma -1)^{a_\chi} \delta_1 \delta_2 
\cdot \bar{\varphi}_{m,N,\chi}\big( \text{the image of } 
(C_{\infty,\chi})_{\Gamma_m} \big) 
\subseteq  \psi(\mca{W}_{m,N,\chi}(1)) 
\subseteq  \mf{C}_{0,m,N,\chi}.
\]

Next, we prove the second assertion. 
We take arbitrary elements 
$\delta'_1 \in \ann_{\Lambda_\chi}(\Ker \varphi)$ 
and $\delta'_2 \in \ann_{\Lambda_\chi}(\Coker \varphi)$. 
In particular, we may take some $p$-powers 
as $\delta'_1$ and $\delta'_2$.
We shall construct a homomorphism 
\[
\psi_{\delta'_1,\delta'_2} \in 
\Hom_{R_{m, N,\chi} }( R_{m, N,\chi}, 
(E_{\infty,\chi})_{\Gamma_m}/p^N ),
\]
which can be regarded as ``inverse" of $\bar{\varphi}_{m,N,\chi}$
in some sense as follows. 
For each $x \in R_{m, N,\chi}$, 
we take $y \in (E_{\infty,\chi})_{\Gamma_m}/p^N $ such that 
\[
\bar{\varphi}_{m,N,\chi}(y)=(\gamma-1)^{a_\chi/2} \delta'_2 x.
\]
Then, we define
\[
{\psi}_{\delta'_1,\delta'_2}(x)
:= (\gamma-1)^{a_\chi/2} \delta'_1 y 
\in (E_{\infty,\chi})_{\Gamma_m}/p^N.
\]
The definition of 
$\psi_{\delta'_1,\delta'_2}(x)$ 
is independent of the choice of $y$, and 
${\psi}_{\delta'_1,\delta'_2}$ 
is contained in 
$\Hom_{R_{m, N,\chi} }( R_{m, N,\chi}, 
(E_{\infty,\chi})_{\Gamma_m}/p^N )$.

Let $f\in \mca{H}_{m,N,\chi}(n)$ 
be an arbitrary homomorphism.
Since $R_{m,N,\chi}$ is an injective 
$R_{m,N,\chi}$-module, 
there exists a homomorphism 
$\tilde{f} \colon 
\big(F_m^{\times}/p^N\big)_\chi \longrightarrow  R_{m,N,\chi}$
whose restriction to $\mca{W}_{m,N,\chi}(1)$ coincides with $f$. 
Then, we have an element $a \in R_{m,N,\chi}$ 
which makes the following diagram
\[
\xymatrix{
(E_{\infty,\chi})_{\Gamma_m}/p^N \ar[ddd]& & & & & R_{m,N,\chi} 
\ar[lllll]_{{\psi}_{\delta'_1,\delta'_2}} 
\ar@{.>}[ddd]^{\times a}  \\
 & & (C_{\infty,\chi})_{\Gamma_m}/p^N \ar[d]^{(\gamma-1)^{a_\chi}
\delta'_1 \delta'_2\cdot j} 
\ar[llu]^(0.47){(\gamma-1)^{a_\chi}\delta'_1 \delta'_2\cdot i \ \ \ \ } 
\ar[rrru]_(0.48){\ \ \bar{\varphi}_{m,N,\chi} \circ i}&  & &   \\
& &\mca{W}_{m,N,\chi}(1) \ar[rrrd]^(0.54){f}  \ar@{_{(}->}[lld] &&&  \\
\big(F_m^{\times}/p^N\big)_\chi \ar@{-->}[rrrrr]^(0.47){\tilde{f}} 
& & & & &  R_{m,N,\chi} }
\] 
commute, where the right vertical arrow $\times a$ 
is the scalar multiplication by $a$, 
and maps 
$i\colon (C_{\infty,\chi})_{\Gamma_m}/p^N \longrightarrow 
(E_{\infty,\chi})_{\Gamma_m}/p^N$ and 
$j\colon (C_{\infty,\chi})_{\Gamma_m}/p^N \longrightarrow  
\mca{W}_{m,N,\chi}(1)$
are the natural homomorphism.
From this diagram, we obtain 
\[
{(\gamma-1)^{a_\chi}\delta'_1 \delta'_2}
f(\mca{W}_{m,N,\chi}(1))_{\Gamma_m}/p^N) 
=  a\bar{\varphi}_{m,N,\chi} \circ 
i((C_{\infty,\chi})_{\Gamma_m}/p^N).
\]
Since the characteristic ideal 
$\cha_{\Lambda_{\chi}}\big( E_{\infty,\chi}/C_{\infty,\chi} \big)$ 
is a principal ideal of 
$\Lambda_\chi$ not containing neither $p$ nor $\gamma-1$,
the second assertion of this proposition follows.
\end{proof}

\section{Kurihara's elements for circular units}\label{the section of Kurihara's elements for circular units}
In this section, we recall some basic facts 
on the Euler system of circular units, 
and define some elements $x_{m,N}(n)_\chi$ 
of $(F_m^\times/p^N)_\chi$, which are 
analogue of Kurihara's elements in \cite{Ku}. 

\subsection{}\label{2-maps}
Let $\chi \in \widehat\Delta$ be a character. 
Recall we put $R_{m,N}:=\bb{Z}/p^N[\Gal(F_m/\bb{Q})]$ 
and $R_{m,N,\chi}:=R_{m,N}\otimes_{\bb{Z}_p[\Delta]}\mca{O}_{\chi}$.  
Here, for each $\ell\in\mca{S}_N$, 
we shall recall the definition of two  homomorphisms 
\begin{align*}
[\cdot ]_{m,N,\chi}^\ell\colon 
& (F_m^{\times}/p^N)_{\chi} 
\longrightarrow R_{m, N, \chi} 
&(\text{cf.\ Definition \ref{[]}}), \\
\bar{\phi}_{m,N,\chi}^\ell\colon & 
(F_m^{\times}/p^N)_{\chi} 
\longrightarrow R_{m, N, \chi} 
 &(\text{cf.\ Definition \ref{phi}}),
\end{align*}  
which are important 
in the induction part in the Euler system arguments.
The homomorphism $[\cdot ]_{m,N,\chi}^\ell$ is defined 
by the valuations of the places above $\ell$, 
and $\bar{\phi}_{m,N,\chi}^\ell$ is defined 
by the local reciprocity maps. 

First, we define $[\cdot ]_{m,N,\chi}^\ell$. 
Let $F$ be an algebraic number field. 
We define 
\[
\mca{I}_F:=\mrm{Div}\big(\mrm{Spec}(\mca{O}_F) \big)
\] 
to be the divisor group, and we write its group law additively.
We define the homomorphism 
$(\cdot )_F
\colon F^{\times} \longrightarrow \mca{I}_F$ 
by 
\[
(x)_F=\sum_{\lambda}\mrm{ord}_{\lambda}(x)\lambda,
\] 
where $\lambda$ runs through all prime ideals of $\mca{O}_F$, 
and $\mrm{ord}_{\lambda}$ is the normalized valuation of $\lambda$. 
For any prime number $\ell$, 
we define  $\mca{I}_F^\ell$ to be 
the subgroup of $\mca{I}_F$ generated by 
all prime ideals above $\ell$. 
Then, we define 
$(\cdot )_F^\ell\colon F^{\times} \longrightarrow \mca{I}_F^\ell$ 
by
\[
(x)_F^\ell
=\sum_{\lambda|\ell}\mrm{ord}_{\lambda}(x)\lambda.
\]

Recall that we fix a family of embeddings 
$\{ \xymatrix{\ell_{\overline{\bb{Q}}}\colon\overline{\bb{Q}} 
\ar@{^{(}->}[r] &
\overline{\bb{Q}}_\ell} \}_{\ell: \rm{prime}}$ (cf. \S 1 Notation). 
We denote the ideal of $F$ corresponding to the  embedding 
$\ell_{\overline{\bb{Q}}}|_K$ 
by $\ell_F$ for each prime number $\ell$ and algebraic number field $F$. 
Assume $F/\bb{Q}$ is Galois extension, 
and $\ell$ splits completely in $F/\bb{Q}$. 
Then, $\mca{I}_F^\ell$ is a free $\bb{Z}[\Gal(F/\bb{Q})]$-module 
generated by $\ell_F$, 
and we identify $\mca{I}_F^\ell$ with $\bb{Z}[\Gal(F/\bb{Q})]$ 
by the isomorphism 
$\xymatrix{\iota\colon \bb{Z}[\Gal(F/\bb{Q})] 
\ar[r]^(0.68){\simeq} & \mca{I}_F^\ell}$ 
defined by 
${x \longmapsto x\cdot \ell_F}$ 
for $x \in \bb{Z}[\Gal(F/\bb{Q})]$.
We also denote the composition map
$\xymatrix{
F^{\times} \ar[r] & \mca{I}_F^\ell \ar[r]^(0.3){\iota^{-1}}& 
\bb{Z}[\Gal(F/\bb{Q})]  
}$
by $(\cdot )_F^\ell$.

\begin{dfn}\label{[]}
We define the $R_{m, N, \chi}$-homomorphism 
\[
\xymatrix{
[\cdot ]_{m,N,\chi}\colon (F_m^{\times}/p^N)_{\chi} 
\ar[r] & (\mca{I}_{F_m}/p^N)_\chi
}
\] 
to be the homomorphism induced by 
$(\cdot )_{F_m}^\ell
\colon F_m^{\times} \longrightarrow \mca{I}_{F_m}$.
Let $\ell\in\mca{S}_N$ be any element. 
We define the $R_{m, N, \chi}$-homomorphism
\[
\xymatrix{
[\cdot ]_{m,N,\chi}^\ell\colon (F_m^{\times}/p^N)_{\chi} 
\ar[r] & R_{m, N, \chi}
=\bb{Z}/p^N[\Gal(F_m/\bb{Q})]
\otimes_{\bb{Z}_p[\Delta]}\mca{O}_{\chi}
}
\] 
to be the homomorphism induced by 
$(\cdot )_{F_m}^\ell\colon F_m^{\times} 
\longrightarrow \bb{Z}[\Gal(F_m/\bb{Q})]$.
\end{dfn}

Second, we will define $\bar{\phi}_{m,N,\chi}^\ell$. 
Let $\ell\in\mca{S}_N$ be any element. 
Since we assume $N \ge m+1$, 
the prime number $\ell$ splits completely in $F_m/\bb{Q}$, 
and we have $F_{m,\lambda}=\bb{Q}_\ell$ 
for any prime ideal $\lambda$ of $F_m$ above $\ell$. 
We regard the groups 
$\bigoplus_{\lambda|\ell}F_{m,\lambda}^{\times}$ 
and $\bigoplus_{\lambda|\ell}H_\ell$ are 
regarded as $\bb{Z}[\Gal(F_m/\bb{Q})]$-modules by the identification 
\begin{align*}
\bigoplus_{\lambda|\ell}F_{m,\lambda}^{\times} 
& = \mca{I}_{F_m}^\ell\otimes_{\bb{Z}} \bb{Q}_\ell^\times, \\ 
\bigoplus_{\lambda|\ell}H_\ell 
& = \mca{I}_{F_m}^\ell\otimes H_\ell,
\end{align*}
respectively. 
Here, we regard $\bb{Q}_\ell^\times$ 
as a $\bb{Z}[\Gal(F_m/\bb{Q})]$-modules 
on which $\Gal(F_m/\bb{Q})$ acts trivially.
We denote by 
\[
\xymatrix{
\phi_{\bb{Q}_\ell}\colon \bb{Q}_\ell^{\times} \ar[r] 
& \Gal\big(\bb{Q}_\ell(\mu_\ell)/\bb{Q}_\ell\big)
=\Gal\big(\bb{Q}(\mu_\ell)/\bb{Q}\big) 
= H_\ell 
}
\] 
the reciprocity map of local class field theory 
defined by 
\[
\phi_{\bb{Q}_\ell}(\ell)
=(\ell_{\bb{Q}(\mu_\ell)},\bb{Q}(\mu_\ell)/\bb{Q}). 
\]
In this paper, we define the homomorphism  
\[
\xymatrix{
\phi_{m}^{\ell}\colon F_m^{\times} \ar[r] & 
\bb{Z}[\Gal(F_m/\bb{Q})]\otimes H_\ell  
}
\] 
to be the composition of the three homomorphisms 
of $\bb{Z}[\Gal( F_m(\mu_n)/\bb{Q} )]$-modules:
\begin{align*}
\mrm{diag}\colon 
& \xymatrix{
F_m^{\times} \ar[r] &  
\bigoplus_{\lambda|\ell}F_{m,\lambda}^{\times},} \\
\oplus\phi_{\bb{Q}_\ell} \colon 
& \xymatrix{
\bigoplus_{\lambda|\ell}F_{m,\lambda}^{\times} 
\ar[r] & \bigoplus_{\lambda|\ell}H_\ell,} \\
\iota_H^{-1} \colon 
& \xymatrix{
\bigoplus_{\lambda|\ell}H_\ell 
\ar[r]^(0.34){\simeq } & \bb{Z}[\Gal( F_m/\bb{Q})]\otimes H_\ell, }
\end{align*}
which are defined  as follows:
\begin{enumerate}
\item the first homomorphism $\mrm{diag}$ is the diagonal inclusion;
\item the second homomorphism $\oplus\phi_{\bb{Q}_\ell}$ 
is the direct sum of the reciprocity maps; 
\item the third isomorphism $\iota_H^{-1}$ 
is the inverse of the isomorphism 
\[
\xymatrix{
\iota_H \colon \bb{Z}[\Gal( F_m/\bb{Q}) ]\otimes H_\ell 
\ar[r]^(0.52){\simeq} 
& \bigoplus_{\lambda|\ell}H_\ell
=\mca{I}_{F_m}^\ell\otimes H_\ell,
}
\]
which is induced by the above isomorphism 
$\xymatrix{\iota\colon \bb{Z}[\Gal(F_m/\bb{Q})] 
\ar[r]^(0.67){\simeq} & \mca{I}_{F_m}^\ell}$
given by 
$x \longmapsto x \cdot \ell_{F_m}$.
\end{enumerate}

\begin{dfn}\label{phi} 
Let $\ell\in\mca{S}_N$ be any element.
We define 
\[
\xymatrix{
\phi_{m,N,\chi}^\ell\colon (F_m^{\times}/p^N)_{\chi} 
\ar[r] & \bb{Z}/p^N[\Gal(F_m/\bb{Q})]_{\chi}\otimes H_\ell
}
\] 
to be the homomorphism of $R_{m,N,\chi}$-modules 
induced by $\phi_{m}^\ell$. 
The choice of a generator $\sigma_\ell$ of $H_\ell$ induces 
the $R_{m,N,\chi}$-homomorphism
\[
\xymatrix{
\bar{\phi}_{m,N,\chi}^\ell\colon 
(F_m^{\times}/p^N)_{\chi} \ar[r] & \bb{Z}[\Gal(F_m/\bb{Q})]_{\chi}
= R_{m,N,\chi}.
}
\]
\end{dfn}

The following formulas 
on Kolyvagin derivatives are well-known.
\begin{prop}
\label{[] and phi}
Let $n$ be an integer contained in $\mca{N}_N$. 
Let $d \in \bb{Z}_{>1}$ be an integer 
dividing $\mf{f}_{K}$ and $a \in \bb{Z}$ 
an integer coprime to $p$.
For simplicity, we denote $\kappa^d_{m,N}(n)$ or 
$\kappa^{1,a}_{m,N}(n)$ by $\kappa^{\bullet }(n)$.
\begin{enumerate}
\item  If $\lambda$ is a prime ideal of $K$ 
not dividing $n$, 
the $\lambda$-component of 
$[\kappa^{\bullet }(n)_\chi]_{m,N,\chi}$ is $0$. 
In particular, if $q \in \mca{S}_N$ is a prime number not dividing $n$, 
we have 
\[
[\kappa^{\bullet }(n)_\chi]_{m,N,\chi}^q = 0.
\]
{\em 
(See \cite{Grei} Lemma 3.6 
and \cite{Ru2} Proposition 2.4.)
}
\item Let $\ell$ be a prime number dividing $n$. 
Then,  
\[
[\kappa^{\bullet }(n)_\chi]_{m,N,\chi}^{\ell}
=\bar{\phi}_{m,N,\chi}^{\ell}(\kappa^{\bullet }(n/\ell)_\chi).
\] 
{\em 
(See \cite{Grei} Lemma 3.6 
and \cite{Ru2} Proposition 2.4.)
}
\item If $n$ is well-ordered, 
then 
\[
\bar{\phi}_{m,N,\chi}^{\ell}
(\kappa^{\bullet }_{m,N}(n)_\chi)=0
\] 
for each prime number $\ell$ dividing $n$. 
{\em (See \cite{MR} Theorem A.4.)}
\end{enumerate}
\end{prop}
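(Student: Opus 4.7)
The plan is to reduce all three parts to classical formulas about the Kolyvagin derivatives of the circular unit Euler system (as recorded in the cited references of Greither, Rubin, and Mazur--Rubin), and then to check that the passage to the $\chi$-isotypic quotient is formal. In what follows I work at the level of $F_m(\mu_n)$ first and then descend to $(F_m^\times/p^N)_\chi$ by noting that $[\cdot]_{m,N,\chi}^\ell$ and $\bar\phi_{m,N,\chi}^\ell$ are defined as $R_{m,N,\chi}$-module maps induced by $\Delta$-equivariant maps on $F_m^\times$, so equalities over $F_m^\times/p^N$ descend automatically.

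For part (1), the basic units $\eta_m^d(n)$ and $\eta_m^{1,a}(n)$ are constructed as norms of cyclotomic units of the form $1-\zeta$ with $\zeta \in \mu_{np^{m+1}\mf{f}_K}$, so their divisors in $\mca{O}_{F_m(\mu_n)}$ are supported on primes above $np\mf{f}_K$. Applying $D_n$ and descending the resulting class (via Lemma \ref{fix}) to $F_m^\times/p^N$ preserves this support. Since any prime in $\mca{S}_N$ is unramified in $K/\bb{Q}$ by definition, such a prime $q$ cannot divide $p\mf{f}_K$, so if $q\nmid n$ then $[\kappa^{\bullet}(n)_\chi]_{m,N,\chi}^q=0$; more generally the whole $\lambda$-component of $[\kappa^{\bullet}(n)_\chi]_{m,N,\chi}$ is zero for any $\lambda\nmid n\mf{f}_Kp$, from which the stated assertion follows.

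For part (2), the key input is the identity $(\sigma_\ell-1)D_\ell=(\ell-1)-N_{H_\ell}$ in $\bb{Z}[H_\ell]$, combined with the norm relation Lemma \ref{norm comp}(i). Evaluated on $\eta_m^{\bullet}(n)$, this gives $\eta_m^{\bullet}(n)^{(\sigma_\ell-1)D_{n}} = \eta_m^{\bullet}(n)^{(\ell-1)D_{n/\ell}}\cdot \eta_m^{\bullet}(n/\ell)^{-(1-\Frob_\ell^{-1})D_{n/\ell}}$. Since $\ell\equiv 1\pmod{p^N}$ and $\Frob_\ell$ acts trivially modulo $\ell_{F_m(\mu_{n/\ell})}$, one can compare valuations at a prime above $\ell$ and use the explicit description of the local reciprocity map via the fixed generator $\sigma_\ell$ (chosen exactly so that this identification is canonical) to obtain the congruence $[\kappa^{\bullet}(n)]_{m,N}^\ell \equiv \bar\phi_{m,N}^\ell(\kappa^{\bullet}(n/\ell))$. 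This is the computation carried out in \cite{Grei} Lemma 3.6 and \cite{Ru2} Proposition 2.4; tensoring over $\bb{Z}_p[\Delta]$ with $\mca{O}_\chi$ gives the $\chi$-version.

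For part (3), I would appeal directly to \cite{MR} Theorem A.4, which is the finite-singular comparison for Kolyvagin systems: it shows that on a well-ordered $n$ the local class of $\kappa_{m,N}^{\bullet}(n)$ at any prime $\ell\mid n$ lies entirely in the transverse (singular) part, so its image under $\bar\phi_{m,N}^\ell$ (the finite projection) vanishes. The well-orderedness assumption is precisely what lets one iterate the formula in part (2) inductively without incurring error terms from the auxiliary primes. I expect the main obstacle to be purely bookkeeping: matching the sign/normalization of the generator $\sigma_\ell$ fixed before Definition \ref{Dn} with the local reciprocity convention of Definition \ref{phi}, and verifying that the congruences, which are originally formulated in $F_m(\mu_n)^\times/p^N$, descend unambiguously to $(F_m^\times/p^N)_\chi$ despite the intervening $D_n$-action. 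Once these normalizations are pinned down, each assertion is a direct translation of the cited result.
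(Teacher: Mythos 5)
The paper offers no argument for Proposition~\ref{[] and phi} at all; it simply records these as ``well-known formulas'' and directs the reader to \cite{Grei} Lemma~3.6, \cite{Ru2} Proposition~2.4, and \cite{MR} Theorem~A.4. Your proposal therefore takes the same overall route (reduce to the literature) and usefully sketches the underlying mechanisms, but the justification you give for part~(1) is off in a way worth fixing.

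You assert that the basic units $\eta_m^\bullet(n)$ have divisor \emph{supported on primes above $np\mf{f}_K$}, and that descent to $F_m^\times/p^N$ ``preserves this support.'' Two issues. First, the basic circular units $\eta_m^d(n)$ (with $d>1$) and $\eta_m^{1,a}(n)$ are in fact \emph{global units} of $F_m(\mu_n)$: they are norms (or quotients of norms) of $1-\zeta$ with $\zeta$ of composite order, and $1-\zeta_k$ is a unit precisely when $k$ is not a prime power; so their divisor is trivial, not merely supported along $np\mf{f}_K$. Second, ``descent preserves support'' is not the right mechanism and indeed fails at primes above $n$: for $\ell\mid n$ the ramification index of $F_m(\mu_n)/F_m$ at $\ell$ is $\ell-1\equiv 0\pmod{p^N}$, so the map $\mca{I}_{F_m}/p^N\to\mca{I}_{F_m(\mu_n)}/p^N$ kills those components, and $\kappa^\bullet(n)$ can pick up honest $\ell$-valuation there even though $\eta^\bullet(n)^{D_n}$ is a unit. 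The clean argument for~(1) is: $\eta^\bullet(n)^{D_n}$ is a global unit, and for $\lambda\nmid n$ the extension $F_m(\mu_n)/F_m$ is unramified at $\lambda$, so $\mrm{ord}_\lambda(\kappa^\bullet(n))=\mrm{ord}_{\lambda'}(\eta^\bullet(n)^{D_n})=0\bmod p^N$. This gives vanishing at \emph{all} $\lambda\nmid n$, as the proposition claims; your argument only covers $\lambda\nmid n\mf{f}_K p$, which happens to suffice for the ``in particular'' clause used in the rest of the paper but does not establish the full first assertion. Parts~(2) and~(3) are fine: the identity $(\sigma_\ell-1)D_\ell=(\ell-1)-N_{H_\ell}$ together with the norm relation and the explicit normalization of $\sigma_\ell$ is exactly the content of the Greither/Rubin computation for~(2), and citing \cite{MR} Theorem~A.4 for~(3) is precisely what the paper does; those results are stated for the Kolyvagin-system packaging but specialize to the $\bar\phi$/$[\cdot]$ maps here after matching conventions, as you note.
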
 

\subsection{}\label{the subsection of x}
In this subsection, 
we will define the Kurihara's elements 
$x_{\nu ,q} \in (F_m^{\times}/p^N)_{\chi}$ 
which become a key of the proof of Theorem \ref{Main theorem}.

We fix circular units 
\[
\eta_m(n):=\prod_{d|\mf{f}_K}\eta_m^{d}(n)^{u_d}\times
\prod_{i=1}^r \eta_m^{1,a_i}(n)^{v_i} \in F_m^\times
\]
for any $m \in \bb{Z}_{\ge 0}$ and 
any $n \in \bb{Z}_{\ge 1}$ 
with $(n,p\mf{f}_K)=1$,
where $r \in \bb{Z}_{>0}$, $u_d$ and $v_i$ 
are elements of $\bb{Z}[\Gal(F_m/\bb{Q})]$ 
for each positive integers $d$ and $i$ 
with $d|\mf{f}_K$ and $1 \le i \le r$, 
and $a_1, \cdots, a_r$ are integers prime to $p$. 
Here, we assume that $r$, $a_1,...,a_r$, 
$u_d$'s and $v_i$'s are 
constant independent of $m$ and $n$. 
Then, we put 
\[
\kappa_{m,N}(\eta;n)
:= \prod_{d|\mf{f}_K}\kappa_{m,N}^d(n)^{u_d}
\times \prod_{i=1}^r \kappa_{m,N}^{a_i}(n)^{v_i} 
\in F_m^\times/p^N.
\]
Note that $\kappa_{m,N}(\eta,\chi)$ is 
the Kolyvagin derivative of the Euler system 
\[
\eta=
\{\eta_m(n)_\chi \in 
(F_m(\mu_n)^\times \otimes_{\bb{Z}} \bb{Z}_p)_\chi \}_{m,n}
\]
of circular units. 
If no confusion arises, we write 
$\kappa(n)=\kappa_{m,N}(\eta;n)$
for simplicity.

\begin{dfn}
Let $q\nu =q\prod_{i=1}^r \ell_i \in\mca{N}_N$, 
where $q, \ell_1,\dots,\ell_r$ are distinct prime numbers. 
For any $e \in \bb{Z}_{>0}$ dividing $\nu$, 
we define $\tilde{\kappa }_{\{ e,q \}} 
\in (F_m^\times/p^N)_{\chi} \otimes(\bigotimes_{\ell |d} H_\ell)$ by 
\[
\tilde{\kappa}_{\{ e,q \}} := 
\kappa (qe)_\chi\otimes\big( \bigotimes_{\ell |e}\sigma_\ell \big).
\] 
\end{dfn}

Let $q\nu \in\mca{N}_N$ and assume $q\nu$ is {\em well-ordered}. 
Assume that for each prime number $\ell$ dividing $\nu$, 
an element $w_\ell \in R_{m,N, \chi} \otimes H_\ell$ is given. 
Then, we have an element $\bar{w}_\ell \in R_{m, N, \chi}$ 
such that $w_\ell = \bar{w}_\ell\otimes \sigma_\ell$. 
Note that we will take $\{ w_\ell \}_{\ell | \nu}$ explicitly later, 
but here, we take arbitrary one.
For any $e \in \bb{Z}_{>0}$ dividing $\nu$, we define 
$$w_e:=\bigotimes_{\ell |e} w_\ell \in R_{m,N, \chi} 
\otimes \big (\bigotimes_{\ell |e} H_\ell \big).$$
We also define the element $\bar{w}_e \in R_{m,N, \chi}$ by 
$w_e = \bar{w}_e \otimes\big( \bigotimes_{\ell |e}\sigma_\ell \big)$.

Note that we write the group law of 
$(F_m^{\times}/p^N)_{\chi} \otimes 
\big( \bigotimes_{\ell |e} H_\ell \big)$ multiplicatively. 
\begin{dfn}
\label{x}
We define the element  $\tilde{x}_{\nu, q} $ by 
\[
\tilde{x}_{\nu ,q} := 
\prod_{e|\nu}w_e\otimes \tilde{\kappa}_{\{\nu/e,q\}} 
\in (F_m^{\times}/p^N)_{\chi} \otimes 
\big( \bigotimes_{\ell |e} H_\ell \big).
\]
Note that we naturally identify 
the $R_{m, N, \chi}$-module 
$(F_m^{\times}/p^N)_{\chi} \otimes 
\big( \bigotimes_{\ell |e} H_\ell \big)$ 
with 
\[
R_{m, N, \chi} \otimes \big( \bigotimes_{\ell |e} H_\ell \big)
\otimes_{R_{m, N, \chi}}(F_m^{\times}/p^N)_{\chi}.
\]
The element $x_{\nu,q} \in (F_m^{\times}/p^N)_{\chi}$ 
is defined by 
$\tilde{x}_{\nu,q}=x_{\nu,q}\otimes
\big( \bigotimes_{\ell | \nu}\sigma_\ell \big)$.
\end{dfn}

The following formulas follows from 
Proposition \ref{[] and  phi} straightforward.

\begin{prop}[cf. \cite{Ku} Proposition 6.1]\label{[] and phi and x}
Let $q\nu \in  \mca{N}_N$ and we assume that $q\nu$ is well-ordered.
\begin{enumerate}
\item If $\lambda$ is a prime ideal of $K$ not dividing $n$, 
the $\lambda$-component of $[x_{\nu,q}]_{m,N,\chi}$ is $0$. 
In particular, if $s$ is a prime number not dividing $q\nu$, 
we have 
\[
[x_{\nu,q}]_{m,N,\chi}^s = 0.
\]
\item Let $\ell$ be a prime number dividing $\nu$. Then, we have 
\[
[x_{\nu,q}]_{m,N,\chi}^{\ell}
=\bar{\phi}_{m,N,\chi}^{\ell}(x_{\nu/\ell,q}).
\]
\item Let $\ell$ be a prime number dividing $\nu$. 
Then, we have 
\[
\bar{\phi}_{m,N,\chi}^{\ell}(x_{\nu,q})
= \bar{w}_\ell\bar{\phi}_{m,N,\chi}^{\ell}(x_{\nu/\ell,q}).
\]
\end{enumerate}
\end{prop}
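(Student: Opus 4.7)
The plan is to unwind Definition~\ref{x} so that $x_{\nu,q}$ becomes an explicit $R_{m,N,\chi}$-linear combination of Kolyvagin derivatives $\kappa(q\nu/e)_\chi$, and then reduce the three assertions to the termwise application of the corresponding parts of Proposition~\ref{[] and phi}. Pulling every factor $\sigma_{\ell'}$ out of the tensor product defining $w_e$ gives $\bar{w}_e=\prod_{\ell'|e}\bar{w}_{\ell'}$, and since the sets of prime divisors of $e$ and $\nu/e$ partition those of the square-free $\nu$, one obtains
\[
x_{\nu,q}\;=\;\sum_{e|\nu}\bar{w}_e\cdot\kappa(q\nu/e)_\chi
\]
inside the $R_{m,N,\chi}$-module $(F_m^\times/p^N)_\chi$. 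All three claims now follow from this formula together with the $R_{m,N,\chi}$-linearity of $[\cdot]^s_{m,N,\chi}$ and $\bar\phi^\ell_{m,N,\chi}$ and the three cases of Proposition~\ref{[] and phi}.

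For assertion (1), if $s\nmid q\nu$ then $s\nmid q\nu/e$ for every divisor $e|\nu$, so Proposition~\ref{[] and phi}(1) gives $[\kappa(q\nu/e)_\chi]^s_{m,N,\chi}=0$ termwise, and the claim is immediate by linearity. For assertion (2), with $\ell|\nu$ I would partition the sum according to whether $\ell|e$ or $\ell\nmid e$. When $\ell|e$, the square-freeness of $\nu$ forces $\ell\nmid q\nu/e$, so the contribution dies by Proposition~\ref{[] and phi}(1); when $\ell\nmid e$, Proposition~\ref{[] and phi}(2) converts $[\kappa(q\nu/e)_\chi]^\ell_{m,N,\chi}$ into $\bar\phi^\ell_{m,N,\chi}(\kappa(q\nu/(e\ell))_\chi)$, and then reindexing by $e':=e$, which now ranges over divisors of $\nu/\ell$, reassembles the surviving terms into $\bar\phi^\ell_{m,N,\chi}(x_{\nu/\ell,q})$.

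For assertion (3), the same partition is applied to $\bar\phi^\ell_{m,N,\chi}(x_{\nu,q})$. When $\ell\nmid e$, the integer $q\nu/e$ contains $\ell$ and is a divisor of the well-ordered integer $q\nu$, hence is itself well-ordered under the inherited ordering of its primes, so Proposition~\ref{[] and phi}(3) forces $\bar\phi^\ell_{m,N,\chi}(\kappa(q\nu/e)_\chi)=0$. When $\ell|e$, writing $e=\ell e'$ with $e'|\nu/\ell$ and using the factorization $\bar{w}_e=\bar{w}_\ell\bar{w}_{e'}$, I would pull $\bar{w}_\ell$ out and recognize the remaining sum as $\bar\phi^\ell_{m,N,\chi}(x_{\nu/\ell,q})$, yielding the desired identity. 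The only point deserving care is the stability of well-orderedness under passing to divisors (which amounts to the observation that if $q\nu=\prod_{i=1}^{r}\ell_i$ satisfies the congruence $\ell_{i+1}\equiv 1\pmod{p^{N}\prod_{j\le i}\ell_{j}}$ then every subproduct does, under the induced order) and the indexing bookkeeping between divisors of $\nu$ and of $\nu/\ell$; beyond this, the statement is a purely formal consequence of Proposition~\ref{[] and phi}, so I do not anticipate any genuine obstacle.
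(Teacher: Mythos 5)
Your proof is correct and follows exactly the argument the paper intends (the paper simply asserts Proposition~\ref{[] and phi and x} "follows from Proposition \ref{[] and phi} straightforward" and gives no further detail). The explicit expansion $x_{\nu,q}=\sum_{e|\nu}\bar w_e\,\kappa(q\nu/e)_\chi$, the partition of the sum by $\ell|e$ versus $\ell\nmid e$, the multiplicativity $\bar w_{\ell e'}=\bar w_\ell\bar w_{e'}$, and the observation that any divisor (with its inherited order of primes) of a well-ordered integer is again well-ordered are precisely the ingredients needed, and you have deployed them correctly.
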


\section{Chebotarev Density Theorem}\label{the section of Chebotarev Density Theorem}
Recall that we fix a family of embeddings 
$\{ \xymatrix{\ell_{\overline{\bb{Q}}}\colon
\overline{\bb{Q}}\ar@{^{(}->}[r] & 
\overline{\bb{Q}}_\ell} \}_{\ell: \rm{prime}}$ 
satisfying the technical condition (Chb) 
for families of embeddings as follows:
\begin{itemize}
\item[(Chb)]  {\em For any subfield $F \subset \overline{\bb{Q}}$ 
which is a finite Galois extension of $\bb{Q}$ and any element 
$\sigma \in \Gal(F/\bb{Q})$, 
there exist infinitely many prime numbers 
$\ell$ such that $\ell$ is unramified in $F/\bb{Q}$ 
and $(\ell_F, F/\bb{Q})= \sigma$, 
where $\ell_F$ is the prime ideal of $F$ 
corresponding to the  embedding $\ell_{\overline{\bb{Q}}}|_F$.}
\end{itemize}
Note that the existence of such a family of embeddings 
follows from the Chebotarev density theorem. 
We need the condition (Chb) 
in the proof of Proposition \ref{chebo appli}.

Let $\chi \in \widehat{\Delta}$ be any element. 
Recall that we denote the restriction of $\chi$ 
to $\Delta_0$ by $\chi_0$, and 
we put 
\begin{align*}
a_\chi &:=
\begin{cases}
0 & \text{if $\chi(p)\ne 1$; } \\
2 & \text{if $\chi(p)= 1$.} 
\end{cases}
\end{align*}
Here, we shall prove Proposition \ref{chebo appli}, 
which is the key of induction argument in the proof of 
Theorem \ref{Main theorem}.
This proposition corresponds to Lemma 9.1 in \cite{Ku}.

\begin{prop}\label{chebo appli}
Let $\chi \in \widehat\Delta $ be a non-trivial character. 
Assume $qn=q\prod_{i=1}^r\ell_i \in \mca{N}_N$, 
where $q,\ell_1,\dots,\ell_r$ are prime numbers. 
Suppose the following are given:
\begin{itemize}
\item a finite $R_{m, N, \chi}$-submodule $W$ 
of $(F_m^{\times}/p^N)_{\chi}$;
\item a $R_{m, N, \chi}$-homomorphism 
$\xymatrix{\psi  \colon W \ar[r] & R_{m, N, \chi}.}$
\end{itemize}
Then, there exist infinitely many $q' \in \mca{S}_N$
which splits completely in $F_m(\mu_n)/\bb{Q}$, and  
satisfies all of the following properties.
\begin{enumerate}
\item the class of $q'_{F_m}$ in $A_{m,\chi}$ 
coincides with the class of 
$\left| \Delta_p \right| \cdot q_{F_m}$.
{\em (}Recall that we write the group law of 
$\mca{I}_F$ additively.{\em)}
\item there exists an element 
$z \in (F_m^\times\otimes \bb{Z}_p)_{\chi}$ 
such that 
\begin{itemize}
\item $(z)_{_m,\chi} =
(q'_{F_m}- \left| \Delta_p \right|\cdot q_{F_m})_\chi 
\in \big(\mca{I}_{F_m}\otimes\bb{Z}_p \big)_\chi, $
\item $\phi_{m,N,\chi}^{\ell_i}(z)=0$
for each $i=1,\dots,r$.
\end{itemize}
\item the group $W$ is contained 
in the kernel of $[\cdot ]_{m,N,\chi}^{q'}$, 
and 
\[
\psi(x)= \left| \Delta_p \right|^2 
\bar{\phi}_{m,N,\chi}^{q'}(x)
\]
for any $x \in W$.
\end{enumerate}
\end{prop}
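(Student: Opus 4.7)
I would prove this by applying the Chebotarev density theorem, via the fixed family of embeddings satisfying (Chb), to a suitably constructed finite Galois extension $L/\bb{Q}$. The strategy is to reformulate each of the three conditions on $q'$ as a condition on the Frobenius conjugacy class of $q'$ in $\Gal(L/\bb{Q})$, and then to verify that the prescribed class is non-empty. This is the analogue, for the Euler system of circular units, of Lemma 9.1 in \cite{Ku}.

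First I would extend $\psi\colon W\to R_{m,N,\chi}$ to a homomorphism $\tilde{\psi}\colon (F_m^\times/p^N)_\chi \to R_{m,N,\chi}$ using the injectivity of $R_{m,N,\chi}$ as a module over itself (observed after Definition \ref{The $i$-th cyclotomic ideal}). Since $F_m$ is totally real, Kummer theory is not directly available over $F_m$; I would instead pass to the CM field $F_m^\ast := F_m(\mu_{p^N})$, choose a finitely generated subgroup of $F_m^{\ast\times}$ whose image in $F_m^{\ast\times}/p^N$ contains representatives for a suitable twist of $W$, and form the corresponding Kummer extension $L_\psi/F_m^\ast$. Under the Kummer pairing, $\tilde{\psi}$ corresponds to an $R_{m,N,\chi}$-equivariant piece of $\Gal(L_\psi/F_m^\ast)$. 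In parallel, I would let $H_m^{(p)}/F_m$ denote the maximal $p$-subextension of the Hilbert class field, so that $A_{m,\chi}$ is identified with $\Gal(H_m^{(p)}/F_m)_\chi$ via the Artin map.

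Next I would take $L/\bb{Q}$ to be the Galois closure of the compositum of $F_m(\mu_{np^N})$, $H_m^{(p)}$, $L_\psi$, and further auxiliary Kummer extensions attached to each $\ell_i$ (needed in order to control condition (2)). With this $L$, the three desired conditions on $q'$ translate as follows: the requirement $q'\in\mca{S}_N$ together with complete splitting in $F_m(\mu_n)$ is equivalent to $\Frob_{q'}$ being trivial on $F_m(\mu_{np^N})$; condition (1) pins the image of $\Frob_{q'}$ in $\Gal(H_m^{(p)}/F_m)$ to be the class of $|\Delta_p|\cdot q_{F_m}$; condition (3), via the identification of $\bar{\phi}_{m,N,\chi}^{q'}$ with the action of $\Frob_{q'}$ on Kummer radicals of $L_\psi$, requires $\Frob_{q'}|_{L_\psi}$ to act as $|\Delta_p|^2\tilde{\psi}$; and condition (2) imposes a compatibility between the class-field and $\ell_i$-Kummer components of $\Frob_{q'}$. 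Applying (Chb) then yields infinitely many such $q'$.

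The main obstacle will be showing that the prescribed Frobenius class is \emph{non-empty}, which amounts to a linear-disjointness statement, after taking $\chi$-isotypic components, between $H_m^{(p)}$, $L_\psi$, and the $\ell_i$-Kummer extensions over $F_m(\mu_{np^N})$. The non-triviality of $\chi$ is essential here, since it ensures that the relevant $\chi$-isotypic pieces do not collapse; this plays the same role as in the Euler system arguments of \cite{Ru5}. The factors $|\Delta_p|$ and $|\Delta_p|^2$ in conditions (1) and (3) arise precisely to absorb the kernel and cokernel of the $\chi$-decomposition maps of Lemma \ref{elementary lemma} and Corollary \ref{kernel}, which are annihilated by $|\Delta_p|$; condition (3) requires doubling up because two such decomposition maps intervene simultaneously (for $\tilde{\psi}$ on one side and for the Kummer pairing on the other). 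Once such a $q'$ is produced satisfying (1) and (3), condition (2) follows by choosing an element $z$ (determined up to units) witnessing the principality forced by (1), and modifying $z$ by units to kill each $\phi^{\ell_i}(z)$; the feasibility of this last adjustment is precisely encoded in the $\ell_i$-Kummer extensions that I included in $L$.
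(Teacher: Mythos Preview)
Your overall strategy is correct and matches the paper's: build a large abelian extension, translate the conditions into a Frobenius prescription, verify non-emptiness, and apply (Chb). However, two of your technical choices diverge from the paper in ways that matter.

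First, for conditions (1) and (2) the paper does \emph{not} use the Hilbert class field together with separate ``auxiliary $\ell_i$-extensions''. Instead it uses the single field $F_m\{n\}$, the maximal abelian $p$-extension of $F_m$ unramified outside $n$, and its $\chi$-quotient $F_m\{n\}_\chi$. By class field theory the Galois group of $F_m\{n\}/F_m$ is an idele class group which simultaneously records the ideal class of $q'_{F_m}$ \emph{and} the local components at each $\ell_i$. Prescribing $(\ell'_{F_m\{n\}_\chi},F_m\{n\}_\chi/F_m)=(\ell_{F_m\{n\}_\chi},F_m\{n\}_\chi/F_m)^{|\Delta_p|}$ therefore forces the existence of $z\in(F_m^\times\otimes\bb{Z}_p)_\chi$ with $(z)_\chi=(q'_{F_m}-|\Delta_p|q_{F_m})_\chi$ and $\phi^{\ell_i}(z)=0$ in one stroke, via an idele computation. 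Your proposed route---first obtain (1) from the Hilbert class field, then ``modify $z$ by units to kill each $\phi^{\ell_i}(z)$''---requires that the combined map $\bigoplus_i\phi^{\ell_i}$ restricted to global units hits the prescribed target, which you have not established; the ray class field packages this automatically.

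Second, the crucial disjointness between the class-field layer $L=F_m\{n\}_\chi K(\mu_{np^N})$ and the Kummer layer $M$ (built from $\mca{W}\subset F_m^\times/p^N$ over $K(\mu_{p^N})$) is argued in the paper via the $\pm$-decomposition under complex conjugation, not via $\chi$-isotypic components: $\widetilde{\Delta}=\Gal(\bb{Q}(\mu_p)/\bb{Q})$ acts trivially on $\Gal(L/K(\mu_{p^N}))$ but via $\chi^{-1}\omega$ on $\Gal(M/K(\mu_{p^N}))$, so $c$ acts by $+1$ and $-1$ respectively, giving $\Gal(LM/K(\mu_{p^N}))\twoheadrightarrow\Gal(L/K(\mu_{p^N}))\times\Gal(M/K(\mu_{p^N}))$. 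The non-triviality of $\chi$ is used elsewhere, namely in Step~1, to separate $\Gal(F_m\{n\}_\chi/F_m)$ (on which $\Delta$ acts via $\chi$) from $\Gal(K(\mu_{np^N})/F_m)$ (trivial $\Delta$-action); this is where the single factor $|\Delta_p|$ in condition (1) originates. The factor $|\Delta_p|^2$ in condition (3) comes, as you say, from lifting $\psi$ through the $\chi$-quotient, but concretely it is one application of Corollary~\ref{kernel} (which already carries the square), producing $\tilde{\psi}\colon\mca{W}\to R_{m,N}$ with $\tilde{\psi}\mapsto|\Delta_p|^2\psi$ after projecting to $R_{m,N,\chi}$.
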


\begin{proof}
Proof of this proposition is essentially the same as 
those of Lemma 9.1 in \cite{Ku} 
though we have to treat carefully 
when $\left| \Delta_p \right|\ne 1$.
We shall prove this proposition in four steps.

{\em The first step}.
Let $v$ be a prime ideal of $F_m$. 
We denote the ring of integers of the completion $F_{m,v}$ of $F_m$ 
at $v$ by $\mca{O}_{F_{m,v}}$, 
and define the subgroup $\mca{O}_{F_{m,v}}^1$ 
of $\mca{O}_{F_{m,v}}^\times$ by 
\[
\mca{O}_{F_{m,v}}^1:=\{x \ |\ x \equiv 1 \pmod{\mf{m}_v} \},
\]
where $\mf{v}$ is the maximal ideal of $\mca{O}_{F_{m,v}}$.
We denote the residue field of $F_m$ at $v$ by $k(v)$.
Let $F_m\{n \}$ be the maximal abelian $p$-extension of $F_m$ 
unramified outside $n$. 
By global class field theory, we have the isomorphism
\[
\xymatrix{
\displaystyle \frac{(\prod_{v|n}F_{m,v}^\times
/\mca{O}_{F_{m,v}}^1 )\times 
(\bigoplus_{u \nmid n}F_{m,u}^\times/\mca{O}_{F_{m,u}}^\times)}
{ F_m^\times}\otimes\bb{Z}_p \ar[r]^(0.68){\simeq} & 
\Gal\big({F_m}\{n\}/{F_m}\big),
}
\]
where $u$ runs all finite places outside $n$.
This isomorphism induces the homomorphism
\[
\xymatrix{
\iota\colon\bigoplus_{v|n}k(v)^\times \otimes \bb{Z}_p  
\ar[r]& \Gal\big(F_m\{n\}/F_m\big). 
}
\]
Taking the $\chi$-quotients, we obtain the homomorphism 
\[
\xymatrix{
\iota_\chi\colon\big(\bigoplus_{v|n} k(v)^\times 
\otimes \bb{Z}_p \big)_{\chi}  \ar[r] & 
\Gal\big(F_m\{n\}/F_m\big)_{\chi}
}
\]
of $\bb{Z}_p[\Gal(F_m/\bb{Q})]_\chi$-modules.
We denote by $F_m\{n\}_\chi$ the intermediate field of $F_m\{n\}/F_m$ 
with $\Gal\big(F_m\{n\}_\chi/F_m\big)=\Gal(F_m\{n\}/F_m)_\chi$.
Recall $n=\prod_{i=1}^r\ell_i$, 
and all prime divisors $\ell_i$ of $n$ split completely in $F_m/\bb{Q}$. 
By local Artin maps, we obtain the isomorphism
\[
\xymatrix{
\big(\bigoplus_{v|n}k(v)^\times 
\otimes \bb{Z}_p \big)_{\chi}  
\ar[r]^(0.37){\simeq} & 
\bigoplus_{\i=1}^r\big(\bb{Z}_p[\Gal(F_m/\bb{Q})]_{\chi} 
\cdot \ell_{i,F_m} \big)\otimes H_{\ell_i}, 
}
\]
and we identify them by this isomorphism. 

Let $L:=F_m\{n\}_\chi K(\mu_{np^{N}})$ 
be the composition field. 
Note that the cokernel of the natural homomorphism 
\[
\xymatrix{
\Gal(L/F_m) \ar[r] & \Gal(L/F_m)_\chi \times \Gal(L/F_m)_1
}
\] 
is annihilated by $\left| \Delta_p \right|$ since $\chi \ne 1$.
Since the subgroup $\Delta$ of 
$\Gal(F_m/\bb{Q})$ acts on $\Gal\big(F_m\{n\}_{\chi}/F_m\big)$ 
via $\chi$,
$\Gal\big(F_m\{n\}_{\chi}/F_m \big)$ 
is a quotient of $\Gal(L/F_m)_\chi$.
On the other hand, 
since $\Delta$ of $\Gal(F_m/\bb{Q})$ acts on 
$\Gal\big(K(\mu_{np^N})/F_m \big)$
via the trivial character,  
$\Gal\big(K(\mu_{np^N})/F_m\big)$ is a quotient of $\Gal(L/F_m)_1$.
Then, the cokernel of the natural homomorphism 
\[
\xymatrix{
\Gal(L/F_m) \ar[r] & \Gal\big(F_m\{n\}_{\chi}/F_m\big) 
\times \Gal\big(K(\mu_{np^N})/F_m \big)
}
\] 
is annihilated by $\left| \Delta_p \right|$.
Then, we take an element 
$\sigma \in \Gal\big(L/K(\mu_{np^N})\big)$ 
such that 
\[
\sigma |_{F_m\{n\}_\chi}=
(q_{F_m\{n\}_\chi}, F_m\{n\}_\chi/F_m )^{\left| \Delta_p \right|}.
\]

{\em The second step.}
We fix a finite $R_{m,\chi}$-submodule $\mca{W}$ of 
$F_m^\times/p^N$ whose image in $(F_m^\times/p^N)_\chi$ is $W$.
By Corollary \ref{kernel}, 
we take a homomorphism $\tilde{\psi} \in \Hom_{R_m,N}(\mca{W},R_{m,N})$
which makes the diagram 
\[
\xymatrix{\mca{W} \ar[rr]^(0.51){\tilde{\psi}} 
\ar[d] & &  R_{m,N} \ar[d] \\
W \ar[rr]^{\left| \Delta_p \right|^2\psi} & & R_{m,N,\chi}}
\]
commute.
We define a projection 
$\mrm{pr}\colon R_{m,N} \longrightarrow \bb{Z}/p^N\bb{Z}$ by 
\[
\sum_{g \in \Gal(F_m/\bb{Q})}a_g g \longmapsto  a_1,
\]
where $a_g \in \bb{Z}/p^N\bb{Z}$ for all 
$g \in \Gal(F_m/\bb{Q})$, 
and $1 \in \Gal(F_m/\bb{Q})$ is the unit. 
This projection induces an {\em isomorphism}
\[
\xymatrix{
P\colon \Hom_{R_{m,N}}(\mca{W},R_{m,N}) \ar[r] &  
\Hom(\mca{W},\bb{Z}/p^N\bb{Z})
}
\] given by
$f \longmapsto \mrm{pr}\circ f $. 
We define a homomorphism
\[
\xymatrix{
(P\tilde{\psi})_1 \colon 
\mca{W} \ar[r] & \mu_{p^N}
}
\] 
of abelian groups
by $x \longmapsto {\zeta _{p^{N}}}^{P(\tilde{\psi})(x)}$.

We denote the image of $\mca{W}$ in $K(\mu_{p^N})^\times/p^N$ by $\mca{W}'$. 
Let $M$ be the extension field of 
$K(\mu_{p^N})$ generated by all $p^N$-th roots of 
elements of $K(\mu_{p^N})^\times$ 
whose image in $K(\mu_{p^N})^\times/p^N$ is contained in $\mca{W}'$. 
So, the Kummer pairing 
induces the isomorphism 
\[
\xymatrix{
\mrm{Kum}\colon \Gal\big(M/K(\mu_{p^N})\big) 
\ar[r]^(0.57){\simeq} & \Hom(\mca{W'},\mu_{p^N}).
}
\]
Note that the complex conjugation $c$ acts on
$H^1\big(K(\mu_{p^N})/F_m, \mu_{p^N}\big)$
by $-1$, and $c$ acts on $F_m^\times/p^N$ trivially. 
So, the group $H^1\big(K(\mu_{p^N})/F_m, \mu_{p^N}\big)$
vanishes, 
and the natural homomorphism 
$F_m^\times/p^N \longrightarrow K(\mu_{p^N})^\times/p^N$
is an injection. 
This implies that 
the natural homomorphism 
\[
\xymatrix{i \colon \mca{W} \ar@{->>}[r] & \mca{W}'}
\]
is an isomorphism. 
We take $\lambda \in \Gal(M/K(\mu_{p^N}))$ such that 
\[
i^* \circ \mrm{Kum}(\lambda)={(P\tilde{\psi})_1}.
\]

{\em The third step}. 
Recall that $LM/K(\mu_{p^N})$ is an abelian $p$-extension, 
so we regard $\Gal\big( LM/K(\mu_{p^N}) \big)$ 
as a $\bb{Z}_p[\Gal \big(K(\mu_{p^N})/\bb{Q} \big)]$-module. 
We have the natural isomorphism:
\[
\xymatrix{
\Gal \big(LM/K(\mu_{p^N}) \big) \ar@{^{(}->}[r] & 
\Gal \big( LM/K(\mu_{p^N}) \big)_{+}
\times\Gal\big(LM/K(\mu_{p^N}) \big)_{-},
}
\]
where $\Gal \big( LM/K(\mu_{p^N})\big)_{+}$  
(resp.\ $\Gal \big( LM/K(\mu_{p^N})\big)_{-}$)  denotes 
the maximal quotient of 
$\Gal \big(LM/K(\mu_{p^N})\big)$ 
on which the complex conjugation 
$c$ acts trivially (resp.\ by $-1$).  
We put 
$\tilde{\Delta}:=\Gal\big(
\bb{Q}(\mu_p)/\bb{Q} \big)$, 
and regard $\tilde{\Delta}$ as 
a subgroup of $\Gal\big(
\bb{Q}(\mu_{p^N})/\bb{Q} \big)$. 
Note that on the one hand, 
$\Gal \big(L/K(\mu_{p^N})\big)$ 
is a quotient of $\Gal \big(LM/K(\mu_{p^N-})\big)_{+}$ since 
$\widetilde{\Delta}$ 
acts trivially on 
$\Gal \big(L/K(\mu_{p^N})\big)$.
On the other hand, The complex conjugation $c$ acts 
on $\Gal(M/K(\mu_{p^N})$ by $-1$ 
since $\widetilde{\Delta}$ 
acts on $\Gal \big(M/K(\mu_{p^N})\big)$ 
via the character $\chi^{-1}\omega$. 
This implies $\Gal(M/K(\mu_{p^N})\big)$ is 
a quotient of $\Gal\big(LM/K(\mu_{p^N})\big)_{-}$.
Therefore, the natural homomorphism 
\[
\xymatrix{
\Gal \big(LM/K(\mu_{p^N})\big) 
\ar@{^{(}->}[r] & \Gal \big(L/K(\mu_{p^N})\big)
\times\Gal \big(M/K(\mu_{p^N})\big)
}
\] 
is a surjection.
By the condition (Chb), 
there exists infinitely many prime numbers $q'$ such that 
\[
\begin{cases}
(q'_L,L/K(\mu_{p^N}))=\sigma \\
(q'_M,M/K(\mu_{p^N}))={\lambda}^{-1}. 
\end{cases}
\]

{\em The fourth step}.
Here, we prove that each of such $q'$ unramified in $L/\bb{Q}$ satisfies conditions (1)-(3) of Proposition \ref{chebo appli}.
First, we show $q'$ satisfies conditions (1) and (2). 
Let $\alpha=(\alpha_v)_v \in \bb{A}_{F_m}^\times$ 
be an idele whose $q'_{F_m}$-component 
is a uniformizer of $F_{m,q'_{F_m}}$, 
and other components are $1$. 
Let $\beta=(\beta_v)_v \in \bb{A}_{F_m}^\times$ be 
an element as follows.
The components above $q$ are given by
\[
\left| \Delta_p \right|\cdot (x_v)_{v|q} 
\in \prod_{v|q}F_{m,v}^\times,
\]
where $x_{q_{F_m}}$ is 
a uniformizer of $F_{m,q_{F_m}}$, 
and $x_v=1$ otherwise. 
For all places $v$ of ${F_m}$ not above $q$, 
we put $\beta_v=1$. 

By definition, ideles $\alpha$ and 
$\beta$ have the same image in the group 
\[
\bigg( \frac{(\prod_{v|n}F_{m,v}^\times
/\mca{O}_{F_{m,v}}^1 )
\times (\bigoplus_{u \nmid n}F_{m,u}^\times
/\mca{O}_{F_{m,u}}^\times)}
{ F_m^\times}\otimes\bb{Z}_p \bigg)_{\chi} 
\simeq \Gal\big(F_m\{n\}_\chi/F_m\big).
\]
This implies there exist 
$z \in (F_m^\times\otimes\bb{Z}_p)_\chi$ 
such that
\[
\alpha=z\beta \ \ \text{in} \  
\bigg( \big((\prod_{v|n}F_{m,v}^\times
/\mca{O}_{F_{m,v}}^1 )
\times (\bigoplus_{u\nmid n}F_{m,u}^\times
/\mca{O}_{F_{m,u}}^\times)\big)\otimes\bb{Z}_p\bigg)_\chi. 
\]
Hence, we have 
$(z)_{F_{m,\chi}}=
(q'_{F_m}-\left| 
\Delta_p \right|\cdot q_{F_m})_\chi,$
and $\phi_{m,N,\chi}^{\ell_i}(z)=0$ for each $i=1,\dots,r$. 
Obviously, the prime number $q'$ 
satisfies conditions (1) and (2).

Next, we shall prove $q'$ satisfies condition (3).
Recall that we have 
\[
(q'_M,M/K(\mu_{p^N}))=\lambda^{-1}. 
\]
So, by definition of $\lambda$, 
we have
\[
{\zeta _{p^{N}}}^{P(\tilde{\psi})(x)}
=(x^{1/{p^N}})^{1-\Frob_{q'}},
\]
for any $x \in W$, 
where we put
\[
\Frob_{q'}:=(q'_M,M/K(\mu_{p^N})) 
\in \Gal \big(M/K(\mu_{p^N}) \big),
\] 
and $x^{1/{p^N}}\in L$ is a $p^N$-th root of $x$. 
Since $q'$ is unramified in $M/\bb{Q}$, the group $W$ 
is contained in the kernel of $[\cdot ]_{m,N,\chi}^{q'}$.
So, we obtain 
\[
{\zeta _{p^{N}}}^{P(\tilde{\psi})(x)} \equiv 
x^{(q'-1)/p^N} \pmod{q'}.
\]

We can take the unique intermediate field $F$ 
of $F_m(\mu_{q'})/F_m$ whose degree over $F_m$ 
is $p^N$ since $q'\equiv 1 \pmod{p^N}$. 
We denote the image of $\sigma_{q'} \in H_{q'}$ in 
\[ 
 H_{q'}\otimes_{\bb{Z}}\bb{Z}_p
=\Gal(F/F_m)
\]
by $\bar{\sigma}_{q'}$.
Let $\pi$ be a uniformizer of $F_{q'_F}$. 
By definition of $\sigma_{q'}$, we have 
\[
\pi^{\bar{\sigma}_{q'}-1} \equiv \zeta_{p^N} 
\pmod{\mf{m}_{q'}},
\]
where $\mf{m}_{q'}$ is the maximal ideal of $F_{{q'}_F}$.
Recall that $W$ is contained 
in the kernel of $[\cdot ]_{m,N,\chi}^{q'}$. 
We put 
\[
\phi(x):=\bar{\sigma}_{q'}^{P(\bar{\phi}_{m,N}^{q'})(x)} 
\in \Gal(F/F_m).
\]
By \cite{Se} Chapter XIV Proposition 6, 
we have
\[
{\zeta _{p^{N}}}^{P(\bar{\phi}_{m,N}^{q'})(x)} 
= \pi^{\phi(x)-1}
\equiv x^{(1-q')/p^N} 
\pmod{\mf{m}_{q'}}
\]
for all $x \in W$. 
Hence, we obtain $${\zeta _{p^{N}}}^{P(\tilde{\psi})(x)}
= {\zeta_{p^{N}}}^{P(\bar{\phi}_{m,N}^{q'})(x)}$$
for all $x \in W$.
Therefore $q'$ satisfies condition (3) of Proposition \ref{chebo appli}.
\end{proof}

\section{Euler system argument via Kurihara's elements}\label{Kurihara's Euler system argument}

In this section, we prove the assertions of 
Theorem \ref{Main theorem, Rough} 
on the upper bounds of the higher Fitting ideals 
by using Kurihara's Euler system arguments 
established in \cite{Ku}. 
Let us state the assertions of Theorem \ref{Main theorem},
which is the goal of this section. 
Let the ideals $I_{P_{\chi}^E}$ and 
$J_{P_{\chi}^E}$ of $\Lambda_\chi$ be 
as in Proposition \ref{proj of E}, 
the ideal $I_C$ as in \S \ref{subsection of 0-th cyclotomic ideals}, 
and the $\Lambda_\chi$-submodule $Y$ of $X_\chi$ as in 
Proposition \ref{class group}. 
We denote the ideal of $\Lambda_\chi$ generated by $i$-th power of 
elements of $\ann_{\Lambda_\chi}(Y/(\gamma-1)X)_\chi$ by $I_i$
for each $i \in \bb{Z}_{i \ge 0}$.
The goal of this section is the following theorem. 

\begin{thm}\label{Main theorem}
Let $\chi  \in \widehat \Delta$ be a non-trivial character.
Then, we have
\begin{itemize}
\item[(i)] 
\ $\mf{C}_{0,\chi} \subseteq \Fitt_{\Lambda_\chi,0}(X'_\chi)$.
\item[(ii-$0$)]
\ $(\gamma-1)^{a_\chi}\left| \Delta_p \right|^4 
I_C I_{P_{\chi}^E}J_{P_{\chi}^E}  
\Fitt_{\Lambda_\chi,i}(X'_\chi) \subseteq \mf{C}_{i,\chi}$.
\item[(ii-$i$)]\ 
$(\gamma-1)^{a_\chi}\left| \Delta_p \right|^{6+4i} 
I_i I_C I_{P_{\chi}^E}J_{P_{\chi}^E}  
\Fitt_{\Lambda_\chi,i}(X'_\chi) \subseteq \mf{C}_{i,\chi}$
for any $i \in \bb{Z}_{\ge 1}$.
\end{itemize}
In particular, we have 
\begin{itemize}
\item[(i)] 
$\mf{C}_{0,\chi} \prec \Fitt_{\Lambda_\chi,0}(X_\chi)$.
\item[(ii-$0$)]
\ $(\gamma-1)^{a_\chi}\left| \Delta_p \right|^4  
\Fitt_{\Lambda_\chi,i}(X'_\chi) \subseteq \mf{C}_{i,\chi}$.
\item[(ii-$i$)]
$(\gamma -1)^{a_\chi} \left| \Delta_p \right|^{6+4i} 
\Fitt_{\Lambda_\chi,i}(X_\chi)\prec \mf{C}_{i,\chi}$
for for any $i \in \bb{Z}_{\ge 1}$.
\end{itemize}
\end{thm}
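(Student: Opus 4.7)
The plan is to prove (ii-$i$) for $i \geq 1$ by Kurihara's Euler system argument via induction on $i$; parts (i) and (ii-$0$) are immediate from Theorem~\ref{Main theorem for i=0}. The ``$\prec$'' consequences at the end of the statement will follow once the stated inclusions are established, since $I_C$, $I_{P_{\chi}^E}$, $J_{P_{\chi}^E}$, and (via the Iwasawa main conjecture together with $\chi \ne 1$) the ideal $I_i$ all have finite index in $\Lambda_\chi$, hence are of height $\geq 2$.

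First I would reduce to the finite level: by Corollary~\ref{compatibility of cyclotomic ideals}, it suffices to show, for each $(m,N)$ with $N \geq m+1$, that the image of the left-hand side in $R_{m,N,\chi}$ is contained in $\mf{C}_{i,m,N,\chi}$. Using Lemma~\ref{no pn}, one writes $X'_\chi$ via a square presentation $\Lambda_\chi^n \to \Lambda_\chi^n \to X'_\chi \to 0$, so that a generator of $\Fitt_{\Lambda_\chi,i}(X'_\chi)$ is an $(n-i)\times(n-i)$ minor of the associated matrix. By Proposition~\ref{class group}(ii), modulo the error $I_A=\ann_\Lambda(Y/(\gamma-1)X)$, one obtains a parallel presentation of $A_{F_m,\chi}$ whose generators, by global class field theory, can be realized as classes of primes in $\mca{S}_N$; iterating this choice $i$ times accounts for the factor $I_i$ in the error term.

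The induction then proceeds as follows. Starting from the homomorphism $\psi \colon \mca{NO}_{m,N,\chi} \to R_{m,N,\chi}$ of Lemma~\ref{delta kowaza} (which absorbs the base factor $(\gamma-1)^{a_\chi}\left|\Delta_p\right|^4 I_C I_{P_\chi^E} J_{P_\chi^E}$ and represents the $i=0$ contribution), I would apply Proposition~\ref{chebo appli} repeatedly to select primes $\ell_1,\ldots,\ell_i \in \mca{S}_N$ such that (a) $\ell_1\cdots \ell_i q$ is well-ordered in the sense of Definition~\ref{well-ordered}, and (b) at each step $j$, on the finite $R_{m,N,\chi}$-submodule $W_{j-1} \subseteq (F_m^\times/p^N)_\chi$ generated by the previously constructed Kurihara elements $x_{e,q}$ (Definition~\ref{x}), the homomorphism built so far coincides with $\left|\Delta_p\right|^2 \bar{\phi}^{\ell_j}_{m,N,\chi}$. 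Each invocation of Proposition~\ref{chebo appli} contributes $\left|\Delta_p\right|^2$, and each lift of a homomorphism to a larger domain via Corollary~\ref{kernel} contributes another $\left|\Delta_p\right|^2$, totalling $\left|\Delta_p\right|^{4i}$ over $i$ steps and explaining the exponent $6+4i$ together with the base $\left|\Delta_p\right|^6 = \left|\Delta_p\right|^4 \cdot \left|\Delta_p\right|^2$ coming from Lemma~\ref{delta kowaza} and one additional Corollary~\ref{kernel} descent. Then, by iterating the recursion $\bar{\phi}^{\ell}_{m,N,\chi}(x_{\nu,q}) = \bar{w}_\ell \, \bar{\phi}^{\ell}_{m,N,\chi}(x_{\nu/\ell,q})$ of Proposition~\ref{[] and phi and x}(3), together with the valuation identity $[x_{\nu,q}]^{\ell}_{m,N,\chi} = \bar{\phi}^{\ell}_{m,N,\chi}(x_{\nu/\ell,q})$ of (2) and the vanishing (1) at primes outside $\nu q$, the image $\bar{\phi}^{q'}_{m,N,\chi}(x_{\ell_1\cdots\ell_i,q})$ for a final Chebotarev prime $q'$ (from one more application of Proposition~\ref{chebo appli} to the entire $W_i$) realizes precisely the desired $(n-i)\times(n-i)$ minor, up to all the collected error factors. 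Since $\bar{\phi}^{q'}_{m,N,\chi}$ extends to an element of $\mca{H}_{m,N,\chi}$ by injectivity of $R_{m,N,\chi}$ and $x_{\ell_1\cdots\ell_i,q} \in \mca{W}_{m,N,\chi}(\ell_1\cdots\ell_i q)$ by construction, the resulting element lies in $\mf{C}_{i,m,N,\chi}$.

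The hard part will be the precise bookkeeping of the error factor $(\gamma-1)^{a_\chi}\left|\Delta_p\right|^{6+4i} I_i I_C I_{P_\chi^E} J_{P_\chi^E}$: one must track, at every inductive step, how the Euler-system descent error (through $I_{P_\chi^E}, J_{P_\chi^E}$ and $(\gamma-1)^{a_\chi}$), the $\chi$-quotient error from Corollary~\ref{kernel} ($\left|\Delta_p\right|^2$ per descent), the class-group–Iwasawa module discrepancy $I_A$ (contributing $I_i$ after $i$ iterations), and the Chebotarev-choice error from Proposition~\ref{chebo appli}(3) ($\left|\Delta_p\right|^2$ per new prime) combine. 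Once this accounting is in place, passage to the inverse limit over $(m,N)$ via Corollary~\ref{compatibility of cyclotomic ideals} yields the desired inclusion inside $\mf{C}_{i,\chi}$, completing the proof.
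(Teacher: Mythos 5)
Your plan has the right overall shape — square presentation of $X'_\chi$, Chebotarev-chosen auxiliary primes, Kurihara elements $x_{\nu,q}$, a final homomorphism of type $\bar{\phi}^{\ell}_{m,N,\chi}$ — but it is missing the single mechanism by which the minors $\det(M_r)$ actually appear. The recursion $\bar{\phi}^{\ell}_{m,N,\chi}(x_{\nu,q}) = \bar{w}_\ell\,\bar{\phi}^{\ell}_{m,N,\chi}(x_{\nu/\ell,q})$ of Proposition \ref{[] and phi and x}(3), together with the valuation identities, only relates Kurihara elements to one another; it cannot by itself manufacture a determinant. What the paper does (and what your proposal never mentions) is build the two maps $\alpha$ and $\beta$ of \S 6.1, fitting the classes of the Chebotarev primes into the commutative square with the presentation matrix $M$ of $X'_\chi$, and then, in Proposition \ref{ind}, multiply the relation $\mb{y}^{(r)} = M\,\mb{x}^{(r)}$ (where $\mb{x}^{(r)} = \tilde{\beta}_N(x_{\nu_r,q}^{\cdots})$ and $\mb{y}^{(r)} = \varepsilon\,\bar{\alpha}_N(x_{\nu_r,q}^{\cdots})$, after deleting rows trivialized by Lemma \ref{ab}) by the adjugate $\widetilde{M}_{r-1}$. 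Cramer's rule is precisely where $\det(M_{r-1})\,\bar{\phi}^{\ell_{r+1}}(x_{\nu_r,q}) = \pm(\cdots)\det(M_r)\,\bar{\phi}^{\ell_r}(x_{\nu_{r-1},q})$ comes from. Without this step you have no iterable identity and no route to the $(n-i)\times(n-i)$ minor.

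Your finite-level reduction via Corollary \ref{compatibility of cyclotomic ideals} is also too optimistic as stated. The identity above pins down $\bar{\phi}^{\ell_{r+1}}(x_{\nu_r,q})$ only \emph{after multiplication by} $\det(M_{r-1})$, and in the quotient $R_{m,N,\chi}$ the image of $\det(M_{r-1})$ may very well be zero or a zero-divisor for any fixed $(m,N)$. The paper therefore does \emph{not} conclude level by level: in \S 6.4 it varies $m$, introduces the notion of convergence (Definition \ref{convergence}), shows by induction that $(\bar{\phi}^{\ell_{r+1}}(x_{\nu_r,q})_m)_m$ converges to a specified multiple of $\det(M_r)\in\Lambda_\chi$ using that $\det(M_{r-1})\neq 0$ in the \emph{domain} $\Lambda_\chi$, and only then feeds $\bar{\phi}^{\ell_{i+1}}(x_{\nu,q})_m\in\mf{C}_{i,F_m,N_m,\chi}$ into the inverse limit. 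You should also note that the precise $|\Delta_p|$-exponents ($10$, $14$, $20$, etc.) are tracked through Lemmas \ref{delta kowaza}, \ref{tannsha}, \ref{delta kowaza 2} and the conditions (q1)--(q2), (x1)--(x3); your account of where $6+4i$ comes from is only suggestive, and without the adjugate-matrix computation there is nothing to which a correct accounting could attach.
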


The assertions of 
Theorem \ref{Main theorem, Rough} 
on the upper bounds of the higher Fitting ideals are special cases 
of Theorem \ref{Main theorem}.

\begin{cor}\label{half of Main theorem, Rough}
Assume that the extension degree of $K/\bb{Q}$ is prime to $p$, 
and $\chi \in \widehat{\Delta}$ is a character 
satisfying $\chi(p)\ne 1$.
Then, we have the following. 
\begin{itemize}
\item[(i)] 
$\mf{C}_{0,\chi} \subseteq \Fitt_{\Lambda_\chi,0}(X'_\chi)$.
\item[(ii)]
$\ann_{\Lambda_\chi}(X_{\chi,\mrm{fin}})
\Fitt_{\Lambda_\chi,i}(X_\chi) \subseteq \mf{C}_{i,\chi}$
for any $i \in \bb{Z}_{\ge 0}$.
\end{itemize}
\end{cor}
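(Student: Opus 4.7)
The plan is to derive this corollary as a direct specialization of Theorem \ref{Main theorem}, using the refined auxiliary results already available under the hypotheses $[K:\bb{Q}]$ prime to $p$ and $\chi(p)\ne 1$. First I would observe that these hypotheses immediately collapse every constant appearing in Theorem \ref{Main theorem}: since $\Delta_p = 0$ we have $|\Delta_p| = 1$, and since the restriction $\chi_0$ of $\chi$ to $\Delta_0 = \Delta$ satisfies $\chi_0(p) = \chi(p)\ne 1$, the definition in \S \ref{subsection of global units} gives $a_\chi = 0$. So the factor $(\gamma-1)^{a_\chi}|\Delta_p|^{k}$ disappears for every $k$.

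Next I would invoke the three refined propositions already established. Proposition \ref{proj of E, refined} shows that we may take $I_{P_\chi^E} = \Lambda_\chi$ and $J_{P_\chi^E} = \ann_{\Lambda_\chi}(X_{\chi,\mrm{fin}})$, so the product $I_{P_\chi^E}J_{P_\chi^E}$ collapses to $\ann_{\Lambda_\chi}(X_{\chi,\mrm{fin}})$. Proposition \ref{circular units, free} together with Proposition \ref{rank one} implies that $C_{\infty,\chi}$ is free of rank one over $\Lambda_\chi$, and an isomorphism $C_{\infty,\chi} \simeq \Lambda_\chi$ realizes every $\varphi \in \Hom_{\Lambda_\chi}(E_{\infty,\chi},\Lambda_\chi)$ so that the ideal $I_C$ defined in \S \ref{subsection of 0-th cyclotomic ideals} equals $\Lambda_\chi$. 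Finally, Proposition \ref{class group, refined} gives that $X_{\Gamma_m,\chi} \to A_{F_m,\chi}$ is an isomorphism for every $m$, so we may take $Y = (\gamma-1)X$ in Proposition \ref{class group}, yielding $I_i = \Lambda_\chi$ for every $i \ge 0$.

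Substituting these reductions into Theorem \ref{Main theorem}(i) and (ii-$0$) immediately gives (i) of the corollary and the inclusion
\[
\ann_{\Lambda_\chi}(X_{\chi,\mrm{fin}})\,\Fitt_{\Lambda_\chi,i}(X'_\chi) \subseteq \mf{C}_{i,\chi}
\]
for every $i \ge 0$. To convert $X'_\chi$ to $X_\chi$ in part (ii), I would use the defining surjection $X_\chi \twoheadrightarrow X'_\chi$ together with Lemma \ref{basicFitting}(ii), which supplies $\Fitt_{\Lambda_\chi,i}(X_\chi) \subseteq \Fitt_{\Lambda_\chi,i}(X'_\chi)$; multiplying by $\ann_{\Lambda_\chi}(X_{\chi,\mrm{fin}})$ then yields the desired inclusion.

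Since every step consists of applying a result already proved in the paper, there is essentially no obstacle; the only care required is to verify that the hypotheses of each refined proposition match those of the corollary, which is immediate by inspection, and to check that the choice $Y = (\gamma-1)X$ is compatible with the existence statement of Proposition \ref{class group}(ii) in this refined setting.
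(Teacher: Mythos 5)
Your proposal is correct and follows essentially the same route as the paper: specialize Theorem \ref{Main theorem} using $|\Delta_p|=1$, $a_\chi=0$, and the refined Propositions \ref{proj of E, refined}, \ref{class group, refined} and \ref{circular units, free} to collapse all the auxiliary ideals. You also make explicit the passage from $\Fitt_{\Lambda_\chi,i}(X'_\chi)$ to $\Fitt_{\Lambda_\chi,i}(X_\chi)$ via Lemma \ref{basicFitting}(2), a step the paper's proof leaves implicit.
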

\begin{proof}
Here, let us deduce the corollary 
from Theorem \ref{Main theorem}. 
Under the assumption of Corollary \ref{half of Main theorem, Rough}, 
we have $\left| \Delta_p \right|=1$, and 
we can take $I_i= I_C=I_{P_{\chi}^E}=\Lambda_\chi$ 
and $J_{P_{\chi}^E}=\ann_{\Lambda_\chi}(X_{\chi,\mrm{fin}})$ 
by Proposition \ref{proj of E, refined}, 
Proposition \ref{class group, refined} 
and Proposition \ref{circular units, free}.
The corollary follows from these results. 
\end{proof}

\subsection{}
We spend this subsection on the setting of notations.
We assume that $\chi \in \widehat{\Delta}$ is non-trivial. 
Let $\mf{m}_\chi$ be the maximal ideal of $\Lambda_\chi$. 
Recall that we denote by 
$X_{\chi,\mrm{fin}}$ the maximal pseudo-null submodule of $X_{\chi}$, 
and put $X'_\chi:=X_\chi/X_{\chi,\mrm{fin}}$.
Since $X'_{\chi}$ has no non-trivial pseudo-null submodules, 
we have an exact sequence
\begin{align}\label{su}
\xymatrix{0 \ar[r] & \Lambda_\chi^h \ar[r]^{f} & \Lambda_\chi^h 
\ar[r]^{g} & {X'_{\chi}} \ar[r] & 0,} 
\end{align}
by Lemma \ref{no pn}. 
Let $M$ be the matrix corresponding to $f$ with respect to 
the standard basis $( \mb{e}_i )_{i=1}^h$ of $\Lambda_\chi^h$. 
We may assume that all entries of $M$ are contained 
in $\mf{m}_\chi$. 
In particular, we have 
\begin{equation}\label{disticct condition}
\mb{e}_i -a \mb{e}_j \notin \Ker g
\end{equation}
for all $i, j \in \bb{Z}$ 
with $1 \le i\ne j \le h$ and all $a \in \Lambda_\chi$.

Let $\{ m_1,...,m_h \}$ and $\{ n_1,...,n_h\}$ 
be permutations of $\{1,...,h\}$, and let
$i$ be an integer satisfying $1 \le i \le h-1$. 
Let us consider 
the matrix $M_i$ which is obtained from $M$ 
by eliminating the $n_j$-th rows ($j=1,...,i$) 
and the $m_k$-th columns ($k=1,...,i$). 
If $\det (M_i)=0$, this is trivial, so we assume that $\det (M_i) \ne 0$. 
If necessary, we permute $\{ m_1,...,m_i \}$, 
and assume $\det (M_r) \ne 0$ 
for all integers $r$ satisfying $ 0 \le r \le i$.

We fix a sequence $\{N_m \}_{m \in \bb{Z}_{\ge 0}}$ 
of positive integers satisfying $N_{m+1}> N_m > m+1$ 
and $p^{N_m} > \left| A_m \right|$ 
for any $m \in \bb{Z}_{\ge 0} $. 
First, we fix a sufficiently large integer $m$. 
For simplicity, we put $F:=F_m$, $R:=\bb{Z}_p[\Gal(F_m/\bb{Q})]_\chi$,  
$N:=N_m$ and $R_N:=R_{m,N,\chi}=\bb{Z}/p^N[\Gal(F_m/\bb{Q})]_\chi$. 
From the exact sequence (\ref{su}), we obtain the exact sequence 
\[
\xymatrix{
0 \ar[r] & R^h \ar[r]^{\bar{f}} & R^h \ar[r]^{\bar{g}} & 
{X'_{\chi,\Gamma_m}} \ar[r] & 0,
}
\]
by taking the $\Gamma_m$-coinvariants.
Let $A_{m,\chi,\mrm{fin}}$ be the image of 
$X_{\chi,\mrm{fin}}$ in $A_{m,\chi}$ by the natural homomorphism 
\[
\xymatrix{
X_{\chi} \ar@{->>}[r] & X_{\Gamma_m,\chi} \ar@{->>}[r] &  A_m.
}
\] 
We put 
$A'_{m,\chi}:= A_{m,\chi}/A_{\mrm{fin},\chi}$.
The image of $\mb{e}_r$ in $R^h$ is denoted by $\mb{e}_r^{(m)}$. 
For each $i \in \bb{Z}$ with $1 \le i \le h$, 
we denote by $\mb{c}_r^{(m)} \in {A'_{m,\chi}}$ 
the image of $\mb{e}_r^{(m)}$ by the homomorphism 
\[
\xymatrix{R^h \ar@{->>}[r]^(0.43){\bar{g}} & 
{X'_{\chi,\Gamma_m}} \ar@{->>}[r] & A'_m.}
\]
We fix a lift 
$\tilde {\mb{c}}_r^{(m)} 
\in A_{m,\chi}$ of $\mb{c}_r^{(m)}$. 
The condition (\ref{disticct condition}) and 
Proposition \ref{class group} imply that
if necessary, we replace $m$ with larger one, and
we may assume 
$\tilde {\mb{c}}_r^{(m)}\ne \tilde {\mb{c}}_s^{(m)}$
for any  $r,s \in \bb{Z}$ with $1 \le r, s \le h$
and $r \ne s$.  
If the extension degree of $K/\bb{Q}$ is divisible by $p$, 
then we additionally assume that 
$\tilde {\mb{c}}_r^{(m)}\ne 
\left| \Delta_p \right| \cdot  \tilde {\mb{c}}_s^{(m)}$
for any $r,s \in \bb{Z}$ satisfying $1 \le r,s \le h$.
We define 
\begin{align*}
P_r& :=\{ \ell \in \mca{S}_N \ | \ [\ell_{F}]_\chi 
= \tilde {\mb{c}}_r^{(m)} \}; \\
P_r'& :=\{ \ell' \in \mca{S}_N \ | \ [\ell'_{F}]_\chi 
= \left| \Delta_p \right| \cdot \tilde {\mb{c}}_r^{(m)} \},
\end{align*}
where $[\ell_{F}]_\chi$ is the ideal class of $\ell_{F}$ in $A_{m,\chi}$. 
By the condition (Chb), 
note that $P_r$ and $P'_r$ are not empty for all $r$.
We define a set $P$ of prime numbers by the union
\[
P:=\coprod_{r=1}^i(P_r \cup P_r'),
\]
and we denote by $P_F$ the set of all prime ideals of 
$\mca{O}_F$ above primes contained in  $P$. 

Let $J$ be the subgroup of $\mca{I}_{F}$ generated by $P_{F}$, and 
$\mca{J}$ the image of $(J\otimes\bb{Z}_p)_{\chi}$ 
in $(\mca{I}_{F}\otimes\bb{Z}_p)_\chi$. 
We denote by $\mca{F}$  
the inverse image of $\mca{J}$ by the homomorphism  
\[
\xymatrix{
(\cdot )_{F}\colon ({F}^{\times} \otimes \bb{Z}_p)_\chi 
\ar[r] &  (\mca{I}_{F}\otimes \bb{Z}_p)_{\chi}
}.
\] 
We define a surjective homomorphism 
\[
\xymatrix{
\alpha\colon \mca{J} \ar[r] & R^h
}
\] 
by $\ell_{F} \mapsto \mb{e}_r$ 
(resp.\ $\ell'_{F} \mapsto \left| \Delta_p \right|\cdot \mb{e}_r$ ) 
for each $\ell \in P_r $ (resp.\ $\ell \in P_r' $) 
and $r $ with $1 \le r \le h$. 
We define 
\[
\xymatrix{
\alpha_r := 
\mrm{pr}_r \circ \alpha\colon \mca{J} \ar[r]^(0.76){\alpha} & 
R^h  \ar[r]^{\mrm{pr}_r} & R
}
\]
to be the composition of $\alpha$ and the $r$-th projection.
We consider the following diagram  
\[
\xymatrix{
& \mca{F} \ar[rr]^{(\cdot)_{F, \chi}} &
& \mca{J} \ar@{->>}[rr] \ar[d]^\alpha &  & A'_{m,\chi}  &  \\
0 \ar[r] & R^h \ar[rr]^{\bar{f}} & & R^h \ar[rr]^{\bar{g}} & 
& X'_{\chi,\Gamma_m} \ar[u]_{\iota_A} \ar[r] & 0, \\
}
\]
where $\iota_A$ is induced by the canonical homomorphism. 
We fix a non-zero element 
$\varepsilon \in \ann_{\Lambda_\chi}(Y/(\gamma-1)X)$.
By Lemma \ref{class group}, we can define the homomorphism 
$\beta\colon \mca{F} \longrightarrow R^h$ 
to make the diagram
\begin{align}\label{diagram2}
\xymatrix{
& \mca{F} \ar[rr]^{(\cdot)_{F, \chi}} \ar@{-->}[d]^\beta 
& & \mca{J} \ar[rr]^{\varepsilon \cdot \pi'_A} 
\ar[d]^{\varepsilon \cdot \alpha} 
& &  A'_{m,\chi}  &  \\
0 \ar[r] & R^h \ar[rr]^{\bar{f}} 
& & R^h \ar[rr]^{\bar{g}} & &  
X'_{\chi,\Gamma_m} \ar[u]_{\iota_A} \ar[r] & 0
}
\end{align} 
commute, where $\pi'_A$ is the natural homomorphism.
Note that since the second row of the diagram is exact, 
$\beta$ is well-defined
We define 
\[
\xymatrix{
\beta_r := \mrm{pr}_r \circ \beta\colon \mca{F} 
\ar[r]^(0.71){\beta} 
& R^h \ar[r]^{\mrm{pr}_r} & R
}
\]
to be the composition of $\beta$ and the $r$-th projection.
We consider the diagram (\ref{diagram2}) 
by taking $( -  \otimes \bb{Z}/p\bb{Z})$. 

We regard $({F}^{\times}/p^N)_\chi$ 
as a $\Lambda_\chi$-module. 
For an element $x \in ({F}^{\times}/p^N)_{\chi}$ 
and $\delta \in \ann_{\Lambda_\chi}(X_{\chi,\mrm{fin}})$, 
we denote the scaler multiple of $x$ 
by $\delta \in \Lambda_\chi$ by $x^\delta$.

We need the following two lemmas, 
namely Lemma \ref{tannsha} and \ref{delta kowaza 2}.

\begin{lem}\label{tannsha}
The kernel of the natural homomorphism 
\[
\xymatrix{\iota_{\mca{F},N} \colon 
\mca{F}/p^N \ar[r] & 
({F}^{\times}/p^N)_{\chi}}
\]
is annihilated by $\left| \Delta_p \right|$.
\end{lem}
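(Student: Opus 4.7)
The plan is a short diagram chase reducing everything to Corollary \ref{torsion free}. Take $x \in \mca{F}$ whose class in $\mca{F}/p^N$ lies in $\Ker \iota_{\mca{F},N}$; by definition this means $x = p^N y$ for some $y \in (F^\times \otimes \bb{Z}_p)_\chi$. The goal is to prove $|\Delta_p|\, y \in \mca{F}$, because then $|\Delta_p|\, x = p^N (|\Delta_p|\, y) \in p^N \mca{F}$, which is exactly the desired annihilation statement.

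Applying the divisor map $(\cdot)_{F,\chi}$ to the equality $x = p^N y$, we obtain $p^N (y)_{F,\chi} = (x)_{F,\chi} \in \mca{J}$, so the image of $(y)_{F,\chi}$ in the quotient $Q := (\mca{I}_F \otimes \bb{Z}_p)_\chi / \mca{J}$ is killed by $p^N$. I would then identify $Q$ with $(\mca{I}_F/J \otimes \bb{Z}_p)_\chi$ using right exactness of the $\chi$-quotient functor applied to the short exact sequence $0 \to J \to \mca{I}_F \to \mca{I}_F/J \to 0$ tensored with $\bb{Z}_p$. Since $J$ is generated by the subset $P_F$ of the free $\bb{Z}$-generators of $\mca{I}_F$, the quotient $\mca{I}_F/J$ is free abelian, so $\mca{I}_F/J \otimes \bb{Z}_p$ is $\bb{Z}_p$-torsion-free, and Corollary \ref{torsion free} is applicable.

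That corollary yields that the $\bb{Z}_p$-torsion submodule of $Q$ is annihilated by $|\Delta_p|$. Since the image of $(y)_{F,\chi}$ in $Q$ is $p^N$-torsion, it is in particular $\bb{Z}_p$-torsion, hence $|\Delta_p|\, (y)_{F,\chi} \in \mca{J}$, i.e., $|\Delta_p|\, y \in \mca{F}$, which completes the argument. The proof is a formal diagram chase and I do not expect a real obstacle; the only step that demands a moment's care is the identification $Q \simeq (\mca{I}_F/J \otimes \bb{Z}_p)_\chi$ and the observation that $\mca{I}_F/J$ is free abelian, so that the $\bb{Z}_p$-torsion-free hypothesis of Corollary \ref{torsion free} is satisfied.
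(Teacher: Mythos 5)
Your argument is correct and is essentially the paper's own proof: the paper also takes a lift $\tilde{x}\in\mca{F}$ with $\tilde{x}=y_\chi^{\,p^N}$, notes that $(\mca{I}_{F}\otimes\bb{Z}_p)/(J\otimes\bb{Z}_p)$ is $\bb{Z}_p$-torsion-free so Corollary \ref{torsion free} bounds the torsion of $(\mca{I}_F\otimes\bb{Z}_p)_\chi/\mca{J}$ by $|\Delta_p|$, and concludes $y^{|\Delta_p|}\in\mca{F}$. The only cosmetic differences are that you take $y$ directly in $(F^\times\otimes\bb{Z}_p)_\chi$ (the paper lifts one step further, unnecessarily) and that you spell out the right-exactness identification $Q\simeq((\mca{I}_F/J)\otimes\bb{Z}_p)_\chi$ which the paper leaves implicit.
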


\begin{proof}
Let $x$ be an element in the kernel of the homomorphism 
\[
\xymatrix{\iota_{\mca{F},N} \colon 
\mca{F}/p^N \ar[r] & 
({F}^{\times}/p^N)_{\chi}}
\]
and $\tilde{x}$ a lift of $x$ in $\mca{F}$. 
Then, there exists  $y \in {F}^{\times} \otimes \bb{Z}_p$ 
such that $\tilde{x}={y_\chi}^{p^N}$. 
Note that all $\bb{Z}_p$-torsion elements of 
$(\mca{I}_{F} \otimes \bb{Z}_p)_\chi/\mca{J}$ 
are annihilated by $\left| \Delta_p \right|$ 
by Corollary \ref{torsion free} 
since the $\bb{Z}_p$-module 
$(\mca{I}_{F}\otimes \bb{Z}_p)/(J\otimes \bb{Z}_p)$ 
is torsion free.
Since $(\tilde{x})_{F, \chi} \in \mca{J}$,
we have 
$(y^{\left| \Delta_p \right|})_{F, \chi} \in \mca{J}$.
Therefore we have 
$y^{\left| \Delta_p \right|} \in \mca{F}$, 
and we obtain $x^{\left| \Delta_p \right|}=1$.
\end{proof}

We denote the image of the natural homomorphism 
$\iota_{\mca{F},N} \colon 
\mca{F}/p^N \longrightarrow ({F}^{\times}/p^N)_{\chi}$ 
by $\bar{\mca{F}}_N$. 
By Lemma \ref{tannsha}, 
there exists an $R_N$-homomorphism
\[
\xymatrix{
\tilde{\beta}_{r,N} \colon 
\bar{\mca{F}}_N \ar[r] & R_N
}
\]
which makes the diagram
\[
\xymatrix{
\mca{F}/p^N \ar@{->>}[rr]^(0.56){\iota_{\mca{F},N}} 
\ar[rrd]_{\left| \Delta_p \right| \cdot 
\bar{\beta}_{r,N}} & & 
\bar{\mca{F}}_N 
\ar@{-->}[d]^{\tilde{\beta}_{r,N}} \\
& & R_N
}
\]
for each integer $r$ with $1 \le r \le h$,
where 
$\bar{\beta}_{r,N}\colon \mca{F}/p^N \longrightarrow R_N$ 
is the homomorphism induced by $\beta_r$.

\begin{lem}\label{delta kowaza 2}
Let $[\cdot ]_{F,N,\chi} \colon 
({F}^{\times}/p^N)_\chi \longrightarrow
(\mca{I}_{F}/p^N)_{\chi}$ 
be the homomorphism induced by 
$(\cdot )_{F}\colon {F}^\times \longrightarrow
\mca{I}_{F}$. 
Let $x$ be an element of 
$({F}^{\times}/p^N)_\chi$ such that 
$[x]_{F,N,\chi} \in \mca{J}/p^N.$
Then, for any 
$\delta \in \ann_{\Lambda_\chi}(X_{\chi,\mrm{fin}})$, 
the element $x^{\delta\left| \Delta_p \right|^2}$ 
is contained 
in $\bar{\mca{F}}_N$.
\end{lem}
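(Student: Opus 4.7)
The plan is to reduce the assertion to a claim about ideal classes in $A_{F,\chi}$, which is then handled using the construction of $\mca{J}$. First, choose a lift $\tilde{x}\in(F^\times\otimes\bb{Z}_p)_\chi$ of $x$. By the hypothesis $[x]_{F,N,\chi}\in\mca{J}/p^N$, we may pick $\tilde{j}\in\mca{J}$ lifting $[x]_{F,N,\chi}$ and write $(\tilde{x})_{F,\chi}=\tilde{j}+p^N\iota$ for some $\iota\in(\mca{I}_F\otimes\bb{Z}_p)_\chi$. Identifying $(F^\times/p^N)_\chi$ with $(F^\times\otimes\bb{Z}_p)_\chi/p^N$, the desired conclusion $x^{\delta|\Delta_p|^2}\in\bar{\mca{F}}_N$ is equivalent to showing that the image of $\tilde{x}^{\delta|\Delta_p|^2}$ in $(F^\times\otimes\bb{Z}_p)_\chi/\mca{F}$ is $p^N$-divisible.

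Since $\mca{F}$ is the preimage of $\mca{J}$ under the divisor map, the induced map $(F^\times\otimes\bb{Z}_p)_\chi/\mca{F}\hookrightarrow(\mca{I}_F\otimes\bb{Z}_p)_\chi/\mca{J}$ is injective, and by right-exactness of the $\chi$-quotient applied to the sequence $F^\times\otimes\bb{Z}_p\to\mca{I}_F\otimes\bb{Z}_p\to A_F\to 0$, its image coincides with the kernel of the surjection $(\mca{I}_F\otimes\bb{Z}_p)_\chi/\mca{J}\twoheadrightarrow A_{F,\chi}/\bar{\mca{J}}$, where $\bar{\mca{J}}$ denotes the image of $\mca{J}$ in $A_{F,\chi}$. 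Under this injection, $\tilde{x}^{\delta|\Delta_p|^2}$ maps to $p^N\cdot(\delta|\Delta_p|^2\iota\bmod\mca{J})$, so $p^N$-divisibility in the source follows as soon as $\delta|\Delta_p|^2\iota\bmod\mca{J}$ itself lies in the subgroup, equivalently, as soon as $\delta|\Delta_p|^2[\iota]\in\bar{\mca{J}}$.

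The core step is thus to prove $\delta|\Delta_p|^2\cdot A_{F,\chi}\subseteq\bar{\mca{J}}$. By construction of the sets $P_r$ and $P'_r$, the submodule $\bar{\mca{J}}$ contains the lifts $\tilde{\mb{c}}_r^{(m)}$ and the multiples $|\Delta_p|\tilde{\mb{c}}_r^{(m)}$, while the $\mb{c}_r^{(m)}$ generate $A'_{m,\chi}=A_{F,\chi}/A_{m,\chi,\mrm{fin}}$. Hence the image of $\bar{\mca{J}}$ in $A'_{m,\chi}$ exhausts this quotient, giving $A_{F,\chi}=\bar{\mca{J}}+A_{m,\chi,\mrm{fin}}$ modulo defects of the $\chi$-quotient functor that are controlled by a factor $|\Delta_p|^2$ (this is where Corollary \ref{kernel} and Corollary \ref{torsion free} enter, bounding respectively the kernel of the natural $\chi$-quotient map and the $\bb{Z}_p$-torsion of $(\mca{I}_F\otimes\bb{Z}_p)_\chi$ each by $|\Delta_p|$). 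Since $\delta\in\ann_{\Lambda_\chi}(X_{\chi,\mrm{fin}})$ annihilates $A_{m,\chi,\mrm{fin}}$, the inclusion $\delta|\Delta_p|^2\cdot A_{F,\chi}\subseteq\bar{\mca{J}}$ follows.

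The main obstacle is the meticulous tracking of $|\Delta_p|$-factors through the non-flatness of $\Lambda\to\Lambda_\chi$ when $\Delta_p\ne 0$: each separate passage through a $\chi$-quotient (identifying $\bar{\mca{J}}$ concretely inside $A_{F,\chi}$, and transferring the principal-divisor exact sequence after $\chi$-twisting) incurs a $|\Delta_p|$-torsion error, and combining two such sources of error produces precisely the $|\Delta_p|^2$ correction appearing in the conclusion; verifying that no further factor is needed is the delicate part of the argument.
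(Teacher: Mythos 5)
Your argument is correct, but it travels a genuinely different route from the paper's. The paper carries out the whole divisibility argument at the ambient $\bb{Z}_p[\Gal(F_m/\bb{Q})]$-level, using the snake--lemma diagram (\ref{kernelfinite}) built from the exact sequences $0\to\mca{P}\to\mca{I}_F\otimes\bb{Z}_p\to A_m\to 0$ and $0\to\mca{P}_0\to J\otimes\bb{Z}_p\to B_m\to 0$, establishing first that $x^{\tilde\delta}\in\mca{P}_0/p^N$ for $\tilde\delta\in\ann_\Lambda(A_m/B_m)$, and only afterwards descending to the $\chi$-component via Lemma \ref{elementary lemma}; the two passages (lifting the hypothesis from the $\chi$-side to the $\Lambda$-side, and pushing the conclusion back down) are where the $|\Delta_p|^2$ appears. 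You instead work entirely on the $\chi$-twisted side, exploiting right-exactness of $(-)_\chi$ applied to $F^\times\otimes\bb{Z}_p\to\mca{I}_F\otimes\bb{Z}_p\to A_F\to 0$, and reducing everything to the single inclusion $\delta\,A_{F,\chi}\subseteq\bar{\mca{J}}$, which holds on the nose because $\bar{\mca{J}}$ contains each $\tilde{\mb{c}}_r^{(m)}$, hence surjects onto $A'_{m,\chi}=A_{m,\chi}/A_{m,\chi,\mrm{fin}}$, and $\delta$ kills $A_{m,\chi,\mrm{fin}}$ since it is a quotient of $X_{\chi,\mrm{fin}}$. In particular, the $|\Delta_p|^2$ factor that you attribute to $\chi$-quotient defects is a red herring in your route: once you are already on the $\chi$-twisted side, Corollaries \ref{kernel} and \ref{torsion free} play no role at this step, and your argument in fact proves the sharper statement $x^\delta\in\bar{\mca{F}}_N$, of which the lemma's claim with $\delta|\Delta_p|^2$ is a trivial consequence. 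The trade-off is that your approach requires first principles identification of $\bar{\mca{J}}$ inside $A_{F,\chi}$ and of the image of $(F^\times\otimes\bb{Z}_p)_\chi/\mca{F}$ inside $(\mca{I}_F\otimes\bb{Z}_p)_\chi/\mca{J}$, while the paper's keeps all bookkeeping in the cleaner $\Lambda$-level diagram at the expense of extra $|\Delta_p|$-factors when twisting down.
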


\begin{proof}
Recall the natural exact sequence:
\[
\xymatrix{
0 \ar[r] & \mca{P} \ar[r] & 
\mca{I}_{F}\otimes\bb{Z}_p \ar[r] & A_{m} \ar[r] & 0,
}
\] 
where $\mca{P}$ is defined by 
$\mca{P}: = ({F}^\times/\mca{O}_{F}^\times)\otimes\bb{Z}_p$. 
By the snake lemma for the commutative diagram
\[
\xymatrix{
0 \ar[r] & \mca{P} \ar[r] \ar[d]^{\times p^N} & 
\mca{I}_{F}\otimes\bb{Z}_p \ar[r] \ar[d]^{\times p^N} & 
A_{m} \ar[r] \ar[d]^{\times p^N} & 0 \\
0 \ar[r] & \mca{P} \ar[r]  & 
\mca{I}_{F}\otimes\bb{Z}_p \ar[r]  & A_{m} \ar[r]  & 0, 
}
\]
we obtain the following exact sequence 
\[
\xymatrix{
0 \ar[r] & A_m \ar[r] & 
\mca{P}/p^N \ar[r] & 
\mca{I}_{F}/p^N \ar[r] & A_m \ar[r] & 0. 
}
\]
(Recall we take $p^{N_m} > \left|A_m \right|$.)

Let $B_m$ be the image of 
$J\otimes\bb{Z}_p$ in $A_m$, 
and $\mca{P}_0 \subset  \mca{P}$ 
the inverse image of $J\otimes\bb{Z}_p$. 
Then, we have the exact sequence  
\[
\xymatrix{
0 \ar[r] & \mca{P}_0 \ar[r] & 
J\otimes\bb{Z}_p \ar[r] & B_m \ar[r] & 0,
}
\] 
and by a similar argument as above, 
we obtain the exact sequence
\[
\xymatrix{
0 \ar[r] & B_m \ar[r] & 
\mca{P}_0/p^N \ar[r] & J/p^N \ar[r] 
& B_m \ar[r] & 0.
}
\] 
Now, we obtain the commutative diagram
\begin{align}\label{kernelfinite}
\xymatrix{
0 \ar[r] & B_m \ar[r] \ar@{^{(}->}[d] & \mca{P}_0/p^N \ar[r] 
\ar@{^{(}->}[d] & J/p^N \ar[r] \ar@{^{(}->}[d] & 
B_m \ar[r] \ar@{^{(}->}[d] & 0 \\
0 \ar[r] & A_m \ar[r] & \mca{P}/p^N \ar[r] & 
\mca{I}_{F}/p^N \ar[r]  & A_m \ar[r]  & 0 \\
}
\end{align}
whose two rows are exact, and the vertical arrows are injective.
Let $\tilde{\delta}$ be 
an arbitrary element of $\ann_{\Lambda}(A_m/B_m)$.
Let $x$ be an arbitrary element of $\mca{P}/p^N$ satisfying 
$[x]_N\in J/p^N$.
Let us show that $x^\delta$ is contained in $\mca{P}_0/p^N$.
By the diagram (\ref{kernelfinite}), there exists an element 
$y \in \mca{P}_0/p^N$ satisfying $[x]_N=[y]_N$. 
Since $[xy^{-1}]_N=0$, the element $xy^{-1}$ is contained in 
the image of $A_m$. 
Since $\delta A_m$ is contained in $B_m$, 
$(xy^{-1})^{\delta}$ is contained in the image of $B_m$.
In particular, we have $(xy^{-1})^{\delta} \in \mca{P}_0/p^N$,
and we obtain $x^\delta \in \mca{P}_0/p^N$.
Combine this result with Lemma \ref{elementary lemma}, 
we obtain the lemma.
\end{proof}

We define the $R_N$-submodule $\bar{\mca{F}}_{N}'$ of 
$(F^{\times}/p^N)_{\chi}$ by 
\[
\bar{\mca{F}}_N' :=
\{x^{\left| \Delta_p \right|^6} \mathrel{|} 
x\in \bar{\mca{F}}_N \}.
\]
By the first row of the diagram (\ref{kernelfinite}), 
we obtain the following corollary.

\begin{cor}\label{yuugensei}
The order of the kernel of 
$[\cdot]_{m,N,\chi}\colon 
\mca{F}_N' \longrightarrow \mca{J}/p^N$
is finite.
\end{cor}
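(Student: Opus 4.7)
The plan is to factor the map $[\cdot ]_{m,N,\chi}\colon \bar{\mca{F}}_N' \longrightarrow \mca{J}/p^N$ through the middle arrow $(\mca{P}_0/p^N)_\chi \to (J/p^N)_\chi$ coming from the first row of diagram (\ref{kernelfinite}), and then control its kernel by two successively finite contributions.

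First, I would observe that the natural surjection $F_m^\times \otimes \bb{Z}_p \twoheadrightarrow \mca{P}$ sends $\mca{F}$ into $\mca{P}_0$: any lift $\tilde{x} \in \mca{F}$ of an element of $\bar{\mca{F}}_N$ satisfies $(\tilde{x})_{F_m} \in \mca{J}$, so its image in $\mca{P}$ lies in $\mca{P}_0$. Passing to $p^N$-quotients and to $\chi$-parts then yields a homomorphism $\phi\colon \bar{\mca{F}}_N' \longrightarrow (\mca{P}_0/p^N)_\chi$ whose composition with $(\mca{P}_0/p^N)_\chi \to (J/p^N)_\chi$ agrees, up to the canonical identification $(J/p^N)_\chi \cong \mca{J}/p^N$ (an isomorphism whose kernel and cokernel are killed by $\left|\Delta_p\right|$ by Corollary \ref{kernel}), with $[\cdot ]_{m,N,\chi}$.

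Next, I would bound $\ker\phi$. Because $p$ is odd, the short exact sequence
$0 \to \mca{O}_{F_m}^\times \otimes \bb{Z}_p \to F_m^\times \otimes \bb{Z}_p \to \mca{P} \to 0$
remains exact after reducing modulo $p^N$, and taking $\chi$-quotients via Corollary \ref{kernel} shows that the kernel of $(F_m^\times/p^N)_\chi \to (\mca{P}/p^N)_\chi$ coincides, modulo $\left|\Delta_p\right|$-torsion, with the image of $(\mca{O}_{F_m}^\times/p^N)_\chi$; the latter is finite since $\mca{O}_{F_m}^\times \otimes \bb{Z}_p$ is a finitely generated $\bb{Z}_p$-module. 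Hence $\ker\phi$ is finite.

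The last step is to bound the kernel of $(\mca{P}_0/p^N)_\chi \to (J/p^N)_\chi$: the first row of diagram (\ref{kernelfinite}) identifies the kernel of $\mca{P}_0/p^N \to J/p^N$ with $B_m \subseteq A_m$, which is finite since $p^{N_m} > \left|A_m\right|$ by our choice. Passing to $\chi$-quotients adds at most further finite $\left|\Delta_p\right|$-torsion noise, so the resulting kernel is still finite. Composing the two bounds, I obtain that $\ker([\cdot ]_{m,N,\chi}\colon \bar{\mca{F}}_N' \to \mca{J}/p^N)$ is contained in $\phi^{-1}$ of a finite group, and hence is finite itself. The only delicate technical point is the systematic tracking of the finite $\left|\Delta_p\right|$-torsion arising from the failure of exactness of the $\chi$-quotient functor when $\Delta_p$ is nontrivial, and this is handled uniformly by Lemma \ref{elementary lemma} and Corollary \ref{kernel}; the raising to the power $\left|\Delta_p\right|^6$ in the definition of $\bar{\mca{F}}_N'$ is comfortably enough to absorb every such correction factor that appears in the argument.
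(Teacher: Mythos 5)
Your proposal is correct and follows essentially the same route as the paper's own proof: both factor $[\cdot]_{m,N,\chi}$ through the first row of diagram (\ref{kernelfinite}), use the finiteness of the unit contribution at the first stage and of $B_m$ at the second, and then absorb the failure of exactness of the $\chi$-quotient functor (Lemma \ref{elementary lemma}, Corollary \ref{kernel}) using the $\left|\Delta_p\right|^6$ cushion built into $\bar{\mca{F}}_N'$. The only cosmetic difference is that the paper routes through $\widetilde{\mca{P}}_0 \subset F^\times\otimes\bb{Z}_p$ and its $\chi$-quotient, whereas you pass through $(\mca{P}_0/p^N)_\chi$ directly; the underlying diagram chase is the same.
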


\begin{proof}
Let $\mca{P}_0 \subseteq 
({F}^\times/\mca{O}_F^\times)\otimes \bb{Z}_p$ 
be as in  the proof of Lemma \ref{delta kowaza 2}. 
We denote the inverse image of $\mca{P}_0$ 
by the natural homomorphism 
\[
\xymatrix{
{F}^\times\otimes \bb{Z}_p \ar[r] & 
\mca{P}=({F}^\times/\mca{O}_{F}^\times)\otimes \bb{Z}_p
}
\]
by $\widetilde{\mca{P}}_0$.
Note that 
the kernel of the natural homomorphism 
\[
\xymatrix{
\mrm{pr_{\mca{P}_0,N}}\colon 
\widetilde{\mca{P}}_0/p^N 
\ar[r] & \mca{P}_0/p^N 
}
\] 
coincides with the image of 
$\mca{O}_{F}^\times/p^N$
in $\widetilde{\mca{P}}_0/p^N$. 
So, $\Ker\mrm{pr_{\mca{P}_0}}$ has finite order.
The top row of the diagram (\ref{kernelfinite}) 
implies that 
the kernel of the homomorphism 
\[
\xymatrix{
[\cdot]'_{m,N}\colon 
\mca{P}_0/p^N 
\ar[r] & (J/p^N)_\chi
}
\] 
induced by the natural homomorphism 
$[\cdot]_{m,N} \colon 
F^{\times}/p^N 
\longrightarrow (J/p^N)_\chi$ 
has finite order. 
Then, the kernel of the composite map 
\[
\xymatrix{
[\cdot]'_{m,N}\circ \mrm{pr}_{\mca{P}_0,N} \colon 
\widetilde{\mca{P}}_0/p^N \ar[r] & 
(J/p^N)_\chi
}
\]
is finite. 

Let us consider the commutative diagram 
\begin{equation}\label{commutative diagram for finiteness}
\xymatrix{
(\widetilde{\mca{P}}_0/p^N)_\chi 
\ar@{->>}[rr]^{ 
([\cdot]'_{m,N}\circ \mrm{pr}_{\mca{P}_0,N})_\chi
} 
\ar[d]_{\iota_1} 
& & (J/p^N)_\chi \ar@{->>}[d]^{\iota_2} \\
\mca{F}/p^N \ar@{->>}[r]^{\iota_{\mca{F},N}} & 
\bar{\mca{F}}_N  \ar[r]^(0.46){[\cdot]_{m,N,\chi}}  
& \mca{J}/p^N
} 
\end{equation}
of natural homomorphisms. 
Lemma \ref{elementary lemma} implies that  
the $\Coker{\iota_1}$ is annihilated by 
$\left| \Delta_p \right|^2$. 
By Corollary \ref{kernel}, 
it follows that $\Ker{\iota_2}$ is annihilated 
by ${\left| \Delta_p \right|}^2$.
The finiteness of the kernel of 
$[\cdot]'_{m,N}\circ \mrm{pr}_{\mca{P}_0,N}$ and 
Corollary \ref{kernel} imply that 
the order of 
${\left| \Delta_p \right|}^2
\Ker{([\cdot]'_{m,N}\circ \mrm{pr}_{\mca{P}_0,N})_\chi}$ 
is finite. 
Hence we obtain the corollary 
by chasing the commutative diagram 
(\ref{commutative diagram for finiteness}).
\end{proof}

Let $n$ be an element of $\mca{N}_N$ 
whose prime divisors are in $P$. 
We define $P^n_F$ to be the set of 
all elements of $P$ dividing $n$. 
We define $J_n$ to be the subgroup of $J$ 
generated by $P^n_F$, 
and the submodule $\mca{J}_{n,N}$ of $\mca{J}/p^N$ 
the image of $J_n\otimes\bb{Z}_p$ in $\mca{J}/p^N$.
We denote by $\bar{\mca{F}}_{n,N} $ 
the inverse image of $\mca{J}_{n,N}$ by 
$[\cdot]_{m, \chi} \colon ({F}^{\times}/p^N)_\chi 
\longrightarrow \mca{J}/p^N$.
Then, we define the $R_N$-submodule 
$\bar{\mca{F}}_{n,N}'$ of $(F^\times/p^N)_\chi$
\[
\bar{\mca{F}}_{n,N}':= 
\bar{\mca{F}}_{n,N} \cap \bar{\mca{F}}_{N}'.
\]
Note that $\bar{\mca{F}}'_{n,N}$ is a {\em finite} 
$R_N$-submodule of $({F}^{\times}/p^N)_\chi$ 
by Corollary \ref{yuugensei}. 

For each integer $r$ with $1 \le r \le h$, 
let
\[
\xymatrix{
\bar{\alpha}_{r,N} \colon 
\mca{J}_{n,N} \ar[r] & R_N
}
\]
be the $R_N$-homomorphism induced by $\alpha_r$. 
We put 
\begin{align*}
\bar{\alpha}_N:=(\bar{\alpha}_{s,N})_{s=1}^h \colon &
\mca{J}_{n,N}  \longrightarrow R_N^h,  \\
\tilde{\beta}_N:=(\tilde{\beta}_{s,N})_{s=1}^h \colon &  
\bar{\mca{F}}_N \longrightarrow R_N^h. 
\end{align*}
Then, we obtain the commutative diagram
\begin{equation}\label{commutative diagram with alpha and beta}
\xymatrix{
\bar{\mca{F}}'_{n,N} \ar[rr]^{[\cdot]_{F, \chi}} 
\ar[d]_{\tilde{\beta}_{N}} & & 
\mca{J}_{n,N} 
\ar[d]^{\left| \Delta_p \right| \varepsilon 
\cdot \bar{\alpha}_{N}} \\
R_N^h \ar[rr]^{\bar{f}} & & R_N^h.   \\
}
\end{equation}
of $R_N$-modules. 

\subsection{}
Let $\delta_A$ be a non-zero element of 
$\ann_{\Lambda_{\chi}}(X_{\chi,\mrm{fin}})$.
In this and the next subsection, 
we write $\bar{\phi}^\ell$ 
in place of $\bar{\phi}_{m,N,\chi}^\ell$ for simplicity. 
Here, as in \cite{Ku}, 
we shall take the element 
$x_{\nu,q}  \in (F^\times/p^N)_\chi$ 
which is defined in Definition \ref{x}, 
to translate $\beta_r$ 
to homomorphisms of the type $\bar{\phi}^\ell$. 
Recall the element $x_{\nu,q}  \in (F^\times/p^N)_\chi$ 
is determined by $\eta$, $q$, $\nu$, 
and $\{ w_\ell \}_{\ell |\nu}$. We shall take them as follows.  

First, let us take a prime number $q$ by the following way. 
For each integer $r$ with $1 \le r \le h$, 
we fix a prime number $q_r \in P_{n_r}$.
We put $Q:=\prod_{r=1}^h q_r \in \mca{N}_N$. 
We fix a homomorphism 
$\varphi\colon E_{\infty,\chi} 
\longrightarrow \Lambda_{\chi}$
of $\Lambda_\chi$-modules 
with pseudo-null cokernel.
By the Iwasawa main conjecture, 
we have 
\[
\varphi(\bar{C}_{\infty,\chi})
=\det (M_0) \cdot I_\varphi(E;C),
\]
where $\bar{C}_{\infty,\chi}$ is 
the image of $C_{\infty,\chi}$ 
in $E_{\infty,\chi}$, 
and $I_\varphi(E;C)$ is an ideal of $\Lambda_\chi$ 
of finite index. 
We fix an element $\delta_\varphi \in I_\varphi(E;C)$.
Then, we fix a family $(\eta_m)_{m \ge 0} \in C_{\infty,\chi}$
of circular units which is defined by 
$\Lambda_\chi$-linear combination of basic circular units, 
and satisfies  
$\varphi\big( (\eta_m)_{m \ge 0} \big)
= \delta_\varphi \det (M_0)$.
We write  
\[
\eta_m=\eta_m(1):=
\prod_{d|\mf{f}_K}\eta_m^{d}(1)^{u_d} \times 
\prod_{j=1}^r 
\eta_m^{1,a_j}(1)^{v_j} \in F_m^\times
\]
where $r$ is a positive integer, 
$a_1,..., a_r$ 
are integers prime to $p$, 
and $u_d$ and $v_j$ are elements of $\Lambda_\chi$ 
for each positive integers $d$ 
and $i$ with $d|\mf{f}_K$ and $1 \le i \le r$. 
Here, we assume that $r$, $a_1,...,a_r$, $u_d$'s and $v_j$'s are 
constant independent of $m$. 
As in the previous section, we fix $m$ in this and the next section, 
and put $\eta:=\eta_m$ for simplicity.

Let 
$\bar{\varphi}_{F,N, \chi}\colon 
(E_{\infty,\chi})_{\Gamma_m}/p^N \longrightarrow R_N$ 
be the induced homomorphism by $\bar{\varphi}$. 
Recall that in the proof of Proposition \ref{size}, 
we define $\mca{NO}:=\mca{NO}_{m,N,\chi}$ 
to be the image of the natural homomorphism 
\[
\xymatrix{
(E_{\infty,\chi})_{\Gamma_m}/p^N \ar[r] & 
\big(\mathcal{O}_{F}^\times/p^N\big)_\chi \ar[r] 
& \big({F_m}^{\times}/p^N\big)_\chi.
}
\]
We fix non-zero elements $\delta_I \in I_{P_{\chi}^{E}}$ 
and $\delta_J \in J_{P_{\chi}^{E}}$.
By the same argument as in Lemma \ref{delta kowaza}, 
there exists a homomorphism  
$\xymatrix{\psi\colon \mca{NO} 
\ar[r] & R_{N}}$ 
which makes the diagram
\[
\xymatrix{
(C_{\infty,\chi})_{\Gamma_m}/p^N \ar[r] \ar@{->>}[d] & 
(E_{\infty,\chi})_{\Gamma_m}/p^N_\chi 
\ar[rrrr]^(0.55){(\gamma -1)^{a_\chi} 
\cdot \delta_I \delta_J \left| \Delta_p \right|^4 
\cdot \bar{\varphi}_{F,N,\chi}}  \ar[d] & & & & R_{N} \\
\mca{W}_{m,N,\chi}(1)  \ar@{^{(}->}[r] & 
\mca{NO} \ar@{-->}[rrrru]_{\psi}  & & & 
}
\] 
commute. 
By Proposition \ref{chebo appli}, 
we can take a prime number $q\in \mca{S}_N$ 
satisfying the following two conditions:
\begin{enumerate}
\item[(q1)] $q \in P_{n_1}'$, 
and $q \ne q_1$ if $\Delta_p=0$;
\item[(q2)] $\mca{NO}_{m,N,\chi}$ is contained in 
the kernel of $[\cdot ]_{m,N,\chi}^{q}$, 
and for all $x \in \mca{NO}_{m,N,\chi}$,
\[
\bar{\phi}^{q}(x) = \left| \Delta_p \right|^2\psi(x).
\] 
\end{enumerate}
In particular, we have 
\begin{align*}
\bar{\phi}^{q}(\eta) &= 
\left| \Delta_p \right|^2 \psi(\eta) \\
&=  (\gamma -1)^{a_\chi}{\left| \Delta_p \right|^6 
\delta_I \delta_J \bar{\varphi}_{m,N,\chi}}
\big( (\eta_m )_{m\ge 0} \big) \\
&= \left| \Delta_p \right|^6 \delta_I \delta_J 
\delta_\varphi(\gamma -1)^{a_\chi/2}\cdot  \det (M_0).
\end{align*}

Next, let us take  $\nu$ and $\{ w_\ell \}_{\ell |\nu}$. 
First, we consider 
$\tilde{\beta}_{m_1,N} \colon 
\bar{\mca{F}}'_{Qq,N} \longrightarrow R_N$. 
By Proposition \ref{chebo appli}, 
we can take $\ell_2 \in \mca{S}_N$ 
which splits completely in $F(\mu_{q})/\bb{Q}$, and 
satisfies $\ell_2 \in P'_{n_2}$, 
$\ell\ne q_2$ and 
\[
\bar{\phi}^{\ell_2}(x) = 
{\left| \Delta_p \right|^2 }\cdot
\tilde{\beta}_{m_1,N}(x)
\] 
for all $x \in \bar{\mca{F}}'_{Qq,N}$. 
We put $\nu_1:=1$.

In the case $i=1$, 
we put $\nu:=\nu_1=1$, 
and $x_{\nu,q}=x_{1,q}= 
\kappa_{m,N}(\eta;q)=\kappa(q)$. 
It follows from 
Proposition \ref{[] and phi and x} (1) 
and Lemma \ref{delta kowaza 2} 
that $x_{1,q}^{\delta_A\left| \Delta_p \right|^8 }$ 
is an element of $\mca{F}'_{Qq,N}$.  

Suppose $i \ge 2$. 
To take $\nu$ and $\{ w_\ell \}_{\ell |\nu}$, 
we choose prime numbers $\ell_r$ for each $r$ 
with $2\le r \le i+1$  by induction on $r$ as follows. 
Let $r$ be an integer satisfying $2<r \le i+1$. 
Suppose that for each $s$ with $2 \le s \le r-1$, 
we have chosen distinct prime numbers 
$\ell_{s} \in \mca{S}_N$ 
which splits completely in 
$F(\mu_{q \nu_{s-1}})/\bb{Q}$. 
We put 
$\nu_{r-1} := 
\prod_{s=2}^{r-1}\ell_s$. 
We consider the $R_N$-linear homomorphism 
\[
\xymatrix{
\tilde{\beta}_{m_{r-1},N}\colon 
\bar{\mca{F}}'_{Q q \nu_{r-1},N} \ar[r] &  R_N.
}
\]  
Applying Proposition \ref{chebo appli}, 
we can take $\ell_r \in \mca{S}_N$
which splits completely in 
$F(\mu_{q \nu_{r-1}})/\bb{Q}$, 
and satisfies the following conditions:
\begin{itemize}
\item[(x1)] $\ell_r \in P_{n_r}'$, 
and $\ell_r \ne q_r$ if $\Delta_p=0$;
\item[(x2)] there exists $b_r 
\in ({F}^\times \otimes \bb{Z}_p)_\chi$ 
such that 
$(b_r)_{F, \chi} = 
(\ell_{r,F}-\left| \Delta_p \right| 
\cdot q_{r,F})_{\chi}$ and 
$\bar{\phi}^{\ell_s}(b_r)=0$ 
for any $s$ with $2 \le s < r$;  
\item[(x3)] $\bar{\phi}^{\ell_r}(x)= 
\left| \Delta_p \right|^2\cdot 
\tilde{\beta}_{m_{r-1},N}(x)$ for any 
$x \in \bar{\mca{F}}'_{Q q \nu_{r-1},N}$.
\end{itemize}
Thus, we have taken $\ell_2, \dots , \ell_{i+1}$, 
and we put 
$\nu:=\nu_{i}=\prod_{r=2}^i\ell_r 
\in \mca{N}_N$. 
For each $r$ with $2 \le r \le i$, 
we put 
\[w_{\ell_r}:=-{\phi}^{\ell_r}(b_r) 
\in R_N \otimes H_{\ell_r},
\]
and we obtain 
$x_{\nu,q} \in ({F}^\times/p^N)_\chi$. 
It follows from Proposition \ref{[] and phi and x} (1) 
and Lemma \ref{delta kowaza 2} 
that $ x_{\nu,q}^{\delta_A \left| \Delta_p \right|^8}$ 
is an element of $\bar{\mca{F}}'_{Q q \nu,N}$. 
Note that $q\nu$ is {\em well-ordered}. 

\subsection{}
In this subsection, 
we observe two homomorphism 
$\alpha$ and $\beta$ by using $x_{\nu,q}$, 
and describe $\det (M_i)$ in $R_N$. 
First, we prepare the following lemma.
\begin{lem}[cf. \cite{Ku} Lemma 9.2]\label{ab}
Suppose $i \ge 2$. Then,
\begin{enumerate}
\item $ \tilde{\beta}_{m_{r-1},N}
(x_{\nu,q}^{\delta_A \left| \Delta_p \right|^{10}} )=0$ 
for all $r$ with $2 \le r \le i$;
\item $\bar{\alpha}_{j,N}([x_{\nu,q}]_{m,N,\chi})=0$ 
for any $j\ne n_1,...,n_i.$ 
\end{enumerate}
\end{lem}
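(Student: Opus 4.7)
For part (2), the plan is to invoke Proposition~\ref{[] and phi and x}(1), which shows that $[x_{\nu,q}]_{m,N,\chi}$ is supported only on primes of $F$ above $q$ and $\ell_2, \ldots, \ell_i$. By condition (q1) we have $q \in P_{n_1}'$, and by (x1) each $\ell_s \in P_{n_s}'$ for $2 \le s \le i$. By the definition of $\alpha$, the image of any prime in $P_{n_s}'$ lies in $\left| \Delta_p \right| R_N \mb{e}_{n_s}$, so $\bar{\alpha}_N([x_{\nu,q}]_{m,N,\chi})$ lies in $\left| \Delta_p \right| \sum_{s=1}^i R_N \mb{e}_{n_s}$. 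Projecting onto the $j$-th coordinate for $j \notin \{n_1,\ldots,n_i\}$ yields zero, which is precisely~(2).

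For part (1), fix $r$ with $2 \le r \le i$. The strategy is to translate the desired vanishing of $\tilde{\beta}_{m_{r-1},N}$ into a vanishing of $\bar{\phi}^{\ell_r}$ through the relation (x3), which holds on the submodule $\bar{\mca{F}}'_{Qq\nu_{r-1},N}$. Since $x_{\nu,q}$ does not itself lie in this submodule (its divisor has support at $\ell_r,\ldots,\ell_i$), I would first construct an auxiliary element $y \in \bar{\mca{F}}'_{Qq\nu_{r-1},N}$ by subtracting from $x_{\nu,q}$ suitable $R_N$-linear combinations of the elements $b_s$ for $s \ge r$ produced by (x2), choosing the coefficients so that $[y]_{m,N,\chi}^{\ell_s}$ vanishes for each $s \ge r$. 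Lemma~\ref{delta kowaza 2} then ensures that $y^{\delta_A \left| \Delta_p \right|^{10}}$ genuinely lies in $\bar{\mca{F}}'_{Qq\nu_{r-1},N}$. Evaluating $\bar{\phi}^{\ell_r}$ on $y$ decomposes into three types of contributions: terms $\bar{\phi}^{\ell_r}(\kappa^\bullet(n))$ with $\ell_r \mid n$ and $n$ well-ordered, which vanish by Proposition~\ref{[] and phi}(3); terms $\bar{\phi}^{\ell_r}(b_s)$ with $s > r$, which vanish by (x2); and a residual contribution from the $b_r$-correction which, by Proposition~\ref{[] and phi and x}(3) combined with the defining identity $\bar{w}_{\ell_r} = -\bar{\phi}^{\ell_r}(b_r)$, cancels the surviving $\bar{w}_{\ell_r}\bar{\phi}^{\ell_r}(x_{\nu/\ell_r,q})$ piece. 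Applying (x3) then yields $\tilde{\beta}_{m_{r-1},N}(y^{\delta_A\left| \Delta_p \right|^{10}})=0$; an analogous analysis of $\tilde{\beta}_{m_{r-1},N}$ on the correction $b_s$-terms (separately, using (x3) again where applicable or the commutative diagram~(\ref{commutative diagram with alpha and beta}) otherwise) converts this back to the desired statement for $x_{\nu,q}$.

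The main obstacle will be the careful sign and power bookkeeping: one must verify that the $b_s$-corrections really land in $\bar{\mca{F}}'_{Qq\nu_{r-1},N}$ after raising to the chosen power, that the residual contribution produced by Proposition~\ref{[] and phi and x}(3) is precisely cancelled by the $b_r$-term via $\bar{w}_{\ell_r} = -\bar{\phi}^{\ell_r}(b_r)$, and that the accumulated factors of $\left| \Delta_p \right|$ and $\delta_A$ arising from Lemma~\ref{delta kowaza 2}, Corollary~\ref{kernel}, and the repeated use of (x3) stay within the prescribed bound $\delta_A \left| \Delta_p \right|^{10}$. This is directly analogous to Kurihara's Lemma~9.2 in~\cite{Ku}, with the extra $\left| \Delta_p \right|$-factors needed to handle the case $\Delta_p \ne 0$ in the plus-part setting.
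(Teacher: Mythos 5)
Your proposal follows essentially the same route as the paper's proof: for (2) you cite Proposition~\ref{[] and phi and x}(1) together with the choices $q\in P_{n_1}'$, $\ell_s\in P_{n_s}'$ feeding into the definition of $\alpha$; for (1) you form the paper's auxiliary element $y_r = x_{\nu,q}\prod_{s\ge r}b_s^{(\cdot)}$, put a suitable $\delta_A\left|\Delta_p\right|$-power of it in $\bar{\mca{F}}'_{Qq\nu_{r-1},N}$ via Lemma~\ref{delta kowaza 2}, convert $\tilde{\beta}_{m_{r-1},N}$ to $\bar{\phi}^{\ell_r}$ by (x3), use (x2) to kill the $\bar{\phi}^{\ell_r}(b_s)$ terms for $s>r$, cancel the $b_r$-term against $\bar{w}_{\ell_r}\bar{\phi}^{\ell_r}(x_{\nu/\ell_r,q})$ via Proposition~\ref{[] and phi and x}(3) and $\bar{w}_{\ell_r}=-\bar{\phi}^{\ell_r}(b_r)$, and finally pass back from $y_r$ to $x_{\nu,q}$ using $\beta(b_s)=0$ (your ``analogous analysis'' of the corrections). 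The only discrepancy from the paper is minor bookkeeping (the paper puts $y_r^{\delta_A\left|\Delta_p\right|^8}$, not the $\left|\Delta_p\right|^{10}$ power, in $\bar{\mca{F}}'_{Qq\nu_{r-1},N}$, the extra $\left|\Delta_p\right|^2$ coming from (x3)), which you flag yourself as needing care.
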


\begin{proof}
The second assertion (2) of 
the above lemma is clear 
by Proposition \ref{[] and phi and x} (1). 

Let us prove the first assertion. 
We define an element $y_r \in (F^\times/p^N)_\chi$ by 
\[
y_r=x_{\nu,q}
\prod_{s=r}^ib_s^{\bar{\phi}^{\ell_s}(x_{\nu/\ell_s,q})} 
\]
for any $r$ satisfying $2 \le r \le i$. 
Note that we have $\bar{\alpha}_N ([b_r]_{m,N,\chi})=0$ 
for any $r$ satisfying $2 \le r \le i$ 
since we have 
$(b_r)_{F, \chi} =
(\ell_{r,F}-\left| \Delta_p \right|\cdot q_{r,F})_{\chi}$
and $\ell_r \in P_{n_r}'$.
By definition of $\beta$, 
we have ${\beta}(b_r)=0$ 
for any $r$ satisfying $2 \le r \le i$. 
So, we have 
\[
\tilde{\beta}( x_{\nu,q}^{\delta_A \left| \Delta_p \right|^8})=
\tilde{\beta}( y_r^{\delta_A \left| \Delta_p \right|^8})
\] 
for any $r$ with $2 \le r \le i$. 
Let us show   
$\tilde{\beta}_{m_{r-1},N}
(y_r^{\delta_A \left| \Delta_p \right|^8})=0$ 
for any integer $r$ satisfying $2 \le r \le i$. 
By Proposition \ref{[] and phi and x} (1), 
we have $ [y_r]_{F,N,\chi} \in J_{Qq \nu_{r-1}}$.  
Then, by Lemma \ref{delta kowaza 2}, 
we have 
$y_r^{\delta_A \left| \Delta_p \right|^8} 
\in \mca{F}_{Qq \nu_{r-1},N}$. 
Therefore, we obtain 
\[
{\delta_A \left| \Delta_p \right|^8} 
\bar{\phi}^{\ell_r}( y_r)= 
\left| \Delta_p \right|^2 \tilde{\beta}_{m_{r-1},N}
(y_r^{\delta_A \left| \Delta_p \right|^8} )
\]  
by the condition (x3). 
Since $\bar{\phi}^{\ell_r}( b_{s})=0$ 
for all integers $s$ satisfying $r+1 \le s \le i$ by the condition (x2), 
we have 
\[
\bar{\phi}^{\ell_r}( y_r) =
 \bar{\phi}^{\ell_r}( x_{\nu,q} 
b_r^{\bar{\phi}^{\ell_r}(x_{\nu/\ell_r,q})}).
\]
By Proposition \ref{[] and phi and x} (3), 
we have 
\begin{align*}
\bar{\phi}^{\ell_r}( x_{\nu,q} 
b_r^{\bar{\phi}^{\ell_r}(x_{\nu/\ell_r,q})}) 
&= \bar{\phi}^{\ell_r}( x_{\nu,q} ) 
+ \bar{\phi}^{\ell_r}( 
b_r^{\bar{\phi}^{\ell_r}(x_{\nu/\ell_r,q})}) \\
&= -\bar{\phi}^{\ell_r}(b_r)
\bar{\phi}^{\ell_r}(x_{\nu/\ell_r,q})
+\bar{\phi}^{\ell_r}(x_{\nu/\ell_r,q}) 
\bar {\phi}^{\ell_r}(b_r) \\
&= 0.
\end{align*}
Therefore, we obtain 
\[
\tilde{\beta}_{m_{r-1},N}
(x_{\nu,q}^{\delta_A \left| \Delta_p \right|^{10}})
=\left| \Delta_p \right|^{2}\tilde{\beta}_{m_{r-1},N}
(y_r^{\delta_A \left| \Delta_p \right|^8})
=\delta_A \left| \Delta_p \right|^8
\bar{\phi}^{\ell_r}( y_r) =0.
\] 
\end{proof} 

The goal of this subsection is the following proposition.

\begin{prop}[cf. \cite{Ku} p.44]\label{ind}
The following equalities 
of elements contained in $R_{N}$ holds. 
\begin{enumerate}
\item We have 
\begin{align*}
\delta_A & \left| \Delta_p \right|^{10}  
\det (M) \bar{\phi}^{\ell_2}(x_{1,q}) \\
 &= \pm  \left| \Delta_p \right|^{20} (\gamma -1)^{a_\chi} 
\delta_A \delta_I \delta_J   
\varepsilon \det (M_1)  
\bar{\varphi}_{F,N,\chi}\big( (\eta_m )_{m\ge 0} \big).
\end{align*}
\item We have 
\[
\delta_A \left| \Delta_p \right|^{10}
\det (M_{r-1}) 
\bar{\phi}^{\ell_{r+1}}(x_{\nu_{r},q}) 
= \pm \left| \Delta_p \right|^{14} \delta_A  
\varepsilon \det (M_{r})  
\bar{\phi}^{\ell_{r}}(x_{\nu_{r-1},q})
\]  
for any $r$ with $2 \le r \le i $.
\end{enumerate}
The signs $\pm$ in {\em (1)} and {\em (2)} do not depend on $m$.
\end{prop}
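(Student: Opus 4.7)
The plan is to exploit the commutative diagram $\bar{f}\circ\tilde{\beta}_N=\left|\Delta_p\right|\varepsilon\cdot\bar{\alpha}_N\circ[\cdot]_{F,\chi}$ from (\ref{commutative diagram with alpha and beta}), evaluated on elements of the form $x_{\nu_r,q}^{\delta_A\left|\Delta_p\right|^{c}}$, and to extract both claimed identities by a Cramer-style expansion of $M$ along the two permutations $(n_1,\dots,n_h)$ and $(m_1,\dots,m_h)$.

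For part (1), I put $x:=x_{1,q}^{\delta_A\left|\Delta_p\right|^8}$, which lies in $\bar{\mca{F}}'_{Qq,N}$ by Proposition \ref{[] and phi and x}(1) and Lemma \ref{delta kowaza 2}. Since $x_{1,q}=\kappa(q)$, Proposition \ref{[] and phi}(2) yields $[x_{1,q}]^{q}=\bar{\phi}^{q}(\eta)$ and vanishing at every other prime by Proposition \ref{[] and phi}(1), so the definition of $\bar{\alpha}$ gives $\bar{\alpha}_N([x_{1,q}])=\left|\Delta_p\right|\bar{\phi}^{q}(\eta)\,\mb{e}_{n_1}$. Thus the diagram reads $M\,\tilde{\beta}_N(x)=\delta_A\left|\Delta_p\right|^{10}\varepsilon\,\bar{\phi}^{q}(\eta)\,\mb{e}_{n_1}$. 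Applying Cramer's rule to the $m_1$-th coordinate and using that the $(n_1,m_1)$-minor of $M$ equals $\det(M_1)$ produces $\det(M)\,\tilde{\beta}_{m_1,N}(x)=(-1)^{n_1+m_1}\delta_A\left|\Delta_p\right|^{10}\varepsilon\,\bar{\phi}^{q}(\eta)\det(M_1)$, whose sign depends only on the permutations fixed at the outset. Translating $\tilde{\beta}_{m_1,N}(x)$ into $\bar{\phi}^{\ell_2}(x_{1,q})$ via property (x3) for $\ell_2$, and inserting the explicit formula $\bar{\phi}^{q}(\eta)=(\gamma-1)^{a_\chi}\left|\Delta_p\right|^6\delta_I\delta_J\,\bar{\varphi}_{F,N,\chi}\bigl((\eta_m)_{m\ge 0}\bigr)$ established after (q1)-(q2), yields the first equality after clearing common factors.

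For part (2), the same strategy is applied with $x:=x_{\nu_r,q}^{\delta_A\left|\Delta_p\right|^{10}}\in\bar{\mca{F}}'_{Qq\nu_r,N}$. The proof of Lemma \ref{ab} uses only properties (x1)-(x3) for $\ell_2,\dots,\ell_r$ and carries over verbatim to $x_{\nu_r,q}$, yielding $\tilde{\beta}_{m_{s-1},N}(x)=0$ for $2\le s\le r$ and $\bar{\alpha}_{j,N}([x])=0$ for $j\notin\{n_1,\dots,n_r\}$. After re-indexing rows and columns by the two permutations, the rows $k=r,r+1,\dots,h$ of the matrix equation collapse to a square system of size $h-r+1$, namely $M_{r-1}\cdot\bigl(\tilde{\beta}_{m_r,N}(x),\dots,\tilde{\beta}_{m_h,N}(x)\bigr)=\left|\Delta_p\right|\varepsilon\cdot\bigl(\bar{\alpha}_{n_r,N}([x]),0,\dots,0\bigr)$, whose top-left $(h-r)\times(h-r)$ minor is precisely $\det(M_r)$. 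Cramer's rule applied to the first coordinate gives $\det(M_{r-1})\,\tilde{\beta}_{m_r,N}(x)=\pm\left|\Delta_p\right|\varepsilon\,\bar{\alpha}_{n_r,N}([x])\,\det(M_r)$. Finally, Proposition \ref{[] and phi and x}(2) together with the definition of $\bar{\alpha}$ identifies $\bar{\alpha}_{n_r,N}([x])=\left|\Delta_p\right|[x]^{\ell_r}$ with $\delta_A\left|\Delta_p\right|^{11}\bar{\phi}^{\ell_r}(x_{\nu_{r-1},q})$, while property (x3) for $\ell_{r+1}$ identifies $\tilde{\beta}_{m_r,N}(x)$ with $\delta_A\left|\Delta_p\right|^{8}\bar{\phi}^{\ell_{r+1}}(x_{\nu_r,q})$; combining these and multiplying by $\left|\Delta_p\right|^2$ yields the second equality.

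The principal obstacle is purely bookkeeping: the $\chi$-localisation is not flat over $\Lambda$ when $\Delta_p\ne 0$, so every diagram needed to set up $\tilde{\beta}_N$ and $\bar{\alpha}_N$ costs a factor of $\left|\Delta_p\right|$ (Corollary \ref{kernel}, Lemmas \ref{tannsha} and \ref{delta kowaza 2}), and the lift of $\mca{W}_{m,N,\chi}(1)$ to $\mca{NO}$ together with the construction of $q$ costs factors of $\delta_A\in\ann_{\Lambda_\chi}(X_{\chi,\mrm{fin}})$, $\varepsilon\in\ann_{\Lambda_\chi}(Y/(\gamma-1)X)$, $\delta_I\in I_{P_\chi^E}$ and $\delta_J\in J_{P_\chi^E}$ (Lemma \ref{delta kowaza}). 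Organising these exponents is exactly what produces the specific powers $\left|\Delta_p\right|^{10}$, $\left|\Delta_p\right|^{14}$, $\left|\Delta_p\right|^{20}$ appearing in the statement, and the $m$-independence of the signs follows because the permutations defining $M_r$ and the primes $q,\ell_2,\dots,\ell_{i+1}$ are fixed once and for all.
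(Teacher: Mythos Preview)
Your proposal is correct and follows essentially the same route as the paper: evaluate the relation $M\,\tilde{\beta}_N(x)=\left|\Delta_p\right|\varepsilon\,\bar{\alpha}_N([x])$ on a suitable power of $x_{\nu_r,q}$, use Lemma~\ref{ab} to kill the first $r-1$ rows and columns, apply the cofactor expansion to extract the $m_r$-th coordinate, and then translate $\tilde{\beta}_{m_r,N}$ into $\bar{\phi}^{\ell_{r+1}}$ via condition~(x3). Two cosmetic discrepancies with the paper: in part~(1) the paper raises $x_{1,q}$ to the power $\delta_A\left|\Delta_p\right|^{10}$ rather than $\delta_A\left|\Delta_p\right|^{8}$ (your version yields the identity with exponents $8$ and $18$, which becomes the stated $10$ and $20$ after the final multiplication by $\left|\Delta_p\right|^2$ you mention anyway); and in part~(2) the paper writes only the case $r=i$, remarking that this suffices, whereas you argue directly for general $r$ by noting that the proof of Lemma~\ref{ab} transfers verbatim to $x_{\nu_r,q}$---which it does, since the construction of $\ell_2,\dots,\ell_{r+1}$ and $w_{\ell_2},\dots,w_{\ell_r}$ already fixes all the ingredients needed.
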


\begin{proof}
For each $r$ satisfying $1 \le r \le i $ we put 
\begin{align*}
\mb{x}^{(r)}&:=
\tilde{\beta}_N
({x_{\nu_r,q}}^{\left| \Delta_p \right|^{10} 
\delta_A}) \in R_N^h; \\
\mb{y}^{(r)}&:=
\left| \Delta_p \right| \varepsilon 
\bar{\alpha}_N({x_{\nu_r,q}}^{\left| 
\Delta_p \right|^{10} \delta_A}) 
\in R_N^h,
\end{align*}
and regard them as column vectors.
Then, by the commutative diagram 
(\ref{commutative diagram with alpha and beta}), 
we have $\mb{y}^{(r)}=M\mb{x}^{(r)}$
in $R_N^h$.

We first prove the assertion (1) of the above proposition.
Note that $\delta_A \left| \Delta_p \right|^2$ times of 
$x_{1,q}=\kappa(q)$ 
is an element of $\mca{F}_{q,N}$, and we have 
\begin{align*}
\mb{y}^{(1)}&= 
\left| \Delta_p \right|^{12} \delta_A \varepsilon 
[\kappa(q)_\chi]_{F,N,\chi}^q\mb{e}_{n_1}^{(m)}  \\
&= \left| \Delta_p \right|^{12} \delta_A \varepsilon 
\bar{\phi}^{q}(\eta_m )\mb{e}_{n_1}^{(m)}  \\
&= \left| \Delta_p \right|^{14} \delta_A \varepsilon \psi(\eta_m) \\
&=\left| \Delta_p \right|^{18} (\gamma -1)^{a_\chi}\delta_A   
\delta_I \delta_J \varepsilon \cdot 
\bar{\varphi}_{F,N,\chi}\big( (\eta_m )_{m\ge 0} \big).
\end{align*}
Note that the first equation 
follows from the condition (q1) and 
definition of $\alpha$, 
the second one 
follows from Proposition \ref{[] and phi} (2), 
and the third one follows from the condition (q2).
Let $\widetilde{M}$ be the matrix of cofactors of $M$. 
Multiplying the both sides of $\mb{y}^{(1)}=M\mb{x}^{(1)}$ 
by $\widetilde{M}$,
and comparing the $m_1$-st components, we obtain 
\begin{align*}
& (-1)^{n_1+m_1}\left| \Delta_p \right|^{18} 
(\gamma -1)^{a_\chi}\delta_A   
\delta_I \delta_J \varepsilon \det (M_1) 
\bar{\varphi}_{F,N,\chi}\big( (\eta_m )_{m\ge 0} \big)\\
& =\det (M) \tilde{\beta}_{m_1,N}(
{x_{1,q}}^{\left| \Delta_p \right|^{10} \delta_A}).
\end{align*}
By condition (x3) for $\ell_2$, we have 
\[
\left| \Delta_p \right|^{2}
\tilde{\beta}_{m_1,N}({x_{1,q}}^{\left| \Delta_p \right|^{10} \delta_A}) 
= {\left| \Delta_p \right|^{10} \delta_A} 
\bar{\phi}^{\ell_2}(x_{1,q}),
\]
and the assertion (1) follows.

Next, we assume $i \ge 2$, and we shall prove Proposition \ref{ind} (2). 
The proof is essentially the same as the proof of assertion (1). 
It is sufficient to prove the assertion when $r=i$.
We write $\mb{x}=\mb{x}^{(i)}$ and $\mb{y}=\mb{y}^{(i)}$.
Let $\mb{x}' \in R_N^{h-i+1}$ be the vector obtained from 
$\mb{x}$ by eliminating the $m_j$-th rows for $j=1,...,i-1$, 
and $\mb{y}'$ the vector obtained from $\mb{y}$ 
by eliminating the $n_k$-th rows  for $k=1,...,i-1$. 
Since the $m_r$-th rows of $\mb{x}$ are 0 
for all $r$ with $1 \le r \le i-1$ by Lemma \ref{ab} (1), 
we have $\mb{y}'=M_{i-1}\mb{x}'.$ 
We assume the $m_i'$-th component of $\mb{x}'$ 
corresponds to the $m_i$-th component of $\mb{x}$, 
and the $n_i'$-th component of $\mb{y}'$ 
corresponds to the $n_i$-th component of $\mb{y}$. 
By Lemma \ref{ab} (2) and Proposition \ref{[] and phi and x} (2), 
we have 
\[
\mb{y}'=\left| \Delta_p \right|^{12} \delta_A 
\varepsilon \bar{\phi}^{\ell_i}(x_{\nu_{i-1},q})\mb{e'}_{n_i '}^{(m)},
\]
where $( \mb{e'}_i^{(m)} )_{i=1}^{h-i+1}$ 
denotes the standard basis of $R_N^{h-i+1}$.

Let $\widetilde{M}_{i-1}$ be the matrix of cofactors of $M_{i-1}$. 
Multiplying the both sides of $\mb{y}'=M_{i-1}\mb{x}'$ 
by $\widetilde{M}_{i-1}$,
and comparing the $m_i '$-th components, we obtain 
\[
(-1)^{n_i'+m_i'}\left| \Delta_p \right|^{12}
\delta_A \varepsilon \det (M_i)
\bar{\phi}^{\ell_i}(x_{\nu_{i-1},q})
=\det (M_{i-1}) 
\tilde{\beta}_{m_i,N}(
{x_{\nu,q}}^{\left| \Delta_p \right|^{10}\delta_A}).
\]
By condition (x3) for $\ell_{i+1}$, and 
since $x_{n,q}^\delta $ is an element of $\mca{F}_{Qq\nu,N}$, 
we have 
\[
\left| \Delta_p \right|^2
\tilde{\beta}_{m_i,N}(
{x_{\nu,q}}^{\left| \Delta_p \right|^{10} \delta_A}) 
= {\left| \Delta_p \right|^{10} \delta_A}
\bar{\phi}^{\ell_{i+1}}(x_{\nu,q}).
\]
Here, the proof of Proposition \ref{ind} is complete.
\end{proof}  

\subsection{} 
Now let us prove Theorem \ref{Main theorem}.
It is convenient to use the following notion of convergence. 

\begin{dfn}\label{convergence}
A sequence $(a_m)_{m \ge 0} \in \prod_{m \ge 0} R_{F_m,N_m,\chi}$ 
is said to {\em converge} to 
\[
b=(b_m)_{m \ge 0} \in \plim_{m \ge 0}R_{F_m,N_m,\chi}=\Lambda_{\chi}
\]
if for each $m$, 
there exists an integer $L_m$ such that the image of $a_{m'}$ 
in $R_{F_m,N_m,\chi}$ coincides to $b_m \in R_{F_m,N_m,\chi}$ 
for any $m' \ge L_m$.
\end{dfn}

\begin{proof}[Proof of Theorem \ref{Main theorem} ]
Here, we vary $m$. 
In this subsection, we denote the element 
\[
\bar{\phi}^{\ell_{r+1}}(x_{\nu_r,q}) 
\in R_N=(\bb{Z}/p^{N_m})[\Gal(F_m/F_0)]_\chi
\] 
defined in \S 6.2 
by $\bar{\phi}^{\ell_{r+1}}(x_{\nu_r,q})_m$.
By induction on $r$, 
let us prove that the sequence
$\big( \bar{\phi}^{\ell_{r+1}}(x_{\nu_{r},q})_m \big)_{m\ge 0}$ 
converges to 
\[
\pm \left| \Delta_p \right|^{6+4r} (\gamma -1)^{a_\chi}
\delta_I \delta_J \delta_\varphi \varepsilon^r  
\det (M_{r}) \in \Lambda_{\chi}
\]
in the sense of Definition \ref{convergence}
for any integer $r$ satisfying $0 \le r \le i$.

First, we consider the equality
\begin{align*}
\left| \Delta_p \right|^{10} & \delta_A
\det (M)\cdot  \bar{\phi}^{\ell_2}(x_{1,q}) \\
&= \pm \left| \Delta_p \right|^{20}(\gamma -1)^{a_\chi}  
\delta_A \delta_I \delta_J \varepsilon 
\det (M_1) \bar{\varphi}_{F,N,\chi}\big( (\eta_m )_{m\ge 0} \big). 
\end{align*}
Since the right hand side converges to 
\[
\pm \left| \Delta_p \right|^{20}(\gamma -1)^{a_\chi} 
\delta_A \delta_I \delta_J \delta_\varphi  
\varepsilon \det (M_1)\det (M)
\]
and $\left| \Delta_p \right|^{10} \delta_A \det (M)$ 
is non-zero element, it follows that 
$\big( \bar{\phi}^{\ell_2}(x_{1,q})_m \big)_{m\ge 0}$ 
converges to 
\[
\pm \left| \Delta_p \right|^{10}(\gamma -1)^{a_\chi} 
\delta_I \delta_J \delta_\varphi \varepsilon \det (M_1).
\]
(Note the sign $\pm$ does not depend on $m$, 
see Proposition \ref{ind}). 

Next, we assume that the sequence 
$\big( \bar{\phi}^{\ell_r}(x_{\nu_{r-1},q})_m \big)_{m \ge 0}$ 
converges to 
\[
\pm \left| \Delta_p \right|^{2+4r}(\gamma -1)^{a_\chi} 
\delta_I \delta_J \delta_\varphi 
\varepsilon^{r-1} \det (M_{r-1}).
\]
Then, the right hand side of 
\[
\left| \Delta_p \right|^{10} \delta_A\det (M_{r-1}) 
\bar{\phi}^{\ell_{r+1}}(x_{\nu_{r},q}) 
= \pm \left| \Delta_p \right|^{14} \delta_A \varepsilon 
\det (M_{r}) \bar{\phi}^{\ell_{r}}(x_{\nu_{r-1},q})
\]
converges to 
\[
\pm \left| \Delta_p \right|^{16+4r}(\gamma -1)^{a_\chi} 
\delta_A \delta_I \delta_J \delta_\varphi   
\varepsilon^r  \det (M_{r}) \det (M_{r-1})
\]
Since we take $\det (M_{r-1}) \ne 0$, 
the sequence 
$\big( \bar{\phi}^{\ell_{r+1}}(x_{\nu_r,q})_m \big)_{m \ge 0}$ 
converges to 
\[
\pm  \left| \Delta_p \right|^{6+4r} (\gamma -1)^{a_\chi} 
\delta_I \delta_J \delta_\varphi  
\varepsilon^r \det (M_{r}).
\]

By induction, in particular, we conclude 
$\big( \bar{\phi}^{\ell_{i+1}}(x_{\nu,q})_m \big)$ 
converges to 
\[
\pm \left| \Delta_p \right|^{6+4i}(\gamma -1)^{a_\chi} 
\delta_I \delta_J \delta_\varphi \varepsilon^i \det (M_{i}).
\]
Since $(x_{\nu,q})_m$ is contained in an $R_N$-submodule of 
$(F^\times/p^N)_\chi$ generated by the set
$\bigcup_{e|q\nu} \mca{W}_{m,N,\chi}(e)$ with $\epsilon (q\nu)=i$, 
we have 
$\bar{\phi}^{\ell_{i+1}}(x_{\nu,q})_m \in\mf{C}_{i,F_m,N \chi}$ 
for all $m \in \bb{Z}_{\ge 0}$. 
Hence we have 
\[
\pm \left| \Delta_p \right|^{6+4i} (\gamma -1)^{a_\chi}  
\delta_I \delta_J \delta_\varphi \varepsilon^i 
\det (M_{i}) \in \mf{C}_{i,\chi}.
\]
This completes the proof of theorem.
\end{proof}

\section{The higher cyclotomic ideals and Mazur-Rubin theory}\label{the cyclotomic ideals and Mazur-Rubin theory}
In this section, we assume that 
the extension degree of $K/\bb{Q}$ is coprime to $p$, and 
fix a character $\chi \in \widehat{\Delta}$ 
satisfying $\chi (p)\ne 1$. 
We denote by $\mca{O}$ the $\bb{Z}_p$-algebra 
isomorphic to $\mca{O}_\chi$ with trivial $G_{\bb{Q}}$-action. 
We identify the ring $\mca{O}$ (resp.\ $\mca{O}[[\Gamma]]$) 
with $\mca{O}_\chi$ (resp.\ $\Lambda_\chi$) 
when we ignore the action of $G_{\bb{Q}}$. 
In particular, we sometimes regard $\mf{C}_{i,\chi}$ 
as an ideal of $\mca{O}[[\Gamma]]$, and 
$R_{m,N,\chi}$ as a quotient ring of $\mca{O}[[\Gamma]]$. 
In this section, 
we complete the proof of Theorem \ref{Main theorem, Rough} 
in \S \ref{notation}. 

\begin{thm}[Theorem \ref{Main theorem, Rough}]
We assume that the extension degree of $K/\bb{Q}$ is prime to $p$. 
Let $\chi  \in \widehat \Delta$ be a character
satisfying $\chi(p)\ne 1$.
Then, we have 
\[
 \Fitt_{\Lambda_\chi,i}(X_\chi)\sim \mf{C}_{i,\chi}
\]
for any $i \in \bb{Z}_{\ge 0}$. 
Moreover, we have 
\[
\ann_{\Lambda_\chi}(X_{\chi,\mrm{fin}}) 
\Fitt_{\Lambda_\chi,i}(X'_\chi)\subseteq \mf{C}_{i,\chi}
\]
for any $i \in \bb{Z}_{\ge 0}$. 
\end{thm}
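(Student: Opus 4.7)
The plan is to deduce this statement by assembling two results that were already built up in the body of the paper: the upper-bound inequality coming from Kurihara's Euler system argument via Kurihara's elements, and the lower-bound inequality coming from Mazur--Rubin theory applied to the Kolyvagin system of circular units. The role of the hypotheses $p\nmid [K:\bb{Q}]$ and $\chi(p)\ne 1$ is to make the refined versions of Proposition \ref{proj of E} (namely Proposition \ref{proj of E, refined}), Proposition \ref{class group} (Proposition \ref{class group, refined}) and Proposition \ref{circular units, free} available, so that in Theorem \ref{Main theorem} we may take $|\Delta_p|=1$, $I_i = I_C = I_{P_\chi^E} = \Lambda_\chi$, and $J_{P_\chi^E} = \ann_{\Lambda_\chi}(X_{\chi,\mrm{fin}})$.

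First I would prove the second assertion of the theorem, the explicit estimate \eqref{explicit estimates}. Under the hypotheses this is precisely Corollary \ref{half of Main theorem, Rough}(ii), which is a direct specialization of Theorem \ref{Main theorem}(ii-$i$). Hence no new work is required at this stage; the Euler system argument of \S \ref{Kurihara's Euler system argument}, with analogues of Kurihara's elements $x_{\nu,q}$, already supplies the estimate in every degree $i \ge 0$.

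Next I would translate \eqref{explicit estimates} into the relation $\Fitt_{\Lambda_\chi,i}(X'_\chi) \prec \mf{C}_{i,\chi}$. Since $X_{\chi,\mrm{fin}}$ is pseudo-null, its annihilator contains elements of height $\ge 2$, so \eqref{explicit estimates} yields the inclusion $\Fitt_{\Lambda_\chi,i}(X'_\chi)\Lambda_{\chi,\mf{p}} \subseteq \mf{C}_{i,\chi}\Lambda_{\chi,\mf{p}}$ at every height-one prime $\mf{p}$, which is the definition of $\prec$. For the reverse relation $\mf{C}_{i,\chi} \prec \Fitt_{\Lambda_\chi,i}(X'_\chi)$ I would invoke Theorem \ref{local lower bounds}, proved in \S \ref{the cyclotomic ideals and Mazur-Rubin theory} via the Mazur--Rubin machinery: the principal Kolyvagin systems of $(\Lambda_{\chi^{-1}}\otimes_{\bb{Z}_p}\bb{Z}_p(1),\mca{F}_\Lambda)$ detect the pseudo-isomorphism class of $X'_\chi$ (cf.\ Theorem \ref{MR5.2.12} and its corollaries), and this information translates, after localization at a height-one prime, into the inclusion $\mf{C}_{i,\chi}\Lambda_{\chi,\mf{p}} \subseteq \Fitt_{\Lambda_\chi,i}(X'_\chi)\Lambda_{\chi,\mf{p}}$ via the definition of $\mf{C}_{i,\chi}$ as being generated by images of Kolyvagin derivatives under $R_{m,N,\chi}$-homomorphisms. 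Combining the two directions gives $\mf{C}_{i,\chi} \sim \Fitt_{\Lambda_\chi,i}(X'_\chi)$.

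Finally I would pass from $X'_\chi$ to $X_\chi$. The short exact sequence
\[
\xymatrix{0 \ar[r] & X_{\chi,\mrm{fin}} \ar[r] & X_\chi \ar[r] & X'_\chi \ar[r] & 0}
\]
together with Lemma \ref{basicFitting}(2) and (3) gives
\[
\Fitt_{\Lambda_\chi,0}(X_{\chi,\mrm{fin}}) \Fitt_{\Lambda_\chi,i}(X'_\chi) \subseteq \Fitt_{\Lambda_\chi,i}(X_\chi) \subseteq \Fitt_{\Lambda_\chi,i}(X'_\chi).
\]
Since $X_{\chi,\mrm{fin}}$ is pseudo-null, $\Fitt_{\Lambda_\chi,0}(X_{\chi,\mrm{fin}})$ has height at least two, so both inclusions become equalities after localization at any height-one prime; hence $\Fitt_{\Lambda_\chi,i}(X_\chi) \sim \Fitt_{\Lambda_\chi,i}(X'_\chi) \sim \mf{C}_{i,\chi}$, which is the first assertion of the theorem. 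The main technical step in this program is Theorem \ref{local lower bounds}, whose proof via Mazur--Rubin theory carries the burden of comparing the abstract notion of core rank and principal Kolyvagin systems with the concrete $R_{m,N,\chi}$-homomorphism description of $\mf{C}_{i,\chi}$; everything else is bookkeeping with Fitting ideals and the already-proved estimate \eqref{explicit estimates}.
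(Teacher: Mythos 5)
Your proposal is correct and follows essentially the same route as the paper: upper bound from Corollary~\ref{half of Main theorem, Rough} (the specialization of Theorem~\ref{Main theorem} under $|\Delta_p|=1$, $I_i=I_C=I_{P_\chi^E}=\Lambda_\chi$, $J_{P_\chi^E}=\ann_{\Lambda_\chi}(X_{\chi,\mrm{fin}})$), lower bound from Theorem~\ref{local lower bounds}, and combine. Two small bookkeeping remarks: the paper states Theorem~\ref{local lower bounds} directly for $X_\chi$ rather than $X'_\chi$, so your extra passage through the short exact sequence is not strictly needed (though it is correct — note the relevant part of Lemma~\ref{basicFitting} for the left inclusion is (4), not (3)); and for the first translation into $\prec$ you also want $\Fitt_{\Lambda_\chi,i}(X_\chi)\subseteq\Fitt_{\Lambda_\chi,i}(X'_\chi)$ from Lemma~\ref{basicFitting}(2), which you do supply in the final paragraph.
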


In the previous section, we have already proved 
\[
\ann_{\Lambda_\chi}(X_{\chi,\mrm{fin}}) 
\Fitt_{\Lambda_\chi,i}(X'_\chi)\subseteq \mf{C}_{i,\chi}
\]
for any $i \ge 0$. 
In order to complete the proof of the theorem, 
we have to show 
\[
\mf{C}_{i,\chi} \prec 
\Fitt_{\Lambda_\chi,i}(X_\chi)
\]
for any $i \in \bb{Z}_{\ge 0}$. 
It is sufficient to show the following theorem. 

\begin{thm}\label{local lower bounds}
Let $i$ be a positive integer, and 
$\mf{P} \subset \mca{O}[[\Gamma]]=\Lambda_\chi$ 
a prime ideal of hight one containing 
$\Fitt_{\Lambda_\chi,i}(X_\chi)$.   
We define two integers 
$\alpha_{i}({\mf{P}})$ and 
$\beta_{i}({\mf{P}})$ by
\begin{align*}
\Fitt_{\Lambda_{\chi,\mf{P}},i}
(X_\chi \otimes_{\mca{O}[[\Gamma]]} \Lambda_{\chi,\mf{P}} )
& =\mf{P}^{\alpha_i({\mf{P}})}\Lambda_{\chi,\mf{P}}, \\
\mf{C}_{i,\chi}\Lambda_{\chi,\mf{P}}
& =\mf{P}^{\beta_{i}({\mf{P}})}\Lambda_{\chi,\mf{P}}. 
\end{align*}
Then, we have $\beta_{i}({\mf{P}})\ge \alpha_{i}({\mf{P}})$.
\end{thm}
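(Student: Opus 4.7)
The plan is to deduce Theorem \ref{local lower bounds} from the Mazur-Rubin structure theorems for Kolyvagin systems applied to the Galois representation $T_\chi := \mca{O}_{\chi^{-1}} \otimes_{\bb{Z}_p} \bb{Z}_p(1)$ equipped with the canonical Selmer structure $\mca{F}_\Lambda$ over $\Lambda_{\chi^{-1}}$. The two inputs needed are: (a) an identification of $\mf{C}_{i,\chi}$ in the language of Kolyvagin-derivative evaluations, and (b) the statement (already referenced as Theorem \ref{MR5.2.12}, Corollary \ref{AandaBandb}, and Corollary \ref{akandpartial}) that the principal Kolyvagin system of $(T_\chi,\mca{F}_\Lambda)$ controls the higher Fitting ideals of $X_\chi$ up to a height-two error.

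The first step is to reinterpret $\mf{C}_{i,\chi}$ via Kolyvagin systems. By Proposition \ref{[] and phi} together with Lemma \ref{fix}, for a well-ordered $n$ with $\epsilon(n)\le i$ the Kolyvagin derivative $\kappa_{m,N}(\eta;n)$ of a circular-unit Euler system $\eta$ is unramified outside the primes dividing $n$, and its singular localization at each $\ell\mid n$ is computed by the reciprocity map $\bar{\phi}_{m,N,\chi}^{\ell}$. I would show that every $R_{m,N,\chi}$-linear homomorphism $f\colon\mca{W}_{m,N,\chi}(n)\to R_{m,N,\chi}$ appearing in Definition \ref{The $i$-th cyclotomic ideal} arises, up to error factors killed by height-two ideals, as a composite of $\epsilon(n)$ such singular localizations applied to $\kappa_{m,N}(\eta;n)$ for varying $\eta$. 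Passing to the projective limit with the aid of Lemma \ref{liftinghom} and Corollary \ref{compatibility of cyclotomic ideals}, this identifies $\mf{C}_{i,\chi}\Lambda_{\chi,\mf{P}}$ at each height-one prime $\mf{P}$ with the Mazur-Rubin $i$-th Kolyvagin-derivative ideal for the Euler system of circular units.

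The second step is to feed this identification into Mazur-Rubin's theory. By Proposition \ref{circular units, free} and Remark \ref{extending to Euler system}, the circular-unit Euler system generates $C_{\infty,\chi}$ as a free $\Lambda_\chi$-module of rank one and extends to an Euler system whose associated Kolyvagin system $\kappa$ is principal in the sense of \cite{MR}. The cited Mazur-Rubin results then show that, after localizing at any height-one prime $\mf{P}$, the ideal $\Fitt_{\Lambda_\chi,i}(X_\chi)\Lambda_{\chi,\mf{P}}$ is generated by the values of $\kappa$ at Kolyvagin products of multiplicity at most $i$. Combined with Step~1 this gives $\mf{C}_{i,\chi}\Lambda_{\chi,\mf{P}}=\Fitt_{\Lambda_\chi,i}(X_\chi)\Lambda_{\chi,\mf{P}}$, hence $\beta_i(\mf{P})=\alpha_i(\mf{P})$, which in particular implies the desired inequality.

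The main obstacle will be carrying out the identification in Step~1 at finite level: one must verify that the ad hoc family of $R_{m,N,\chi}$-homomorphisms used to define $\mf{C}_{i,m,N,\chi}$ is no larger (modulo height-two ideals) than the family of singular-localization maps appearing in the Mazur-Rubin formalism. This reduces to a finite-level version of the core-rank-one property for $(T_\chi,\mca{F}_\Lambda)$ when $\chi(p)\ne 1$ (which underlies the fact that $C_{\infty,\chi}$ has rank one), together with a careful Chebotarev-type choice of auxiliary Kolyvagin primes along the lines of Proposition \ref{chebo appli}. Once this bridge is in place, Theorem \ref{local lower bounds} follows directly from the Mazur-Rubin theorems already cited in the paper.
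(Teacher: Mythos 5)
There is a genuine gap in your proposal, and it concerns the central technical obstruction that the paper's proof is organized around.

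Your Step~2 asserts that the Mazur--Rubin results ``show that, after localizing at any height-one prime $\mf{P}$, the ideal $\Fitt_{\Lambda_\chi,i}(X_\chi)\Lambda_{\chi,\mf{P}}$ is generated by the values of $\kappa$ at Kolyvagin products of multiplicity at most~$i$.'' But Theorem \ref{MR5.2.12} (and the whole apparatus of Lemma \ref{corerank}, Corollary \ref{KS-barKS}, Proposition \ref{Comparisonofkappa}) is only available at height-one primes $\mf{Q}\notin\Sigma_\Lambda$, where $\Sigma_\Lambda$ contains every height-one prime containing $\cha_{\Lambda_\chi}(X_\chi)$. Any $\mf{P}$ as in the statement of Theorem \ref{local lower bounds} contains $\Fitt_{\Lambda_\chi,i}(X_\chi)\supseteq\Fitt_{\Lambda_\chi,0}(X_\chi)$, hence (being height one in the UFD $\Lambda_\chi$) contains $\cha_{\Lambda_\chi}(X_\chi)$, so $\mf{P}\in\Sigma_\Lambda$ always. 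The Mazur--Rubin structure theorem therefore can \emph{never} be applied directly at $\mf{P}$. The paper's proof circumvents this by introducing the family of Weierstrass specializations $\mf{P}_k=(f(T)+p^k)$, showing that for $k$ large these avoid $\Sigma_\Lambda$ and satisfy the Euler-system hypotheses, applying Theorem \ref{MR5.2.12} at each $\mf{P}_k$, and then recovering the behavior at $\mf{P}$ via the asymptotics $a_k\sim\alpha e_{\mf{P}}k$, $b_k\sim\beta e_{\mf{P}}k$ of Corollary \ref{AandaBandb}. Your outline contains none of this specialization-and-limit argument, which is where essentially all of the work happens (the construction of $\tilde{h}_{\mf{P}_k}$ in Proposition \ref{big commutative diagram} and the twist $\eta\otimes\rho$ to produce an Euler system over $S_{\mf{P}_k}$).

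Two smaller points. First, the obstacle you identify in your final paragraph -- that the homomorphisms $f$ in Definition \ref{The $i$-th cyclotomic ideal} must be shown to be ``no larger'' than the singular-localization maps of the Mazur--Rubin formalism -- is not actually where the difficulty lies: the Mazur--Rubin divisibility statement is that $\kappa_n$ itself is divisible by $\pi^{\partial_i}$ in $H^1_{\mca{F}(n)}(\bb{Q},T/I_n)$, after which \emph{every} $R$-linear functional applied to $\kappa_n$ automatically produces an element of the correct ideal, so no comparison of families of homomorphisms is needed. Second, your proposal claims to deduce the stronger equality $\mf{C}_{i,\chi}\Lambda_{\chi,\mf{P}}=\Fitt_{\Lambda_\chi,i}(X_\chi)\Lambda_{\chi,\mf{P}}$, but the paper's argument at this stage only yields the inequality $\beta_i(\mf{P})\ge\alpha_i(\mf{P})$; the reverse inequality comes from the entirely different Euler-system-with-Kurihara's-elements argument of \S6 (Theorem \ref{Main theorem}). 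To extract equality from the Kolyvagin-system side alone you would additionally need the circular-unit Kolyvagin system to be primitive ($\Lambda$-primitive) at every relevant specialization, which is a nontrivial input (effectively the Iwasawa main conjecture again) that your outline does not supply.
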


In the rest of this section, 
we prove Theorem \ref{local lower bounds}.
The key of the proof is comparison between 
the higher cyclotomic ideals $\mf{C}_{i,\chi}$ 
defined in this paper and 
the theory of Kolyvagin systems, 
which is established by Mazur and Rubin in \cite{MR}. 

\subsection{}
In the first three subsections, 
we will review some results 
on Kolyvagin systems briefly. 
Here, we recall the notion of Kolyvagin systems 
in general setting. 

Let $(R,\mf{m})$ be a noetherian complete local ring 
whose residue field $R/\mf{m}$
is a finite field of characteristic $p$. 
We call a triple $(T,\mca{F},\Sigma)$ which consists of 
the following data a Selmer structure 
(cf.\ \cite{MR} Definition 2.1.1):
\begin{itemize}
\item a finite set $\Sigma$ of places of $\bb{Q}$ containing 
$p$ and $\infty$;
\item a free $R$-module $T$ of finite rank with 
continuous $G_{\bb{Q}}$-action unramified outside 
$\Sigma \setminus \{ \infty \}$.
\item a local condition 
\[
\mca{F}=\{ H^1_{\mca{F}}(\bb{Q}_v,T) 
\subseteq H^1(\bb{Q}_v,T)\}_{v:\text{place of $\bb{Q}$}}
\]
on $T$ satisfying 
\[
H^1_{\mca{F}}(\bb{Q}_v,T)=H^1_f(\bb{Q}_v,T)
=H^1(\bb{Q}_v^{\mrm{unr}}/\bb{Q}_v,T)
\]
for all finite places $v \notin \Sigma$.
\end{itemize}
We define the Selmer group 
for a Selmer structure $(T,\mca{F},\Sigma)$ by
\[
H^1_{\mca{F}}(\bb{Q}, T):=
\Ker\bigg(\xymatrix{
H^1(\bb{Q},T) \ar[r] & 
\displaystyle\prod_{v}
\frac{H^1(\bb{Q}_v, T)}{H^1_{\mca{F}}(\bb{Q}_v, T)} 
\bigg),
}
\]
where in the product, $v$ runs through all places of $\bb{Q}$.

Let $(T,\mca{F},\Sigma)$ be a Selmer structure. 
For each prime number $\ell \notin \Sigma$, 
we define $$H^1_{\mrm{tr}}(\bb{Q}_\ell,T):=
\Ker(\xymatrix{H^1(\bb{Q}_\ell,T) \ar[r] & 
H^1(\bb{Q}_\ell(\mu_\ell),T)}).$$
Then, we have a direct decomposition 
$H^1(\bb{Q}_\ell,T)=
H^1_{\mca{F}}(\bb{Q}_\ell,T) \oplus
H^1_{\mrm{tr}}(\bb{Q}_\ell,T)$, 
so the natural projection 
\[
\xymatrix{
H^1_{\mrm{tr}}(\bb{Q}_\ell,T) \ar[r] & 
H^1_s(\bb{Q}_\ell,T):=H^1(\bb{Q}_\ell,T)/H^1_f (\bb{Q}_\ell,T)
}
\]
is an isomorphism. 
For each square-free product $n$ of finite numbers of primes
not contained in $\Sigma$, 
we define a new local condition
\begin{equation*}
H^1_{\mca{F}(n)}(\bb{Q}_v, T):=
\begin{cases}
H^1_{\mca{F}}(\bb{Q}_v,T) 
& \text{if $v\not| n$;} \\
H^1_{\mrm{tr}}(\bb{Q}_v,T) & \text{if $v|n$.} 
\end{cases}
\end{equation*}
Let $I$ be an ideal of $R$. 
Then, we denote the image of 
$H^1_{\mca{F}}(\bb{Q}_v, T)$ 
in $H^1(\bb{Q}_v, T/IT)$
by $H^1_{\mca{F}}(\bb{Q}_v, T/IT)$ 
for any place $v$ of $\bb{Q}$.

In order to define the Kolyvagin systems,
we have to construct two homomorphisms of Galois cohomology groups, 
the ``localization" map and the ``finite-singular comparison" map. 
Let $(T,\mca{F},\Sigma)$ be a Selmer structure over $R$. 
For each prime number $\ell \notin \Sigma$, 
we define 
\[
P_\ell(x):=\det_R(1- \mathrm{Fr}_\ell x\mid T),
\]
where $\mathrm{Fr}_\ell \in G_{\bb{Q}}$ is 
an arithmetic Frobenius element. 
Note that the polynomial $P_\ell(x)$ is well-defined
since the action of $G_{\bb{Q}}$ on $T$ is unramified at $\ell$. 
Let $I_\ell$ be the ideal of $R$ generated by $\ell-1$ and 
$P_\ell(1)$. 
Take a square-free product $n:=\ell_1\times \cdots \times \ell_r$,
where $\ell_i$ is a prime number not contained in $\Sigma$ for 
$i=1,...,r$. 
Then, we define an ideal 
\[
I_n = \sum_{i=1}^rI_{\ell_i}.
\]
Recall that we defined 
$H_\ell:=\Gal(\bb{Q}(\mu_\ell)/\bb{Q}) 
\simeq \bb{F}_\ell^\times$.
By \cite{MR} Definition 1.2.2 and Definition 2.2.1, 
we can construct a canonical map
\[
\xymatrix{
\phi_\ell^{\rm{fs}} \colon 
H^1_f(\bb{Q}_{\ell_i}, T/I_nT) \ar[r] & 
H^1_s(\bb{Q}_{\ell_i}, T/I_nT)\otimes_{\bb{Z}} H_\ell 
}
\]
called the finite-singular comparison map 
for each integer $i$
with $1 \le i \le r$. 
(See loc.\ cit.\ for construction and detail
of $\phi_\ell^{\rm{fs}}$.)
Note that if $R=\mca{O}/p^N$ and 
$T=\mu_{p^N} \otimes_{\bb{Z}_p} \mca{O}_{\chi^{-1}}$, 
then the natural localization map 
\[
\xymatrix{
(\cdot)_{\ell,s}\colon
H^1_{\mca{F}(n\ell)}(\bb{Q}, T/I_{n\ell} T)
\otimes_{\bb{Z}} H_{n\ell} 
\ar[r] & H^1_s(\bb{Q}_{\ell}, T/I_{n\ell}T)
\otimes_{\bb{Z}} H_{n\ell}, 
}
\]
and the composite map 
\[
\xymatrix@R=2mm{
H^1_{\mca{F}(n)}(\bb{Q}, T/I_{n} T)\otimes_{\bb{Z}} H_{n} 
\ar[r] & 
H^1_f(\bb{Q}_{\ell}, T/I_{n\ell}T)\otimes_{\bb{Z}} H_{n} 
\\
\hspace{40mm} \ar^(0.48){\phi_\ell^{\mrm{fs}}\otimes 1}[r] & 
H^1_s(\bb{Q}_{\ell}, T/I_{n\ell}T)\otimes_{\bb{Z}} H_{n\ell}, 
} 
\]
coincide with the map
\[
\xymatrix{
[\cdot ]_{0,N,\chi}^\ell \otimes 1 \colon 
 (F_0^{\times}/p^N)_{\chi}\otimes_{\bb{Z}} H_{n\ell} 
\ar[r] & R_{0, N, \chi}\otimes_{\bb{Z}} H_{n\ell}   
}
\]
defined in Definition \ref{[]} and
\[
\xymatrix{
{\phi}_{0,N,\chi}^\ell \otimes 1 \colon  
(F_0^{\times}/p^N)_{\chi} \otimes_{\bb{Z}} H_{n}
\ar[r] & R_{0, N, \chi}
\otimes_{\bb{Z}} H_{n\ell}
}
\]
defined in Definition \ref{phi}  
respectively for any $\ell \in \mca{S}_N$ and 
any $n \in \mca{N}_N$ with $(\ell,n)=1$.

Let us recall the definition of Kolyvagin systems. 
Let $(T,\mca{F},\Sigma)$ be a Selmer structure over $R$ and 
$\mca{P}$ be a set of rational primes disjoint from $\Sigma$. 
We denote the set of all square-free products of $\mca{P}$ 
by $\mca{N}(\mca{P})$. 
(Note $1 \in \mca{N}(\mca{P})$.) 
We call such a triple $(T,\mca{F},\mca{P})$ a Selmer triple.

\begin{dfn}
A Kolyvagin system for a Selmer triple $(T,\mca{F},\mca{P})$ 
is a family of cohomology classes 
\[
\kappa=\{\kappa_n \in  
H^1_{\mca{F}(n)}(\bb{Q}, T/I_{n} T)
\otimes_{\bb{Z}} H_{n}\}_{n \in \mca{N}(\mca{P})}
\]
satisfying 
\[
(\kappa_{n\ell})_{\ell,s}=
\phi_{\ell}^{\mrm{fs}}(\kappa_{n}) \hspace{3mm}
\text{in} \hspace{3mm}
H^1_{s}(\bb{Q}_\ell, T/I_{n\ell} T)
\otimes_{\bb{Z}} H_{n\ell}
\]
for any $n \in \mca{N}(\mca{P})$ and any $\ell\in \mca{P}$
satisfying $n\ell \in \mca{N}(\mca{P})$. 
We denote the set of all Kolyvagin systems for $(T,\mca{F},\mca{P})$
by $\mrm{KS}(T,\mca{F},\mca{P})$. 
\end{dfn}

Let $(T,\mca{F},\mca{P})$ be a Selmer triple over $R$. 
We naturally regard an abelian group  
\[
T^*:=\Hom_{\bb{Z}_p}(T,\mu_{p^\infty})
\]
as an $R[G_{\bb{Q}}]$-module.
For all finite places $v$ of $\bb{Q}$, 
we denote the orthogonal complement of 
$H^1_{\mca{F}}(\bb{Q}_v,T)$ 
in the sense of 
the local duality pairing 
\[
\xymatrix{
H^1(\bb{Q}_v,T) \times H^1(\bb{Q}_{\ell},T^*) \ar[r] & 
H^2(\bb{Q}_v,\mu_{p^\infty})\simeq \bb{Q}_p/\bb{Z}_p}
\]
by $H^1_{\mca{F}_{\mrm{can}}}(\bb{Q}_v,T^*)$. 
If $v=\infty$, we have 
$H^1(\bb{Q}_v,T)=0 \hspace{3mm} \text{and} \hspace{3mm}
H^1(\bb{Q}_v, T^*)=0$ since $p$ is odd.
In certain good situations, Kolyvagin systems for the triple
$(T,\mca{F}, \mca{P})$ describe the structure of 
the Selmer group
\[
H^1_{\mca{F}^*}(\bb{Q}, T^*)=
\Ker\bigg(
\xymatrix{
H^1(\bb{Q},T^*) \ar[r] & 
\displaystyle\prod_{v}
\frac{H^1(\bb{Q}_v, T^*)}{H^1_{\mca{F}^*}(\bb{Q}_v, T^*)} 
\bigg),
}
\]
where in the product, $v$ runs through all places of $\bb{Q}$. 

For $r \in \bb{Z}_{\ge 0}$, we denote by $\mca{P}_r$
the set of all prime numbers $\ell$ 
not contained in $\Sigma$ satisfying both of the following to conditions: 
\begin{itemize}
\item $T/(\mf{m}^rT+(\mathrm{Fr}_\ell-1)T)$ is 
a free $R$-module of rank one;
\item $I_\ell \subseteq \mf{m}^r$.
\end{itemize}
Then, we have $\mca{P}_i \subseteq \mca{P}_{i+1}$ 
for any positive integer $i$. 
\begin{dfn}
Let $(T,\mca{F},\mca{P})$ be a Selmer triple over $R$. 
Then, we define
$$\overline{\mrm{KS}}(T,\mca{F},\mca{P}):=
\varprojlim_k \varinjlim_j 
{\mrm{KS}}(T/\mf{m}^kT,\mca{F},\mca{P} \cap \mca{P}_j).$$
Note that $\overline{\mrm{KS}}(T,\mca{F},\mca{P})$ 
matches Euler system arguments better 
than ${\mrm{KS}}(T,\mca{F},\mca{P})$. 
(See Theorem \ref{ESKS}.)
\end{dfn}

\begin{rem}
We have a natural homomorphism
\[
\xymatrix{
{\mrm{KS}}(T,\mca{F},\mca{P})
 \ar[r] & 
\overline{\mrm{KS}}(T,\mca{F},\mca{P}).
}
\]
of $R$-modules. 
It may not be
either injective or surjective in general. 
(See \cite{MR} p.\ 21.)
But, later, it will turns out that
this homomorphism is an isomorphism 
in the special case which we treat in our paper
(cf.\ Proposition \ref{KS-barKS}.)
\end{rem}

\subsection{}
In this subsection, 
we review the notation on Selmer structures 
over complete discrete valuation rings 
and Iwasawa algebras. 

Now, we set a Selmer structure over 
complete discrete valuation rings.
Let $R$ be 
the integer ring of a finite extension field of $\bb{Q}_p$, 
and $T$ a free $R$-module of finite rank with 
continuous $G_{\bb{Q}}$-action unramified at all  
but finitely many primes. 
Throughout this subsection, we assume that
$T$ satisfies conditions (H.0)-(H.4) 
in \cite{MR} \S 3.5. 
We set the local condition $\mca{F}_{\mathrm{can}}$ for $T$ by
\begin{equation*}
H^1_{\mca{F}_{\mathrm{can}}}(\bb{Q}_v, T):=
\begin{cases}
H^1_f(\bb{Q}_v,T) 
& \text{if $v$ is a finite place prime to p;} \\
H^1(\bb{Q}_p,T) & \text{if $v=p$;} \\
H^1(\bb{R},T)=0 & \text{if $v = \infty$,}
\end{cases}
\end{equation*}
where for finite places $v$ prime to $p$,
the local condition $H^1_f(\bb{Q}_v,T)$ is in the sense of 
Bloch-Kato. Namely, we define 
\[
H^1_f(\bb{Q}_v,T) = 
\Ker\big( \xymatrix{H^1(\bb{Q}_v,T) \ar[r] & 
H^1(\bb{Q}^{\mathrm{unr}}_v,T \otimes_{\bb{Z}_p}\bb{Q}_p)} \big).
\] 
Note that the triple $(T,\mca{F},\mca{P}_1)$ satisfies 
all of the conditions (H.0)-(H.6) in \cite{MR} \S 3.5.

Next, we set a Selmer structure over Iwasawa algebra. 
Let $R$ be the integer ring of a finite extension field of $\bb{Q}_p$, 
and $(T,\mca{F},\Sigma)$ a Selmer structure over $R$. 
We define the $R[[\Gamma]]$-module $\mathbf{T}$ 
with a continuous $R[[\Gamma]]$-linear $G_{\bb{Q}}$-action by
$$\mathbf{T}:=T\otimes_{R}R[[\Gamma]],$$
and the local condition $\mca{F}_{\Lambda}$ on $\mathbf{T}$ by
\begin{equation*}
H^1_{\mca{F}_{\Lambda}}(\bb{Q}_v, \mathbf{T}):=
H^1(\bb{Q}_v,\mathbf{T}). 
\end{equation*}
Let $\bb{Q}_{\Sigma}$ be the maximal extension of $\bb{Q}$ 
unramified outside $\Sigma$. 
Then, by standard arguments, we have 
$$H^1(\bb{Q},\mathbf{T})
=H^1(\bb{Q}_{\Sigma}/\bb{Q},\mathbf{T})
\simeq 
\plim H^1(\bb{Q}_{\Sigma}/\bb{Q}_n,{T}),
$$
where the limit in the most right term is defined 
by the projective system with respect to corestriction maps. 
(See, for example \cite{MR} Lemma 5.3.1.) 
In particular, the local condition $\mca{F}_{\Lambda}$ 
coincides the finite local condition $f$. 
So, the triple $(\mathbf{T},\mca{F}_{\Lambda},\Sigma)$ 
is a Selmer structure.

In the last of this subsection, 
we recall the relation between Euler systems and 
Kolyvagin systems. 
Let $R$ be the integer ring of a finite extension field of $\bb{Q}_p$, 
and $({T},\mca{F},\mca{P})$ a Selmer triple over $R$. 
We define the field $\mca{K}$ to be the composite field of 
$\bb{Q}_{\infty}$ and $\bb{Q}(\mu_n)$ 
for all positive integers $n$ 
satisfying $(n,\ell)=1$ for any prime number $\ell \in \Sigma$.
Let $\mf{N}_{\Sigma}$ be the ideal of $\bb{Z}$ 
defined by the square free product of the prime numbers 
contained in $\Sigma$.  
We denote the set of 
Euler systems for $(T,\mca{K}/\bb{Q},\mf{N}_{\Sigma})$
in the sense of \cite{Ru5} 
by $\mathrm{ES}(T,\Sigma)$. 

\begin{thm}[Theorem 3.2.4 in \cite{MR}]\label{ESKS}
Let $({T},\mca{F},\mca{P})$ be a Selmer triple satisfying 
the conditions {\em (H.0)-(H.6)} in \cite{MR} \S 3.5. 
Assume the following two conditions:
\begin{itemize}
\item the R-module $T/(\mathrm{Fr}_\ell-1)T$ is cyclic
for any $\ell \in \mca{P}$; 
\item the action of $\mrm{Fr}_\ell^{p^k}-1$ on $T$
is injective 
for any $\ell \in \mca{P}$ and 
any $k \in \bb{Z}_{\ge 0}$. 
\end{itemize} 
Then, there exists a $R$-linear map 
$$\mathrm{ES}(T,\Sigma) \longrightarrow 
\overline{\mrm{KS}}(T,\mca{F},\mca{P}); \ \ 
c=\{c_F \}_{F \subseteq \mca{K}} \longmapsto
\kappa(c):=\{ \kappa(c)_n \}_n$$
satisfying $\kappa(c)_1=c_{\bb{Q}}$ in $H^1(\bb{Q},T)$. 
This map is constructed by using Kolyvagin derivative classes of 
Euler systems. For the detail of the construction of this map, 
see \cite{MR} Appendix A.
\end{thm}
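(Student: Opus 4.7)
The plan is to construct the map by Kolyvagin's derivative construction, reducing the problem to a sequence of standard cohomological manipulations. For each squarefree product $n = \ell_1 \cdots \ell_r$ with all $\ell_i \in \mca{P} \cap \mca{P}_j$, form $D_n := \prod_i D_{\ell_i}$ with $D_\ell = \sum_{k=1}^{\ell-2} k\sigma_\ell^k \in \bb{Z}[H_\ell]$; these satisfy the telescoping identity $(\sigma_\ell - 1)D_\ell = (\ell - 1) - N_\ell$, where $N_\ell = \sum_{k=0}^{\ell-2} \sigma_\ell^k$. Applied to the Euler-system class $c_{\bb{Q}(\mu_n)} \in H^1(\bb{Q}(\mu_n), T)$, this produces the candidate derivative class $D_n c_{\bb{Q}(\mu_n)}$.

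First I would check that the image of $D_n c_{\bb{Q}(\mu_n)}$ in $H^1(\bb{Q}(\mu_n), T/I_n T)$ is $H_n$-fixed. This follows by induction on the number of prime divisors of $n$, combining the telescoping identity with the Euler-system norm relation $\mrm{cor}_{\bb{Q}(\mu_n)/\bb{Q}(\mu_{n/\ell})} c_{\bb{Q}(\mu_n)} = P_\ell(\Frob_\ell^{-1}) c_{\bb{Q}(\mu_{n/\ell})}$: the terms $N_\ell \cdot D_{n/\ell} c_{\bb{Q}(\mu_n)}$ that appear as obstructions become $P_\ell(1) \cdot (\cdots)$, which vanishes modulo $I_n$ since $P_\ell(1) \in I_\ell \subseteq I_n$. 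Next, I would descend to $H^1(\bb{Q}, T/I_n T) \otimes \bigotimes_{\ell|n} H_\ell$ by inflation--restriction: the hypotheses (H.0)--(H.6) and the condition $\ell \in \mca{P}_j$ ensure $H^1(H_n, (T/I_n T)^{G_{\bb{Q}(\mu_n)}}) = 0$, so the restriction map is an isomorphism, the $H_n$-fixed class lifts canonically, and the tensor factors $H_\ell$ record the choices of generators $\sigma_\ell$ needed to identify the lift intrinsically.

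Third, I would verify that the resulting class $\kappa(c)_n$ lies in $H^1_{\mca{F}(n)}(\bb{Q}, T/I_n T)$. Away from $n \cdot \Sigma$, the local component is unramified because the Euler-system class is. At $\ell \mid n$, a direct local computation at $\bb{Q}_\ell$, using the fact that $\bb{Q}(\mu_\ell)/\bb{Q}$ is totally ramified at $\ell$ together with the telescoping identity, shows that the component lies in the transverse subspace $H^1_{\mrm{tr}}(\bb{Q}_\ell, T/I_n T)$. The local conditions at $p$ and $\infty$ are automatic from $\mca{F}_{\mrm{can}}$.

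The main obstacle is the finite--singular comparison relation
\[
(\kappa(c)_{n\ell})_{\ell, s} \;=\; \phi_\ell^{\mrm{fs}}(\kappa(c)_n)
\quad \text{in} \quad
H^1_s(\bb{Q}_\ell, T/I_{n\ell} T) \otimes H_{n\ell},
\]
which encodes the entire Euler-system compatibility at the prime $\ell$. I would prove it by unwinding both sides at the cocycle level: writing $D_{n\ell} = D_\ell \cdot D_n$, the singular localization of $(\kappa(c)_{n\ell})_{\ell,s}$ is computed via the telescoping identity and reduces, modulo $I_{n\ell}$, to a multiple of $N_\ell D_n c_{\bb{Q}(\mu_{n\ell})}$; the Euler-system norm relation rewrites this as $P_\ell(\Frob_\ell^{-1}) D_n c_{\bb{Q}(\mu_n)}$, and since $\Frob_\ell$ acts trivially on $T/I_{n\ell} T$ this becomes $P_\ell(1) D_n c_{\bb{Q}(\mu_n)}$, matching $\phi_\ell^{\mrm{fs}}(\kappa(c)_n)$ on the nose by the intrinsic definition of the finite--singular comparison isomorphism from \cite{MR} \S 1.2. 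Finally, the construction is manifestly compatible with enlarging $j$ (the Kolyvagin-system axioms restrict along $\mca{P} \cap \mca{P}_j \subseteq \mca{P} \cap \mca{P}_{j+1}$) and with reducing modulo $\mf{m}^k$, so the family assembles into an element of $\varprojlim_k \varinjlim_j \mrm{KS}(T/\mf{m}^k T, \mca{F}, \mca{P} \cap \mca{P}_j) = \overline{\mrm{KS}}(T, \mca{F}, \mca{P})$. The equality $\kappa(c)_1 = c_{\bb{Q}}$ is immediate because $D_1 = 1$ and $I_1 = 0$, and $R$-linearity is clear from the linearity of the derivative operators.
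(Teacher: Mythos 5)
The paper does not prove this statement; it is imported wholesale as Theorem 3.2.4 of Mazur--Rubin with the construction explicitly deferred to \cite{MR} Appendix A, so there is no in-paper argument to compare against. Your sketch does follow the same strategy as Mazur--Rubin's Appendix A: the Kolyvagin derivative operators $D_n$, the telescoping identity, $H_n$-fixedness modulo $I_n$ via the Euler norm relation, descent by inflation--restriction, verification of the $\mca{F}(n)$ local conditions, and then the finite--singular relation and assembly into $\overline{\mrm{KS}}$ via the $(k,j)$-limits. These are the right ingredients and the right order of steps.

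However, there is a genuine gap at the step you treat most casually --- the finite--singular comparison. You assert that after applying the telescoping identity and the norm relation, the singular component of $\kappa(c)_{n\ell}$ at $\ell$ \emph{matches $\phi_\ell^{\mrm{fs}}(\kappa(c)_n)$ on the nose by the intrinsic definition}. That is precisely what is not immediate, and it is the reason Mazur--Rubin's Appendix A is long. The map $\phi_\ell^{\mrm{fs}}$ of \cite{MR} \S 1.2 is defined intrinsically in terms of $T$ (and in particular without any choice of Frobenius lift), whereas the Kolyvagin derivative computation naturally produces a class whose singular part is governed by a Frobenius-dependent quantity of the shape $Q_\ell(\Frob_\ell^{-1})$ where $P_\ell(x) - P_\ell(1) = (x-1)Q_\ell(x)$, as in Rubin's Theorem 4.5.4. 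Reconciling this choice-laden formula with the intrinsic $\phi_\ell^{\mrm{fs}}$, keeping track of the $H_\ell$-twist bookkeeping and the lift across the inflation--restriction step (a matter of cocycles, not just classes), is exactly the content one must carry out; in fact, it is only for \emph{well-ordered} $n$ that the derivative classes behave cleanly (cf.\ \cite{MR} Theorem A.4, which the paper itself invokes in its Proposition \ref{[] and phi}), and the passage from well-ordered $n$ to all $n$, and from fixed $(k,j)$ to the limit $\overline{\mrm{KS}}$, requires more than the manifest compatibility you mention. In short, your outline is the correct skeleton, but the single sentence you devote to the finite--singular relation hides the entire technical core of the theorem.
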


\subsection{}
In this subsection, 
we recall some results
(specialized for our purpose)
on Kolyvagin system 
proved in \cite{MR} \S\S 5.2-5.3.  
(In \cite{MR} \S 5.3, they treat only the case
of $\mca{O}_\chi=\bb{Z}_p$, 
but we can prove similar results for general $\mca{O}_\chi$ 
by similar arguments.)

Recall that $\mca{O}$ is a $\bb{Z}_p$-algebra isomorphic to 
$\mca{O}_\chi$ with trivial $G_{\bb{Q}}$-action. 
We define a $\mca{O}$-module $T_\chi$ by 
$$T_\chi:=\bb{Z}_p(1)\otimes_{\bb{Z}_p}\mca{O}_{\chi^{-1}}.$$
We define a set $\Sigma$ of places of $\bb{Q}$ by
$$\Sigma:=\{p,\infty \}  \cup  
\{\ell \mathrel{|} \text{$\ell$ ramifies in $K/\bb{Q}$}  \},$$
and consider the Selmer structure 
$(T_\chi,\mca{F}_{\mrm{can}},\Sigma)$. 
We fix $\mca{P}:=\mca{P}_1$. 
Note that 
by Kummer theory, we have
$$H^1(\bb{Q},T/p^NT)
=(F_0^\times/p^N)_\chi$$
for any positive integer $N$.
Since we assume $\chi(p)\ne 1$,
we have 
$$H^1(\bb{Q}_p,T_\chi^*)=\big( 
\bigoplus_{\lambda|p}\Hom(G_{F_{0,\lambda}},\bb{Q}_p/\bb{Z}_p)
\big)_{\chi^{-1}}=0, $$ 
so by global class field theory, we have 
\begin{align*}
& H^1_{\mca{F}^*_{\mrm{can}}} (\bb{Q},T_\chi^*) \\
&=\Ker\bigg(
\Hom(G_{F_{0}},\bb{Q}_p/\bb{Z}_p))_{\chi^{-1}}
\longrightarrow 
\prod_{\ell \neq p,\infty}
\big( \bigoplus_{\lambda|\ell}
\Hom(G_{F_{0,\lambda}^{\mrm{unr}}},
\bb{Q}_p/\bb{Z}_p)\big)_{\chi^{-1}}\bigg) \\
& \hspace{6mm}\cap \Ker\bigg(
\Hom(G_{F_{0}},\bb{Q}_p/\bb{Z}_p))_{\chi^{-1}}
\longrightarrow 
\big( \bigoplus_{\lambda|p}
\Hom(G_{F_{0,\lambda}},
\bb{Q}_p/\bb{Z}_p)\big)_{\chi^{-1}}\bigg) \\
&=\Ker\bigg(
\Hom(G_{F_{0}},\bb{Q}_p/\bb{Z}_p))_{\chi^{-1}}
\longrightarrow 
\prod_{\ell \neq \infty}
\big( \bigoplus_{\lambda|\ell}
\Hom(G_{F_{0,\lambda}^{\mrm{unr}}},
\bb{Q}_p/\bb{Z}_p)\big)_{\chi^{-1}}\bigg) \\
&=\Hom(A_{F_0,\chi},\bb{Q}_p/\bb{Z}_p ). 
\end{align*}
Recall we define $\Lambda_\chi:=\mca{O}_\chi[[\Gamma_0]]$. 
We define 
$$\mathbf{T}_\chi:=T_\chi 
\otimes_{\mca{O}}\mca{O}[[\Gamma]]
=\Lambda_{\chi^{-1}}\otimes_{\bb{Z}_p}\bb{Z}_p(1) .$$
Note that by local duality, 
we have $H^2(\bb{Q}_p,T_\chi^*)=0$, 
and by (the limit of) Poitou-Tate exact sequence, we obtain
\[
X_\chi=
\Hom(H^1_{\mca{F}_\Lambda^*}(\bb{Q},T^*),\bb{Q}_p/\bb{Z}_p)
=H^2(\bb{Q}_{\Sigma}/\bb{Q},\mathbf{T}).
\]

We denote define a set $\Sigma_{\Lambda}$
of hight one prime ideals of the ring 
$\mca{O}[[\Gamma]]=\Lambda_\chi$ by 
\[
\Sigma_\Lambda:=\big\{\mf{P} 
\in \mathrm{Spec}(\Lambda_\chi)
 \mathrel{|} 
\mathrm{ht}\mf{P}=1\ \text{and} \ 
\cha_{\Lambda_\chi}(X_\chi) \subseteq\mf{P} \big\}
\cup 
\{p \Lambda_\chi  \}.
\]
Let $\mf{Q} \subseteq \mca{O}[[\Gamma]]=\Lambda_\chi$ 
be a prime ideal of hight one not contained in $\Sigma_{\Lambda}$. 
We denote the integral closure of $\mca{O}[[\Gamma]]/\mf{Q}$
by $S_{\mf{Q}}$. 
Note that $S_{\mf{Q}}$ is 
a complete discrete valuation ring of 
mixed characteristic $(0,p)$ 
whose residue field is finite. 
We fix a uniformizer of $S_{\mf{P}}$ by $\pi$.

Here, let us consider the Selmer triple 
\[
\big(
\mathbf{T}_{\chi}\otimes_{\mca{O}[[\Gamma]]}S_{\mf{Q}}=
T_\chi\otimes_{\mca{O}}S_{\mf{Q}},\mca{F}_{\mrm{can}},
\mca{P}:=\mca{P}_1
\big).
\]
Note that this Selmer triple satisfies conditions (H.0)-(H.6) 
in \cite{MR} \S 3.5. 
We have the following lemma. 
\begin{lem}[\cite{MR} Lemma 5.3.16]\label{corerank}
Let $\mf{Q} \subseteq \mca{O}[[\Gamma]]$ 
be a prime ideal of hight one not contained in $\Sigma_{\Lambda}$. 
Then, we have 
$$\chi(T_\chi\otimes_{\mca{O}}S_{\mf{Q}})
=\mathrm{rank}_{\mca{O}}(T_\chi^-)=1$$
where $\chi(T_\chi\otimes_{\mca{O}}S_{\mf{Q}})$ is the core rank of 
$T_\chi\otimes_{\mca{O}}S_{\mf{Q}}$ in the sense of 
\cite{MR} Definition 5.2.4. 
\end{lem}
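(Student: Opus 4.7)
The plan is to apply the Mazur-Rubin core rank formula of \cite{MR} \S 5.2, extended from their setting of $\bb{Z}_p$-coefficients to the general $\mca{O}$ we use here. Their formula identifies, under the hypotheses (H.0)-(H.6) of \cite{MR} \S 3.5, the core rank of a Selmer triple over a complete discrete valuation ring $S$ with the $S$-rank of the $(-1)$-eigenspace of complex conjugation acting on the representation.

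First I would compute $T_\chi^-$. Since $K$ is totally real and $F_0$ is the maximal totally real subfield of $K(\mu_p)$, the field $F_0$ is totally real, so complex conjugation $c\in G_{\bb{Q}}$ acts trivially on $F_0$ and hence $\chi(c)=1$. Therefore $c$ acts trivially on $\mca{O}_{\chi^{-1}}$ while acting by $-1$ on $\bb{Z}_p(1)$, so it acts as $-1$ on all of $T_\chi=\bb{Z}_p(1)\otimes_{\bb{Z}_p}\mca{O}_{\chi^{-1}}$. Consequently $T_\chi^-=T_\chi$ is free of $\mca{O}$-rank one, and this persists after base change to $S_{\mf{Q}}$.

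Next I would verify (H.0)-(H.6) for $(T_\chi\otimes_{\mca{O}} S_{\mf{Q}},\mca{F}_{\mrm{can}},\mca{P}_1)$. Since $T_\chi\otimes S_{\mf{Q}}$ has rank one, its residual representation is one-dimensional and hence absolutely irreducible. The hypothesis $\chi(p)\ne 1$ gives the vanishing $H^1(\bb{Q}_p,T_\chi^*)=0$ already noted at the beginning of this section, so the dual of the canonical local condition at $p$ is trivial; combined with the non-triviality of $\chi$ this also yields $H^0(\bb{Q},\bar T_\chi)=H^0(\bb{Q},\bar T_\chi^*)=0$. The remaining large-image type conditions are immediate for a character of $G_\bb{Q}$ with nontrivial conductor. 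Granted (H.0)-(H.6), the Mazur-Rubin core rank formula gives
\begin{equation*}
\chi(T_\chi\otimes_{\mca{O}} S_{\mf{Q}})
=\mrm{rank}_{S_{\mf{Q}}}\!\bigl((T_\chi\otimes_{\mca{O}} S_{\mf{Q}})^-\bigr)
=\mrm{rank}_{\mca{O}}(T_\chi^-)=1.
\end{equation*}

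The main obstacle will be the adaptation of Mazur-Rubin's arguments in \cite{MR} \S\S 5.2-5.3 from $\mca{O}=\bb{Z}_p$ to a general $\mca{O}$. Their proofs proceed via finite residue ring quotients together with Chebotarev density estimates; all of these go through if one consistently replaces $\bb{Z}_p$-lengths by $\mca{O}$-lengths, but the Poitou-Tate and Euler-Poincar\'e computations underlying \cite{MR} Definition 5.2.4 are sensitive to coefficient arithmetic and must be tracked with care. In particular one must confirm that the canonical local condition at $p$, being the full $H^1(\bb{Q}_p,T_\chi\otimes S_{\mf{Q}})$, produces no local defect in the core rank formula, which is exactly where $\chi(p)\ne 1$ enters essentially through the vanishing of the dual Selmer condition at $p$.
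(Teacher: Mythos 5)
Your proposal is correct and follows the same approach the paper intends: the paper gives no argument of its own, simply citing \cite{MR} Lemma~5.3.16 together with a remark that the $\bb{Z}_p$-coefficient proof generalizes to $\mca{O}_\chi$, and your proof fleshes out exactly what that citation amounts to (computing $T_\chi^- = T_\chi$ of $\mca{O}$-rank one since $\chi$ is totally even and $c$ acts by $-1$ on $\bb{Z}_p(1)$, checking the hypotheses (H.0)--(H.6), and invoking the Mazur--Rubin core rank formula with the canonical local condition at $p$ and its vanishing dual, which is where $\chi(p)\ne 1$ enters). The verification of (H.0)--(H.6) is stated somewhat loosely -- for instance, the absolute irreducibility and the $H^0$ vanishing follow because the residual character of $T_\chi\otimes_{\mca{O}}S_{\mf{Q}}$ is $\omega\chi^{-1}$ independently of $\mf{Q}$ (the twisting character of $\Gamma$ is residually trivial), and this is nontrivial since $\omega$ is odd while $\chi$ is even -- and the exclusion $\mf{Q}\notin\Sigma_\Lambda$ (in particular $\mf{Q}\ne p\Lambda_\chi$) should be cited as the source of the finiteness needed for the remaining hypotheses; but the substance matches the paper's intent.
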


\begin{rem}
Note that the original statement of Lemma 5.3.16 in \cite{MR} 
treats only the case of $\mca{O}=\bb{Z}_p$, 
but the similar proof works for general $\mca{O}$.
\end{rem}

By Lemma \ref{corerank} and \cite{MR} Proposition 5.2.9, 
we obtain the following corollary.
\begin{cor}\label{KS-barKS}
Let $\mf{Q} \subseteq \mca{O}[[\Gamma]]$ 
be a prime ideal of hight one not contained in $\Sigma_{\Lambda}$. 
Then, the natural homomorphism 
$$\xymatrix{{\mrm{KS}}(T_\chi\otimes_{\mca{O}}S_{\mf{Q}},
\mca{F}_{\mrm{can}},\mca{P})
 \ar[r] & \overline{\mrm{KS}}(T_\chi\otimes_{\mca{O}}S_{\mf{Q}},
\mca{F}_{\mrm{can}},\mca{P})}$$
is an isomorphism. 
\end{cor}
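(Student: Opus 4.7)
The plan is to invoke \cite{MR} Proposition 5.2.9 directly, once the hypotheses have been verified. That proposition asserts, roughly speaking, that if $(T,\mca{F},\mca{P})$ is a Selmer triple over a complete discrete valuation ring $R$ satisfying the standard hypotheses (H.0)–(H.6) of \cite{MR} \S 3.5, and if the core rank of $T$ in the sense of \cite{MR} Definition 5.2.4 equals one, then the $R$-module $\mrm{KS}(T,\mca{F},\mca{P})$ is free of rank one and the canonical map ${\mrm{KS}}(T,\mca{F},\mca{P}) \to \overline{\mrm{KS}}(T,\mca{F},\mca{P})$ is an isomorphism. So the task is to check that the triple $(T_\chi \otimes_{\mca{O}} S_{\mf{Q}}, \mca{F}_{\mrm{can}}, \mca{P})$ fits into this framework.

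First, I would observe that $S_{\mf{Q}}$ is by construction a complete discrete valuation ring of mixed characteristic $(0,p)$ with finite residue field (stated in the excerpt just above the corollary), so the coefficient ring is of the correct type. Next, I would verify the hypotheses (H.0)–(H.6) for the Selmer triple; this has essentially been built into the setup of the preceding subsection, where $T_\chi = \bb{Z}_p(1) \otimes_{\bb{Z}_p} \mca{O}_{\chi^{-1}}$ was arranged to satisfy (H.0)–(H.4), and the base change $T_\chi \otimes_{\mca{O}} S_{\mf{Q}}$ preserves these properties since $S_{\mf{Q}}$ is flat over $\mca{O}$; the remaining conditions (H.5)–(H.6) concern $\mca{P} = \mca{P}_1$ and follow from the standard choice of Kolyvagin primes.

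Having fixed the framework, the second step is to apply Lemma \ref{corerank}, which computes the core rank $\chi(T_\chi \otimes_{\mca{O}} S_{\mf{Q}}) = 1$. With the core rank equal to one, \cite{MR} Proposition 5.2.9 applies verbatim and yields the conclusion that the natural map $\mrm{KS} \to \overline{\mrm{KS}}$ is an isomorphism.

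The main obstacle is not really mathematical but bookkeeping: one has to be confident that the hypotheses (H.0)–(H.6) survive base change from $\mca{O}$ to $S_{\mf{Q}}$, and that the set of Kolyvagin primes $\mca{P} = \mca{P}_1$ used here agrees with the one implicit in \cite{MR} Proposition 5.2.9. Once those routine compatibilities are in place, the corollary is a formal consequence of the core-rank-one case of the Mazur–Rubin structure theorem together with Lemma \ref{corerank}.
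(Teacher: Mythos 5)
Your proof matches the paper's argument exactly: the paper deduces the corollary from Lemma \ref{corerank} (core rank one) together with \cite{MR} Proposition 5.2.9, having already noted just before Lemma \ref{corerank} that the Selmer triple $(T_\chi\otimes_{\mca{O}}S_{\mf{Q}},\mca{F}_{\mrm{can}},\mca{P})$ satisfies (H.0)--(H.6). Your proposal fills in the same verifications with slightly more detail, but the key lemma, the key reference, and the logical structure are identical.
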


For each positive integer $N$,
we define 
\[
\mca{S}_{N}(\mf{Q}):=\{\ell \in \mca{S}_N \mathrm{|} 
\text{ $\mathrm{Fr}_\ell$ acts trivially on 
$T\otimes_{\mca{O}}
S_{\mf{Q}}/p^NS_{\mf{Q}} $} \},
\] 
and we denote the set of all 
well-ordered square-free products of 
$\mca{S}_{N}(\mf{Q})$
by $\mca{N}_{N}^{\mathrm{w.o.}}(\mf{Q})$. 
Note that we have 
$\mca{S}_{N'} \subseteq \mca{S}_{N}(\mf{Q})$
for any sufficiently large $N'$. 
Consider the $R$-linear map 
\[
\mathrm{ES}(T_\chi\otimes_{\mca{O}}S_{\mf{Q}},
\mca{F}_{\mrm{can}},\mca{P})
 \longrightarrow 
\overline{\mrm{KS}}(T_\chi\otimes_{\mca{O}}S_{\mf{Q}},
\mca{F}_{\mrm{can}},\mca{P})=
{\mrm{KS}}(T_\chi\otimes_{\mca{O}}S_{\mf{Q}},
\mca{F}_{\mrm{can}},\mca{P})
\]
in Theorem \ref{ESKS}. 
Then, by construction of this map (cf.\ \cite{MR} pp.\ 80-81.), 
we obtain the following proposition.

\begin{prop}\label{Comparisonofkappa}
Take $c \in \mrm{ES}(T_\chi\otimes_{\mca{O}}S_{\mf{Q}},
\mca{K}/\bb{Q},\mca{P})$ and 
$n \in \mca{N}_{N}^{\mathrm{w.o.}}(\mf{Q})$. 
Let $\kappa_{0,N}(c;n)$ be the Kolyvagin derivative class of $c$ 
at $n$. 
Then, we have $$\kappa(c)_n=\kappa_{0,N}(c;n)$$ 
in $H^1(\bb{Q}, T_\chi\otimes_{\mca{O}}(S_{\mf{Q}}/p^N))$. 
\end{prop}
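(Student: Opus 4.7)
The plan is to unwind both constructions and observe that they describe the same cohomology class. Recall the Mazur-Rubin construction of $\kappa(c)_n$ given in \cite{MR} Appendix A. Starting from the Euler system class $c_{F_0(\mu_n)} \in H^1(F_0(\mu_n), T_\chi \otimes_{\mca{O}} S_{\mf{Q}})$, one forms the derivative
\[
D_n \cdot c_{F_0(\mu_n)} \bmod{p^N} \ \in \ H^1(F_0(\mu_n), T_\chi \otimes_{\mca{O}} S_{\mf{Q}}/p^N).
\]
The Euler system distribution relation together with the hypothesis $n \in \mca{N}_N^{\mrm{w.o.}}(\mf{Q})$, which ensures that Frobenius acts trivially on $T_\chi \otimes_{\mca{O}} S_{\mf{Q}}/p^N$ at every prime dividing $n$, implies by the standard telescoping argument that this class is fixed by $H_n$. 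Since $\chi \ne 1$ forces $H^0(F_0(\mu_n), T_\chi^*[p^N]) = 0$, the inflation-restriction sequence shows the restriction map
\[
H^1(\bb{Q}, T_\chi \otimes_{\mca{O}} S_{\mf{Q}}/p^N) \xrightarrow{\ \simeq\ } H^1(F_0(\mu_n), T_\chi \otimes_{\mca{O}} S_{\mf{Q}}/p^N)^{H_n}
\]
is an isomorphism, and $\kappa(c)_n$ is by definition the unique preimage of $D_n \cdot c_{F_0(\mu_n)} \bmod{p^N}$ under this isomorphism.

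On our side, the classical Kolyvagin derivative $\kappa_{0,N}(c;n)$ is, by Definition~\ref{kappa}, the unique element of $(F_0^\times/p^N)_\chi \otimes_{\mca{O}_\chi} S_{\mf{Q}}/p^N$ whose image in $(F_0(\mu_n)^\times/p^N)_\chi \otimes_{\mca{O}_\chi} S_{\mf{Q}}/p^N$ equals the image of the circular unit Euler system raised to $D_n$; this element exists and is unique by Lemma~\ref{fix} together with the Kummer-theoretic vanishing $H^0(F_0(\mu_n), \mu_{p^N}) = 0$.

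The remaining step is to translate via Kummer theory. For any intermediate field $F_0 \subseteq F \subseteq F_0(\mu_n)$, the Kummer isomorphism furnishes a natural identification
\[
(F^\times/p^N)_\chi \otimes_{\mca{O}_\chi} S_{\mf{Q}}/p^N \ \simeq \ H^1(F, T_\chi \otimes_{\mca{O}} S_{\mf{Q}}/p^N)
\]
which is compatible with restriction maps and with the action of $\Gal(F_0(\mu_n)/F)$. Under this identification the image of a circular unit Euler system in Galois cohomology matches the Euler system class $c$, and the action of the derivative operator $D_n$ on both sides intertwines. Consequently, both $\kappa(c)_n$ and $\kappa_{0,N}(c;n)$ are characterized by exactly the same universal property, namely being the unique class in $H^1(\bb{Q}, T_\chi \otimes_{\mca{O}} S_{\mf{Q}}/p^N)$ whose restriction to $F_0(\mu_n)$ equals $D_n \cdot c_{F_0(\mu_n)} \bmod{p^N}$. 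Hence the two coincide.

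The main obstacle is purely organizational: keeping track of the several layers of functoriality (the Kummer isomorphism, the $\chi$-component, the base change from $\mca{O}_\chi$ to $S_{\mf{Q}}$, and the passage from circular units to cohomology classes) and verifying that the Mazur-Rubin uniqueness-by-restriction and our uniqueness-by-Lemma~\ref{fix} refer to the same lifting. No new arithmetic content is required; the compatibility is built into the construction of the Euler-to-Kolyvagin map in \cite{MR} Appendix A and our choice of normalizations in Definition~\ref{Dn}, so the verification reduces to diagram-chasing.
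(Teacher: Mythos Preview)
Your approach is correct and is precisely what the paper means by ``by construction of this map (cf.\ \cite{MR} pp.\ 80--81)'': the paper gives no argument beyond that citation, so your unwinding of the two definitions and observation that both are characterized as the unique descent of $D_n\cdot c \bmod p^N$ along the restriction isomorphism is exactly the content being invoked.

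Two small points of hygiene. First, in the Mazur--Rubin framework the Euler system $c$ is indexed by subfields of $\mca{K}/\bb{Q}$, so the class at level $n$ is $c_{\bb{Q}(\mu_n)} \in H^1(\bb{Q}(\mu_n), T_\chi\otimes_{\mca{O}}S_{\mf{Q}})$ rather than $c_{F_0(\mu_n)}$; the passage to $(F_0(\mu_n)^\times/p^N)_\chi$ is then via Shapiro's lemma and Kummer theory, which you invoke correctly later. Second, your phrasing attributes the $H_n$-invariance of the derivative to the well-ordered hypothesis, but invariance already follows from $n\in\mca{N}_N(\mf{Q})$ alone (each $\ell\mid n$ satisfies $\ell\equiv 1\pmod{p^N}$); the well-ordered condition is what guarantees, via \cite{MR} Theorem~A.4 (Proposition~\ref{[] and phi}~(3) here), that the derivative class already lies in $H^1_{\mca{F}(n)}$ and hence that the \cite{MR} construction introduces no correction terms at this $n$. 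With those adjustments the argument is complete.
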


Recall that
for each element 
$n:=\ell_1\times \cdots \times \ell_r \in \mca{N}(\mca{P})$,
we denote the number of prime divisors of $n$ by 
$\epsilon(n):=r$. For any non-zero Kolyvagin system 
$\kappa=\{\kappa_n \} \in \mathrm{KS}
(T_\chi S_{\mf{Q}},\mca{F}_{\mrm{can}},\mca{P})$, 
we define
$$\partial_{i}(\kappa;\mf{Q}):=\max\{j \in \bb{Z}_{\ge 0} \mathrel{|}
\pi^{j}H_{\mca{F}(n)}(\bb{Q},T_\chi\otimes_{\mca{O}}
S_{\mf{Q}}/I_n) \text{ for all 
$n \in \mca{N}(\mca{P})$ with $\epsilon(n)=i$} \} $$
for any non-negative integer $i$. 
We also define 
\[
\partial_{i}({\mf{Q}}):=\min\{\partial_{i}(\kappa;\mf{Q}) 
\mathrel{|}
\kappa=\{\kappa_n \} \in \mathrm{KS}
(T_\chi\otimes_{\mca{O}}S_{\mf{Q}},\mca{F}_{\mrm{can}},\mca{P})\}. 
\]
Note that we have 
$e_i({\mf{Q}}):=\partial_{i}(\mf{Q}) - \partial_{(i+1)}(\mf{Q}) 
\ge 0$ for any 
$i \in \bb{Z}_{\ge 0}$, and 
$\partial_{i}(\mf{Q})=0$ for sufficiently large $i$. 
Note that the core rank of $T_\chi\otimes_{\mca{O}}S_{\mf{Q}}$
is one by Lemma \ref{corerank}, and 
we can check the Selmer triple 
$(T_\chi\otimes_{\mca{O}}S_{\mf{Q}},\mca{F}_{\mrm{can}},\mca{P})$
satisfies all conditions required in \cite{MR} Theorem 5.2.12,
we obtain the following result.

\begin{thm}[a special case of \cite{MR} Theorem 5.2.12]\label{MR5.2.12}
Let $\mf{Q} \subseteq \mca{O}[[\Gamma]]$ 
be a prime ideal of hight one not contained in $\Sigma_{\Lambda}$. 
We fix a uniformizer $\pi_{\mf{Q}}$ of $S_{\mf{Q}}$. 
Then, we have an isomorphism
$$H^1_{\mca{F}_\Lambda^*}(\bb{Q},
(T_\chi\otimes_{\mca{O}}S_{\mf{Q}})^*) \simeq 
\bigoplus_{i \ge 0}S_{\mf{Q}}/\pi_{\mf{Q}}^{e_i(\mf{Q})}S_{\mf{Q}} $$
of $S_{\mf{P}}$-modules.
In other words, we have 
\[
\Fitt_{S_{\mf{Q}},i}\big( 
H^1_{\mca{F}_{\mrm{can}}^*}(\bb{Q},
(T_\chi\otimes_{\mca{O}}S_{\mf{Q}})^*) \big)
=\pi_{\mf{Q}}^{\partial_{i}({\mf{Q}})}S_{\mf{Q}}
\]
for any $i \in \bb{Z}_{\ge 0}$. 
\end{thm}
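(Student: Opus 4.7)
The plan is to derive Theorem \ref{MR5.2.12} as a direct specialization of Mazur--Rubin's structure theorem \cite{MR} Theorem 5.2.12 to the Selmer triple $(T_\chi\otimes_{\mca{O}}S_{\mf{Q}},\mca{F}_{\mrm{can}},\mca{P})$. The substantive work therefore consists of verifying that every hypothesis of that theorem is met in the present setting; once this is done, the first displayed isomorphism is literally its conclusion, and the Fitting-ideal reformulation is extracted from it by an elementary computation over a discrete valuation ring.

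First I would confirm that the coefficient ring $S_{\mf{Q}}$ satisfies the standing assumptions of \cite{MR} \S\S 3.5 and 5.2. Since $\mf{Q}$ is a height-one prime of $\Lambda_\chi\simeq \mca{O}[[\Gamma]]$ not contained in $\Sigma_{\Lambda}$, in particular $\mf{Q}\neq p\Lambda_\chi$, so $\Lambda_\chi/\mf{Q}$ is a one-dimensional complete local domain of mixed characteristic, and its integral closure $S_{\mf{Q}}$ is a complete discrete valuation ring with finite residue field of characteristic $p$. Next I would verify the representation-theoretic conditions (H.0)--(H.6) of \cite{MR} \S 3.5 for $T_\chi\otimes_{\mca{O}}S_{\mf{Q}}$: unramification outside $\Sigma$, freeness of rank one, cyclicity of $T_\chi/(\mf{m}T+(\mathrm{Fr}_\ell-1)T)$ for $\ell\in\mca{P}=\mca{P}_1$, and injectivity of $\mathrm{Fr}_\ell^{p^k}-1$ follow immediately from the description $T_\chi=\mca{O}_{\chi^{-1}}\otimes_{\bb{Z}_p}\bb{Z}_p(1)$ together with the defining property of $\mca{P}_1$. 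The existence of an element $\tau\in G_{\bb{Q}}$ with the prescribed action on $T_\chi$ and on $\mu_{p^\infty}$ is a routine Chebotarev argument, using that the residual representation of $T_\chi$ is a nontrivial character of $G_{\bb{Q}}$ (ensured by $\chi(p)\ne 1$ and by $p$ being odd).

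The essential non-formal hypothesis is the \emph{core rank} condition: Theorem 5.2.12 of \cite{MR} requires the core rank of the coefficient module to be exactly one, and this is furnished by Lemma \ref{corerank}. With all hypotheses verified, Mazur--Rubin's theorem provides the isomorphism
\[
H^1_{\mca{F}_\Lambda^*}\bigl(\bb{Q}, (T_\chi\otimes_{\mca{O}}S_{\mf{Q}})^*\bigr)
\;\simeq\; \bigoplus_{i\ge 0} S_{\mf{Q}}/\pi_{\mf{Q}}^{e_i(\mf{Q})}S_{\mf{Q}},
\]
where $e_i(\mf{Q})=\partial_i(\mf{Q})-\partial_{i+1}(\mf{Q})$; the fact that $(e_i(\mf{Q}))_{i\ge 0}$ is nonincreasing and eventually zero, i.e.\ a genuine sequence of elementary divisors for a finitely generated torsion $S_{\mf{Q}}$-module, is itself part of the content of \cite{MR} Theorem 5.2.12 and would be invoked as a black box.

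Finally, the Fitting-ideal formulation is a one-line calculation over the DVR $S_{\mf{Q}}$. For a finitely generated torsion $S_{\mf{Q}}$-module $M=\bigoplus_{j\ge 0}S_{\mf{Q}}/\pi_{\mf{Q}}^{a_j}$ with $a_0\ge a_1\ge\cdots$ eventually zero, computing minors of the diagonal presentation matrix gives $\Fitt_{S_{\mf{Q}},i}(M)=\pi_{\mf{Q}}^{\sum_{j\ge i}a_j}S_{\mf{Q}}$. Substituting $a_j=e_j(\mf{Q})$ and telescoping yields $\sum_{j\ge i}e_j(\mf{Q})=\partial_i(\mf{Q})$, which is the asserted formula. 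The main obstacle is not conceptual but technical bookkeeping: one must check that (H.0)--(H.6) survive the base change from $\mca{O}$ to the possibly ramified DVR $S_{\mf{Q}}$, and that the Selmer condition $\mca{F}_\Lambda^*$ on $\mathbf{T}_\chi^*$ specializes at $\mf{Q}$ to the canonical dual condition $\mca{F}_{\mrm{can}}^*$ used by Mazur--Rubin---a compatibility that is implicit in their Iwasawa-theoretic framework of \cite{MR} \S 5.3 but deserves explicit verification when invoking their theorem in our setting.
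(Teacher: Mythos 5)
Your proposal takes essentially the same approach as the paper: the paper's own justification consists of the sentence preceding the theorem, which simply observes that Lemma \ref{corerank} gives core rank one and that the Selmer triple $(T_\chi\otimes_{\mca{O}}S_{\mf{Q}},\mca{F}_{\mrm{can}},\mca{P})$ satisfies all conditions required by \cite{MR} Theorem 5.2.12, then cites that theorem; your write-up fleshes out exactly this verification and adds the routine DVR computation converting the elementary-divisor isomorphism into the Fitting-ideal formula. Your parenthetical flag about the notational mismatch between $\mca{F}_\Lambda^*$ and $\mca{F}_{\mrm{can}}^*$ in the statement is well-taken but is just a typographical inconsistency in the paper, not a gap in the argument.
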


\subsection{} 
In this section, we treat the results 
on higher Fitting ideals of $A_{0,\chi}$. 
By Theorem \ref{MR5.2.12} and 
usual Euler system arguments (without Kurihara's elements), 
we obtain the following theorem. 

\begin{thm}\label{0-layer}
Assume the extension degree of 
$K/\bb{Q}$ is prime to $p$. 
Let $\chi \in \widehat{\Delta}$ be a character 
satisfying $\chi(p)\ne 1$. 
Then, we have
\[
\Fitt_{\mca{O}_\chi/p^N,i}(A_{0,\chi})
=\mf{C}_{i,0,N,\chi}
\]
for any non-negative integer $i$ and 
sufficiently large integer $N$. 
\end{thm}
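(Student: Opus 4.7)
The strategy is to apply Mazur-Rubin's structure theorem for Kolyvagin systems \emph{at the ground level} (not after localization at a height-one prime of $\Lambda_\chi$) and match the resulting invariants with the higher cyclotomic ideals via the circular-unit Euler system.

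First I would specialize Theorem \ref{MR5.2.12} to the Selmer triple $(T_\chi,\mca{F}_{\mrm{can}},\mca{P}_1)$ over the discrete valuation ring $\mca{O}_\chi$. Under the hypotheses $\chi(p)\ne 1$ and $p\nmid[K:\bb{Q}]$, the conditions (H.0)--(H.6) of \cite{MR} \S 3.5 are routine, and the core rank is one by a direct repetition of the global-duality argument behind Lemma \ref{corerank}. Combined with the identification $H^1_{\mca{F}^*_{\mrm{can}}}(\bb{Q},T_\chi^*) = \Hom(A_{0,\chi},\bb{Q}_p/\bb{Z}_p)$ from the start of this section and the invariance of Fitting ideals under Pontryagin duality for finite $\mca{O}_\chi$-modules, one obtains
\[
\Fitt_{\mca{O}_\chi,i}(A_{0,\chi}) = \pi^{\partial_i}\mca{O}_\chi,
\]
where $\pi$ is a uniformizer of $\mca{O}_\chi$ and $\partial_i := \min_\kappa \partial_i(\kappa)$ ranges over Kolyvagin systems in $\mrm{KS}(T_\chi,\mca{F}_{\mrm{can}},\mca{P}_1)$.

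Next I would produce a primitive Kolyvagin system from circular units. Using Proposition \ref{circular units, free} and Remark \ref{extending to Euler system}, fix a generator $\eta$ of the rank-one free $\Lambda_\chi$-module $C_{\infty,\chi}$ and extend it to an Euler system; Theorem \ref{ESKS} then produces $\kappa^{\mathrm{circ}}\in\mrm{KS}(T_\chi,\mca{F}_{\mrm{can}},\mca{P}_1)$ whose class at each $n\in\mca{N}_N^{\mrm{w.o.}}$ coincides, by Proposition \ref{Comparisonofkappa}, with the Kolyvagin derivative $\kappa_{0,N}(\eta;n)\in(F_0^\times/p^N)_\chi$. Primitivity $\partial_0(\kappa^{\mathrm{circ}})=\partial_0$ follows from the Iwasawa main conjecture at $n=1$ (the $\pi$-order of $\eta \bmod (\gamma-1)$ equals that of a generator of $\cha_{\Lambda_\chi}(X_\chi)\bmod (\gamma-1)$, using that $C_{\infty,\chi}$ is free of rank one); by the rank-one Mazur-Rubin formalism, primitivity propagates to $\partial_i(\kappa^{\mathrm{circ}})=\partial_i$ for all $i$. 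Choosing $N$ so large that $p^N$ annihilates $A_{0,\chi}$ ensures that reduction modulo $p^N$ loses no information.

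Finally I would translate this into an equality with $\mf{C}_{i,0,N,\chi}$. Since $R_{0,N,\chi}\simeq\mca{O}_\chi/p^N$ is a self-injective local ring, for any $R_{0,N,\chi}$-submodule $W\subseteq(F_0^\times/p^N)_\chi$ and any $x\in W$, the evaluation ideal $\{f(x):f\in\Hom_{R_{0,N,\chi}}(W,R_{0,N,\chi})\}$ reads off the $\pi$-adic divisibility of $x$ inside the cyclic submodule $R_{0,N,\chi}\cdot x$, and by injectivity any such $f$ extends to the ambient module giving the same ideal. Applied to $W=\mca{W}_{0,N,\chi}(n)$ and $x=\kappa_{0,N}(\eta;n)$, and taking the minimum over well-ordered $n$ with $\epsilon(n)\le i$, one obtains $\mf{C}_{i,0,N,\chi}=\pi^{\partial_i(\kappa^{\mathrm{circ}})}R_{0,N,\chi}=\pi^{\partial_i}R_{0,N,\chi}$, which combined with the first step is exactly $\Fitt_{\mca{O}_\chi/p^N,i}(A_{0,\chi})$. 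The hardest part is precisely this last identification: one must verify that the family of $R_{0,N,\chi}$-homomorphisms appearing in the definition of $\mf{C}_{i,0,N,\chi}$ really detects the exact $\pi$-adic order of $\kappa^{\mathrm{circ}}_n$, with no loss coming from the non-cyclic structure of $(F_0^\times/p^N)_\chi$ or from the distinction between $H^1_{\mca{F}(n)}(\bb{Q},T_\chi/I_nT_\chi)$ and the full Kummer cohomology. A secondary point is the primitivity of $\kappa^{\mathrm{circ}}$, which must be extracted from the Iwasawa main conjecture together with Proposition \ref{Comparisonofkappa} without circular reference to the Fitting ideals being computed.
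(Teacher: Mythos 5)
Your strategy coincides with the paper's for the inclusion $\mf{C}_{i,0,N,\chi}\subseteq\Fitt_{\mca{O}_\chi/p^N,i}(A_{0,\chi})$: both apply Theorem~\ref{MR5.2.12} (at the prime $(\gamma-1)$, i.e.\ over $\mca{O}_\chi=S_{(\gamma-1)}$) together with Proposition~\ref{Comparisonofkappa} to conclude that every derivative class $\kappa_{0,N}(\eta;n)$, for $n\in\mca{N}_N^{\mrm{w.o.}}$ with $\epsilon(n)\le i$, lies in $p^{\sum_{j>i}d_j}\cdot H^1_{\mca{F}(n)}(\bb{Q},T_\chi/I_n)$, which is a submodule of $(F_0^\times/p^N)_\chi$, so every functional evaluation lands in $\Fitt_{\mca{O}_\chi,i}(A_{0,\chi})\cdot R_{0,N,\chi}$. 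That half is correct.

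The genuine gap is in the reverse inclusion, and it is exactly the step you flag at the end. The invariant $\partial_i(\kappa)$ measures divisibility of $\kappa_n$ \emph{inside the transverse Selmer group} $H^1_{\mca{F}(n)}(\bb{Q},T_\chi/I_n)$, while $\mf{C}_{i,0,N,\chi}$, by self-injectivity of $R_{0,N,\chi}$, measures divisibility of the same element \emph{inside the ambient Kummer group} $(F_0^\times/p^N)_\chi$. Divisibility in the subgroup implies divisibility in the ambient group, not conversely; so the identity $\mf{C}_{i,0,N,\chi}=\pi^{\partial_i(\kappa^{\mathrm{circ}})}R_{0,N,\chi}$ that you need is an assertion, not a formal consequence of "the evaluation ideal reads off the $\pi$-adic order." The paper does not try to resolve this abstractly: for $\Fitt_{\mca{O}_\chi/p^N,i}(A_{0,\chi})\subseteq\mf{C}_{i,0,N,\chi}$ it runs the classical Rubin Euler-system induction (\cite{Ru2} \S 4), using Proposition~\ref{chebo appli} to choose primes $\ell_1,\dots,\ell_{r+1}\in\mca{S}_N$ and maps $\psi_j$ realised by $\phi^{\ell_j}_{0,N,\chi}$ so that $\psi_j\bigl(\kappa_{0,N}(\eta;n_j)\bigr)$ peels off the $j$-th elementary divisor, with the base case $\psi_0(\eta)=u\cdot\#A_{0,\chi}$ supplied by \cite{Ru2} Theorem 4.2. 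That explicit construction \emph{is} the missing lemma; your "primitivity of $\kappa^{\mathrm{circ}}$" (which is indeed equivalent to the same analytic input) together with the rank-one freeness of the Kolyvagin-system module gives you the correct $\partial_i$'s, but does not by itself show that the homomorphisms appearing in $\mf{C}_{i,0,N,\chi}$ can achieve them.
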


\begin{proof}
The order of $\chi$ is prime to $p$, 
so $p$ is a prime element of 
the discrete valuation ring $\mca{O}_\chi$. 
Since $A_{0,\chi}$ is a finitely generated 
torsion $\mca{O}_{\chi}$-module, 
we have an exact sequence
\begin{align*}
\xymatrix{0 \ar[r] & \mca{O}^r_\chi \ar[r]^{f} 
& \mca{O}^r_\chi \ar[r]^{g} & {A_{0,\chi}} \ar[r] & 0,} 
\end{align*}
of $\mca{O}_\chi$-modules, 
where the matrix $M_f$ associated to 
$f$ for the standard basis $(\mathbf{e}_j)_{j=1}^r$ 
of $\mca{O}_\chi^r$ is 
a diagonal matrix 
\begin{equation*}
M_f:=\begin{pmatrix}
p^{d_1} & & & & \\
& p^{d_2} & & & \\
& & & \ddots & \\
& & & & p^{d_r}
\end{pmatrix}
\end{equation*}
satisfying $d_1\ge d_2 \ge \cdots \ge d_r$. 

We fix an integer $N$ satisfying $p^N \ge \# A_{0,\chi}$. 
First, let us show the inequality 
$\Fitt_{\mca{O}_\chi/p^N,i}(A_{0,\chi})
\supseteq \mf{C}_{i,0,N,\chi}$. 
Let 
\[
\eta=\{\eta_m(n)_\chi \in 
(F_m(\mu_n)^\times \otimes_{\bb{Z}} \bb{Z}_p)_\chi \}_{m,n}
\] 
be an Euler system of circular units  
defined by a 
$\Lambda_\chi$-linear combination of 
basic circular units, 
$n \in \mca{N}_N^{\mrm{w.o.}}$ satisfying 
$\epsilon (n) \le i$, 
and 
\[
\xymatrix{f \colon 
(F_0^\times /p^N)_\chi \ar[r] & 
R_{0,N,\chi}=\mca{O}_\chi/p^N}
\]
an arbitrary homomorphism of 
$R_{0,N,\chi}$-modules. 
Then, from Theorem \ref{MR5.2.12} 
for the prime ideal $(\gamma-1)\Lambda_\chi$ and
Proposition \ref{Comparisonofkappa},
it follows that 
$\kappa_{0,N,\chi}(n,\eta)_\chi$ 
is a $p^{\sum_{j=i+1}^{r} d_j}$-power 
of some element in $(F_0^\times /p^N)_\chi$. 
This implies 
\[
f(\kappa_{0,N}(\eta;n)_\chi) \in 
p^{\sum_{j=i+1}^{r} d_j}(\mca{O}_\chi/p^N)=
\Fitt_{\mca{O}_\chi,i}(A_{0,\chi})(\mca{O}_\chi/p^N),
\]
and we obtain $\Fitt_{\mca{O}_\chi/p^N,i}(A_{0,\chi})
\supseteq \mf{C}_{i,0,N,\chi}$. 

Note that the inequality 
$\Fitt_{\mca{O}_\chi,i}(A_{0,\chi})
\subseteq \mf{C}_{i,0,N,\chi}$
follows from the usual Euler system argument. 
(See, for example, the arguments in \cite{Ru2} \S 4.)
We sketch the proof of this inequality briefly.
Let $N$ be a sufficiently integer. 
Note that any circular unit in $F_0$ extends to 
an Euler system defined by a 
$\Lambda_\chi$-linear combination of 
basic circular units 
since we assume $\chi(p) \ne 1$
(cf.\ Remark \ref{extending to Euler system}).
Fix an Euler system
\[
\eta=\{\eta_m(n)_\chi \in 
(F_m(\mu_n)^\times \otimes_{\bb{Z}} \bb{Z}_p)_\chi \}_{m,n}
\]
circular units  
defined by a 
$\Lambda_\chi$-linear combination of 
basic circular units, and assume that
the circular unit $\eta_0(1)$ generates 
the free $\mca{O}_\chi$-module 
$C_{0,\chi}= (C_{0}\otimes \bb{Z}_p)_\chi$ of rank one. 
Recall that 
$E_{0,\chi}:=
(\mca{O}_{F_0}^\times \otimes \bb{Z}_p)_\chi$
is a free $\mca{O}_\chi$-module of rank one.  
We fix an isomorphism 
\[
\xymatrix{
\psi_0\colon E_{0,\chi}/p^N \ar[r] & 
R_{0,N,\chi}=\mca{O}_\chi/p^N 
}
\]
of $\mca{O}_\chi$-modules and 
a prime number $\ell_1$ whose 
ideal class $[\ell_1]_{F_0,\chi}$ in $A_{0,\chi}$ 
coincides with $g(\mathbf{e}_{1})$ and 
satisfying 
\[
\phi_{m,N,\chi}^{\ell_1}\mid_{E_{0,\chi}/p^N}
=\psi_0.
\]
(Note that Proposition \ref{chebo appli} ensures the existence 
of such a prime number $\ell_1$.)
By the arguments in \cite{Ru2} \S 4 combined 
with Proposition \ref{chebo appli}, 
we can inductively take prime numbers 
$\ell_1,...,\ell_{r+1} \in \mca{S}_N$
homomorphisms
\[
\xymatrix{
\psi_j\colon (F_{0}^\times/p^N)_\chi \ar[r] & 
R_{0,N,\chi}=\mca{O}_\chi/p^N}  
\hspace{5mm}
(j=1,...,r)
\]
satisfying the following conditions. 
\begin{itemize}
\item $[\ell_{j,F_0}]_{\chi}=g(\mathbf{e}_{j})$ in $A_{0,\chi}$  
for any integer $j$ with $1 \le j \le r$. 
\item The integer $n_j:=\prod_{\nu=1}^j\ell_j$ is well-ordered
any integer $j$ with $1 \le j \le r$. 
\item $p^{d_{j-1}}\psi_j(\kappa_{0,N}(\eta;n_j))
=\psi_{j-1}(\kappa_{0,N}(\eta;n_{j-1})_\chi)$
for any integer $j$ with $1 \le j \le r$. 
Here, we put $n_0:=1$. 
\item The  restriction of $\phi_{0,N,\chi}^{\ell_j}$
on $\mca{W}_{0,N.\chi}(n_j)$ coincides with $\psi_{j-1}$
any integer $j$ with $1 \le j \le r$. 
\end{itemize}
Then, we obtain 
\[
p^{\sum_{j=1}^{i-1} d_j}\psi_i(\kappa_{0,N}(\eta;n_i)_\chi)
=p^{\sum_{j=1}^{i-2} d_j}\psi_{i-1}(\kappa_{0,N}(\eta;n_{i-1})_\chi)
=\cdots=\psi_0(\eta).
\]
By \cite{Ru2} Theorem 4.2 (see \cite{MW} Theorem 1.10.1 or 
\cite{Ru5} Corollary 3.2.4 for general cases), 
there exists a unit $u \in \mca{O}_\chi^{\times}$
such that 
\[
\psi_0(\eta)=u\# A_{0,\chi}
=up^{\sum_{j=1}^r d_j}.
\]
Therefore, we obtain 
\[
\mf{C}_{0,N,\chi}\subseteq 
\psi_i(\kappa_{0,N}(\eta;n_i)_\chi)R_{0,N,\chi}
=p^{\sum_{j=i+1}^{r} d_j}R_{0,N,\chi}.
\]
This completes the proof. 
\end{proof}

\begin{rem}\label{usual ES arguments}
Fix a pseudo-isomorphism 
\[
\xymatrix{
X_\chi \ar[r] & 
\bigoplus_{j=1}^r \Lambda_\chi/f_i\Lambda_\chi,
}
\]
where $f_1,...,f_r$ are non-unit and non-zero elements of $\Lambda_\chi$
satisfying $f_r|\cdots |f_2|f_1$. 
Then, by similar argument to the proof of 
the inequality 
$\Fitt_{\mca{O}_\chi,i}(A_{0,\chi})
\subseteq \mf{C}_{i,0,N,\chi}$, 
we can prove  
rough estimates
\begin{equation}\label{rough estimates}
\Fitt_{\Lambda_\chi,i}(X_\chi) \prec 
\mf{C}_{i,\chi}
\end{equation}
without using Kurihara's elements. 
In this argument,  
we have to use the argument of \cite{Ru2} \S 5 and 
the Iwasawa main conjecture instead of 
the argument of \cite{Ru2} \S 4 and 
\cite{Ru2} Theorem 4.2. 
Note that when we apply 
such arguments without Kurihara's elements, 
we have to ignore error factors 
(\ref{rough estimates}) completely. 
So, Theorem \ref{Main theorem}, 
which is proved by 
Euler arguments via Kurihara's elements, 
is stronger than results obtained 
by usual arguments without Kurihara's elements. 
\end{rem}

Note that Theorem \ref{0-layer}
implies that 
for any non-negative integer $i$ and
any two integers $N$ and $N'$
satisfying $N' \ge N >0$, 
the image of 
$\mf{C}_{0,N',\chi}$ in 
$R_{0,N,\chi}$ coincides with 
$\mf{C}_{0,N,\chi}$. 
Combining this fact and the second assertion of 
Corollary \ref{compatibility of cyclotomic ideals}, 
we obtain the following corollary immediately. 

\begin{cor}\label{cyclotomic ideals and reduction}
Assume the extension degree of 
$K/\bb{Q}$ is prime to $p$. 
Let $i$ be a non-negative integer, and
$\chi \in \widehat{\Delta}$ a character 
satisfying $\chi(p)\ne 1$.  
Then, the following holds. 
\begin{enumerate}
\item The image of $\mf{C}_{i,\chi}$ in 
$R_{0,N,\chi}$ coincides with the ideal 
$\mf{C}_{i,0,N,\chi}$
for any positive integer $N$. 
\item The image of $\mf{C}_{i,\chi}$ in 
$R_{0,\chi}:=\bb{Z}_p[\Gal(F_0/\bb{Q})]_\chi$
coincides with the ideal 
$\mf{C}_{i,F_0,\chi}:= \varprojlim_{N} \mf{C}_{i,0,N,\chi}$. 
\end{enumerate}
\end{cor}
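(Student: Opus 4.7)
Both assertions will follow by combining the two results explicitly cited just above the statement. Since $[K:\bb{Q}]$ is prime to $p$ we have $\Delta_p=0$, so the second assertion of Corollary \ref{compatibility of cyclotomic ideals} gives surjectivity of the natural map $\mf{C}_{i,m,N,\chi}\twoheadrightarrow\mf{C}_{i,0,N,\chi}$ in the $m$-direction (with $N$ fixed). Theorem \ref{0-layer}, combined with the base-change property of Fitting ideals noted after Definition \ref{Fitt}, identifies $\mf{C}_{i,0,N,\chi}$ with $\Fitt_{\mca{O}_\chi,i}(A_{0,\chi})\cdot(\mca{O}_\chi/p^N)$ once $N$ is sufficiently large; since $A_{0,\chi}$ is finite, this yields the complementary surjectivity $\mf{C}_{i,0,N',\chi}\twoheadrightarrow\mf{C}_{i,0,N,\chi}$ for all $N'\ge N\gg 0$ (the smaller values of $N$ being irrelevant by cofinality in the index set). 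Composing these two statements, every transition map $\mf{C}_{i,m',N',\chi}\to\mf{C}_{i,0,N,\chi}$ arising in the directed index set is surjective.

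To prove (1), I would fix $N$ and an arbitrary $c_0\in\mf{C}_{i,0,N,\chi}$, and, for each pair $(m',N')$ with $N'\ge\max(m'+1,N)$, consider the \emph{finite} set
\[
Y_{m',N'}:=\bigl\{\,c\in\mf{C}_{i,m',N',\chi}\ \bigm|\ \text{the image of }c\text{ in }R_{0,N,\chi}\text{ equals }c_0\,\bigr\}.
\]
The two surjectivity statements recalled above show that every $Y_{m',N'}$ is non-empty and that all transition maps among them are surjective. Because each $Y_{m',N'}$ is finite, the Mittag-Leffler condition is automatic and the projective limit $\varprojlim_{(m',N')}Y_{m',N'}$ is non-empty. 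Any element of this limit extends — using the first assertion of Corollary \ref{compatibility of cyclotomic ideals} to define its components at indices $(m',N')$ with $N'<N$ as mere reductions — to a compatible system in $\varprojlim_{(m',N')}\mf{C}_{i,m',N',\chi}=\mf{C}_{i,\chi}$ whose image in $R_{0,N,\chi}$ is $c_0$. The reverse containment is immediate from the first assertion of Corollary \ref{compatibility of cyclotomic ideals} applied repeatedly.

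Part (2) then follows formally by passing to the inverse limit in $N$: the image of $\mf{C}_{i,\chi}$ in $R_{0,\chi}=\varprojlim_N R_{0,N,\chi}$ is contained in $\varprojlim_N\mf{C}_{i,0,N,\chi}=\mf{C}_{i,F_0,\chi}$ by (1), and the converse inclusion is obtained by the same Mittag-Leffler argument applied to the inverse system of finite non-empty preimages. The only real obstacle is the bookkeeping of the double inverse limit indexed by $(m',N')$; the substance is captured entirely in the two surjectivities above, after which the argument is essentially formal.
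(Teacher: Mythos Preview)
Your proposal is correct and follows essentially the same approach as the paper. The paper's own proof is a single sentence invoking precisely the two inputs you isolate---Theorem \ref{0-layer} for surjectivity in the $N$-direction and the second assertion of Corollary \ref{compatibility of cyclotomic ideals} for surjectivity in the $m$-direction---and then declares the corollary ``immediate''; you have simply spelled out the inverse-limit/Mittag-Leffler bookkeeping that the paper suppresses.
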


We put $\mf{m}:=p\Lambda_\chi+(\gamma-1)\Lambda_\chi$. 
Note we have the natural isomorphism 
$$X_\chi/\mf{m}X_\chi\simeq A_{0,\chi}/p$$ 
by Proposition \ref{class group, refined}. 
So, the least cardinality of generators of 
the $\Lambda_\chi$-module $X_\chi$ 
coincides to that of 
the $\mca{O}_\chi/p$-module $A_{0,\chi}$
by Nakayama's lemma. 
Hence the following corollary follows from 
Remark \ref{remark Fitt and generators}, 
Theorem \ref{0-layer} and 
Corollary \ref{cyclotomic ideals and reduction}.

\begin{cor}\label{generator and cyclotomic ideals}
Let $K/\bf{Q}$ and $\chi  \in \widehat \Delta$ be as 
in Theorem \ref{Main theorem, Rough}.
Let $r$ be a non-negative integer. 
Then, the following two properties are equivalent. 
\begin{itemize}
\item[(1)] The least cardinality of generators of 
the $\Lambda_\chi$-module $X_\chi$ is $r$. 
\item[(2)] $\mf{C}_{r-1,\chi} \ne \Lambda_\chi$ 
and $\mf{C}_{r,\chi} = \Lambda_\chi$. 
\end{itemize}
\end{cor}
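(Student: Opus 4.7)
The plan is to reduce the statement at the generic fiber to the residue field $\Lambda_\chi/\mathfrak{m} = \mca{O}_\chi/p$, where $\mathfrak{m} = p\Lambda_\chi + (\gamma-1)\Lambda_\chi$ is the maximal ideal of the local ring $\Lambda_\chi$. The first step is to combine Proposition~\ref{class group, refined} (which gives $X_\chi/(\gamma-1)X_\chi \simeq A_{0,\chi}$) with a further reduction mod $p$ to obtain the isomorphism $X_\chi/\mathfrak{m}X_\chi \simeq A_{0,\chi}/p$ already noted in the discussion preceding the corollary. Nakayama's lemma then identifies the least cardinality of generators of $X_\chi$ as a $\Lambda_\chi$-module with $\dim_{\mca{O}_\chi/p}(A_{0,\chi}/p)$, and hence with the least cardinality of generators of $A_{0,\chi}$ as an $\mca{O}_\chi$-module.

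Next, I would translate the combinatorial characterisation of generator count into Fitting ideals. Since $\mca{O}_\chi$ is a discrete valuation ring, Remark~\ref{remark Fitt and generators} yields the equivalence: the least cardinality of generators of $A_{0,\chi}$ over $\mca{O}_\chi$ equals $r$ if and only if $\Fitt_{\mca{O}_\chi,r-1}(A_{0,\chi}) \neq \mca{O}_\chi$ and $\Fitt_{\mca{O}_\chi,r}(A_{0,\chi}) = \mca{O}_\chi$. By Theorem~\ref{0-layer} (passing to the limit in $N$, or working modulo a sufficiently large $p^N > \#A_{0,\chi}$, which absorbs the ideal $\Fitt_{\mca{O}_\chi,i}(A_{0,\chi})$), we have $\Fitt_{\mca{O}_\chi,i}(A_{0,\chi}) = \mathfrak{C}_{i,F_0,\chi}$ for every $i \geq 0$.

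Finally, I would connect $\mathfrak{C}_{i,F_0,\chi} \subseteq R_{0,\chi} = \mca{O}_\chi$ back to $\mathfrak{C}_{i,\chi} \subseteq \Lambda_\chi$ via Corollary~\ref{cyclotomic ideals and reduction}, which asserts that the image of $\mathfrak{C}_{i,\chi}$ under the quotient $\Lambda_\chi \twoheadrightarrow \Lambda_\chi/(\gamma-1) = \mca{O}_\chi$ is exactly $\mathfrak{C}_{i,F_0,\chi}$. Because $\Lambda_\chi$ is a local ring with maximal ideal $\mathfrak{m}$, an ideal $\mathfrak{C}_{i,\chi}$ equals $\Lambda_\chi$ if and only if it is not contained in $\mathfrak{m}$, if and only if its image in $\Lambda_\chi/\mathfrak{m} = \mca{O}_\chi/p$ is nonzero—equivalently, the image of $\mathfrak{C}_{i,F_0,\chi}$ in the residue field $\mca{O}_\chi/p$ is the whole field. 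Stringing these equivalences together yields the claim that condition (1) holds precisely when $\mathfrak{C}_{r-1,\chi} \neq \Lambda_\chi$ and $\mathfrak{C}_{r,\chi} = \Lambda_\chi$.

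There is essentially no obstacle here: the corollary is a formal bookkeeping consequence of Nakayama's lemma together with the previously established identifications, and the main ingredients (Theorem~\ref{0-layer} and Corollary~\ref{cyclotomic ideals and reduction}) have already done all the substantive work in passing from the $\chi$-quotient of the Iwasawa module to the ground level.
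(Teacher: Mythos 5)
Your proposal is correct and follows exactly the route the paper takes: reduce modulo $\mathfrak{m}$ via Proposition \ref{class group, refined} and Nakayama to equate the generator count of $X_\chi$ over $\Lambda_\chi$ with that of $A_{0,\chi}$ over $\mca{O}_\chi$, then invoke Remark \ref{remark Fitt and generators}, Theorem \ref{0-layer}, and Corollary \ref{cyclotomic ideals and reduction} to identify the condition on Fitting ideals with the condition on $\mf{C}_{i,\chi}$. You have simply written out, step by step, the same chain of equivalences that the paper compresses into a single sentence.
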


\begin{exa}
In general, the computation of the higher cyclotomic ideals
$\mf{C}_{i,\chi}$ is  hard. 
But in a certain very special case, we determine 
the higher cyclotomic ideals explicitly and prove that 
they coincides with the higher Fitting ideals. 
Let $p=3$ and $K:=\bb{Q}(\sqrt{257})$. 
Then, we have $F_0=K$. 
We take a unique non-trivial character 
$\chi \in \widehat{\Delta}$. 
In this case, Greenberg proved that 
$A_{n,\chi}$ is a cyclic group of order 3
for any $n \ge 0$. (See \cite{Gree2} \S 7.)
So, we have 
$X_\chi=X_{\chi,\mrm{fin}}\simeq \Lambda_\chi/(3,\gamma-1)$
by Proposition \ref{class group, refined}, 
and we obtain $\Fitt_{\Lambda_\chi,0}(X_\chi)=(3,\gamma-1)$ 
and $\Fitt_{\Lambda_\chi,i}(X_\chi)=\Lambda_\chi$ for $i \ge 1$.
Note that $\gamma-1$ annihilates $X_{\chi,\mrm{fin}}$, 
so Corollary \ref{half of Main theorem, Rough}
implies that the element $\gamma-1$ belongs to 
$\mf{C}_{i,\chi}$ for any $i \ge 0$. 
Since both $\Fitt_{\Lambda_\chi,i}(X_\chi)$ and 
$\mf{C}_{i,\chi}$  
contain the ideal $(\gamma-1)\Lambda_\chi$
for any $i \ge 0$, 
we deduce that
$$\mf{C}_{i,\chi}=\Fitt_{\Lambda_\chi,i}(X_\chi)=
\begin{cases}
(3,\gamma-1) & \text{if $i=0$} \\
\Lambda_\chi & \text{if $i>0$}
\end{cases}
$$
from Theorem \ref{0-layer} and 
Corollary \ref{cyclotomic ideals and reduction}.
\end{exa}

\subsection{}
In this subsection,  
we prove Theorem \ref{local lower bounds}.
Here, we fix a positive integer $i$ 
and a height one prime ideal $\mf{P}$ of $\mca{O}[[\Gamma]]$
containing $\Fitt_{\Lambda_\chi}(X_\chi)$. 
In particular, we have $\mf{P} \ne (p)$. 
For simplicity, we put 
$\alpha:=\alpha_{i}(\mf{P})$ and 
$\beta:=\beta_i(\mf{P})$. 
We define a non-negative integer $s$ 
by 
\[
p^s=(S_{\mf{P}}:\mca{O}[[\Gamma]]/\mf{P}).
\]

We regard $\mca{O}[[\Gamma]]$ as the ring $\mca{O}[[T]]$ 
of formal power series by the isomorphism 
$\mca{O}[[\Gamma]] \simeq \mca{O}[[T]]$ defined by 
$\gamma \mapsto 1+T$. 
Let $f(T) \in \mca{O}_\chi[T]$ be 
the Weierstrass polynomial generating 
the fixed prime ideal $\mf{P}$ of 
$\mca{O}[[\Gamma]]= \mca{O}_\chi[[T]]$.
For any positive integer$k$, 
we put $$f_k(T):=f(T)+p^k$$ and let 
$\mf{P}_k$ be the principal ideal of 
$\Lambda_{\chi}= \mca{O}_\chi[[T]]$ generated by $f_k(T)$. 
We need the following lemma (cf.\ \cite{MR} p.\ 66). 

\begin{lem}
There exists a positive integer $N(\mf{P})$ satisfying 
the following properties. 
\begin{enumerate}
\item The ideal $\mf{P}_k$ is a prime for any $k \ge N(\mf{P})$.
\item The ideal $\mf{P}_k$ is not contained in $\Sigma_{\Lambda}$ 
for any $k \ge N(\mf{P})$. 
\item The residue ring $\mca{O}[[\Gamma]]/\mf{P}_k$ 
is (non-canonically)
isomorphic to $\mca{O}[[\Gamma]]/\mf{P}$ as $\mca{O}$-algebra for 
any $k \ge N(\mf{P})$. 
\item The action of $\mrm{Fr}_\ell^{p^m}-1$ on 
$T_\chi\otimes_{\mca{O}}S_{\mf{P}_k}$
is injective 
for any $\ell \in \mca{P}$, 
any $m \ge 0$ and any $k \ge N(\mf{P})$. 
\end{enumerate}
\end{lem}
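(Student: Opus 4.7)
The plan is to verify the four properties in turn, using the fact that $f_k(T)=f(T)+p^k$ is a distinguished polynomial of the same degree $d=\deg f$ for every $k\ge 1$ (its constant term $f(0)+p^k$ remains in $p\mca{O}_\chi$), that $f_k\to f$ coefficient-wise in $\mca{O}_\chi[T]$, and that the space $\mca{W}_d$ of distinguished polynomials of degree $d$ is compact in the $p$-adic topology.

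For (1) I will argue by contradiction: if $f_k$ were reducible for infinitely many $k$, Weierstrass preparation would produce factorizations $f_k=g_kh_k$ with $g_k\in\mca{W}_e$ and $h_k\in\mca{W}_{d-e}$ for some $0<e<d$; by the compactness of $\mca{W}_e\times\mca{W}_{d-e}$, a convergent subsequence yields a nontrivial distinguished factorization of the limit $f$, contradicting irreducibility. For (2), write $\cha_{\Lambda_\chi}(X_\chi)=(g(T))$ and let $h_1,\dots,h_r$ be its distinguished irreducible factors; the containment $\mf{P}_k\supseteq\cha_{\Lambda_\chi}(X_\chi)$ forces $f_k\mid g$ in $\mca{O}_\chi[[T]]$, hence $f_k=h_j$ for some $j$, and $f+p^k=h_j$ admits at most one solution $k$ for each $j$, so the exclusion holds for $k$ large; combined with the obvious $\mf{P}_k\ne(p)$, this gives $\mf{P}_k\notin\Sigma_\Lambda$.

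For (3), fix a root $\alpha$ of $f$ in an algebraic closure of $K_\chi:=\mrm{Frac}(\mca{O}_\chi)$ and let $\beta_k$ be the root of $f_k$ closest to $\alpha$. From $f(\beta_k)=-p^k$ and the product formula $f(T)=\prod_i(T-\alpha_i)$, together with the fact that $|\beta_k-\alpha_i|$ is bounded below for $i\ne 1$ and $k$ large, one deduces $\mrm{val}(\beta_k-\alpha)\ge k-C$ for a constant $C$ independent of $k$; Krasner's lemma therefore forces $K_\chi(\beta_k)=K_\chi(\alpha)$ for $k$ large. Continuity of the discriminant gives $\mrm{val}_p\mrm{disc}(f_k)=\mrm{val}_p\mrm{disc}(f)$ for $k$ large, so the orders $\mca{O}_\chi[\alpha]$ and $\mca{O}_\chi[\beta_k]$ have the same index $p^{M_0}$ in $\mca{O}_{K_\chi(\alpha)}$. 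For $k$ large enough that $\beta_k-\alpha\in p^{M_0}\mca{O}_{K_\chi(\alpha)}\subseteq\mca{O}_\chi[\beta_k]$, we obtain $\alpha\in\mca{O}_\chi[\beta_k]$, whence $\mca{O}_\chi[\alpha]=\mca{O}_\chi[\beta_k]$ by the index equality. This yields the required $\mca{O}$-algebra isomorphism $\Lambda_\chi/\mf{P}_k\simeq\mca{O}_\chi[\beta_k]=\mca{O}_\chi[\alpha]\simeq\Lambda_\chi/\mf{P}$.

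The main obstacle is (4). Writing $\mrm{Fr}_\ell|_{\bb{Q}_\infty}=\gamma^{c_\ell}$ with $c_\ell\in\bb{Z}_p\setminus\{0\}$ (non-zero since no rational prime splits completely in $\bb{Q}_\infty/\bb{Q}$), and using that $\mrm{Fr}_\ell$ acts on $T_\chi$ by $\chi(\ell)^{-1}\ell$, the action of $\mrm{Fr}_\ell^{p^m}-1$ on the free rank-one $S_{\mf{P}_k}$-module $T_\chi\otimes_{\mca{O}}S_{\mf{P}_k}$ is multiplication by the image of the non-zero power series $e_{\ell,m}(T):=(\chi(\ell)^{-1}\ell)^{p^m}(1+T)^{c_\ell p^m}-1\in\mca{O}_\chi[[T]]$, so the required injectivity is equivalent to $e_{\ell,m}\notin\mf{P}_k$. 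For each fixed $(\ell,m)$, the Weierstrass factorization of $e_{\ell,m}$ has only finitely many distinguished polynomial factors of degree $d$, so the bad set $B_{\ell,m}:=\{k:f_k\mid e_{\ell,m}\}$ is finite. The difficulty is uniformity in the infinite family of pairs $(\ell,m)$; I expect to handle this by first restricting to $\ell\in\mca{P}=\mca{P}_1(S_{\mf{P}_k})$, which (by (3)) can be arranged to agree with $\mca{P}_1(S_{\mf{P}})$ for $k$ large, and then using the cyclotomic factorization $e_{\ell,m}=\prod_{i=0}^{m}\Phi_{p^i}(A_\ell(T))$ with $A_\ell(T):=(1+T)^{c_\ell}\chi(\ell)^{-1}\ell$ to bound uniformly the number of degree-$d$ distinguished factors appearing as $(\ell,m)$ varies, combined with the openness observation that $f\nmid e_{\ell,m}$ implies $f_k\nmid e_{\ell,m}$ for all sufficiently large $k$ so that only the pairs with $f\mid e_{\ell,m}$ need individual treatment.
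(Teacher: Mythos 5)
Your treatment of (1)--(3) is more explicit than the paper's (which simply cites the arguments of \cite{MR} p.\ 66 for these three items), and the route you propose --- compactness of the space of degree-$d$ distinguished polynomials to rule out asymptotic reducibility, a finiteness count against the factors of $\cha_{\Lambda_\chi}(X_\chi)$ for (2), and a Krasner/discriminant argument for (3) --- is plausible and along expected lines. The genuine gap is in (4), which you yourself flag as the main obstacle and do not close. The difficulty you correctly identify is that the bad set is a union over infinitely many pairs $(\ell,m)$, and neither the finiteness of each individual $B_{\ell,m}$ nor the observation that $f\nmid e_{\ell,m}$ implies $f_k\nmid e_{\ell,m}$ for large $k$ is uniform in $(\ell,m)$; your plan (cyclotomic factorization of $e_{\ell,m}$ plus ``openness'') does not, as written, resolve this uniformity. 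The paper takes a different and decisive route: let $\rho_k\colon\Gamma\to(\mca{O}[[\Gamma]]/\mf{P}_k)^\times$ be the natural projection. Then the action of $\mrm{Fr}_\ell^{p^m}-1$ on $T_\chi\otimes_{\mca{O}}S_{\mf{P}_k}$ fails to be injective \emph{for some} $\ell\in\mca{P}$ and \emph{some} $m\ge 0$ if and only if the single character $\chi^{\mrm{cyc}}\rho_k$ has finite order (using that $\mrm{Fr}_\ell|_{\bb{Q}_\infty}=\gamma^{c_\ell}$ with $c_\ell\neq 0$, and that $\chi(\ell)=1$ for $\ell\in\mca{P}$). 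If it does, its order divides $p^\nu$, where $p^\nu$ is the order of the $p$-power torsion in $(\mca{O}[[\Gamma]]/\mf{P}_k)^\times$, and this $\nu$ is independent of $k$ for $k\ge N(\mf{P})$ by item (3). Therefore $f_k(T)$ must divide the \emph{fixed} polynomial $(1+T)^{p^\nu}-(\chi^{\mrm{cyc}})^{-p^\nu}(\gamma)$. Since the monic polynomials $f_{k_1}$, $f_{k_2}$ differ by a nonzero constant for $k_1\neq k_2$ and hence are coprime, this divisibility can only hold for finitely many $k$. The step you are missing is precisely this collapse of the infinite family of conditions over $(\ell,m)$ into a single polynomial-divisibility constraint via the torsion of the unit group.
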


\begin{proof}
The arguments 
in \cite{MR} p.\ 66 implies 
that there exists an integer $N'(\mf{P})$ such that 
the conditions (1)-(3) 
in the lemma holds for 
any integer $k$ satisfying $k \ge N'(\mf{P})$.
(See loc.\ cit.\ for detail.)
So, it is sufficient to show that the fourth condition holds 
for any sufficiently large integer $k$. 
We denote the cyclotomic character by 
$$\xymatrix{\chi^{\mrm{cyc}} \colon 
\Gamma \ar[r] & 1+p\bb{Z}_p \ar@{^{(}->}[r] &  
\mca{O}^\times.  }$$
Let $k$ be an integer satisfying $k \ge N(\mf{P})$. 
The natural projection induces 
a continuous character
$$\xymatrix{\rho_{k}\colon 
\Gamma \ar[r] &
(\mca{O}[[\Gamma]]/\mf{P}_k)^\times. }$$
Note that the action of $\mrm{Fr}_\ell^{p^m}-1$ on 
$T_\chi\otimes_{\mca{O}}S_{\mf{P}_k}$
is {\em not} injective 
for some $\ell \in \mca{P}$ and 
some $m \ge 0$ if and only if 
the order of the character 
$\chi^{\mrm{cyc}}\rho_k$ is finite. 

We denote the order of $p$-power torsion part 
of $(\mca{O}[[\Gamma]]/\mf{P}_k)^\times$ by $p^{\nu}$. 
Assume that 
the character 
$\chi^{\mrm{cyc}}\rho_k$ has finite order.
The image of 
$\chi^{\mrm{cyc}}\rho_k$ is contained in 
$(\mca{O}[[\Gamma]]/\mf{P}_k)^\times$, 
so the character $\chi^{\mrm{cyc}}\rho_k$ is annihilated by $p^{\nu}$. 
In particular, we have 
$$\rho_k(\gamma^{p^\nu})=(\chi^{\mrm{cyc}})^{-p^{\nu}}(\gamma)$$
in $\mca{O}[[\Gamma]]/\mf{P}_k$. 
This implies the polynomial 
$$(1+T)^{p^{\nu}}-(\chi^{\mrm{cyc}})^{-p^{\nu}}(\gamma)
\in \mca{O}[T]$$ 
is divisible by the monic polynomial $f_k(T)$. 
Obviously, such a situation occurs for only finitely many $k$, so 
the condition (4) holds 
for any sufficiently large integer $k$.
\end{proof}

\begin{dfn}
Let $M$ be an integer, and 
$\{x_k \}_{k \in \bb{Z}_{\ge M}}$ and 
$\{y_k \}_{k \in \subset \bb{Z}_{\ge M}}$  
sequences of real numbers. 
We write $x_k \succ y_k$ 
if and only if 
$\liminf_{k\to \infty}(x_k-y_k) \ne -\infty$. 
We write $x_N \sim y_N$ if and only if 
$x_k \succ y_k$ and $y_k \succ x_k$. 
In other words, 
we write $x_k \sim y_k$ if and only if 
$\left| x_k-y_k\right|$ is bounded independent of $N$.
\end{dfn}

We denote the ramification index of 
$\mathrm{Frac}(S_{\mf{P}})/\bb{Q}_p$ by $e_{\mf{P}}$, 
and the extension degree of the residue field of $\mca{O}_\chi$ 
over $\bb{F}_p$ by $f_{\chi}$. 
Let us recall the observations in \cite{MR} p.\ 66. 
Let $d$ be a non-negative integer. 
Then, we have 
\[
\mf{P}_k+\mf{P}^d
=\mf{P}_k+p^{d e_{\mf{P}} k}\mca{O}[[\Gamma]]
\]
for any sufficiently large integer $k$.
So, we obtain the natural isomorphism
\[
(\mca{O}[[\Gamma]]/\mf{P}^{d})\otimes_{\mca{O}[[\Gamma]]} 
S_{\mf{P}_k} 
\simeq S_{\mf{P}_k}/(\pi_k)^{d e_{\mf{P}} k}
\]
of $S_{\mf{P}_k}$-algebras for any sufficiently large $k$. 
Moreover, we obtain the following lemma from 
the observations in \cite{MR} p.\ 66. 
(See \cite{MR} loc.\ cit.\ for the proof.)

\begin{lem}\label{AandaBandb1}
Let $M$ be a finitely generated torsion $\mca{O}[[\Gamma]]$-module, and 
$$E:=\displaystyle\bigoplus_{j=0}^r \mca{O}[[\Gamma]]/\mf{P}^{d_j}
\oplus \bigoplus_{j'=0}^{r'}\mca{O}[[\Gamma]]/(g_{j'}(T)^{e_{j'}}) $$
an elementary $\mca{O}[[\Gamma]]$-module, where 
where $d_j$ and $e_{j'}$ are positive integers, and 
$g_{j'}(T)$ is a Weierstrass polynomial in $\mca{O}_\chi[T]$ 
prime to $f(T)$ for any integer $i$  and $j'$. 
Suppose that 
the $\mca{O}[[\Gamma]]$-module $M$ is pseudo-isomorphic 
to $E$. Then, there exists 
a sequence 
$$\big\{\xymatrix{\iota_k \colon 
M\otimes_{\mca{O}[[\Gamma]]}S_{\mf{P}_k} \ar[r] & 
\bigoplus_{j=0}^r S_{\mf{P}_k}/(\pi_k)^{d_j e_{\mf{P}} k}; 
\text{$S_{\mf{P}_k}$-linear} }
\big\}_{k >N(\mf{P}) }$$
of homomorphisms 
such that the orders of 
the kernel and cokernel of $\iota_k$ 
are finite for any $k \ge N(\mf{P})$, 
and bounded by a constant independent of $k$.
\end{lem}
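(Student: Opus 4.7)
The plan is to construct $\iota_k$ by base-changing a pseudo-isomorphism $\phi \colon M \to E$ along the natural ring map $\mca{O}[[\Gamma]] \to S_{\mf{P}_k}$, then composing with the projection onto the first summand of $E \otimes S_{\mf{P}_k}$; the required uniform bounds on the kernel and cokernel of $\iota_k$ then split into two independent finiteness statements, one controlling the pseudo-isomorphism defect after base change and one controlling the contribution of the summands $\mca{O}[[\Gamma]]/(g_{j'}^{e_{j'}})$ that I discard.

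First, by Remark \ref{remark for p-isom class} the kernel $K$ and cokernel $C$ of $\phi$ are finite. Splitting $\phi$ into the two short exact sequences
$$0 \to K \to M \to \mrm{im}(\phi) \to 0, \qquad 0 \to \mrm{im}(\phi) \to E \to C \to 0,$$
tensoring with $S_{\mf{P}_k}$ over $\mca{O}[[\Gamma]]$, and chasing the resulting long exact $\mrm{Tor}$-sequences expresses the kernel and cokernel of $\phi_k := \phi \otimes_{\mca{O}[[\Gamma]]} S_{\mf{P}_k}$ as successive extensions of modules of the form $\mrm{Tor}_{\bullet}^{\mca{O}[[\Gamma]]}(K, S_{\mf{P}_k})$ and $\mrm{Tor}_{\bullet}^{\mca{O}[[\Gamma]]}(C, S_{\mf{P}_k})$. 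Each such $\mrm{Tor}$ is annihilated by the exponent of the finite module $K$ or $C$, and its $\mca{O}$-length is bounded by a constant depending only on $K$ and $C$, hence independent of $k$.

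Next, I will define $\iota_k$ as the composition of $\phi_k$ with the projection
$$E \otimes_{\mca{O}[[\Gamma]]} S_{\mf{P}_k} \twoheadrightarrow \bigoplus_{j=0}^{r} (\mca{O}[[\Gamma]]/\mf{P}^{d_j}) \otimes_{\mca{O}[[\Gamma]]} S_{\mf{P}_k} \simeq \bigoplus_{j=0}^{r} S_{\mf{P}_k}/(\pi_k)^{d_j e_{\mf{P}} k},$$
which is valid for every $k \ge N(\mf{P})$ by the identity $\mf{P}_k + \mf{P}^{d} = \mf{P}_k + p^{d e_{\mf{P}} k} \mca{O}[[\Gamma]]$ recalled immediately before the lemma. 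Given the previous step, a standard diagram chase on the compositions $M\otimes S_{\mf{P}_k}\xrightarrow{\phi_k}E\otimes S_{\mf{P}_k}\twoheadrightarrow \bigoplus_j S_{\mf{P}_k}/(\pi_k)^{d_j e_{\mf{P}} k}$ reduces bounding the kernel and cokernel of $\iota_k$ uniformly in $k$ to showing that $\bigoplus_{j'=0}^{r'} (\mca{O}[[\Gamma]]/g_{j'}^{e_{j'}}) \otimes_{\mca{O}[[\Gamma]]} S_{\mf{P}_k}$ has $\mca{O}$-length bounded independently of $k$.

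The main obstacle is this final claim, which I plan to handle by a resultant argument. Since $g_{j'}$ is coprime to $f$ in $\mca{O}_\chi[T]$, the resultant $\rho_{j'} := \mrm{Res}_T(g_{j'}^{e_{j'}}, f) \in \mca{O}_\chi$ is nonzero. As $f_k = f + p^k$ differs from $f$ only in its constant term and the resultant is polynomial in the coefficients, $\mrm{Res}_T(g_{j'}^{e_{j'}}, f_k) \equiv \rho_{j'} \pmod{p^k}$, so for every $k > v_p(\rho_{j'})$ these two resultants have the same $p$-adic valuation. By Weierstrass preparation $\mca{O}[[\Gamma]]/g_{j'}^{e_{j'}}$ is a finitely generated free $\mca{O}_\chi$-module on which base change to $S_{\mf{P}_k}$ factors through the quotient $\mca{O}_\chi[T]/(g_{j'}^{e_{j'}}, f_k)$ up to bounded index; since $f_k$ is monic the latter has $\mca{O}_\chi$-length equal to $v_p(\mrm{Res}_T(g_{j'}^{e_{j'}}, f_k))$, which stabilizes to $v_p(\rho_{j'})$ for $k$ large. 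Combining this stable bound with the first step furnishes the required uniform finiteness of the kernel and cokernel of $\iota_k$, completing the construction.
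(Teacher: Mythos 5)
Your argument is correct. The paper itself gives no proof of this lemma, deferring entirely to the observations on p.\ 66 of Mazur--Rubin's monograph; your blind reconstruction---base-change the pseudo-isomorphism $\phi\colon M\to E$, bound the kernel and cokernel of $\phi_k$ by the length-bounded $\mathrm{Tor}$ groups of the finite modules $\Ker\phi$ and $\Coker\phi$, and control the $\mca{O}$-length of $(\mca{O}[[\Gamma]]/(g_{j'}^{e_{j'}}))\otimes_{\mca{O}[[\Gamma]]} S_{\mf{P}_k}$ via the eventual stability of $v_p\bigl(\mathrm{Res}_T(g_{j'}^{e_{j'}},f_k)\bigr)$---supplies exactly the omitted details and is consistent with the argument in the cited reference.
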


Then, we immediately obtain 
the following Corollary \ref{AandaBandb} of  
Lemma \ref{AandaBandb1}
combined with Lemma \ref{basicFitting}.
This corollary plays an important role in this section.

\begin{cor}\label{AandaBandb}
Let $M$ be a finitely generated torsion $\mca{O}[[\Gamma]]$-module. 
We define a non-negative integers $C$ by 
$$\Fitt_{\mca{O}[[\Gamma]],i}
(M\otimes_{\mca{O}[[\Gamma]]}\Lambda_{\chi,\mf{P}})
=\mf{P}^C\Lambda_{\chi,\mf{P}}.$$
For each positive integer $k$ with 
$k \ge N(\mf{P})$, 
fix a uniformizer $\pi_k$ of $S_{\mf{P}_k}$, and 
define a non-negative integer $c_k$ by 
$$\Fitt_{S_{\mf{P}},i}
(M\otimes_{\mca{O}[[\Gamma]]}S_{\mf{P}})
=\pi_k^{c_k}S_{\mf{P}}. $$
Then, we have $c_k \sim Ce_{\mf{P}}k$.
\end{cor}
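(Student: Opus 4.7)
The plan is to combine the structure theorem for finitely generated torsion $\mca{O}[[\Gamma]]$-modules with Lemma \ref{AandaBandb1}, and then read off the Fitting ideals on both sides from an explicit elementary module. First, I would fix a pseudo-isomorphism from $M$ to
\[
E := \bigoplus_{j=1}^{r} \mca{O}[[\Gamma]]/\mf{P}^{d_j} \;\oplus\; \bigoplus_{j'=1}^{r'} \mca{O}[[\Gamma]]/(g_{j'}(T)^{e_{j'}}),
\]
with $g_{j'}$ coprime to $f(T)$, so that after localizing at $\mf{P}$ the second summand becomes zero (each $g_{j'}$ is a unit in $\Lambda_{\chi,\mf{P}}$) and $E \otimes_{\mca{O}[[\Gamma]]} \Lambda_{\chi,\mf{P}} \simeq \bigoplus_{j=1}^{r} \Lambda_{\chi,\mf{P}}/\mf{P}^{d_j}$. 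Since pseudo-isomorphisms have kernel and cokernel annihilated by an ideal of height at least two, and $\mf{P}$ has height one, localizing at $\mf{P}$ turns the pseudo-isomorphism into an honest isomorphism of $\Lambda_{\chi,\mf{P}}$-modules. Computing the $i$-th Fitting ideal of the right-hand side over the DVR $\Lambda_{\chi,\mf{P}}$ from the diagonal presentation yields $C = \sum_{j \in J} d_j$ where $J$ is the set of indices picking out the $r-i$ smallest values among $d_1,\dots,d_r$ (if $i \ge r$, we set $C=0$).

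Next, I would invoke Lemma \ref{AandaBandb1} to produce, for every $k \ge N(\mf{P})$, an $S_{\mf{P}_k}$-linear map
\[
\iota_k \colon M \otimes_{\mca{O}[[\Gamma]]} S_{\mf{P}_k} \longrightarrow
\bigoplus_{j=1}^{r} S_{\mf{P}_k}/(\pi_k)^{d_j e_{\mf{P}} k}
\]
whose kernel $L_k$ and cokernel $N_k$ are finite of order bounded by a constant $B$ independent of $k$. For the target, the same minors calculation as above (over the DVR $S_{\mf{P}_k}$) gives $\Fitt_{S_{\mf{P}_k},i}\bigl(\bigoplus_{j} S_{\mf{P}_k}/(\pi_k)^{d_j e_{\mf{P}} k}\bigr) = (\pi_k)^{C e_{\mf{P}} k}$. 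The point of this computation is that the exponent scales linearly in $k$, while the combinatorial weight $C$ is precisely the one that appeared on the $\Lambda_{\chi,\mf{P}}$ side.

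Now I would use Lemma \ref{basicFitting} to transport the Fitting ideal through $\iota_k$. Applying the lemma to the two short exact sequences
\[
0 \to L_k \to M \otimes S_{\mf{P}_k} \to \Im(\iota_k) \to 0, \qquad 0 \to \Im(\iota_k) \to \bigoplus_j S_{\mf{P}_k}/(\pi_k)^{d_j e_{\mf{P}} k} \to N_k \to 0,
\]
the containments (3) and (4) of Lemma \ref{basicFitting} together with the fact that $\Fitt_{S_{\mf{P}_k},0}(L_k)$ and $\Fitt_{S_{\mf{P}_k},0}(N_k)$ each contain $\pi_k^{A}$ for some integer $A$ independent of $k$ (since $|L_k|, |N_k| \le B$ and $S_{\mf{P}_k}/\pi_k^A$ has order at least $B$ for $A$ sufficiently large, uniformly in $k$) show that $\Fitt_{S_{\mf{P}_k},i}(M \otimes S_{\mf{P}_k})$ and $(\pi_k)^{C e_{\mf{P}} k}$ differ multiplicatively by factors lying between $\pi_k^{-2A}$ and $\pi_k^{2A}$. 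Consequently $|c_k - C e_{\mf{P}} k| \le 2A$ uniformly in $k$, which is precisely $c_k \sim C e_{\mf{P}} k$.

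The main obstacle is the bookkeeping in the last step: one must verify that the constant $A$ controlling the Fitting ideals of $L_k$ and $N_k$ can be chosen independently of $k$. This reduces to the fact that the residue field of $S_{\mf{P}_k}$ has cardinality bounded below by that of the residue field of $\mca{O}$ (so a fixed power of $\pi_k$ kills any module of order $\le B$), together with the uniform bound $|L_k|, |N_k| \le B$ supplied by Lemma \ref{AandaBandb1}. Modulo this observation, everything else is a direct application of the structure theorem together with the elementary formulas for Fitting ideals over a discrete valuation ring.
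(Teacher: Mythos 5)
Your proof is correct and follows exactly the route the paper intends: the paper states that the corollary follows "immediately" from Lemma \ref{AandaBandb1} combined with Lemma \ref{basicFitting}, and your write-up supplies precisely those details---the pseudo-isomorphism to an elementary module, the localization calculation identifying $C$, the application of parts (1)--(4) of Lemma \ref{basicFitting} to the two short exact sequences around $\iota_k$, and the uniform bound on $\Fitt_0$ of the finite error terms coming from the size bound in Lemma \ref{AandaBandb1}. The only nitpick is cosmetic: a single application of \ref{basicFitting}(2) plus \ref{basicFitting}(3) gives one inequality and \ref{basicFitting}(1) plus \ref{basicFitting}(4) the other, so the bound is $A$ rather than $2A$; this does not affect the conclusion $c_k\sim Ce_{\mf{P}}k$.
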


\begin{dfn}
Let $k$ be a non-negative integer with 
$k \ge N(\mf{P})$. 
We define 
define non-negative integers $a_k$ and $b_k$ by
\begin{align*}
\pi_k^{a_k}S_{\mf{P}_k} & =
\Fitt_{S_{\mf{P}_k},i}
(X_\chi \otimes_{\mca{O}[[\Gamma]]} S_{\mf{P}_k} ), \\
b_k &= 
\mrm{length}_{S_{\mf{P}_k}}\big(
(\mca{O}[[\Gamma]]/\mf{C}_{i,\chi})\otimes_{\mca{O}[[\Gamma]]} S_{\mf{P}_k}
\big).
\end{align*}
By Corollary \ref{AandaBandb}, we have 
$a_k \sim \alpha e_{\mf{P}} k$ and $b_k \sim \beta e_{\mf{P}} k$. 
\end{dfn}

\begin{prop}[\cite{MR} Proposition 5.3.14]\label{MR5.3.14}
Let $k$ be an integer satisfying $k \ge N(\mf{P})$, and 
$$\xymatrix{ 
\pi_k \colon X_\chi\otimes_{\mca{O}[[\Gamma]]}S_{\mf{P}_k} \ar[r] & 
\Hom\bigg( 
H^1_{\mca{F}_{\mrm{can}}^*}(\bb{Q},
\big(
T_\chi\otimes_{\mca{O}}S_{\mf{P}_k})^*
\big),
\bb{Q}_p/\bb{Z}_p
\bigg)}$$
a natural homomorphism. 
Then, the kernel and cokernel of $\pi_k$ are both finite, 
and the orders of kernel and cokernel of $\pi_k$ 
are bounded by a constant independent of $k$. 
\end{prop}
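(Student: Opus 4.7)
The plan is to construct $\pi_k$ as a specialization map and then to bound its kernel and cokernel via a control-theorem argument for $X_\chi$ combined with a local--global comparison of Selmer conditions. First, I would invoke the identification $X_\chi = H^2(\bb{Q}_\Sigma/\bb{Q}, \mathbf{T}_\chi) \cong H^1_{\mca{F}_\Lambda^*}(\bb{Q}, \mathbf{T}_\chi^*)^\vee$ already noted just before Lemma \ref{corerank} (Poitou--Tate duality together with the fact that $\mca{F}_\Lambda$ is the unrestricted local condition at every finite place). The canonical surjection $\mathbf{T}_\chi \twoheadrightarrow \mathbf{T}_\chi/\mf{P}_k \mathbf{T}_\chi$, followed by the inclusion $\mathbf{T}_\chi/\mf{P}_k \mathbf{T}_\chi \hookrightarrow \bar{T}_k := T_\chi \otimes_{\mca{O}} S_{\mf{P}_k}$ coming from $\mca{O}[[\Gamma]]/\mf{P}_k \hookrightarrow S_{\mf{P}_k}$, dualizes to $\bar{T}_k^* \to \mathbf{T}_\chi^*$. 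Taking cohomology, restricting to the appropriate Selmer groups, and dualizing once more (with an extension of scalars to $S_{\mf{P}_k}$) yields the desired map $\pi_k$.

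To control $\ker \pi_k$ and $\mrm{coker}\, \pi_k$, I would proceed in three steps. The first is a standard control theorem for $X_\chi$: the snake lemma applied to the exact sequence $0 \to \mathbf{T}_\chi \xrightarrow{f_k(T)} \mathbf{T}_\chi \to \mathbf{T}_\chi/\mf{P}_k \mathbf{T}_\chi \to 0$ shows that $X_\chi/\mf{P}_k X_\chi$ agrees with $H^2(\bb{Q}_\Sigma/\bb{Q}, \mathbf{T}_\chi/\mf{P}_k \mathbf{T}_\chi)$ up to a finite error. Since $\mf{P}_k \notin \Sigma_\Lambda$ for $k \geq N(\mf{P})$, multiplication by $f_k(T)$ on $X_\chi$ is pseudo-injective, and the error is a subquotient of $X_{\chi,\mrm{fin}}$, which is of length bounded independently of $k$. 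The second step is the base-change from $\mca{O}[[\Gamma]]/\mf{P}_k$ to $S_{\mf{P}_k}$: by the same analysis of the conductor of $\mca{O}[[\Gamma]]/\mf{P}_k$ in its normalization $S_{\mf{P}_k}$ that underlies Lemma \ref{AandaBandb1} (namely, that this conductor stabilizes for $k$ large, since $f_k(T) = f(T) + p^k$ with only the constant term growing), this introduces an additional error of bounded length.

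The third step, and main obstacle, is the comparison at the level of Selmer structures: the Iwasawa-theoretic condition $\mca{F}_\Lambda$ is unrestricted at $p$, whereas $\mca{F}_{\mrm{can}}$ is the full local cohomology at $p$ as well, so the comparison at $p$ is actually trivial; however one must still account for the difference between $H^1(\bb{Q}_\ell, \bar{T}_k^*)$ and $H^1_f(\bb{Q}_\ell, \bar{T}_k^*)$ at the other places $\ell \in \Sigma \setminus \{p,\infty\}$, and for the kernel/cokernel of the Pontryagin-dual base-change morphism at each such place. The key technical input is that $\chi(p) \ne 1$ forces $H^0(\bb{Q}_p, \bar{T}_k^*) = (\bar{T}_k^*)^{G_{\bb{Q}_p}}$ to have length bounded by a constant depending only on the order of $\chi_0|_{D_p}$, and the analogous $H^0$ at $\ell \neq p$ is controlled by the unramified local conditions. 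Combining these uniform local bounds with the Poitou--Tate nine-term exact sequence at the finite level yields the desired bounds $|\ker \pi_k|, |\mrm{coker}\, \pi_k| \leq C$ for a constant $C$ independent of $k$, completing the proof.
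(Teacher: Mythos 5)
The paper does not supply its own proof of this proposition; it is cited verbatim from Mazur--Rubin \cite{MR}, Proposition 5.3.14, and used as a black box, so there is no internal argument to compare against. Your sketch is a reasonable reconstruction of the kind of control-theoretic argument that underlies it: identify $X_\chi$ with $H^1_{\mca{F}_\Lambda^*}(\bb{Q},\mathbf{T}_\chi^*)^\vee \cong H^2(\bb{Q}_\Sigma/\bb{Q},\mathbf{T}_\chi)$, apply the long exact sequence for $0\to\mathbf{T}_\chi\xrightarrow{f_k}\mathbf{T}_\chi\to\mathbf{T}_\chi/\mf{P}_k\mathbf{T}_\chi\to 0$ (with the kernel of $f_k$ on $X_\chi$ sitting inside $X_{\chi,\mrm{fin}}$ because $\mf{P}_k\notin\Sigma_\Lambda$), extend scalars to $S_{\mf{P}_k}$ using the conductor bound $p^s=(S_{\mf{P}}:\mca{O}[[\Gamma]]/\mf{P})$, which is indeed constant for $k\ge N(\mf{P})$ by the non-canonical isomorphism in property (3) of the lemma defining $N(\mf{P})$ and is the content of Lemma \ref{elementary lemmaSP} (not Lemma \ref{AandaBandb1}, which you cite), and then compare the specialized Iwasawa-theoretic local conditions with $\mca{F}_{\mrm{can}}$.

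Where the sketch is too quick is exactly where the real work lives. First, the snake-lemma step gives $X_\chi/\mf{P}_kX_\chi\cong H^2(\bb{Q}_\Sigma/\bb{Q},\mathbf{T}_\chi/\mf{P}_k)$ on the nose since $H^3$ vanishes, but you still must show $H^2(\bb{Q}_\Sigma/\bb{Q},\mathbf{T}_\chi/\mf{P}_k)$ differs from $H^1_{\mca{F}_{\mrm{can}}^*}(\bb{Q},(T_\chi\otimes S_{\mf{P}_k})^*)^\vee$ by something uniformly bounded; passing from $\Sigma$-ramified cohomology to a Selmer group means inserting the local conditions one place at a time, and the error at each $\ell\in\Sigma\setminus\{p,\infty\}$ is governed by $H^1(\bb{Q}_\ell,(T_\chi\otimes S_{\mf{P}_k})^*)/H^1_{\mca{F}_{\mrm{can}}^*}$ and local $H^0$'s whose orders you assert are bounded independently of $k$ but do not justify. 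Second, the claim that the comparison at $p$ is ``trivial'' because both $\mca{F}_\Lambda$ and $\mca{F}_{\mrm{can}}$ are relaxed at $p$ is correct for the dual conditions (both are strict), but it silently uses $\chi(p)\ne 1$ to kill $H^0(\bb{Q}_p,\cdot)$, which is legitimate here but should be flagged. These boundedness claims are precisely what Mazur--Rubin verify, and until they are worked out your argument is an outline rather than a proof; the paper avoids all of this by citing \cite{MR} directly.
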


Combining Proposition \ref{MR5.3.14} with 
Theorem \ref{MR5.2.12}, 
we obtain the following corollary.  

\begin{cor}\label{akandpartial}
We have $a_k \sim \partial_i({\mf{P}_k})$.
\end{cor}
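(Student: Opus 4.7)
The plan is to interpret $a_k$ and $\partial_i(\mf{P}_k)$ as $\pi_k$-adic valuations of the $i$-th Fitting ideals of two finite $S_{\mf{P}_k}$-modules which are linked by a homomorphism with controllable kernel and cokernel. First, set
\[
A_k:=X_\chi\otimes_{\mca{O}[[\Gamma]]}S_{\mf{P}_k},\quad
H_k^{*}:=H^1_{\mca{F}_{\mrm{can}}^{*}}\bigl(\bb{Q},(T_\chi\otimes_{\mca{O}}S_{\mf{P}_k})^{*}\bigr),\quad
B_k:=\Hom_{\bb{Z}_p}(H_k^{*},\bb{Q}_p/\bb{Z}_p).
\]
By definition, $a_k$ is the $\pi_k$-adic valuation of $\Fitt_{S_{\mf{P}_k},i}(A_k)$; on the other hand, Theorem \ref{MR5.2.12} applied to the Selmer triple $(T_\chi\otimes_{\mca{O}}S_{\mf{P}_k},\mca{F}_{\mrm{can}},\mca{P})$ gives $\Fitt_{S_{\mf{P}_k},i}(H_k^{*})=\pi_k^{\partial_i(\mf{P}_k)}S_{\mf{P}_k}$. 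Since the $S_{\mf{P}_k}$-module $H_k^{*}$ is finite, Pontryagin duality preserves its elementary divisors, so $\Fitt_{S_{\mf{P}_k},i}(B_k)=\Fitt_{S_{\mf{P}_k},i}(H_k^{*})=\pi_k^{\partial_i(\mf{P}_k)}S_{\mf{P}_k}$.

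Next, I would feed in Proposition \ref{MR5.3.14}: the natural homomorphism $A_k\to B_k$ has finite kernel $K_k$ and cokernel $C_k$ whose orders are bounded by a constant independent of $k$. The conditions defining $N(\mf{P})$ ensure that for all $k\ge N(\mf{P})$ the residue field of $S_{\mf{P}_k}$ has a fixed finite cardinality (since $\mca{O}[[\Gamma]]/\mf{P}_k\simeq\mca{O}[[\Gamma]]/\mf{P}$ as $\mca{O}$-algebras), so bounded order translates to a uniform bound on $S_{\mf{P}_k}$-length. Equivalently, $\Fitt_{S_{\mf{P}_k},0}(K_k)$ and $\Fitt_{S_{\mf{P}_k},0}(C_k)$ both contain $\pi_k^{C_0}S_{\mf{P}_k}$ for a constant $C_0$ independent of $k$.

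I would then break the four-term exact sequence $0\to K_k\to A_k\to B_k\to C_k\to 0$ into two short exact sequences and apply Lemma \ref{basicFitting}. Parts (2) and (4) applied to $0\to K_k\to A_k\to A_k/K_k\to 0$ yield $\Fitt_i(A_k)\subseteq\Fitt_i(A_k/K_k)$ and $\Fitt_0(K_k)\Fitt_i(A_k/K_k)\subseteq\Fitt_i(A_k)$; parts (1) and (3) applied to $0\to A_k/K_k\to B_k\to C_k\to 0$ yield $\Fitt_i(B_k)\subseteq\Fitt_i(A_k/K_k)$ and $\Fitt_i(A_k/K_k)\Fitt_0(C_k)\subseteq\Fitt_i(B_k)$. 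Chaining these four containments via the common term $\Fitt_i(A_k/K_k)$ and using the length bound from the previous step gives
\[
\pi_k^{C_0}\Fitt_{S_{\mf{P}_k},i}(B_k)\subseteq\Fitt_{S_{\mf{P}_k},i}(A_k),\qquad
\pi_k^{C_0}\Fitt_{S_{\mf{P}_k},i}(A_k)\subseteq\Fitt_{S_{\mf{P}_k},i}(B_k),
\]
whence $|a_k-\partial_i(\mf{P}_k)|\le C_0$, which is exactly $a_k\sim\partial_i(\mf{P}_k)$.

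The only delicate point is securing uniformity in $k$ of all the constants involved (the bound on $|K_k|,|C_k|$ from Proposition \ref{MR5.3.14}, the residue field cardinality of $S_{\mf{P}_k}$, and hence the constant $C_0$). All of this is precisely what the definition of $N(\mf{P})$ and its stated isomorphism properties were designed to supply, so the main obstacle is really only bookkeeping rather than serious content; once the uniformity is in hand, the Fitting-ideal inclusions of Lemma \ref{basicFitting} finish the argument directly.
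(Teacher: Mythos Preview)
Your proposal is correct and follows exactly the route the paper indicates: the paper states the corollary as an immediate consequence of combining Proposition \ref{MR5.3.14} with Theorem \ref{MR5.2.12}, and you have simply spelled out the intermediate Fitting-ideal comparison (via Lemma \ref{basicFitting} and the uniform bound on residue-field size coming from the $\mca{O}$-algebra isomorphism $\mca{O}[[\Gamma]]/\mf{P}_k\simeq\mca{O}[[\Gamma]]/\mf{P}$) that the paper leaves implicit.
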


For an integer $k$ with $k \ge N(\mf{P})$, 
we take an integer $N'_k$ satisfying 
\begin{itemize}
\item $N'_k \ge \partial_i({\mf{P}_k})$, and 
\item $p^{N'_k} \in \mf{C}_{i,\chi}+\mf{P}_k$. 
\end{itemize}
Note that there exist such an $N'_k$ since 
the ideal $\mf{C}_{i,\chi}+\mf{P}_k$ has 
finite index in $\mca{O}[[\Gamma]]$. 
Then, we take $N''_k \ge N'_k$ satisfying 
\begin{itemize}
\item $\gamma^{p^{N''_k-1}}-1 \in 
\mf{P}_k+p^{N''_k}\mca{O}[[\Gamma]]$, and 
\item $\mca{S}_{N''_k} \subseteq 
\mca{S}_{N'_k}(\mf{P}_{k})$. 
\end{itemize}
We put $m_k:=N''_k-1$.

\begin{proof}[Proof of \ref{local lower bounds}]
Now, we shall prove Theorem \ref{local lower bounds}. 
It is sufficient to show 
$Be_{\mf{P}}k \succ Ae_{\mf{P}}k$. 
Let $k$ be any positive integer satisfying $k \ge N(\mf{P})$. 
Then, we have 
\begin{align*}
\beta e_{\mf{P}}k & \sim b_k= \mrm{length}_{S_{\mf{P}_k}}
\big(
(\mca{O}[[\Gamma]]/(\mf{C}_{i,\chi}+\mf{P}_k))
\otimes_{\mca{O}[[\Gamma]]} S_{\mf{P}_k}
\big) \\
&= \mrm{length}_{S_{\mf{P}_k}}\big(
(\mca{O}[[\Gamma]]/(\mf{C}_{i,\chi}+\mf{P}_k+p^{N'_k}\mca{O}[[\Gamma]]))
\otimes_{\mca{O}[[\Gamma]]} S_{\mf{P}_k}
\big) \\
& = \mrm{length}_{S_{\mf{P}_k}}\big(
(\mca{O}[[\Gamma]]/(\mf{C}_{i,\chi}+\mf{P}_k+
(p^{N'_k},\gamma^{p^{m_k}}-1)))\otimes_{\mca{O}[[\Gamma]]} S_{\mf{P}_k}
\big) \\
&=\mrm{length}_{S_{\mf{P}_k}}\big(
(R_{m_k,N'_k,\chi}/(\text{the image of\ }\mf{C}_{i,\chi}))
\otimes_{\mca{O}[[\Gamma]]} S_{\mf{P}_k}
\big) \\
& \ge \mrm{length}_{S_{\mf{P}_k}}\big(
(R_{m_k,N'_k,\chi}/(\text{the image of\ }\mf{C}_{i,m_k,N''_k,\chi}))
\otimes_{\mca{O}[[\Gamma]]} S_{\mf{P}_k}
\big)
\end{align*}
Since the ring $R_{m_k,N'_k,\chi}\otimes_{\mca{O}[[\Gamma]]} S_{\mf{P}_k}$ is 
a quotient of the discrete valuation ring $S_{\mf{P}_k}$, 
the image of $\mf{C}_{i,m_k,N''_k,\chi}$ in 
$R_{m_k,N'_k,\chi}\otimes_{\mca{O}[[\Gamma]]} S_{\mf{P}_k}$
is a principal ideal. 
So, there exist 
\begin{itemize}
\item a circular unit
$$\eta(n_k)=\eta^{(k)}_{m_k}(n_k):=
\prod_{d|\mf{f}_K}\eta_{m_k}^{d}(n)^{u_d}\times
\prod_{i=1}^r \eta_{m_k}^{1,a_i}(n)^{v_i} 
\in F_{m_k}(\mu_{n_k})^\times,$$
where $r \in \bb{Z}_{>0}$, $u_d$ and $v_i$ 
are elements of $\bb{Z}[\Gal(F_m/\bb{Q})]$ 
for each positive integers $d$ and $i$ 
with $d|\mf{f}_K$ and $1 \le i \le r$, 
and $a_1,..., a_r$ are integers prime to $p$, 
\item an element $n_k \in \mca{N}_{N''_k}^{\mrm{w.o.}}$, 
\item a homomorphism 
$\xymatrix{h_k\colon F_m^\times/p^{N''_k} \ar[r] & R_{m_k, N''_k,\chi}},$
\end{itemize}
such that the ideal of $R_{m_k,N'_k,\chi}\otimes_{\mca{O}[[\Gamma]]} 
S_{\mf{P}_k}$
is generated by the image of $h(\kappa_{m_k,N''_k}(\eta;n_k))$. 
Therefore, we obtain 
\begin{equation}\label{inequarityoflength}
Be_{\mf{P}}k \succ \mrm{length}_{S_{\mf{P}_k}}\big(
R_{m_k,N'_k,\chi}/h(\kappa_{m_k,N''_k}(\eta;n_k))
S_{\mf{P}_k}
\big).
\end{equation}
We denote by 
$\xymatrix{\bar{h}_k 
\colon F_m^\times/p^{N'_k} \ar[r] & R_{m_k, N'_k,\chi}},$
the $R_{m_k,N'_k,\chi}$-linear homomorphism 
induced by $h_k$.

For a moment, we fix an integer $k \ge N(\mf{P})$, 
and put $N':=N_k$, $N'':=N''_k$, $m:=m_k$, 
$n=n_k$ and $\bar{h}_k:=\bar{h}$ for simplicity. 
We put 
$$N_{H_n}:=\sum_{\sigma \in H_n}\sigma 
\in \bb{Z}[H_n].$$
Let $\xymatrix{\nu_{H_n}\colon R_{m,N',\chi} \ar[r] 
& R_{m,N,\chi}[H_n]}$ be the isomorphism of 
$R_{m,N',\chi}[H_n]$-module defined by 
$1 \mapsto N_{H_n}$. 
Note that $R_{m,N',\chi}[H_n]$ is a injective 
$R_{m_k,N'_k,\chi}$-module, so there exist an 
$R_{m_k,N'_k,\chi}$-linear homomorphism 
$\xymatrix{
\tilde{h} \colon 
(F_m(\mu_n)^\times/p^{N'})_\chi \ar[r] & 
R_{m,N',\chi}[H_n] 
}$
which makes the diagram 
$$\xymatrix{
(F_m^\times/p^{N'})_\chi \ar@{^{(}->}[d] \ar[r]^{h} &  
R_{m,N',\chi} \ar[d]^{\nu_{H_n}} \\
(F_m(\mu_n)^\times/p^{N'})_\chi \ar[r]^{\tilde{h}} & 
R_{m,N',\chi}[H_n] 
}$$
commute.

Note that by Shapiro's lemma, 
we have a natural isomorphism 
$$\xymatrix{
H^1(\bb{Q}(\mu_n),\mathbf{T}_\chi/p^{N'}) 
\ar[r]^(0.47){\simeq} &
\displaystyle\varprojlim_{m'}(F_{m'}(\mu_n)^\times/p^{N'})_\chi.
}$$
Then, by Lemma \ref{liftinghom} (ii), 
we obtain the following lemma. 

\begin{lem}
There exists a homomorphism 
$$\xymatrix{{\tilde{h}_\infty}\colon
H^1(\bb{Q}(\mu_n),\mathbf{T}_\chi/p^{N'}) 
\ar[r]  & 
\mca{O}[[\Gamma]][H_n]/p^{N''}=
\displaystyle\varprojlim_{m'} R_{m',N',\chi}[H_n]}$$ 
of $\mca{O}[[\Gamma]][H_n]/p^{N'}$-modules 
which makes the diagram 
$$\xymatrix{
H^1(\bb{Q}(\mu_n),\mathbf{T}_\chi/p^{N'})
\ar[rr]^(0.56){\tilde{h}_\infty} \ar[d] & &
\mca{O}[[\Gamma]][H_n]/p^{N'} \ar[d]^{\mod{(\gamma^{p^m}-1)}}  \\
(F_m(\mu_n)^\times/p^{N'})_\chi \ar[rr]^(0.56){\tilde{h}} & & 
R_{m,N',\chi}[H_n] 
}$$
commute. 
\end{lem}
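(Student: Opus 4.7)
The plan is to construct $\tilde{h}_\infty$ as the inverse limit of a compatible tower of lifts of $\tilde{h}$ to each finite layer $F_{m'}$ of the cyclotomic $\bb{Z}_p$-extension above $F_m$, and then to identify the source of this limit map with $H^1(\bb{Q}(\mu_n),\mathbf{T}_\chi/p^{N'})$ via the Shapiro-type isomorphism recorded immediately before the lemma.

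First, I would construct by induction on $m' \ge m$ an inverse system of $R_{m',N',\chi}[H_n]$-linear homomorphisms
$$\tilde{h}_{m'}\colon \big(F_{m'}(\mu_n)^\times/p^{N'}\big)_\chi \longrightarrow R_{m',N',\chi}[H_n]$$
with $\tilde{h}_m = \tilde{h}$, such that for every $m' \ge m$ the square
$$
\xymatrix{
\big(F_{m'+1}(\mu_n)^\times/p^{N'}\big)_\chi \ar[rr]^(0.55){\tilde{h}_{m'+1}} \ar[d]_{N_{F_{m'+1}/F_{m'}}} & & R_{m'+1,N',\chi}[H_n] \ar[d]^{\mrm{pr}} \\
\big(F_{m'}(\mu_n)^\times/p^{N'}\big)_\chi \ar[rr]^(0.55){\tilde{h}_{m'}} & & R_{m',N',\chi}[H_n]
}
$$
commutes, where the right vertical arrow is the natural projection modulo $\gamma^{p^{m'}}-1$. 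Because we are working under the standing assumption that $[K:\bb{Q}]$ is coprime to $p$, we have $\Delta_p = 0$, and so Lemma \ref{liftinghom} (ii) applies with $N_1 = N_2 = N'$ and $(m_1,m_2) = (m',m'+1)$; this is exactly what furnishes the required lift $\tilde{h}_{m'+1}$ at each inductive step.

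Second, I would pass to the inverse limit over $m'$. The evident identification
$\varprojlim_{m'} R_{m',N',\chi}[H_n] = \mca{O}[[\Gamma]][H_n]/p^{N'}$
on the target side, combined with the Shapiro-type isomorphism
$H^1(\bb{Q}(\mu_n),\mathbf{T}_\chi/p^{N'}) \simeq \varprojlim_{m'}\big(F_{m'}(\mu_n)^\times/p^{N'}\big)_\chi$
stated just before the lemma on the source side, then let us define $\tilde{h}_\infty := \varprojlim_{m'} \tilde{h}_{m'}$. The commutativity of the diagram in the lemma is built into the construction: its $m$-th component is by definition $\tilde{h}_m = \tilde{h}$, reduced modulo $\gamma^{p^m}-1$. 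The main obstacle is purely bookkeeping: Lemma \ref{liftinghom} (ii) asserts only the existence of individual lifts and is not canonical, so one must resist the temptation to lift at all levels independently, and instead build the tower layer by layer from the bottom so that each new lift is compatible with the one below it. Once the induction is organized in this way, no further content is required.
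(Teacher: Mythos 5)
Your argument is precisely what the paper intends when it remarks ``Then, by Lemma \ref{liftinghom} (ii), we obtain the following lemma'': you correctly build the compatible tower $\{\tilde{h}_{m'}\}_{m' \ge m}$ inductively via Lemma \ref{liftinghom}(ii) (whose hypothesis $\Delta_p=0$ holds in this section), then pass to the inverse limit and identify the source using the Shapiro isomorphism stated just above the lemma, and you correctly flag that independent lifting at each level would not produce a compatible system. The only discrepancy is cosmetic: the exponent $N''$ in the displayed target of the lemma is a typo for $N'$, as the accompanying identification with $\varprojlim_{m'} R_{m',N',\chi}[H_n]$ makes clear, and your proof reads it that way.
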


Recall that we define a non-negative integer $s$ 
by 
$$p^s=(S_{\mf{P}}:\mca{O}[[\Gamma]]/\mf{P}).$$
Let us show the following proposition.  

\begin{prop}\label{big commutative diagram}
There exists a $S_{\mf{P}_k}[H_n]$-linear homomorphism 
$$\xymatrix{\tilde{h}_{\mf{P}_k,N'}\colon
H^1(\bb{Q}(\mu_n),(T_\chi
\otimes_{\mca{O}}S_{\mf{P}_N})/p^{N'})
\ar[r] & S_{\mf{P}_k}[H_n]/p^{N'}}$$
which makes the diagram 
$$\xymatrix{
H^1(\bb{Q}(\mu_n),\mathbf{T}_\chi/p^{N'}) \ar[d] 
\ar[rr]^(0.6){p^{3s}\tilde{h}_\infty} & &
\mca{O}[[\Gamma]][H_n]/p^{N'} \ar[d] \\
H^1(\bb{Q}(\mu_n),(T_\chi
\otimes_{\mca{O}}S_{\mf{P}_N})/p^{N'}) 
\ar@{-->}[rr]^(0.6){\tilde{h}_{\mf{P}_k}} & & 
S_{\mf{P}_k}[H_n]/p^{N'}} 
$$
commute. 
Here, the vertical maps in this diagram  are the natural map.
\end{prop}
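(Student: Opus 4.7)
The plan is to descend $\tilde{h}_\infty$ modulo $\mf{P}_k$, transport the reduction along the finite-index inclusion $\mca{O}[[\Gamma]]/\mf{P}_k \hookrightarrow S_{\mf{P}_k}$, and extend the result to the entire source by invoking a self-injectivity property of the target ring; the factor $p^{3s}$ will absorb three sources of $p$-torsion that arise in the comparison.

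First I would set up the key exact sequences. By property (3) of the preceding lemma, the cokernel $Q$ of the inclusion $\mca{O}[[\Gamma]]/\mf{P}_k \hookrightarrow S_{\mf{P}_k}$ has order $p^s$ and is annihilated by $p^s$. Tensoring $0 \to \mca{O}[[\Gamma]]/\mf{P}_k \to S_{\mf{P}_k} \to Q \to 0$ with the $\mca{O}$-free module $T_\chi$ over $\mca{O}$ and reducing modulo $p^{N'}$ yields the four-term exact sequence of $G_{\bb{Q}}$-modules
$$0 \to T_\chi \otimes_{\mca{O}} Q \to \mathbf{T}_\chi/(\mf{P}_k, p^{N'}) \to (T_\chi \otimes_{\mca{O}} S_{\mf{P}_k})/p^{N'} \to T_\chi \otimes_{\mca{O}} Q \to 0.$$
Splitting this into two short exact sequences and taking continuous Galois cohomology over $\bb{Q}(\mu_n)$, each $H^i(\bb{Q}(\mu_n), T_\chi \otimes_{\mca{O}} Q)$ is annihilated by $p^s$, so the comparison map
$$\iota_1 \colon H^1(\bb{Q}(\mu_n), \mathbf{T}_\chi/(\mf{P}_k, p^{N'})) \to H^1(\bb{Q}(\mu_n), (T_\chi \otimes_{\mca{O}} S_{\mf{P}_k})/p^{N'})$$
has kernel and cokernel annihilated by $p^{2s}$. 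In parallel, the short exact sequence $0 \to \mathbf{T}_\chi/p^{N'} \xrightarrow{f_k(T)} \mathbf{T}_\chi/p^{N'} \to \mathbf{T}_\chi/(\mf{P}_k, p^{N'}) \to 0$ produces, through its long cohomology sequence, an injection
$$\iota_2 \colon H^1(\bb{Q}(\mu_n), \mathbf{T}_\chi/p^{N'})/\mf{P}_k H^1 \hookrightarrow H^1(\bb{Q}(\mu_n), \mathbf{T}_\chi/(\mf{P}_k, p^{N'}))$$
whose cokernel embeds into $H^2(\bb{Q}(\mu_n), \mathbf{T}_\chi/p^{N'})[f_k(T)]$.

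Next I would construct the reduced homomorphism downstairs. Since $\tilde{h}_\infty$ is $\mca{O}[[\Gamma]][H_n]$-linear and $\mf{P}_k$ annihilates $S_{\mf{P}_k}$, composing $\tilde{h}_\infty$ with the natural map $\mca{O}[[\Gamma]][H_n]/p^{N'} \to S_{\mf{P}_k}[H_n]/p^{N'}$ factors through the quotient to yield an $S_{\mf{P}_k}[H_n]$-linear homomorphism $\bar{\phi} \colon H^1(\bb{Q}(\mu_n), \mathbf{T}_\chi/p^{N'})/\mf{P}_k H^1 \to S_{\mf{P}_k}[H_n]/p^{N'}$. A diagram chase combined with the bounds above shows that the cokernel of $\iota_1 \circ \iota_2$ is annihilated by $p^{3s}$ (the $p^{2s}$ from $\iota_1$ and the $p^s$ from $\iota_2$ combine) while its kernel is annihilated by $p^{2s}$, so $p^{3s}\bar{\phi}$ vanishes on $\ker(\iota_1 \circ \iota_2)$ and descends to an $S_{\mf{P}_k}[H_n]$-linear map on the image of $\iota_1 \circ \iota_2$ inside $H^1(\bb{Q}(\mu_n), (T_\chi \otimes_{\mca{O}} S_{\mf{P}_k})/p^{N'})$.

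To extend from this image to the full target I exploit that $S_{\mf{P}_k}/p^{N'}$ is a uniserial quotient of a discrete valuation ring and hence a quasi-Frobenius ring, so its finite group ring $S_{\mf{P}_k}[H_n]/p^{N'}$ is also quasi-Frobenius and self-injective as a module over itself; this allows any $S_{\mf{P}_k}[H_n]$-linear homomorphism out of a submodule to extend to the ambient module, and the resulting extension is the desired $\tilde{h}_{\mf{P}_k,N'}$. The main obstacle I anticipate is establishing the $p^s$ bound on $H^2(\bb{Q}(\mu_n), \mathbf{T}_\chi/p^{N'})[f_k(T)]$ uniformly in $k$, which enters through the cokernel of $\iota_2$; this should follow from a control theorem for $X_\chi$ together with the condition $\mf{P}_k \notin \Sigma_\Lambda$ (so that $f_k$ is coprime to $\cha_{\Lambda_\chi}(X_\chi)$), but requires careful handling of the cyclotomic twist introduced by passage to $\bb{Q}(\mu_n)$.
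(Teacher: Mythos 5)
Your overall plan matches the paper's: reduce $\tilde{h}_\infty$ modulo $\mf{P}_k$, transport to $S_{\mf{P}_k}$ at the cost of $p$-power factors, and then use self-injectivity of $S_{\mf{P}_k}[H_n]/p^{N'}$ to extend from a submodule to the full target. The final extension step via injectivity is indeed what the paper uses.

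However, there is a genuine gap in the middle. You define $\bar\phi$ by composing $\tilde h_\infty$ with the projection $\mca{O}[[\Gamma]][H_n]/p^{N'}\to S_{\mf{P}_k}[H_n]/p^{N'}$ and passing to the quotient by $\mf{P}_k$, and you assert that the result is $S_{\mf{P}_k}[H_n]$-linear. It is not: the source $H^1(\bb{Q}(\mu_n),\mathbf{T}_\chi/p^{N'})/\mf{P}_k$ is only an $\mca{O}[[\Gamma]]/\mf{P}_k$-module, not an $S_{\mf{P}_k}$-module, so the map is only $\mca{O}[[\Gamma]]/\mf{P}_k$-linear. The same problem propagates: $\iota_2$ and $\iota_1$ are $\mca{O}[[\Gamma]]/\mf{P}_k$-linear, hence the image of $\iota_1\circ\iota_2$ is only an $\mca{O}[[\Gamma]]/\mf{P}_k$-submodule of $H^1(\bb{Q}(\mu_n),(T_\chi\otimes_{\mca{O}}S_{\mf{P}_k})/p^{N'})$ and need not be stable under multiplication by $S_{\mf{P}_k}$, so \emph{there is no well-posed $S_{\mf{P}_k}[H_n]$-linear map on the image to which the injectivity extension could be applied}. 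The paper avoids this by explicitly applying $-\otimes_{\mca{O}[[\Gamma]]/\mf{P}_k}S_{\mf{P}_k}$ at each stage (this is the role of Lemma \ref{elementary lemmaSP} and Corollary \ref{kernelSP}); that tensoring both produces $S_{\mf{P}_k}$-linear objects and is precisely where one of the three factors of $p^s$ enters. Without that step your exponent accounting is also inconsistent: you announce three sources of $p^s$ but spend only two (the $p^{2s}$ from $\iota_1$), and the third is not the cokernel of $\iota_2$ but the index $[S_{\mf{P}_k}:\mca{O}[[\Gamma]]/\mf{P}_k]=p^s$.

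A secondary, less serious issue: the bound on $H^2(\bb{Q}(\mu_n),\mathbf{T}_\chi/p^{N'})[f_k(T)]$ that you flag as the ``main obstacle'' is not actually needed. Because the final step uses injectivity of $S_{\mf{P}_k}[H_n]/p^{N'}$ to extend a map defined on a submodule of the target, one does not require any control over the cokernel of $\iota_1\circ\iota_2$, only over its kernel (so that the partial map is well-defined). The paper accordingly never bounds a cokernel at this stage. Once you insert the missing $\otimes S_{\mf{P}_k}$ step and drop the cokernel bound, your factorization of the argument becomes essentially the paper's.
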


\begin{proof}
We divide the vertical maps in some short steps, 
and we will construct suitable homomorphisms 
step by step. 
Recall that define a non-negative integer  $s$ by 
$$p^s=(S_{\mf{P}}:\mca{O}[[\Gamma]]/\mf{P}).$$
In order to prove the proposition, 
we need the following 
Lemma \ref{elementary lemmaSP} and its corollary.
(Note that the following lemma is proved
by similar arguments to Lemma \ref{elementary lemma}, 
so we omit the proof.) 
\begin{lem}\label{elementary lemmaSP}
Let $M$ be a $\mca{O}[[\Gamma]]$-module. 
Then, the kernel and the cokernel of the natural 
$\Lambda_{\chi}/\mf{P}_k$-linear map 
$$\xymatrix{M \ar[r] & 
M\otimes_{\mca{O}[[\Gamma]]/{\mf{P}_k}}S_{\mf{P}_k}}$$
is annihilated by $p^s$. 
\end{lem}
The following corollary follows from 
Lemma \ref{elementary lemmaSP} by 
the similar arguments to Corollary \ref{kernel}. 
\begin{cor}\label{kernelSP}
Let $\xymatrix{f\colon M \ar[r] & N}$ be a homomorphism of 
$\mca{O}[[\Gamma]]/\mf{P}_k$-modules. 
Consider the $S_{\mf{P}_k}$-linear map 
$$\xymatrix{f\otimes S_{\mf{P}_k} \colon 
M\otimes_{\mca{O}[[\Gamma]]/\mf{P}_k}S_{\mf{P}_k}
\ar[r] & N\otimes_{\mca{O}[[\Gamma]]/\mf{P}_k}}S_{\mf{P}_k}$$ 
induced by $f$. 
Then, the $S_{\mf{P}_k}$-submodule
$p^s \Ker (f\otimes S_{\mf{P}_k})$ of $M\otimes_{\mca{O}[[\Gamma]]/\mf{P}_k}$
is contained in the image of $\Ker f$. 
\end{cor}

Here, we return to the proof of the proposition. 
From the exact sequence 
$$\xymatrix{ 0 \ar[r] & \mathbf{T}_\chi/p^{N'}  
\ar[rr]^{\times f_k(T)} & &
\mathbf{T}_\chi/p^{N'} \ar[r] & 
\mathbf{T}_\chi/(p^{N'}\mathbf{T}_\chi+\mf{P}_k\mathbf{T}_\chi) 
\ar[r] & 0, }$$
it follows that the natural homomorphism 
$$\xymatrix{H^1(\bb{Q}(\mu_n),\mathbf{T}_\chi/p^{N'})
\otimes_{\mca{O}[[\Gamma]]}(\mca{O}[[\Gamma]]/\mf{P}_k) \ar[r] & 
H^1\big( \bb{Q}(\mu_n),
\mathbf{T}_\chi/(p^{N'}\mathbf{T}_\chi+\mf{P}_k\mathbf{T}_\chi) 
 \big) }$$ 
is injective.  So, by Corollary \ref{kernelSP}, 
the kernel of the $S_{\mf{P}_k}[H_n]$-linear map 
$$\xymatrix{H^1(\bb{Q}(\mu_n),\mathbf{T}_\chi/p^{N'})
\otimes_{\mca{O}[[\Gamma]]}S_{\mf{P}_k} \ar[r] & 
H^1\big( \bb{Q}(\mu_n),
\mathbf{T}_\chi/(p^{N'}\mathbf{T}_\chi+\mf{P}_k\mathbf{T}_\chi) 
 \big)\otimes_{\mca{O}[[\Gamma]]}S_{\mf{P}_k} }$$ 
is annihilated by $p^s$. 

The $\mca{O}[[\Gamma]][H_n]$-linear homomorphism $\tilde{h}'_{\infty}$ 
induces an $S_{\mf{P}_k}[H_n]$-linear homomorphism 
$$\xymatrix{
\tilde{h}^{(0)} 
\colon H^1(\bb{Q}(\mu_n),\mathbf{T}_\chi)\otimes_{\mca{O}[[\Gamma]]} 
S_{\mf{P}_k} \ar[r] & S_{\mf{P}_k}[H_n]/p^{N'}.}$$
For simplicity, we put 
\[
\overline{\mathbf{T}}_{\chi,k}:=
\mathbf{T}_\chi/(p^{N'}\mathbf{T}_\chi+\mf{P}_k\mathbf{T}_\chi). 
\]
Let $\mrm{Im}_{S_{\mf{P}_k}}$ 
be the image of $S_{\mf{P}_k}[H_n]$-linear map
$$\xymatrix{H^1(\bb{Q}(\mu_n),\mathbf{T}_\chi/p^{N'})
\otimes_{\mca{O}[[\Gamma]]}S_{\mf{P}_k} \ar[r] & 
H^1( \bb{Q}(\mu_n),
\overline{\mathbf{T}}_{\chi,k})\otimes_{\mca{O}[[\Gamma]]}S_{\mf{P}_k} 
}$$ 
Then, there exists an $S_{\mf{P}_k}[H_n]$-linear homomorphism 
$$\xymatrix{\tilde{h}^{(1)} \colon
\mrm{Im}_{S_{\mf{P}_k}}
\ar[r] & S_{\mf{P}_k}[H_n]/p^{N'}}$$
which makes the diagram 
$$\xymatrix{H^1(\bb{Q}(\mu_n),\mathbf{T}_\chi)\otimes_{\mca{O}[[\Gamma]]} 
S_{\mf{P}_k} \ar[rr]^(0.56){p^s\tilde{h}^{(0)}}  \ar[d] & & 
S_{\mf{P}_k}[H_n]/p^{N'} \\
\mrm{Im}_{S_{\mf{P}_k}} \ar[rru]_{\tilde{h}^{(1)}} & & 
}$$ 
commute.
Note that $S_{\mf{P}_k}[H_n]/p^{N'}$ is an injective 
$S_{\mf{P}_k}[H_n]/p^{N'}$-module
since $S_{\mf{P}_k}/p^{N'}$ is a quotient of 
a complete discrete valuation ring $S_{\mf{P}_k}$ 
with finite residue field. 
So, we can extend $\tilde{h}^{(1)}$ to 
an $S_{\mf{P}_k}[H_n]/p^{N'}$-linear map 
$$\xymatrix{\tilde{h}^{(1)} \colon
H^1( \bb{Q}(\mu_n),
\overline{\mathbf{T}}_{\chi,k} )\otimes_{\mca{O}[[\Gamma]]}S_{\mf{P}_k} 
\ar[r] & S_{\mf{P}_k}[H_n]/p^{N'},}$$
and we obtain the commutative diagram 
$$\xymatrix{
H^1(\bb{Q}(\mu_n),\mathbf{T}_\chi/p^{N'}) \ar[d] 
\ar[rr]^(0.6){p^{s}\tilde{h}_\infty} & &
\mca{O}[[\Gamma]][H_n]/p^{N'} \ar[d] \\
H^1( \bb{Q}(\mu_n),
\overline{\mathbf{T}}_{\chi,k} )\otimes_{\mca{O}[[\Gamma]]}S_{\mf{P}_k} 
\ar@{-->}[rr]^(0.6){\tilde{h}^{(1)}} & & 
S_{\mf{P}_k}[H_n]/p^{N'}.} 
$$
Note that 
$\mrm{Tor}_1^{\Lambda_{\chi}/\mf{P}_k}\big(
\overline{\mathbf{T}}_{\chi,k},
S_{\mf{P}_k}/(\mca{O}[[\Gamma]]/{\mf{P}_k})\big)$ and  
$\overline{\mathbf{T}}_{\chi,k}\otimes_{\mca{O}[[\Gamma]]/\mf{P}_k}S_{\mf{P}_k}/
\overline{\mathbf{T}}_{\chi,k}$ are annihilated by $p^s$, 
the kernel of the natural homomorphism 
$$\xymatrix{
H^1( \bb{Q}(\mu_n),
\overline{\mathbf{T}}_{\chi,k} )
\otimes_{\mca{O}[[\Gamma]]}S_{\mf{P}_k} 
\ar[r] & H^1\big( \bb{Q}(\mu_n),
({T}_\chi\otimes_{\mca{O}}S_{\mf{P}_k})/p^{N'}\big)}$$
is annihilated by $p^{2s}$. 
Then, from the injectivity of $S_{\mf{P}_k}[H_n]/p^{N'}$, 
there exists an $S_{\mf{P}_k}[H_n]/p^{N'}$-linear map 
$$\xymatrix{\tilde{h}_{\mf{P}_k} \colon
H^1\big( \bb{Q}(\mu_n),
({T}_\chi\otimes_{\mca{O}}S_{\mf{P}_k})/p^{N'}\big)
\ar[r] & S_{\mf{P}_k}[H_n]/p^{N'},}$$
which makes the diagram 
$$\xymatrix{
H^1( \bb{Q}(\mu_n),
\overline{\mathbf{T}}_{\chi,k})
\otimes_{\mca{O}[[\Gamma]]}S_{\mf{P}_k} \ar[d] 
\ar[rr]^(0.6){p^{2s}\tilde{h}^{(1)}} & &
S_{\mf{P}_k}[H_n]/p^{N'} \\
H^1\big( \bb{Q}(\mu_n),
({T}_\chi\otimes_{\mca{O}}S_{\mf{P}_k})/p^{N'}\big)
\ar@{-->}[rru]_(0.56){\tilde{h}_{\mf{P}_k}} & & } $$
commute. 
The homomorphism ${\tilde{h}_{\mf{P}_k}}$ is what we want to construct, 
and this completes the proof of Proposition \ref{big commutative diagram}. 
\end{proof}

We identify $S_{\mf{P}_k}/p^{N'}$ with 
$S_{\mf{P}_k}/p^{N'}[H_n]^{H_n}$ 
as an $S_{\mf{P}_k}/p^{N'}[H_n]$-module by the isomorphism 
$$\xymatrix{\nu_{H_n}\colon
S_{\mf{P}_k}/p^{N'}
\ar[r] & S_{\mf{P}_k}/p^{N'}[H_n]^{H_n};\ 
1 \ar[r] & N_{m+1/m},}$$
and let 
\[
\xymatrix{
{h}_{\mf{P}_k}\colon H^1(\bb{Q},(T_\chi
\otimes_{\mca{O}}S_{\mf{P}_N})/p^{N'}) 
\ar[r] & S_{\mf{P}_k}/p^{N'}
}
\]
be the homomorphism induced by $\tilde{h}_{\mf{P}_k}$. 
Note that since we assume the ideal $\mf{P}_k+p^{N'}\mca{O}[[\Gamma]]$ 
contains $\gamma^{p^m}-1$, the natural homomorphism
$$\xymatrix{
H^1(\bb{Q}(\mu_n),\mathbf{T}_\chi/p^{N'}) \ar[r] &  
H^1(\bb{Q}(\mu_n),(T_\chi
\otimes_{\mca{O}}S_{\mf{P}_N})/p^{N'}) } $$
factors through 
$$H^1\big( \bb{Q}(\mu_n),\mathbf{T}_{\chi}
/((\gamma^{p^{m'}}-1)\mathbf{T}_{\chi}
+p^{N''}\mathbf{T}_{\chi}) \big)
\simeq (F_m(\mu_n)^\times/p^{N'})_\chi.$$
We denote the image of 
$H^1(\bb{Q}(\mu_n),\mathbf{T}_\chi/p^{N'})$
in $(F_m(\mu_n)^\times/p^{N'})_\chi$ 
by $\mathrm{Im}_F$. 
Then, by Proposition \ref{big commutative diagram}, 
we obtain the commutative diagram 
$$\xymatrix{
H^1(\bb{Q}(\mu_n),\mathbf{T}_\chi/p^{N'}) \ar[d] 
\ar[rr]^(0.6){p^{3s}\tilde{h}_\infty} & &
\mca{O}[[\Gamma]][H_n]/p^{N'} \ar[d] \\
\mathrm{Im}_F 
\ar[rr]^(0.6){p^{3s}\tilde{h}} \ar[d] & &  
R_{m,N',\chi}[H_n] \ar[d] \\
H^1(\bb{Q}(\mu_n),(T_\chi
\otimes_{\mca{O}}S_{\mf{P}_N})/p^{N'}) 
\ar[rr]^(0.6){\tilde{h}_{\mf{P}_k}} & & 
S_{\mf{P}_k}[H_n]/p^{N'} \\
H^1(\bb{Q},(T_\chi
\otimes_{\mca{O}}S_{\mf{P}_N})/p^{N'}) 
\ar[rr]^(0.6){{h}_{\mf{P}_k}} \ar@{^{(}->}[u] & & 
S_{\mf{P}_k}/p^{N'} \ar@{^{(}->}[u]_{\nu_{H_n}} .} 
$$

By the norm compatibility of circular units, we can define the element 
$$\eta_{\infty}^{D_n}:=(\eta_m(n)^{D_n}) 
\in H^1(\bb{Q}(\mu_n),\mathbf{T}_\chi/p^{N'})
=\displaystyle\varprojlim_{m'}
(F_{m'}(\mu_n)^\times/p^{N'})_\chi.$$
In particular, we have 
\[
\eta_m(n)^{D_n} \in 
\mrm{Im}_F= (F_m(\mu_n)^\times/p^{N'})_\chi.
\] 
Let $\mca{O}_S$ be a ring  
which is isomorphic to $S_{\mf{P}_k}$ as 
a $\mca{O}_\chi$-algebra, 
and we assume that the Galois group $G_{\bb{Q}}$ 
acts on $\mca{O}_S$ trivially. 
The action of $G_{\bf{Q}}$ on $S_{\mf{P}_k}$ defines 
a continuous character 
$$\xymatrix{\rho\colon \Gamma \ar[r] & \mca{O}_S^{\times}. }$$
We regard both $T_\chi\otimes_{\mca{O}}\mca{O}_S$ and 
$T_\chi\otimes_{\mca{O}}S_{\mf{P}_k}$ as free 
$\mca{O}_S$-modules of rank one. 
Let 
$$\eta\otimes \rho:=
\{\eta\otimes \rho_{F} \in H^1
\big(F,(T_\chi \otimes_{\mca{O}}{S_{\mf{P}_k}}) \big) \}_{F 
\subset \mca{K}}$$ 
be the Euler system for 
$(T_\chi \otimes_{\mca{O}}{S_{\mf{P}_k}},\mca{K}/\bb{Q},\Sigma)$
which is the twist of the Euler system 
$$\eta^{(k)}:=\{\eta_{m'}^{(k)}(n') 
\in H^1
\big(F_{m'}(\mu_{n'}),T_\chi \big)
\}_{F_{m'}(\mu_{n'}) \subset \mca{K}}$$ for 
$(T_\chi\otimes_{\mca{O}}\mca{O}_S,\mca{K}/\bb{Q},\Sigma)$
by the character $\rho$ 
in the sense of \cite{Ru5}. 
Since we assume $n \in \mca{N}_{N''}^{\mrm{w.o.}}\subseteq 
\mca{N}^{\mrm{w.o.}}(\mca{S}_{N'}(\mf{P}_k))$, 
we can define 
the Kolyvagin derivative class 
$$\kappa_{0,N'}(\eta\otimes \rho ;n) 
\in H^1\big(\bb{Q}, 
(T_\chi \otimes_{\mca{O}}{S_{\mf{P}_k}})/p^{N'} 
\big)$$
of the Euler system $\eta\otimes \rho$, 
whose image in $H^1(\bb{Q},
(T_\chi \otimes_{\mca{O}}{S_{\mf{P}_k}})/p^{N'})$
coincides with the image of $\eta_m(n)^{D_n}$. 
By Proposition \ref{Comparisonofkappa}, we have 
$$\kappa_{0,N'}(\eta\otimes \rho;n)=
\kappa(\eta\otimes \rho)_n
\in H^1\big(\bb{Q}, 
(T_\chi \otimes_{\mca{O}}{S_{\mf{P}_k}})/p^{N'} 
\big),$$
where $\kappa(\eta\otimes \rho)_n$ is the $n$-component of 
the Kolyvagin system defined by the Euler system $\eta \otimes \rho$. 
Therefore, we obtain 
\begin{align*}
\beta e_{\mf{P}}k & \succ 
\mrm{length}_{S_{\mf{P}_k}}\big( 
S_{\mf{P}_k}/(p^{N'}S_{\mf{P}_k}+
h(\kappa_{m_k,N''_k}(\eta;n_k))S_{\mf{P}_k}) \big)  \\
& \sim 
\mrm{length}_{S_{\mf{P}_k}}\big( 
S_{\mf{P}_k}/(p^{N'}S_{\mf{P}_k}+
p^{3s}\tilde{h}
(\eta_{m_k}(n_k)^{D_{n_k}})S_{\mf{P}_k}) \big)  \\
& =
\mrm{length}_{S_{\mf{P}_k}}\big( 
S_{\mf{P}_k}/(p^{N'}S_{\mf{P}_k}+
{h}_{\mf{P}_k}
(\kappa(\eta\otimes \rho)_n)S_{\mf{P}_k} \big) \\
& \ge \min\{ \partial_i({\mf{P}_k}),N'\} 
= \partial_i({\mf{P}_k})  \sim a_k  \\
& \sim \alpha e_{\mf{P}}k
\end{align*}
Thus, we obtain $\beta \ge \alpha$, 
and this completes the proof of Theorem \ref{local lower bounds}.  
\end{proof}

\end{document}